\newcommand{\BC}{{\mathbb {C}}}
\newcommand{\BF}{{\mathbb {F}}}
\newcommand{\BN}{{\mathbb {N}}}
\newcommand{\BP}{{\mathbb {P}}}
\newcommand{\BQ}{{\mathbb {Q}}}
\newcommand{\BR}{{\mathbb {R}}}
\newcommand{\BZ}{{\mathbb {Z}}}
\newcommand{\CC  }{{\mathcal {C}}}
\newcommand{\CD}{{\mathcal {D}}}
\newcommand{\CF}{{\mathcal {F}}}
\newcommand{\CL}{{\mathcal {L}}}
\newcommand{\CM}{{\mathcal {M}}}
\newcommand{\CO}{{\mathcal {O}}}
\newcommand{\CP}{{\mathcal {P}}}
\newcommand{\CR }{{\mathcal {R}}}
\newcommand{\CS}{{\mathcal {S}}}
\newcommand{\CT}{{\mathcal {T}}}
\newcommand{\CX}{{\mathcal {X}}}
\newcommand{\CY}{{\mathcal {Y}}}
\newcommand{\CZ}{{\mathcal {Z}}}
\newcommand{\RN}{{\mathrm {N}}}
\newcommand{\Ch}{{\mathrm{Ch}}}
\renewcommand{\div}{{\mathrm{div}}}
\newcommand{\End}{{\mathrm{End}}}
\newcommand{\Frob}{{\mathrm{Frob}}}
\newcommand{\Gal}{{\mathrm{Gal}}}
\newcommand{\Jac}{{\mathrm{Jac}}}
\newcommand{\NS}{{\mathrm{NS}}}
\newcommand{\ord}{{\mathrm{ord}}}
\newcommand{\rank}{{\mathrm{rank}}}
\newcommand{\Pic}{\mathrm{Pic}}
\newcommand{\Proj}{\mathrm{Proj}}
\newcommand{\Spec}{{\mathrm{Spec}}}
\newcommand{\wt}{\widetilde}
\newcommand{\wh}{\widehat}
\newcommand{\pp}{\frac{\partial\bar\partial}{\pi i}}
\newcommand{\pair}[1]{\langle {#1} \rangle}
\newcommand{\lra}{\longrightarrow}
\theoremstyle{plain}
\newtheorem{theorem}{Theorem}[subsection]
\newtheorem{proposition}[theorem]{Proposition}
\newtheorem{lemma}[theorem]{Lemma}
\newtheorem{corollary}[theorem]{Corollary}
\newtheorem{conjecture}[theorem]{Conjecture}
\theoremstyle{definition}
\theoremstyle{remark}
\numberwithin{equation}{subsection}
\title{Gross--Schoen Cycles and Dualising Sheaves}
\author{Shou-Wu Zhang\\
Department of Mathematics\\
Columbia University\\
New York, NY 10027}
\date{November 17, 2007}
\begin{document}

\maketitle \tableofcontents

\section{Introduction and statements of results}
The aim of this paper is to study  the modified diagonal cycle in
the triple product of  a curve over a global field defined by Gross
and Schoen in \cite{gross-schoen}. Our main result is an identity
between the height of this cycle and the self-intersection of the
relative dualising sheaf. We have  some applications to the
following   problems in number theory and algebraic geometry:
\begin{itemize}
\item {\em Gillet--Soul\'e and Bogomolov's conjectures  for
 heights of cycles and points.} We will show that the
 Gillet--Soul\'e's arithmetic standard conjecture \cite{gillet-soule2}
 gives a
 lower bound for the admissible self-dualising sheaf for arithmetic surfaces in
 term of local integrations. This gives an approach toward an effective version of
 Bogomolov conjecture \cite{ullmo, equidistribution}. By applying Noether's formula, this will
 also give an alternative approach toward
 a slope inequality for Hodge bundles (or Faltings heights)
 on moduli space of curves, other than using stability in geometric
 invariant theory \cite{cornalba-harris, xiao}.
\item  {\em Beilinson--Bloch's conjectures for special
values of L-series and cycles.} By Beilinson--Block and Tate's
conjectures \cite{beilinson1, beilinson2, bloch1}, the
non-triviality of Gross--Schoen cycles will imply the vanishing of
the  L-series for the triple product motive of a curve. We have a
Northcott property for vanishing of L-series on moduli space of
curves. In the case of function field, these are unconditional.
Moreover, for non-isotrivial curve over function field of with good
reduction, the Arakelov-Szpiro theorem implies the vanishing of the
L-series of order $\ge 2$.
\item {\em Non-triviality of tautological classes in Jacobians.}
We will show that the heights of the canonical Gross--Schoen cycles
$\Delta _\xi$ have the Northcott type property on the moduli spaces
of curves.  We will give an expression of this height in terms of
the cycles $X_1$ and  $\CF (X_1)$ in  Beauville's Fourier--Mukai
transform \cite {beauville1, beauville2, beauville3} and
K\"unnemann's height pairing \cite{kunnemann2}. This implies in
particular the Northcott property holds for Ceresa \cite{ceresa}
cycles $X-[-1]^*X.$  For a non-isogeny curve over function field
with good reduction, these cycles are non-trivial by using a theorem
of Arakelov-Szpiro's theorem \cite{szpiro}.
\end{itemize}
In the following, we will describe in details the main results and
applications, and a plan of proof.

\subsection{Gross-Schoen cycles}
Let us first review Gross and Shoen's construction of the modified
diagonal cycles in \cite{gross-schoen} and definitions of heights of
Bloch \cite{bloch1}, Beilinson \cite{beilinson1, beilinson2}, and
Gillet--Soul\'e \cite{gillet-soule2}. Let $k$ be a field and let $X$
be a smooth, projective, and geometrically connected curve over $k$.
Let $Y=X^3$ be the triple product of $X$ over $k$ and let $e=\sum
a_i p_i$ be a divisor  of degree $\sum a_i\deg p_i=1$ such that some
positive multiple $ne$ is  defined over $k$. Define the diagonal and
the partially diagonal cycles with respect to base $e$ as follows:
$$\Delta _{123}=\{(x,x,x): x\in X\},$$
$$\Delta _{12}=\sum _i a_i\{(x,x, p_i): x\in X\},$$
$$\Delta _{23}=\sum a_i \{(p_i,x, x): x\in X\},$$
$$\Delta _{31}=\sum a_i\{(x,p_i, x): x\in X\}$$
$$\Delta _{1}=\sum _{i, j}a_ia_j\{(x,p_i, p_j): x\in X\},$$
$$\Delta _{2}=\sum _{i, j}a_ia_j\{(p_i,x, p_j): x\in X\},$$
$$\Delta _{3}=\sum _{i, j}a_ia_j\{(p_i,p_j, x): x\in X\}.$$
Then define the Gross-Schoen cycle  associated to $e$  to be
$$\Delta _e=\Delta _{123}-\Delta _{12}-\Delta _{23}-\Delta _{31}
+\Delta _1+\Delta _2+\Delta _3\in \Ch ^2(X^3)_\BQ.$$  Gross and
Schoen has shown that  $\Delta _e$  is homologous to $0$ in general,
and that $\Delta _e$  it is rationally equivalent to $0$ if $X$ is
rational, or elliptic, or hyperelliptic when $e$ is a Weierstrass
point. A natural question  is:
 {\em When is  $\Delta _e$  non-zero in $\Ch ^2(X^3)_\BQ$
in non-hyperelliptic case?}

Over a global field $k$, a natural invariant of $\Delta _e$ to
measure the  non-triviality of a homologically  trivial cycle is the
height of $\Delta _e$ which was conditionally constructed by
Beilinson--Bloch \cite{beilinson1, beilinson2, bloch1} and
unconditionally by Gross--Schoen \cite{gross-schoen} for $\Delta
_e$. More precisely, assume that $k$ is the fractional field of a
discrete valuation ring $R$ and that $X$ has a regular, semi-stable
model $\CX$ over $S:=\Spec R$. Then Gross-Schoen construct a regular
model $\CY$ over $S$ of $Y=X^3$  and show that the modified diagonal
cycle $\Delta _e$ on $Y$ can be extended to a codimension $2$ cycle
on $\CY$ which is numerically equivalent to $0$ in the special fiber
$\CY _s$.

If $k$ is a function field of a smooth and projective curve $B$ over
a field, then Gross and Schoen's construction gives a cycle $\wh
\Delta _e$ with rational coefficients  on a model $\CY $ of $Y=X^3$
over $B$. We can define the height of $\Delta _e$ as
$$\pair {\Delta _e, \Delta _e}=\wh \Delta _e\cdot \wh \Delta _e.$$
 The right hand here
is the intersection of cycles on $\CY$. This pairing does not depend
on the choice of $\CY$ and the extension $\wh \Delta _e$ of
$\Delta_e$.

If $k$ is a number field, then we use the same formula to define the
height for the arithmetical cycle $$\wh \Delta =(\wt \Delta _e, g)$$
Gillet--Soul\'e's arithmetic intersection theory
\cite{gillet-soule1} where \begin{itemize}
\item $\wt \Delta_e$ is the
Gross--Schoen extension of $\Delta _e$ over a model $\CY$ over
$\Spec \CO _k$; \item  $g$ is a Green's current on the complex
manifold $Y(\BC)$ of the complex variety $Y\otimes _\BQ\BC$ for the
cycle $\Delta _e$: $g$ is a current on $Y(\BC)$ of degree $(1, 1)$
with singularity supported on  $\Delta _e(\BC)$ such that the
curvature equation holds:
$$\pp g=\delta _{\Delta _e(\BC)}.$$
Here the right hand side denotes the Dirac distribution on the cycle
$\Delta _e(\BC)$ when integrate with forms of degree $(2,2)$ on
$Y(\BC)$. \end{itemize}
 Notice that this height can be
also defined using K\"unnemann's results in \cite{kunnemann2}. As
the non-triviality of $\Delta _e$ follows from the nonvanishing of
its height, a natural question is: {\em When is $\pair{\Delta _e,
\Delta _e}$ non-zero}?

\subsection{Admissible dualising sheaves}

Our main result of this paper is an expression of the height $\pair
{\Delta _e, \Delta _e}$ in terms of the self-intersection
$\omega_a^2 $ of the relative dualising sheaf  defined  in our early
paper \cite{admissible} which we recall as follows. Let $X$ be a
curve over a field $k$ of positive genus. We assume that $k$ is
either the fraction field of a smooth and projective curve $B$ or a
number field where we still set $B=\Spec \CO_k$, and that $X$ has a
semistable model $\CX$ over $B$.

When $B$ is a projective curve, then one has a usual intersection
pairing of divisors on $\CX$  and a usual relative dualising sheaf
$\omega _{\CX/B}$ which gives an adjunction formula for
self-intersections of sections.

In number field case, Arakelov theory gives intersections on the
arithmetic divisors of form $\wh D=(D, G)$ formed by a divisor $D$
on $\CX$ and an {\em admissible green's function} $G$ on $X(\BC)$ in
the sense that  its curvature satisfies , $$\delta _{D_\BC}-\pp
G=\deg D \cdot d\mu
$$ where $d\mu$ is the Arakelov measure on $X(\BC)$: on each
connected component $X_v(\BC)$ corresponding to archimedean place
$v$,
$$d\mu_v=\frac i{2g}\sum _{n=1}^g\omega _n\wedge \bar \omega _n$$
where $g$ is the genus of $X$ and $\omega _n$ are base of $\Gamma
(X_v, \Omega _{X_v})$ normalized such that
$$\frac i2\int \omega _m\wedge\bar \omega _n=\delta _{m,n}.$$
Arakelov shows that there is a  unique metric such that an
adjunction formula is true for a dualising sheaf with admissible
metric.
 By Faltings \cite{faltings}, we have a Hodge index theorem.

In \cite{admissible}, we construct an intersection theory (for
function field case or number field case) on divisors of the form
$(D, G)$ formed by a divisor $D$ of $X$ and $G$ an {\em adelic}
green's function with {\em adelic} curvature $d\mu _a$. More
precisely, $G$ has a component
 $G_v$ as a continuous function  on the reduction graph $R(X_v)$
 of $X\otimes k_v$
\cite{chinburg-rumely} for each closed point $v$ of $B$, and as a
usual  green's function on $X_v (\BC)$ for each archimedean place
$v$ in number field case.  We show that in this intersection theory,
we still have an adjunction formula and Hodge index theorem. We
called such intersection pairing an {\em adelic} admissible pairing.

We have proved in \cite{admissible} the following inequalities:
$$\wh\omega _{\CX/B}^2\ge \omega _a^2\ge 0.$$
Moreover the difference of the first two item is given by local
integrations: \begin{equation} \omega _{\CX/B}^2=\omega _a^2+\sum
_v\epsilon (X_v) \log \RN (v),\end{equation} where $v$ runs throught
the set of non-archimedean places, and $$\epsilon (X_v):= \int
_{R(X_v)}G_v(x, x)(\delta _{K_{X_v}}+(2g-2)d\mu _v)$$ where $G_v(x,
y)$ is the admissible Green's function on $R(X_v)$ for the
admissible measure $d\mu_v$ on $R(X_v)$, and $K_{X_v}$ is the
canonical divisor on $R(X_v)$. The first inequality is strict unless
$X$ has genus $1$ or $X$ has good reductions at all non-archimedean
place.

\subsection{Main result and first consequences}

The main result of this paper proved in \S3.5 is an identity between
the two canonical invariants:

\theoremstyle{plain}
\newtheorem*{main}{Main Theorem}

\begin{theorem} Let $X$ be a curve of genus $g>1$
 over a field $k$ which is either a number field or  the fraction
field of a curve $B$.  Then
$$\pair{\Delta_e, \Delta _e}
=\frac{2g+1}{2g-2}\omega _a^2+6(g-1)\pair{x_e, x_e}-\sum _v\varphi
(X_v)\log \RN (v).$$ Here $\pair{x_e, x_e}$ is the Neron-Tate height
of the class $e-K_X/(2g-2)$ in $\Pic ^0(X)_\BQ$, and $\varphi_v$ are
some contribution from  places $v$ of $K$:
\begin{enumerate}
\item If $v$ is an archimedean place, then
$$\varphi(X_v)=\sum _{\ell, m, n}\frac 2{\lambda _\ell}\left|\int _{X_v}
\phi _\ell\omega _m(x)\bar \omega _n(x)\right|^2$$ where $\phi
_\ell$ are normalized real eigenforms of the Arakelov Laplacian:
$$\pp \phi _\ell =\lambda _\ell \cdot \phi _\ell \cdot d\mu _v,
\qquad \int \phi _k\phi_\ell d\mu =\delta _{k, \ell}, $$ and $\omega
_i$ are basis of $\Gamma (X_v, \Omega _{X_v})$ normalized by
$$\frac i2\int \omega _m\bar \omega _n=\delta _{m, n}.$$
\item If $v$ is a nonarchimedean place, then
$$\varphi (X_v)=-\frac 14 \delta(X_v)+\frac 14\int _{R(X_v)}G_v(x,
x)((10g+2)d\mu_a-\delta _{K_{X_v}})$$ where $\delta(X_v)$ is the
number of singular points on the special fiber of $X_v$, $G_v(x, y)$
is the admission Green function for the admissible metric $d\mu_v$,
and the $K_{X_v}$ is the canonical divisor on $R(X_v)$. In
particular, $\varphi_v=0$ if $X$ has good reduction at $v$.
\end{enumerate}
\end{theorem}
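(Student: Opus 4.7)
The plan is to compute $\pair{\Delta_e, \Delta_e}$ by choosing a model, expanding the pairing bilinearly, and identifying each contribution. I would fix a regular semistable model $\CX$ of $X$ over $B$ and use the construction of \cite{gross-schoen} to build a regular model $\CY$ of $X^3$ together with an extension $\wh\Delta_e \in \Ch^2(\CY)$ that is numerically trivial on every special fiber (in the number field case, equipped with an admissible Green's current on $Y(\BC)$). The height is then $\wh\Delta_e \cdot \wh\Delta_e$, which expands into $49$ bilinear pairings of the seven components $\wh\Delta_{123}, \wh\Delta_{ij}, \wh\Delta_i$. The dominant term is $\wh\Delta_{123}^2$: since the conormal bundle of the small diagonal $\iota: X \hookrightarrow X^3$ is $\Omega_{X/k}^{\oplus 2}$, one has $c_2 = c_1(\omega_X)^2$, and a normal-bundle computation on the arithmetic section $\CX \to \CY$, with explicit corrections from the blow-ups used to resolve $\CX^3$, identifies the geometric part of $\wh\Delta_{123}^2$ with $\wh\omega_{\CX/B}^2$ modulo contributions supported on singular fibers. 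The mixed pairings $\wh\Delta_{ij}\cdot\wh\Delta_{k\ell}$, $\wh\Delta_{ij}\cdot\wh\Delta_k$, and $\wh\Delta_i\cdot\wh\Delta_j$ reduce via the projection formula to intersections on $\CX$ and $\CX^2$ involving $e$, $\omega_{\CX/B}$, and N\'eron--Tate pairings of degree-zero divisors.

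Next I would reorganize everything by the dependence on $e$. Writing $e = K_X/(2g-2) + x_e$ with $x_e \in \Pic^0(X)_\BQ$, the terms linear in $x_e$ cancel because $\Delta_e$ is intrinsic to the linear equivalence class of $e$; the purely quadratic-in-$x_e$ contribution must be a multiple of the N\'eron--Tate height $\pair{x_e, x_e}$, and the combinatorial coefficient is read off the seven-term definition to be $6(g-1)$. The $e$-independent part collects into a multiple of $\wh\omega_{\CX/B}^2$ plus local corrections; combining the self-intersection of the small diagonal with the contributions of the three partial diagonals via adjunction on $\CX$ should yield the coefficient $(2g+1)/(2g-2)$. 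Finally I would replace $\wh\omega_{\CX/B}^2$ by $\omega_a^2$ using formula (1) of \S1.2, absorbing the difference $\sum_v \epsilon(X_v)\log \RN(v)$ into the local contributions.

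It remains to identify the remaining local error as $-\sum_v \varphi(X_v)\log\RN(v)$. At a non-archimedean place, this gathers (i) the $\epsilon(X_v)$ from formula (1), (ii) the vertical components added to make $\wh\Delta_e$ fibrewise numerically trivial, and (iii) the exceptional divisors of the Gross--Schoen resolution of $\CX^3$ over each singular point of $\CX_v$, which produces the $-\tfrac14\delta(X_v)$ term; the rest should consolidate into $\tfrac14\int_{R(X_v)} G_v(x,x)((10g+2)d\mu_a - \delta_{K_{X_v}})$ by the admissible intersection theory on the reduction graph. At an archimedean place, the Green's current on $Y(\BC)$ for $\Delta_e$ is built from the Arakelov Green's function on $X(\BC)$; the residual integral producing $\varphi(X_v)$ is computed by expanding in the eigenbasis $\{\phi_\ell\}$ of the Arakelov Laplacian, which gives the stated spectral formula. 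The principal obstacle will be the non-archimedean local analysis at places of bad reduction: tracking vertical corrections and exceptional contributions carefully enough to match the admissible-metric data $G_v, d\mu_v, K_{X_v}, \delta(X_v)$ on the reduction graph, and simultaneously verifying that the coefficient of $\omega_a^2$ consolidates cleanly to $(2g+1)/(2g-2)$ after combining all $49$ bilinear terms.
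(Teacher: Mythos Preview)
Your plan takes a genuinely different route from the paper's, and the route you chose runs straight into the obstacle you name at the end. The paper never expands the $49$ bilinear terms on a model of $X^3$. Instead it uses the idempotent correspondence $\delta = \Delta_{12} - p_1^*e$ on $X$ to write $\Delta_e = (\delta^{\otimes 3})^*\Delta_{123}$, and then shows (Theorem~2.3.5) that
\[
\pair{\Delta_e,\Delta_e} \;=\; \wh t_e \cdot \wh t_e \cdot \wh t_e
\]
as a \emph{triple} intersection of a single admissibly metrized divisor $\wh t_e = \wh\Delta - p_1^*\wh e - p_2^*\wh e + \wh e^2\cdot F$ on $X^2$, not on $X^3$. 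This reduction from $X^3$ to $X^2$ is the key structural step: once one is on $X^2$, the cube $\wh t_e^3$ expands cleanly (Theorem~2.5.1) into $\frac{2g+1}{2g-2}\wh\omega_a^2 + 6(g-1)\|x_e\|^2$ plus a local term $-\log\|1_\Delta\|\cdot(\wh\Delta^2 - 6\wh\Delta\cdot p_1^*\wh e + 6 p_1^*\wh e\cdot p_2^*\wh e)$, and the coefficient $(2g+1)/(2g-2)$ drops out of a short algebraic manipulation rather than a $49$-term combinatorial collapse.

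Your direct approach on $X^3$ has two specific weaknesses. First, the individual components $\wh\Delta_{123}$, $\wh\Delta_{ij}$, $\wh\Delta_i$ are not homologically trivial, so the $49$ pairings $\wh\Delta_T\cdot\wh\Delta_{T'}$ are not intrinsic heights; they depend on the Gross--Schoen choice of vertical corrections, and you would have to track those corrections coherently through every cross term. Second, and more seriously, the non-archimedean local contribution does not arise from a single exceptional divisor count: the paper devotes all of \S3 to building a triple-pairing formula for piecewise-linear functions on the reduction complex $R(X)\times R(X)$ (Theorem~3.4.2), including a singular diagonal term $\tfrac14\int_\CD \delta(f_1)\delta(f_2)\delta(f_3)\,dx$, and then computes $(G,\wh\Delta,\wh\Delta)$, $(G,\wh\Delta,p_1^*\wh e)$, $(G,p_1^*\wh e,p_2^*\wh e)$ separately (Lemmas~3.5.2--3.5.4) to extract $\varphi(X_v)$. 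Your sentence ``the rest should consolidate into $\tfrac14\int G_v(x,x)((10g+2)d\mu_a-\delta_{K_{X_v}})$'' hides exactly this machinery, which has no analogue in your $X^3$ setup; without the reduction to $X^2$ there is no reduction complex on which to run it.
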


Replace $k$ by an extension, we may fix a class  $\xi\in \Pic
(X)(k)$ such that $(2g-2)\xi=K_X$.  By the positivity of the
Neron-Tate height pairing, $\pair{\Delta _e, \Delta _e}$ reaches its
minimal value precisely when where
$$e =\xi+\text{torsion divisor}.$$
We call the cycle $\Delta _\xi$ the {\em canonical} Gross--Schoen
cycle for $X$.

\begin{corollary}
$$\omega _a^2=\frac {2g-2}{2g+1} \left(\pair {\Delta _\xi, \Delta _\xi}
+\sum _v\varphi (X_v)\log \RN (v)\right).$$
\end{corollary}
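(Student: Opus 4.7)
The plan is to derive the corollary as an immediate specialization of the Main Theorem to the canonical choice $e=\xi$. After extending $k$ if necessary so that a class $\xi \in \Pic(X)(k)$ with $(2g-2)\xi = K_X$ exists, I fix a divisor representative of $\xi$ of degree $1$ and form the associated Gross--Schoen cycle $\Delta_\xi$. The Main Theorem then reads
$$\pair{\Delta_\xi, \Delta_\xi} = \frac{2g+1}{2g-2}\,\omega_a^2 + 6(g-1)\pair{x_\xi, x_\xi} - \sum_v \varphi(X_v)\log \RN(v).$$

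The key observation is that the Néron--Tate term vanishes for this choice. Indeed, by definition $x_e = e - K_X/(2g-2) \in \Pic^0(X)_\BQ$, and for $e = \xi$ we have $x_\xi = \xi - K_X/(2g-2) = 0$. By positivity of the Néron--Tate pairing, $\pair{x_\xi, x_\xi} = 0$. (The same holds, and the cycle remains canonical, if $e$ differs from $\xi$ by a torsion class, which is the ambiguity appearing in the definition of $\Delta_\xi$.)

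Substituting this into the displayed formula and solving for $\omega_a^2$ gives the desired identity
$$\omega_a^2 = \frac{2g-2}{2g+1}\left(\pair{\Delta_\xi, \Delta_\xi} + \sum_v \varphi(X_v)\log \RN(v)\right).$$
Since all the substantive content is already packaged into the Main Theorem, there is no real obstacle here; the only point requiring any attention is to verify that the height $\pair{\Delta_e, \Delta_e}$ depends on $e$ only through $\xi$ modulo torsion, but this is transparent from the Main Theorem's formula itself, where the $e$-dependence is isolated in $\pair{x_e, x_e}$.
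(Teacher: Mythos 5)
Your proof is correct and coincides with the paper's (implicit) derivation: the corollary is the specialization $e=\xi$ of the Main Theorem, with the Néron--Tate term $\pair{x_\xi,x_\xi}$ vanishing because $x_\xi = \xi - K_X/(2g-2) = 0$ in $\Pic^0(X)_\BQ$, after which one simply solves for $\omega_a^2$. Your parenthetical about the torsion ambiguity in the choice of $e$ matches exactly the remark in the paper preceding the corollary.
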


\begin{corollary} Assume that $X$ is a hyperelliptic curve, then
$$\omega _a^2=\frac {2g-2}{2g+1}\sum _v\varphi(X_v)\log \RN (v).$$
\end{corollary}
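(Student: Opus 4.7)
The plan is to reduce the corollary to the Main Theorem applied at a carefully chosen base divisor $e$. After possibly passing to a finite extension $k'/k$ over which some Weierstrass point $w$ on $X$ becomes rational, I would take $e=w$ and show that both of the extra contributions on the right-hand side of the Main Theorem identity vanish. This base change is harmless because $\omega_a^2$ and $\sum_v \varphi(X_v)\log \mathrm{N}(v)$ both scale by $[k':k]$, so the identity descends from $k'$ back to $k$.

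For the N\'eron--Tate contribution, recall that on a hyperelliptic curve of genus $g$ any Weierstrass point $w$ satisfies $(2g-2)w\sim K_X$. Hence $x_e = w-\xi = w-K_X/(2g-2)$ is $(2g-2)$-torsion in $\Pic^0(X)$, and since the N\'eron--Tate height is a positive quadratic form vanishing on torsion, one gets $\pair{x_e,x_e}=0$.

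For the cycle contribution, I would invoke the theorem of Gross and Schoen recalled in \S1.1: when $X$ is hyperelliptic and $e$ is a Weierstrass point, the modified diagonal $\Delta_e$ is rationally equivalent to zero in $\Ch^2(X^3)_\BQ$. Since the height pairing of a cycle that is rationally equivalent to zero with itself vanishes, we conclude $\pair{\Delta_e,\Delta_e}=0$.

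Substituting these two vanishings into the Main Theorem yields
$$0=\frac{2g+1}{2g-2}\omega_a^2-\sum_v \varphi(X_v)\log\RN(v),$$
and rearranging gives the corollary. There is no genuine obstacle here: all of the work is carried out in the Main Theorem itself, and the hyperelliptic hypothesis is used only through Gross--Schoen's cycle-theoretic vanishing, which is cited as input. Equivalently, one could argue from the preceding corollary by noting that $e=w$ satisfies $e=\xi+\text{torsion}$, so $\pair{\Delta_\xi,\Delta_\xi}=\pair{\Delta_w,\Delta_w}=0$.
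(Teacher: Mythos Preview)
Your argument is correct and matches the paper's intended reasoning: the corollary is an immediate consequence of the preceding one (or of the Main Theorem directly) once one knows that for a hyperelliptic curve with $e$ a Weierstrass point the cycle $\Delta_e$ is rationally equivalent to zero by Gross--Schoen, and that such a point satisfies $(2g-2)w\sim K_X$ so that $x_e$ is torsion. The paper states exactly this in Remark~2 after Proposition~2.5.3, and your handling of the base change to acquire a rational Weierstrass point is the standard maneuver (the paper itself replaces $k$ by an extension to obtain $\xi$).
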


Combining with (1.2.1), this also gives an identity for the
self-intersection of the usual relative sheaf of hyperelliptic curve
in term of bad reductions. Some explicit examples of such formulae
have been given by Bost, Mestre, and Moret-Bailly in
\cite{bost-mestre-moret-bailly}. It is an interesting question to
compare our formula with theirs.

It is a hard problem to check when the height $\pair {\Delta _\xi,
\Delta _\xi}=0$ even in the function field case. We have the
following consequence of Theorem 1.3.1 in smooth case:
\begin{corollary}
Assume that $k$ is the function field and that $X$ can be extended
to a non-isotrivial family $\CX\lra B$ of smooth and projective
curves of genus $g>1$ over a projective and smooth curve $B$. Then
$$\pair{\Delta _\xi, \Delta _\xi}
=\frac {2g+1}{2g-2}\omega _{\CX/B}^2>0.$$
\end{corollary}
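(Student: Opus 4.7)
The plan is to read Corollary 1.3.4 as a direct specialization of the Main Theorem to the good-reduction, canonical-base case, followed by an appeal to a classical positivity result of Arakelov.

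First I would use the hypothesis that $\CX\to B$ is a smooth family to kill all local contributions. At every closed point $v\in B$ the curve $X_v$ has good reduction, so by the explicit formula in part (2) of the Main Theorem $\varphi(X_v)=0$. (Indeed, $\delta(X_v)=0$, the reduction graph $R(X_v)$ is a single point, and the admissible Green's function vanishes there.) Since $k$ is a function field there are no archimedean places to worry about, so $\sum_v\varphi(X_v)\log\RN(v)=0$. Similarly, since $\epsilon(X_v)=0$ at every place of good reduction, the identity (1.2.1) recalled from \cite{admissible} reduces to $\omega_{\CX/B}^2=\omega_a^2$.

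Next I would use the canonicity of the base point. Taking $e=\xi$ we have $e-K_X/(2g-2)=0$ in $\Pic^0(X)_\BQ$, so its Neron--Tate height $\pair{x_\xi,x_\xi}$ vanishes. Substituting into the Main Theorem now gives
$$\pair{\Delta_\xi,\Delta_\xi}=\frac{2g+1}{2g-2}\,\omega_a^2=\frac{2g+1}{2g-2}\,\omega_{\CX/B}^2,$$
which is the displayed identity.

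The only remaining point, and the real content, is the strict positivity $\omega_{\CX/B}^2>0$. Here I would invoke the Arakelov--Szpiro theorem \cite{szpiro}: for a non-isotrivial semistable family of smooth projective curves of genus $g>1$ over a projective smooth curve $B$, the relative dualising sheaf satisfies $\omega_{\CX/B}^2\geq 0$, with equality precisely in the isotrivial case. Since we have assumed non-isotriviality, this gives the strict inequality. This last ingredient is the main obstacle in the sense that it is the only non-formal input; everything else in the corollary is an immediate specialization of the Main Theorem and of equation (1.2.1).
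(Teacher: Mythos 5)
Your proof is correct and takes essentially the same route as the paper's own (very terse) argument: the paper likewise deduces the first equality by specializing Theorem 1.3.1 to the good-reduction, canonical-base-point case (using the discussion of $\omega_a^2$ versus $\omega_{\CX/B}^2$ in \S1.2), and likewise attributes the strict positivity to Arakelov (characteristic $0$) and Szpiro (positive characteristic) on the ampleness of $\omega_{\CX/B}$ for non-isotrivial families.
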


\begin{proof}
The first equality follows from Theorem 1.3.1 and the discussion in
\S1.2. The second inequality is due to the ampleness of $\omega
_{\CX/B}$ by proved by Arakelov in case of characteristic $0$ and by
Szpiro in case of positive characteristic.
\end{proof}

 We will show that $\pair {\Delta_\xi,
 \Delta _\xi}$ is essentially a height function in \S4.2:

 \begin{theorem}
 Let $Y\lra T$ be a flat family of smooth and projective curves of genus $g\ge 3$
 over a projective variety $T$ over a number field $k$, or the function field
 of a curve over a finite field. Then the function
 $$t\in T(\bar k)\mapsto (2g-2)\pair{\Delta _\xi(Y_t), \Delta _\xi (Y_t)}$$
 is a height function associate to  Deligne's pairing
 $$(2g+1)\pair {\omega _{Y/T}, \omega _{Y/T}}.$$
 Moreover if the induced map $T\lra \CM_g$ from $T$ to the coarse moduli
 space of curves of genus $g$ is finite, then we have a Northcott
 property: for any positive numbers $D$ and $H$,
 $$\#\left\{t\in T(\bar k): \quad \deg t\le D,\quad
 \pair{\Delta (Y_t)_\xi,
 \Delta(Y_t)_\xi}\le H\right\}<\infty.$$
 \end{theorem}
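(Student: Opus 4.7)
The plan is to deduce the theorem by specializing the Main Theorem 1.3.1 to $e=\xi$, rewriting the result via (1.2.1) as an arithmetic self-intersection of the relative dualising sheaf of $Y_t$, and recognizing the latter as a height function associated to Deligne's pairing with bounded local discrepancy.

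First I would apply Theorem 1.3.1 fiberwise with base $e=\xi$. Since $(2g-2)\xi = K_{Y_t}$ by definition, the class $x_\xi$ vanishes in $\Pic^0(Y_t)_\BQ$, killing the Néron--Tate term. Multiplying through by $(2g-2)$ gives
$$(2g-2)\pair{\Delta_\xi(Y_t),\,\Delta_\xi(Y_t)} \;=\; (2g+1)\,\omega_a^2(Y_t) \;-\; (2g-2)\sum_v \varphi(Y_{t,v})\log\RN(v).$$
Substituting (1.2.1), $\omega_a^2 = \wh\omega_{\CY_t/B_t}^2 - \sum_v \epsilon(Y_{t,v})\log\RN(v)$, where $\CY_t$ is a semistable model of $Y_t$ over $B_t$, yields
$$(2g-2)\pair{\Delta_\xi,\Delta_\xi} \;=\; (2g+1)\,\wh\omega_{\CY_t/B_t}^2 \;-\; \sum_v \bigl[(2g+1)\epsilon(Y_{t,v}) + (2g-2)\varphi(Y_{t,v})\bigr]\log\RN(v).$$

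Next I would identify $(2g+1)\wh\omega_{\CY_t/B_t}^2$ with the height of $t$ attached to $(2g+1)\pair{\omega_{Y/T},\omega_{Y/T}}$. Fix an integral model $\CY/\CT$ of $Y/T$ over a $B$-model of $T$, and equip Deligne's pairing with its Arakelov/admissible metric at archimedean places; this produces a hermitian line bundle on $\CT$ whose height at $t$ is, by base-change compatibility of Deligne's pairing, $\wh\deg\tilde t^{\,*}\pair{\omega,\omega} = \wh\omega_{\CY_{\tilde t}/B_t}^2$. The discrepancy between this and $\wh\omega_{\CY_t/B_t}^2$ (computed on the minimal semistable model of $Y_t$) is a further sum of local Artin-conductor-type corrections $c_v(t)\log\RN(v)$ supported at the non-semistable primes of $\CY_{\tilde t}$.

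The main obstacle is to show that the total local error
$$E(t)\;:=\;\sum_v \bigl[(2g+1)\epsilon(Y_{t,v}) + (2g-2)\varphi(Y_{t,v}) + c_v(t)\bigr]\log\RN(v)$$
is $O(1)$ uniformly in $t\in T(\bar k)$. Since $T$ is projective and maps finitely to $\CM_g$, the image of $T(\BC)$ lies in a compact subset of $\CM_g(\BC)$, so the archimedean $\varphi$, being continuous on the interior, is uniformly bounded. At a non-archimedean $v$, the invariants $\epsilon(Y_{t,v})$ and $\varphi(Y_{t,v})$ depend only on the admissibly-metrized reduction graph of $Y_t$ at $v$; for a flat proper family the combinatorial complexity of these graphs is uniformly bounded, so each local term contributes $O(1)$, and the sum is $O(1)$ after the standard normalization by $[k(t):k]^{-1}$. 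This careful bookkeeping of degenerations is the technical heart of the argument, and establishes the height-function identity.

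For the Northcott statement, Mumford's canonical formula gives $\pair{\omega_{Y/T},\omega_{Y/T}}\equiv 12\lambda-\delta$ on $\ol\CM_g$, with $\lambda$ the Hodge class and $\delta$ the boundary divisor, and the Cornalba--Harris--Xiao slope inequality shows this class is big on $\ol\CM_g$. When $T\to\CM_g$ is finite, the pullback of $(2g+1)\pair{\omega,\omega}$ to $T$ is therefore big, and its height function satisfies Northcott: for bounded degree and bounded height, only finitely many points occur. Combined with the identity of the preceding paragraphs, the same Northcott property transfers to $\pair{\Delta_\xi(Y_t),\Delta_\xi(Y_t)}$.
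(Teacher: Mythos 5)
Your overall strategy — apply the Main Theorem at $e=\xi$ to eliminate the Néron--Tate term, then bound the discrepancy between $(2g-2)\pair{\Delta_\xi,\Delta_\xi}$ and a reference height for $(2g+1)\pair{\omega,\omega}$ — is exactly the one the paper uses. But the heart of the argument, the boundedness of the local error, is asserted in a way that is not correct and would not give the result.

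The claim that ``for a flat proper family the combinatorial complexity of these graphs is uniformly bounded, so each local term contributes $O(1)$'' fails at both stages. The invariants $\varphi(Y_{t,v})$ and $\epsilon(Y_{t,v})$ are \emph{not} $O(1)$: by Lemma~4.2.1 they are controlled by $\ell(\Gamma)$, and $\ell(\Gamma)$ at a place $w$ is the number of nodes of the fiber at $w$, which grows without bound as $t$ specializes more deeply into the boundary. Even if the underlying combinatorial graph types form a finite set, $\varphi$ and $\epsilon$ scale linearly with total length, so ``bounded type'' gives no bound on the values. Moreover the number of bad places of $k(t)$ grows with $t$, so even bounded local terms would not yield a bounded sum after normalizing by $\deg t$. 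What actually makes the argument work — and what the paper uses — is that after passing to a semistable model $\CY\to\CT$ over $\CO_k$, the boundary divisor $\delta_\CT$ pulled back from $\ol\CM_g$ is a \emph{vertical} divisor (its generic fiber over $k$ is empty, since $T$ maps into $\CM_g$). Then $\sum_w\ell(\Gamma_w)\log\RN(w)=\delta_\CT\cdot\bar t$ is a height for the line bundle $\CO(\delta_\CT)$, which is trivial on the generic fiber $T_k$, hence bounded. Combining this with the $\varphi_w\ll\ell(\Gamma_w)$ bound of Lemma~4.2.1 and continuity at the archimedean places gives the $O(1)$ estimate. Without the vertical-divisor observation, the step you call ``the technical heart of the argument'' is missing.

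A secondary point: the paper takes $\omega_a^2$ directly as the reference height for $\pair{\omega_{Y/T},\omega_{Y/T}}$ (using the admissible adelic metric on the Deligne pairing), so by Theorem~1.3.1 the discrepancy $f(t)$ is precisely $\tfrac{2g-2}{\deg t}\sum_w\varphi_w(Y_t)$. Your detour through $\wh\omega_{\CY_t/B_t}^2$ via (1.2.1) brings in the $\epsilon(Y_{t,v})$ terms, and the further model comparison brings in the uncontrolled $c_v(t)$. These are not wrong but they are unnecessary, and the $c_v$ terms in particular are never bounded in your proposal. Your Northcott argument via Noether's formula and the slope inequality for bigness of $12\lambda-\delta$ is a reasonable sketch (the paper does not spell out this last step), but it should be checked that bigness on $\ol\CM_g$ restricts to bigness on the image of $T$, which uses more than just finiteness of $T\to\CM_g$.
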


\subsubsection*{Remarks}

We would like to give some remarks about the upper bound for
$\pair{\Delta _\xi, \Delta _\xi}$.

When $k$ is a function field of curve $B$ of genus  $\ge 2$ a field
of characteristic $0$,  the semi-stable model $\CX$ is a surface of
general type and one has  the Bogomolov--Miyaoka--Yau inequality:
$$c_1(\CX)^2\le 3c_2(\CX).$$
Equivalently,  in term of relative data,
$$\omega _{\CX/B}^2\le (2g-2)(2q-2)+3\sum _{b\in B}\delta (X_b).$$
See Moret-Bailly \cite{moret-bailly} for details. By Corollary
1.3.1, we have a  bound  for the  height of Gross--Schoen cycle:
$$\pair{\Delta _\xi, \Delta_\xi}\le (2g+1)(2q-2)+\sum _{b\in B}
\left(\frac {6g+3}{2g-2}(\delta (X_b)+\epsilon (X_b))-\varphi
(X_b)\right).$$

When $k$ is a function field of positive characteristic, then the
Bogomolov--Miyaoka--Yau inequality is not true. Instead, one has a
Szpiro (Theorem 3, \cite{szpiro}) inequality in which one needs to
add some inseparableness of $f$. So we have a similar inequality for
the height of Gross--Schoen cycle.

When  $k$ is a number field, then Parshin \cite{parshin} and
Moret-Bailly \cite{moret-bailly}  have formulated an arithmetic
Bogomolov--Miyaoka--Yau inequality. It has been proved that this
conjecture is equivalent to  the effective Mordell conjecture,
Szpiro's discriminant conjecture, and the ABC conjecture.
Conversely, Elkies \cite{elkies} has proved that $ABC$-conjecture
will imply the effective conjecture. By our main theorem, these are
equivalent to an upper bound  conjecture with $\omega^2$ replaced by
the height of Gross--Schoen cycle.

\subsection{Gillet--Soul\'e and Bogomolov  conjectures}
By the construction, the cycle $\Delta _e$ has zero intersection in
$\Ch^3(X^3)$ with  $p_i^*\Pic (X)$ via the projections $p_i: X^3\lra
X$. Thus, it is primitive with respect an ample line bundle $\CL$ on
$X^3$ of the form $\sum p_i^*\CL$ for an ample line bundle on $X$.
In case where $k$ is a function field of characteristic $0$, by
Hodge index theorem, this height is non-negative, and is vanishing
precisely when $\wh \Delta _e$ is numerically equivalent to $0$.

In function field case of positive characteristic, the Hodge index
theorem is part of the Standard Conjecture of Grothendieck and
Gillet--Soul\'e \cite{gillet-soule2}:
\begin{conjecture}[Grothendieck, Gillet-Soul\'e]
Let $k$ be a number field or a function field with positive
characteristic, then
$$\pair {\Delta _\xi, \Delta _\xi}\ge 0$$
and
  this height  vanishes precisely when $\Delta_\xi$ is rationally
equivalent to $0$.
\end{conjecture}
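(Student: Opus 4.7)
The approach is to reduce the conjecture to the arithmetic Hodge index theorem (the Standard Conjecture of Gillet--Soul\'e) applied to a regular model of $Y=X^3$. First I would record the primitivity of the Gross--Schoen cycle, already noted in \S1.4: by the alternating-sum definition, $p_i^*\alpha \cdot \Delta_e = 0$ in $\Ch^3(X^3)_\BQ$ for every $\alpha \in \Pic(X)_\BQ$ and each projection $p_i\colon X^3 \to X$. Consequently, for any ample line bundle $M$ on $X$, the class $\Delta_e$ lies in the primitive part of $\Ch^2(X^3)_\BQ$ with respect to the polarization $\sum_i p_i^* M$ on $X^3$.

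The second step is to pass to an arithmetic extension $\wh{\Delta}_e = (\wt{\Delta}_e, g)$ on a regular model $\CY$ over $B$, together with a natural arithmetic polarization extending the geometric one. The Standard Conjecture predicts that for a primitive arithmetic cycle $\wh{Z}$ of codimension $p$ on a regular arithmetic variety, one has $(-1)^p \pair{\wh{Z}, \wh{Z}} \geq 0$, with equality precisely when $Z$ is numerically trivial. Applied to $\wh{\Delta}_e$ in codimension two, this would give $\pair{\Delta_e, \Delta_e}\geq 0$. In the geometric case over a projective curve $B$ of characteristic zero, the assertion is the classical Hodge index theorem and is therefore unconditional; in positive characteristic and in the number field case, it is the content of the conjecture itself. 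In those latter cases, the Main Theorem 1.3.1 translates the expected inequality into the concrete lower bound
\begin{equation*}
\frac{2g+1}{2g-2}\,\omega_a^2 \;\geq\; \sum_v \varphi(X_v)\log \RN(v),
\end{equation*}
so the arithmetic Hodge index question becomes a question about the admissible self-intersection.

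For the vanishing half of the statement, I would then invoke the Beilinson--Bloch filtration on $\Ch^2$: a homologically trivial codimension-two cycle whose height pairing and Abel--Jacobi image both vanish should be rationally trivial. Combined with the homological triviality of $\Delta_e$ proved by Gross and Schoen and with the primitivity above, this would upgrade the equality case $\pair{\Delta_\xi,\Delta_\xi}=0$ to rational triviality in $\Ch^2(X^3)_\BQ$.

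The main obstacle is, frankly, that the tools invoked above are themselves open in precisely the cases at hand: the arithmetic Hodge index theorem is known for arithmetic surfaces by Faltings, but not for arithmetic threefolds in mixed characteristic, and the passage from numerical to rational triviality is part of the Beilinson--Bloch package. A genuinely unconditional proof therefore lies beyond current techniques. What the Main Theorem contributes is a faithful translation of the conjecture into a question about $\omega_a^2$ and the local contributions $\varphi(X_v)$; any partial progress on $\omega_a^2$ (for instance in the hyperelliptic case, where Corollary 1.3.3 already reduces $\omega_a^2$ to a sum of local terms) transfers directly to partial progress on the conjecture.
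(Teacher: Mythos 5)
The statement you are asked to prove is labeled a \emph{conjecture} in the paper (Conjecture 1.4.1), and the paper offers no proof of it; the paragraph immediately before it explicitly says that the positivity is the classical Hodge index theorem when $k$ is a function field of characteristic zero, and is ``part of the Standard Conjecture of Grothendieck and Gillet--Soul\'e'' in the remaining cases. Your proposal correctly reproduces that framing: the primitivity of $\Delta_e$ with respect to $\sum_i p_i^*M$, the reduction to the arithmetic Hodge index theorem on a regular model of $X^3$, the unconditional case over a function field of characteristic zero, and the fact that in the number-field and positive-characteristic cases the assertion is precisely the content of the open standard conjecture. You also honestly flag that this is not a proof, which is the accurate assessment.

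One small point of precision worth noting: the paragraph preceding the conjecture speaks of $\wh{\Delta}_e$ being \emph{numerically} equivalent to $0$ in the vanishing case, whereas the conjecture as stated asserts \emph{rational} equivalence. Your invocation of the Beilinson--Bloch filtration to bridge this gap is a reasonable gloss on what an upgrade from numerical to rational triviality would require, but the paper does not actually make that step explicit; it simply folds the stronger assertion into the conjectural package of Gillet--Soul\'e. So your proposal adds a supporting remark not present in the paper, but does not change the conclusion that the statement is open and is being recorded as a conjecture, not established.
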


Granting this conjecture which is true in case of the function field
of characteristic $0$  and hyperelliptic case, we then have a lower
bound for $\omega _a^2$:
$$\omega _a^2\ge \frac {2g-2}{2g+1}\sum _v\varphi (X_v)\log \RN (v).$$
It is  proved in \cite{admissible} that $\omega_a^2>0$ is equivalent
to the Bogomolov conjecture about the finiteness of points $x\in
X(\bar k)$ with small Neron-Tate height in the map
$$ X\lra \Jac (X), \qquad x\mapsto [(2g-2)x-K_X]\in \Jac (X).$$
In  number field case, the Bogomolov conjecture  is proved by Ullmo
\cite{ullmo, equidistribution}. The conjecture of Gillet-Soul\'e
thus implies an effective version of Bogomolov conjecture as
$\varphi _v$ can be computed effectively for any given graph. In
view of  the Bogomolov conjecture, we would to make the following:

\begin{conjecture}Let $v$ be a finite place.
 Let $\delta_0(X_v), \cdots, \delta
_{[g/2]}(X_v)$ denote the numbers of singular points $x$ in the
special fiber $X_{k(v)}$ such the local normalization of $X_{k(v)}$
at $x$ is connected when $i=0$ or a disjoint union of two curves of
genus $i$ and $g-i$. Then
$$\varphi (X_v)\ge c(g)\delta_0(X_v)+\sum _{i>0}\frac
{2i(g-i)}g\delta_i(X_v)$$ where $c(g)$ is positive continuous
function of $g>1$.
\end{conjecture}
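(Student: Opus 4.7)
The plan is to exploit the block decomposition of the reduction graph $R(X_v)$: write $R(X_v)$ as a graph whose maximal $2$-edge-connected subgraphs (blocks) are joined by bridges. Each bridge corresponds to a separating node whose removal splits the curve into two pieces of genera $i$ and $g-i$, so these bridges are the nodes counted by $\delta_i(X_v)$ for $i>0$, while the edges lying inside a block correspond to the non-separating nodes counted by $\delta_0(X_v)$. The strategy is to establish an edge-by-edge lower bound for $\varphi(X_v)$ in which each bridge of type $(i,g-i)$ contributes exactly $\frac{2i(g-i)}{g}$ and each edge inside a block contributes at least some fixed $c(g)>0$.

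To implement this, I would unpack the definition of $\varphi(X_v)$ using the defining equations of the admissible Green's function $G_v$ and the admissible measure $d\mu_v$ on $R(X_v)$ from \cite{admissible}, and rewrite the integral as a sum of one-dimensional integrals along the edges. Along each edge, $G_v(x,x)$ is a quadratic function of the arc-length parameter and $d\mu_v$ has piecewise affine density, so the edge contributions can be written in closed form in terms of the edge length, the effective resistance between its endpoints in the complementary graph, and the distribution of the canonical divisor on either side.

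For a bridge separating $R(X_v)$ into two pieces carrying total genera $i$ and $g-i$, the admissible Green function decouples along the bridge into pieces supported on each side, and an explicit computation of the single-edge integral should collapse to the combination $\frac{2i(g-i)}{g}$, independently of the length of the bridge or the geometry of either side. This would settle the conjecture for tree reduction graphs (where $\delta_0=0$) and reduce the general case to proving positivity of the contribution from edges inside $2$-edge-connected blocks.

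The main obstacle is precisely this non-separating contribution: inside a $2$-edge-connected block the admissible Green function has no closed form in simple combinatorial-geometric data, and one needs a uniform positive lower bound on its contribution to $\varphi(X_v)$, valid across all block topologies and edge-length configurations that can arise in genus $g$. A natural approach is to use the electric-network interpretation of $G_v$ following Chinburg--Rumely and Baker--Faber, realise the per-block contribution as a continuous function on the moduli cone of metric $2$-edge-connected graphs of a fixed combinatorial type, and argue compactness after normalising total edge length. Verifying strict positivity in the interior and controlling the degeneration behaviour as edges pinch (by induction on the first Betti number of the block) should in principle yield a non-explicit $c(g)$; obtaining an effective value, as needed for an effective Bogomolov statement, is likely to require direct analysis of the simplest blocks (cycles and banana graphs) as a starting point.
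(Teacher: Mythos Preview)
This statement is a \emph{conjecture} in the paper, not a theorem; the paper does not claim a full proof. What the paper does establish is exactly the reduction you outline: Theorem~4.3.2 proves that $\varphi$ is additive under the pointed-sum decomposition of the reduction graph into its maximal $2$-edge-connected pieces and the bridges joining them, and Proposition~4.4.1 computes that each bridge of type $(i,g-i)$ contributes precisely $\frac{2i(g-i)}{g}$ times its length to $\varphi$. So your first two steps recover the paper's Corollary~4.4.2, which reduces Conjecture~1.4.2 to the $2$-edge-connected case (equivalently, to Conjecture~4.1.2). One minor correction: the bridge contribution is $\frac{2i(g-i)}{g}$ times the \emph{length} of the bridge, not independent of it; in the arithmetic setting each node gives an edge of length~$1$, so this matches $\delta_i$.

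For the remaining $2$-edge-connected case, the paper does not attempt your compactness/degeneration argument. Instead it proves the conjecture only for \emph{elementary} graphs (those in which every edge lies in at most one cycle), by a direct computation on circles (Proposition~4.4.3), obtaining the explicit constant $c(g)=\frac{g-1}{6g}$. Your proposed approach---normalise total length, use finiteness of combinatorial types in fixed genus, and control edge-contraction limits by induction---is a reasonable line of attack, but it is not carried out in the paper and the degeneration analysis (in particular continuity of $\varphi/\ell_0$ across strata of the moduli cone, and the interaction with the genus function $q$ at contracted vertices) is exactly the open content of the conjecture. The paper leaves this case unresolved beyond the elementary-graph computation and the low-genus verifications of Faber cited after Theorem~1.4.3.
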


From Theorem 1.3.1, it is clear that the conjecture is true for
Archimedean places and finite places with good reductions. In \S4.3,
we will show that it suffices to show the conjecture when all
$\delta_i=0$ for $i>0$.  More precisely,  we will give an explicit
formula in \S4.4 for $\varphi_v$ for elementary graphs and prove the
following:
\begin{theorem}
Assume that the reduction graph $R(X_v)$ is elementary in the sense
that every edge is included in at most one  cycle. Then the
conjecture is true with $$c(g)=\frac {g-1}{6g}.$$ The equality is
true if and only if every circle has at most one vertex.
\end{theorem}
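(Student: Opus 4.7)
My plan is to combine the reduction from \S4.3 with the explicit formula for $\varphi_v$ on elementary graphs (derived earlier in \S4.4) and to establish the inequality one cycle at a time.

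\textbf{Step 1: reduce to a cactus of non-separating cycles.} By the reduction announced just before the statement, it suffices to prove the inequality in the case $\delta_i(X_v)=0$ for every $i>0$, so all nodes of the special fiber are non-separating. On the graph side this means no edge of $R(X_v)$ is a bridge, hence every edge belongs to some cycle. Together with the elementary hypothesis (every edge in at most one cycle), every edge belongs to exactly one cycle, so every block of $R(X_v)$ is either a trivial vertex or a simple cycle. Thus $R(X_v)$ is a cactus, the integer $\delta_0(X_v)$ equals the total number of edges, and the inequality to prove becomes
$$\varphi(X_v)\ \ge\ \frac{g-1}{6g}\,\delta_0(X_v),$$
with equality exactly when every cycle is a self-loop at a single vertex.

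\textbf{Step 2: localize $\varphi_v$ along the cycles.} Using the explicit formula for $\varphi_v$ on elementary graphs from \S4.4, I would decompose $R(X_v)$ along its block structure. The admissible measure $d\mu_a$ and the admissible Green's function $G_v(x,x)$ both behave well under the cactus decomposition: on each cycle $C$ the restriction of $d\mu_a$ is determined by the length of $C$ and by the vertex weights (including the arithmetic genus contributed by the subgraphs attached at the articulation points of $C$). This should yield
$$\varphi(X_v)=\sum_{C}\varphi_C,$$
where $C$ runs over the cycles of $R(X_v)$ and $\varphi_C$ depends only on $C$ and on the genus data it sees at its vertices.

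\textbf{Step 3: bound the contribution of a single cycle.} On a simple cycle of length $\ell(C)$ the admissible Green's function admits a closed-form Fourier expansion, so $\varphi_C$ can be computed explicitly. The claim is the clean bound
$$\varphi_C\ \ge\ \frac{g-1}{6g}\,\ell(C),$$
with equality iff $C$ is a self-loop at a single vertex, so that the contributions from $\delta_{K_{X_v}}$ supported at the vertices on $C$ all collapse to one point and the integral $\int G_v(x,x)\,d\mu_a$ reduces to the constant-curvature case. Summing over $C$ then gives the desired inequality and identifies the equality case.

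\textbf{Main obstacle.} The technical heart is Step~3: extracting a sharp lower bound from the quadratic form provided by the explicit Green's function on a circle carrying several marked vertices with weights. The difficulty is to rearrange the computation so that the discrepancy from the optimal constant $(g-1)/(6g)$ appears as a manifestly non-negative expression that vanishes exactly when the cycle is a single self-loop; I expect this to reduce to a convexity or sum-of-squares identity for the Fourier coefficients of $G_v(x,x)$ on the circle, and it is where most of the real work will lie.
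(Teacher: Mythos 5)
Your skeleton matches the paper's: reduce via the block (pointed-sum) decomposition to $2$-edge connected pieces, recognize that in the elementary case those pieces are circles, and then establish a per-circle lower bound. That said, your Step~3 anticipates a difficulty that is not there, and the technique you propose is considerably more roundabout than what the paper actually does. The paper (Proposition 4.4.3) does not use a Fourier expansion of $G_v(x,x)$ at all. It computes $\tau(\Gamma)$ and $\epsilon(\Gamma)$ on a single circle directly from the closed-form resistance $r(A,B)=\ell(A,B)\ell'(A,B)/\ell$ and the explicit admissible measure $d\mu=\tfrac1g\bigl(\sum_A q(A)\delta_A+\ell^{-1}\,dx\bigr)$, splitting the double integral into the four cases (both $x,y$ atomic; one atomic; neither), and then feeds the result into the identity $\varphi=3g\tau-\tfrac14(\epsilon+\ell)$. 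The outcome is the exact formula
$$\varphi(\Gamma)=\frac{g-1}{6g}\,\ell(C)+\sum_{A,B\in V_C}\frac{q(A)q(B)}{g}\,r_C(A,B),$$
so the discrepancy from the optimal constant is already a sum of resistances weighted by the non-negative quantities $q(A)q(B)/g$, hence \emph{manifestly} non-negative; no convexity or sum-of-squares rearrangement is required. The equality case then reads off immediately: the residual vanishes iff each circle carries at most one point $A$ with $q(A)>0$, i.e.\ the circle degenerates to a self-loop at a single vertex, which is exactly the statement. So your ``main obstacle'' dissolves once the computation is carried out with resistances rather than eigenfunction expansions; the Fourier route would in any case be awkward because the admissible measure on a circle has Dirac masses at the marked vertices, so its spectral data are not the clean Fourier modes of Lebesgue measure. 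I would recommend replacing your Step~3 plan with the direct $\tau/\epsilon$ computation; the rest of your reduction (Steps~1--2) is essentially the content of Theorem~4.3.2 and Proposition~4.4.1 and is sound, though it is cleaner to carry the $\ell_i$ terms for $i>0$ along through the additivity rather than to claim a reduction to the case $\delta_i=0$, since the bridges contribute exactly the terms $\frac{2i(g-i)}{g}\ell_i$ already present in the target inequality.
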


Recently, Xander Faber \cite{faber} has verified the conjecture for
lower genus curves. For example, he shows for genus $2$ and $3$, we
may take $c(2)=1/27$ and that $c(3)=2/81$. Thus he has a proof of
the Bogomolov for all curves of genus $3$.

The Bogomolov conjecture should hold for non-isotrivial curve over
function field. Some partial results have been obtained by Moriwaki
\cite{moriwaki}, Yamaki \cite {yamaki}, and Gubler \cite{gubler}.
The work of Moriwaki and Yamaki are effective and follows from a
slope inequality of Moriwaki for general semistable fiberation $\pi:
\CX\lra B$:
$$\lambda (\CX):=\deg \pi_*\omega _{\CX/B}
\ge \frac g{8g+4}\delta _0(X)+\sum _{i>0}\frac {i(g-i)}{2g+1}\delta
_i(X),$$ where $\delta _i(X)=\sum _v\delta _i(X_v)\log \RN (v)$ is
the intersection of $B$ with $i$-the boundary component of the
moduli space. This formula is a generalization of a work of Xiao
\cite{xiao} and Cornalba--Harris \cite{cornalba-harris}, and is
proved based on the stability of the sheaf $\pi _*\omega _{\CX/B}$
and by Noether's formula
\begin{equation}
\lambda (\CX/B)=\frac 1{12}(\omega _{\CX/B}^2+\sum
_v\delta(X_v))\end{equation} where $\delta(X_v)=\sum \delta _i(X_v)$
be the total number of singular points in the fiber over $v$. Thus,
we have an equality \begin{equation}
 \lambda
(\CX/B)=\frac {2g-2}{2g+1}\pair{\Delta _\xi, \Delta _\xi} +\sum
\lambda (X_v)\log \RN (v)\end{equation} where \begin{equation}
\lambda (X_v)=\frac {g-1}{6(2g+1)}\varphi_v +\frac 1{12}(\epsilon
(X_v)+\delta(X_v)).\end{equation} Thus the Hodge index theorem gives
\begin{theorem} If $k$ is a function field of characteristic $0$, then
$$\lambda (\CX/B)\ge \sum \lambda (X_v).$$
\end{theorem}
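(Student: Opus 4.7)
The plan is to deduce the inequality directly from the identity (1.5.2) together with the Hodge index theorem, which is available unconditionally in the function field case of characteristic zero. Concretely, I would rewrite (1.5.2) as
$$\lambda(\CX/B)-\sum_v \lambda(X_v)\log\RN(v)=\frac{2g-2}{2g+1}\pair{\Delta_\xi,\Delta_\xi},$$
so that the desired inequality is equivalent to the non-negativity of $\pair{\Delta_\xi,\Delta_\xi}$.

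To establish this non-negativity, I would recall the discussion in \S1.4: by construction $\Delta_\xi$ is homologically trivial, and its intersection with $p_i^*\Pic(X)$ vanishes for each of the three projections $p_i\colon X^3\to X$. Hence $\Delta_\xi$ is primitive with respect to any ample class of the form $\sum_i p_i^*\CL$ on $X^3$ with $\CL$ ample on $X$. In the function field case over a curve $B$ of characteristic $0$, a regular semistable model of $X^3$ is a smooth projective variety over the base field, and for such varieties the classical Hodge index theorem is a theorem (not merely a conjecture), so the self-intersection pairing is negative semi-definite on homologically trivial primitive cycles of middle codimension, giving $\pair{\Delta_\xi,\Delta_\xi}\ge 0$.

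Since $\frac{2g-2}{2g+1}>0$ for $g>1$, combining the two displayed facts yields
$$\lambda(\CX/B)\ge\sum_v \lambda(X_v)\log\RN(v),$$
which is the claim (with the $\log\RN(v)$ factors absorbed into the sum $\sum_v\lambda(X_v)$ as in the other statements of the paper). The only genuinely non-trivial input is the Hodge index theorem, and this is precisely where the hypothesis that $k$ has characteristic zero enters: in positive characteristic the same step would require Grothendieck's standard conjecture (Conjecture 1.4.1), so no new obstacle appears beyond what is already needed to state the theorem. Thus the argument is essentially a direct substitution once the Main Theorem and (1.5.2) are in hand.
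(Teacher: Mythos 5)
Your argument reproduces the paper's own (very terse) proof: substitute into identity (1.4.2) — which you cite as (1.5.2), a small typo — and reduce the inequality to $\pair{\Delta_\xi,\Delta_\xi}\ge 0$, which in the characteristic-zero function field case follows from the Hodge index theorem applied to the homologically trivial, primitive cycle $\Delta_\xi$ on a regular (hence smooth over the perfect base field) model. One small slip: for a codimension-$2$ primitive class on a $3$-fold the Gillet--Soul\'e index relation with sign $(-1)^p=(-1)^2=+1$ makes the form \emph{positive} semi-definite, not ``negative semi-definite'' as you wrote; your conclusion $\pair{\Delta_\xi,\Delta_\xi}\ge 0$ is nonetheless the correct one, so this is a wording error rather than a gap in the argument.
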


We believe that this is the sharpest slope inequality  for fibred
surfaces with given configuration of singular fibers. In particular,
the Moriwaki's inequality should follows from the following
\begin{conjecture} If $v$ is a non-archimedean place, then
$$\lambda (X_v)\ge \frac g{8g+4}\delta _0(X_v)+\sum _{i>0}\frac {i(g-i)}{2g+1}
\delta_i(X_v).$$
\end{conjecture}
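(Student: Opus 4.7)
The plan is to translate the inequality into a combinatorial statement on the reduction graph $R(X_v)$ and then handle separating and non-separating singularities separately.

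First I would substitute the explicit formulas for $\varphi(X_v)$ and $\epsilon(X_v)$ from Theorem 1.3.1 into
$$\lambda(X_v) = \frac{g-1}{6(2g+1)}\varphi_v + \frac{1}{12}\bigl(\epsilon(X_v) + \delta(X_v)\bigr)$$
and collect terms. A routine calculation shows that the coefficients of $\int_{R(X_v)} G_v(x,x)\,\delta_{K_{X_v}}$ and of $\delta(X_v)$ coincide and simplify to $\frac{g+1}{8(2g+1)}$, while the coefficient of $\int_{R(X_v)} G_v(x,x)\,d\mu_v$ becomes $\frac{(g-1)(3g+1)}{4(2g+1)}$. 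This reduces the conjecture to a combinatorial inequality on the metrized graph $R(X_v)$ expressed intrinsically in terms of its admissible Green's function, admissible measure, canonical divisor, and the partition $\{\delta_i(X_v)\}$ of its singular points.

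Next I would decompose along separating edges. A singular point contributing to $\delta_i(X_v)$ with $i>0$ corresponds to a separating edge whose removal splits $R(X_v)$ into subgraphs of genera $i$ and $g-i$. On such an edge the admissible measure has total mass $\frac{2i(g-i)}{g}$ and the admissible Green's function decomposes additively modulo an explicit linear bridge term, so I expect each separating node to contribute exactly $\frac{i(g-i)}{2g+1}$ to $\lambda(X_v)$, matching the conjectured coefficient. Induction on the number of separating edges then reduces the conjecture to the case where $R(X_v)$ is $2$-edge-connected, in which the required bound collapses to $\lambda(X_v) \ge \frac{g}{8g+4}\,\delta_0(X_v)$.

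For this residual $2$-edge-connected case I would extend Theorem 1.4.3 from elementary graphs to arbitrary $2$-edge-connected graphs by induction on the first Betti number, using the electrical-network interpretation of $G_v$ (à la Chinburg--Rumely) to control how $\lambda(X_v)$ and $\delta_0(X_v)$ change when a chord is added or a cycle subdivided. The hard part is precisely this inductive step: the admissible Green's function is highly non-local once the cyclomatic number grows, and equality in Theorem 1.4.3 is attained only on graphs whose cycles all pass through a single vertex. I expect the extra $\frac{1}{12}\epsilon(X_v)$ term in $\lambda(X_v)$ to provide the slack needed for a uniform bound in $g$, but making this precise will likely require a genuinely new variational or convexity argument on metrized graphs — simply summing the elementary bound from Theorem 1.4.3 over a decomposition is not tight enough. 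Faber's low-genus verifications of Conjecture 1.4.2 in \cite{faber} offer indirect positive evidence but no uniform strategy.
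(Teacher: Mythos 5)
The statement you are asked about is labeled a \emph{Conjecture} (1.4.5) in the paper, and the paper does not prove it. What the paper actually establishes, in \S4.3--4.4, is (i) the additivity Theorem 4.3.2, which decomposes $\varphi$, $\lambda$, $\tau$, $\epsilon$ across the maximal $2$-edge-connected pieces and separating edges of the reduction graph; (ii) Proposition 4.4.1, which computes $\lambda$ exactly on a separating segment, yielding precisely the coefficient $\frac{i(g-i)}{2g+1}$ per unit length of type-$i$ points and thereby reducing Conjecture~4.1.1 (equivalently 1.4.5) to $2$-edge-connected graphs; and (iii) Proposition 4.4.3, which verifies the conjecture when the graph is elementary (every edge in at most one circle). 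Beyond that, the paper offers no proof of the general case and cites only Faber's low-genus computations as evidence.

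Your proposal mirrors this partial strategy. Your coefficient bookkeeping is correct: substituting the formulae from Theorem~1.3.1 into $\lambda(X_v)=\frac{g-1}{6(2g+1)}\varphi_v+\frac{1}{12}(\epsilon(X_v)+\delta(X_v))$ does give the coefficient $\frac{g+1}{8(2g+1)}$ in front of both $\int G\,\delta_{K}$ and $\delta(X_v)$, and $\frac{(g-1)(3g+1)}{4(2g+1)}$ in front of $\int G\,d\mu$, matching the reformulation (4.1.6) in the paper. Your decomposition along separating edges and the expected per-edge contribution $\frac{i(g-i)}{2g+1}$ are also correct and match Proposition~4.4.1. (A small slip: you assert that on a separating edge the admissible measure has total mass $\frac{2i(g-i)}{g}$; in fact on a separating segment with end-genera $i$ and $g-i$ the admissible measure is $\frac{1}{g}(i\,\delta_a+(g-i)\,\delta_b)$, total mass $1$; the $\frac{2i(g-i)}{g}$ is the coefficient appearing in Conjecture~1.4.2 for $\varphi$, not a measure-mass.)

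The genuine gap is exactly where you flag it: the passage from elementary $2$-edge-connected graphs to arbitrary $2$-edge-connected graphs. Your proposed induction on the first Betti number is not carried out in the paper and is not known to work; the paper leaves this case as a conjecture and you yourself concede that a ``genuinely new variational or convexity argument'' would be needed. So the proposal correctly reconstructs the paper's reduction and its verification in the elementary case, but it does not close --- and cannot, given the current state of knowledge --- the core difficulty, which is the uniform lower bound on $\lambda$ for general $2$-edge-connected metrized graphs.
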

In \S4.3, we will reduce this conjecture to the case where $\delta
_i=0$ and prove the conjecture for elementary graphs. Also Xander
Faber \cite{faber} has verified the conjecture for curves with small
genera.

In number field case, Faltings \cite{faltings} defines  a volume
form on $\lambda $ for each archimedean place $v$. The number
$\lambda (\CX/B)$ is called the Faltings height of $\CX$. He also
proves a Noether formula (1.4.1) with his $\delta _v$. Thus we still
have expression (1.4.2) with $\lambda_v$ given in (1.4.3 ) when $v$
is non-archimedean, and
$$\lambda (X_v)=\frac {g-1}{6(2g+1)}\varphi(X_v)
+\frac 1{12}\delta(X_v)$$ when $v$ is archimedean, where $\varphi_v$
is given in Theorem 1.3.1. Now Theorem 1.4.4 is a conjecture
predicted by Gillet-Soul\'e's Conjecture 1.4.1:
\begin{conjecture} If $k$ is a number  field, then
$$\lambda (\CX/B)\ge \sum \lambda (X_v)\log \RN (v).$$
\end{conjecture}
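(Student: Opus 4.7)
The idea is to replicate the argument of Theorem 1.4.4 in the arithmetic setting: first establish the number-field analogue of the identity (1.4.2), and then appeal to Gillet--Soulé's arithmetic Standard Conjecture 1.4.1 for the non-negativity of $\pair{\Delta_\xi,\Delta_\xi}$. The desired inequality then falls out of the identity by nothing more than reading off signs.

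First, to prove the identity I would combine three ingredients already in the paper. The starting point is Faltings' arithmetic Noether formula
\begin{equation*}
12\,\lambda(\CX/B)=\wh\omega_{\CX/B}^2+\sum_v \delta(X_v)\log\RN(v),
\end{equation*}
together with the decomposition (1.2.1) of the admissible self-intersection
\begin{equation*}
\wh\omega_{\CX/B}^2=\omega_a^2+\sum_v\epsilon(X_v)\log\RN(v),
\end{equation*}
and the Main Theorem applied with $e=\xi$, for which $\pair{x_e,x_e}=0$, giving
\begin{equation*}
\omega_a^2=\tfrac{2g-2}{2g+1}\Bigl(\pair{\Delta_\xi,\Delta_\xi}+\sum_v\varphi(X_v)\log\RN(v)\Bigr).
\end{equation*}
Substituting and collecting, a short arithmetic check shows that the local coefficient at each $v$ equals $12\,\lambda(X_v)$; the $\epsilon(X_v)$ term is silently absent at archimedean places because the Arakelov metric makes the corresponding contribution vanish. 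One arrives at the arithmetic analogue of (1.4.2),
\begin{equation*}
\lambda(\CX/B)=\tfrac{g-1}{6(2g+1)}\pair{\Delta_\xi,\Delta_\xi}+\sum_v\lambda(X_v)\log\RN(v).
\end{equation*}

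Second, the conjecture reduces to the single non-negativity statement $\pair{\Delta_\xi,\Delta_\xi}\ge 0$. By construction $\Delta_\xi$ is cohomologically trivial and orthogonal to $\sum_i p_i^*\Pic(X)$, hence primitive with respect to any symmetric ample class on $Y=X^3$; its arithmetic self-pairing is therefore precisely the quantity whose non-negativity is predicted by Gillet--Soulé's arithmetic Standard Conjecture 1.4.1. The conjecture then follows from the identity above since the coefficient $(g-1)/(6(2g+1))$ is positive for $g>1$.

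The principal obstacle is of course this final appeal: the arithmetic Hodge index theorem for codimension-two cycles on an arithmetic threefold is wide open, and I see no route to it that avoids Conjecture 1.4.1. The plan therefore yields only a conditional proof, parallel to the way Theorem 1.4.4 relies on the classical Hodge index theorem available in characteristic zero. Partial unconditional progress could be sought in the hyperelliptic case, where $\Delta_\xi$ vanishes rationally and the inequality collapses to bounds on the local invariants $\lambda(X_v)$ themselves---but these are precisely the subject of Conjecture 1.4.2, so even there the unconditional statement is not at hand. Any genuine proof will require new input on non-negativity of heights of homologically trivial codimension-two cycles.
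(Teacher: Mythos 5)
The statement is stated in the paper as a conjecture, not a theorem, so there is no proof to compare against; the paper's only argument is precisely the one you give, namely that the inequality would follow from Gillet--Soul\'e's Conjecture 1.4.1 once the number-field analogue of identity (1.4.2) is in hand via Faltings' Noether formula, the decomposition (1.2.1), and the Main Theorem with $e=\xi$. Your account of that conditional reasoning is correct, and you rightly emphasize that without the arithmetic Hodge index theorem in codimension two the argument remains conditional. One minor but worthwhile observation: the coefficient $\tfrac{g-1}{6(2g+1)}$ you derive in front of $\pair{\Delta_\xi,\Delta_\xi}$ is the arithmetically consistent one, whereas the displayed formula (1.4.2) in the paper shows $\tfrac{2g-2}{2g+1}$, apparently omitting the factor $\tfrac{1}{12}$ coming from Noether's formula; your version is the one compatible with (1.4.1) and (1.4.3).
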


\subsection{Beilinson--Bloch conjecture and tautological classes}
Assume that $k$ is a number field or a function field of a curve
defined over a finite field. For a smooth and projective variety $Y$
defined over $k$, and an integer between $0$ and $\dim Y$, we should
have a motive $H^{2n-1}(Y)(n)$ and  a complete  $L$-series
$L(H^{2n-1}(Y)(n), s)$ with a conjectured holomorphic continuation
and a  function equation. We also have a Chow group $\Ch ^n(Y)^0$ of
 codimension $n$-cycles on $Y$ with trivial classes in
$H^{2n}(Y)(n)$. The conjecture of Beilinson \cite{beilinson1,
beilinson2} and Bloch \cite{bloch1} asserts that $\Ch^n(Y)^0$ is of
finite rank and \begin{equation} \rank \Ch^n(Y)^0=\ord
_{s=0}L(H^{2n-1}(Y)(n), s).\end{equation} If $Y$ is a curve, then
the above is the usual Birch and Swinnerton-Dyer conjecture for
$\Jac (X)$. If $k$ is a function field, then the holomorphic
continuation of the L-series and the functional equation are known.
The Beilinson--Bloch conjecture in function field case is equivalent
to Tate's conjecture.

Now we assume that $Y=X^3$ is a power of a curve over $k$, $n=2$.
Then both sides of (1.5.1) has decomposition by correspondences
defined by action of symmetric group $\CS_3$ acting on $X^3$,
projections and emdeddings between $X^i$ and $X^j$. In \S5.1, we
will show that $\Delta _\xi$ lies in the subgroup $\Ch (M)$  of $\Ch
^2(X^3)^0$ of elements $z$ satisfying the following conditions:
\begin{enumerate}
\item $z$ is symmetric with respect to permutations on $X^3$;
\item the pushforward $p_{12*}z=0$ with respect to the projection
$$p_{12}:\quad X^3\lra X^2, \qquad (x, y, z)\mapsto (x, y).$$
\item let $i: X^2\lra X^3$ be the embedding defined by $(x, y)\lra
(x, x, y)$ and $p_2: X^2\lra X$ be the second projection. Then
$$p_{2*}i^*z=0.$$
\end{enumerate}
The operations induces some correspondences on $X^3$. The
corresponding  Chow  motive $M$ can be defined to be the kernel of
$$\bigwedge ^3H^1(X)\,(2)\lra H^1(X)(1),\quad
a\wedge b\wedge c\mapsto a(b\cup c) +b(c\cup a)+ c(a\cup b).$$ The
motive $M$ is pure of weight $-1$ with an alternative pairing
$$M\otimes M\lra \BQ(1).$$
It is conjectured that the complete L-series of $M$ has a
holomorphic continuation to whole complex plane and satisfies a
functional equation
$$L(M, s)=\pm c(M)^{-s}L(M, -s)$$
where $\epsilon (M)=\pm 1$ is the root number of $M$, and $c(M)\in
\BN$ is the conductor of $M$ which is divisible only by places
ramified in $M$. See Deligne \cite{deligne2} and Tate \cite{tate}
for details. In our situation, the Beilinson and Bloch conjecture
has a refinement:
\begin{conjecture}[Beilinson--Bloch]
$$\rank \Ch (M)=\ord _{s=0}L(M, s).$$
\end{conjecture}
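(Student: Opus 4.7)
The plan is to split the equality $\rank \Ch(M) = \ord_{s=0} L(M,s)$ into its two inequalities and attack each via the height formula of Theorem~1.3.1. The strategy parallels the standard program of Gross--Zagier type formulas: first establish an arithmetic inner product formula expressing $\pair{\Delta_\xi,\Delta_\xi}$ in terms of $L'(M,0)$ together with explicit local contributions (the terms $\varphi(X_v)$ from Theorem~1.3.1 already isolate the archimedean and bad-reduction noise), and then combine this with the conjectural non-degeneracy of the Beilinson--Bloch height pairing on $\Ch(M)$.

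First I would carry out the motivic decomposition promised in \S5.1: verify that $\Delta_\xi$ lies in $\Ch(M)$, realize $M$ as the direct summand of $\bigwedge^3 H^1(X)(2)$ cut out by the wedge-cup map described before the conjecture, and identify $L(M,s)$ as the quotient of the triple product $L$-function $L(H^1(X)^{\otimes 3},s)$ by the $L$-factors of the other $\CS_3$-isotypic pieces. Next, building on Theorem~1.3.1, I would try to match $\pair{\Delta_\xi,\Delta_\xi}$ against a special value of a Rankin--Selberg integral representing $L'(M,0)$; the vanishing of the local defect $\varphi(X_v)$ at places of good reduction is exactly what one would expect a local Gross--Zagier comparison to produce, which is evidence that this route is on the right track. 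In the function field setting one can attempt a purely cohomological incarnation: interpret the height as an intersection number on a threefold bundle over $B$, use the Grothendieck--Lefschetz trace formula to write $\ord_{s=0}L(M,s)$ as the dimension of the Frobenius-fixed part of the relevant $H^2$, and then invoke Tate's conjecture to get the inequality $\ord_{s=0} L(M,s) \ge \rank \Ch(M)$ while the height formula plus Hodge index of \cite{admissible} would give the reverse inequality.

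The main obstacle, and the reason this remains conjectural in full generality, is the lower bound $\rank \Ch(M) \ge \ord_{s=0}L(M,s)$: no mechanism is currently known for producing nontrivial Chow cycles from high-order vanishing of an $L$-function, and even in the function field case where Tate's conjecture structurally reduces the problem one still needs an Euler system or a Shimura-variety uniformization of $X$ to manufacture cycles. A realistic intermediate target would be, when $X$ is a Shimura curve parametrizing triples of modular forms with root number $\epsilon(M)=-1$, to combine the main theorem of this paper with the arithmetic triple product formula of Yuan--Zhang--Zhang to establish Conjecture~1.5.1 in analytic rank $\le 1$; and in the non-isotrivial function field setting to exploit Corollary~1.3.4 together with the Hodge index theorem to prove unconditionally the weak statement $\ord_{s=0}L(M,s) \ge 1$ whenever $g\ge 2$ and the family has at least one place of bad reduction, thereby recovering the cycle-to-$L$-value direction in that case.
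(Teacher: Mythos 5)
The statement you have been asked to prove is Conjecture~1.5.1, and the paper offers no proof of it; it is recorded as an open conjecture of Beilinson and Bloch, and the rest of \S5 is devoted to deriving consequences of it (Theorems~1.5.2 and~1.5.3, Conjecture~1.5.4) rather than establishing it. You correctly acknowledge this in your closing paragraph, so there is no disagreement about the status of the problem, but it is worth emphasizing that nothing in your sketch or in the paper proves the asserted equality.

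As a research-program outline, your description of the motivic setup matches \S5.1 (the wedge-cup map cutting $M$ out of $\bigwedge^3 H^1(X)(2)$ and the verification $\Delta_\xi\in\Ch(M)$), and your proposed route to a weak lower bound on $\ord_{s=0}L(M,s)$ via non-triviality of $\Delta_\xi$ plus the function-field inequality (1.5.2) plus the $\epsilon$-factor computation is precisely the route the paper takes to Theorem~1.5.3. However, one claim in your second paragraph is internally inconsistent and would not hold up: you assert that in the function field case ``the height formula plus Hodge index of \cite{admissible} would give the reverse inequality'' $\rank\Ch(M)\ge\ord_{s=0}L(M,s)$, and then immediately, and correctly, identify that very inequality as the one for which no mechanism is known. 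The Hodge index theorem and the height identity of Theorem~1.3.1 together show at most that the single cycle $\Delta_\xi$ is non-zero, hence $\rank\Ch(M)\ge 1$; they provide no way to produce a supply of independent cycles whose number grows with $\ord_{s=0}L(M,s)$. The unconditional direction in the function field case is (1.5.2), namely $\rank\Ch(M)\le\ord_{s=0}L(M,s)$, coming from injectivity of the cycle class map and the Grothendieck--Lefschetz description of the $L$-function; the other direction is equivalent to the full Tate conjecture for the relevant motive and is exactly where the conjectural content lies.
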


If $k$ is a number field, we don't know in general that $L(M, s)$
has a homomorphic continuation. But we attempt to guess that for
most curve $X$ over a field $k$, the L-series should has vanishing
order $\le 2$. In other words, for general $X$,
$$\epsilon
(M)=1\Longrightarrow L(M, 0)\ne 0,$$
$$\epsilon (M)=-1
\Longrightarrow L'(M, 0)\ne 0.$$ The following are some formulae for
computing epsilon factors proved in \S5.2:

\begin{theorem} The epsilon factor has a decomposition
$$\epsilon (M)=\prod _v\epsilon _v(M)$$
into a product of local epsilon factor give as follows.
\begin{enumerate}
\item If $v$ is a real  place,
$$\epsilon _v(M)=(-1)^{g(g-1)/2}=\begin{cases}1,&\text{if $g\equiv 0, 1\mod 4$}\\
-1,&\text{if $g=2,3 \mod 4$}\end{cases}
$$
\item If $v$ is a complex place
$$\epsilon_v(M)=(-1)^{g(g+1)(g+2)/6}=\begin{cases}1&\text {if $g\not\equiv 1\mod 4$}\\
-1&\text{if $g\equiv 1\mod 4$}
\end{cases}
$$
\item If $v$ is a non-archimedean place, then
$$\epsilon _v(M)= (-1)^{e(e-1)(e-2)/6+ge}\cdot \tau
^{(e-1)(e-2)/2+g}$$ where $e$ is the dimension of toric part $T_v$
of the reduction of N\'eron model of $\Jac (X)$ at $v$, and
$\tau=\pm 1$ is the determinant of the Frobenius $\Frob _v$ acting
on the character group $X^*(T_v)$.
\end{enumerate}
\end{theorem}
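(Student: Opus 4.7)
The strategy is to use the defining exact sequence
$$0 \to M \to \wedge^3 H^1(X)(2) \to H^1(X)(1) \to 0$$
together with the additivity of local $\epsilon$-factors in short exact sequences, which reduces the problem at each place $v$ to computing
$$\epsilon_v(M) = \epsilon_v(\wedge^3 H^1(X)(2)) \cdot \epsilon_v(H^1(X)(1))^{-1}.$$
Thus I only need formulas for $\epsilon_v$ of $H^1(X)$ and of $\wedge^3 H^1(X)$, together with the effect of the Tate twists.

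At an archimedean place, Deligne's recipe determines the local $\epsilon$-factor of a pure motive from its Hodge numbers and, at a real place, from the action of complex conjugation $F_\infty$. From $h^{1,0}(H^1)=h^{0,1}(H^1)=g$ and the decomposition $\wedge^3(V\oplus W)=\wedge^3 V\oplus \wedge^2 V\otimes W\oplus V\otimes \wedge^2 W\oplus \wedge^3 W$, I obtain $h^{3,0}(\wedge^3 H^1)=h^{0,3}(\wedge^3 H^1)=\binom{g}{3}$ and $h^{2,1}(\wedge^3 H^1)=h^{1,2}(\wedge^3 H^1)=g\binom{g}{2}$; twisting by $(2)$ and subtracting the Hodge numbers of $H^1(X)(1)$ gives those of $M$. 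Since the weights are odd, no $(p,p)$-type appears in $H^1$ or in $\wedge^3 H^1$, so $F_\infty$ merely swaps conjugate Hodge pieces and contributes no extra data beyond their dimensions. Substituting into the complex and real archimedean formulas of Deligne and simplifying the resulting binomial sums produces $(-1)^{g(g+1)(g+2)/6}$ at a complex place and $(-1)^{g(g-1)/2}$ at a real place.

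At a non-archimedean place $v$, I pass to the Weil--Deligne representation of $H^1(X_{\bar k_v},\mathbb{Q}_\ell)$. Assuming (after base-change if necessary) semistable reduction, the monodromy-weight filtration of $H^1$, as in Grothendieck--Raynaud, has graded pieces
$$\mathrm{gr}^W_0 \cong X^*(T_v)\otimes \mathbb{Q}_\ell, \quad \mathrm{gr}^W_1 \cong V_\ell(A_v)^\vee, \quad \mathrm{gr}^W_2 \cong X_*(T_v)(1)\otimes \mathbb{Q}_\ell,$$
each unramified as a Weil representation, where $T_v$ is the toric part (of dimension $e$) and $A_v$ the abelian part (of dimension $g-e$, with good reduction) of the Néron model of $\Jac(X)$, and where $\Frob_v$ on $X^*(T_v)$ has determinant $\tau$. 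The induced Weil--Deligne structure on $\wedge^3 H^1$ has graded pieces built from wedges and tensor products of these three modules, indexed by triples $(i,j,k)$ with $i+j+k=3$. For each unramified graded piece I compute $\epsilon_v$ by the standard recipe in terms of $\det(-\Frob_v)$, combined via Deligne's correction accounting for the non-trivial monodromy on $\wedge^3$. The pieces involving the abelian summand $V_\ell(A_v)^\vee$ contribute trivially up to sign because $A_v$ has good reduction, while the wedge and tensor powers of $X^*(T_v)$ contribute powers of $\tau$ and of $(-1)$; after subtracting the contribution of $H^1(X)(1)$ and simplifying, the exponents assemble into $\binom{e}{3}$ and $\binom{e-1}{2}$ (with extra linear-in-$g$ terms coming from the Tate twist and the subtraction), yielding the stated $(-1)^{e(e-1)(e-2)/6+ge}\,\tau^{(e-1)(e-2)/2+g}$.

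The main obstacle is the combinatorial bookkeeping at the non-archimedean places: enumerating all graded pieces of $\wedge^3 H^1$ arising from the three-step weight filtration, correctly accounting for the Tate twist $(2)$ and for Deligne's monodromy correction to the local $\epsilon$-factor, verifying that the contributions from the good-reduction abelian summand cancel, and identifying the remaining powers of $\tau$ and of $(-1)$ with the closed-form binomial expressions in the theorem. The archimedean case is more mechanical but still requires care with the $F_\infty$-action in the real case.
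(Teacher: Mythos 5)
Your approach is sound in outline, and at the archimedean places it is essentially the same as the paper's: both compute the Hodge numbers of $M$ from the exact sequence $0\to M\to \wedge^3 H^1(X)(2)\to H^1(X)(1)\to 0$ and then feed them into Deligne's archimedean $\epsilon$-recipe. The only substantive divergence is at the non-archimedean places, where you organize the computation around the three-step monodromy-weight filtration $W_0\subset W_1\subset W_2=H^1$, with $\mathrm{gr}^W_0\cong X^*(T_v)$, $\mathrm{gr}^W_1\cong V_\ell(A_v)^\vee$, $\mathrm{gr}^W_2\cong X_*(T_v)(-1)$, and then decompose $\wedge^3 H^1$ into tensor products of wedges of these three pieces. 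The paper does something rather cleaner: it splits $V=H^1(\bar X,\BQ_\ell)$ into $g$ mutually orthogonal, $D_v$-stable, $2$-dimensional symplectic summands $V_1,\dots,V_g$, exactly $e$ of which carry nontrivial monodromy; then $\wedge^3 V=\bigoplus_{\sum n_i=3}\bigotimes_i\wedge^{n_i}V_i$ respects inertia and Frobenius, and the $\epsilon$-factor formula $\det(-F_v\mid M_\ell)/\det(-F_v\mid M_\ell^{I_v})$ becomes a product over these pieces in which the $g-e$ good-reduction summands contribute $1$ automatically (for them $V_i^{I_v}=V_i$, so they drop out of the ratio). In your filtration picture, by contrast, the cancellation of the abelian-part contributions is not term-by-term: the monodromy operator $N$ on $\wedge^3 H^1$ mixes graded pieces across weights, so $\ker N$ does not respect the associated graded the way you implicitly assume, and verifying that the $V_\ell(A_v)$ contributions drop out requires a genuine computation rather than an appeal to good reduction. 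So while your route is not wrong, the heavy bookkeeping you flag as "the main obstacle" is more than cosmetic — the $2$-dimensional decomposition sidesteps it and is the reason the paper's computation closes quickly. Finally, a small but real caution: you write "assuming (after base-change if necessary) semistable reduction," but local $\epsilon$-factors are not invariant under base change, and the stated formula involves $\tau=\det(\Frob_v\mid X^*(T_v))$ over the ground field; you may not pass to a field extension. This is harmless here only because the paper assumes semistable reduction from the outset, but the hedge should be removed, not invoked.
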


If $k$ is a function field,  we have an inequality
\begin{equation}\rank \Ch ^0(M)\le \ord _{s=0}L(M, s).\end{equation} If $X$ is
non-isotrivial and has good reduction everywhere over places of $k$,
then by Theorem 1.3.4, $\Delta _\xi$ is non-zero. On the other hand,
we can show that the sign of the functional equation is $1$. Thus we
must have
\begin{theorem}If $X/k$ is a curve of over function field of a curve $B$
over a finite field of genus $g\ge 3$. Assume that $X$ can be
extended into a non-isotrivial  smooth family of curves over $B$.
Then
 $$\ord _{s=0}L(M, s)\ge 2.$$
 \end{theorem}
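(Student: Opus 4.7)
The plan combines three inputs from earlier in the paper: non-triviality of $\Delta_\xi$, the Beilinson--Bloch inequality (1.5.2), and the local root-number formula of Theorem 1.5.2. First, I would invoke Corollary 1.3.4: since $\CX\to B$ is smooth and non-isotrivial, Arakelov--Szpiro gives $\omega_{\CX/B}^2>0$, so
$$\pair{\Delta_\xi,\Delta_\xi}=\frac{2g+1}{2g-2}\,\omega_{\CX/B}^2>0.$$
Hence $\Delta_\xi$ is a nonzero class in $\Ch(M)_\BQ$, and $\rank\Ch(M)\ge 1$. Applying the Tate-type inequality (1.5.2) available in the function field setting then gives $\ord_{s=0}L(M,s)\ge 1$.

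To upgrade the bound from $1$ to $2$ I would use the functional equation $L(M,s)=\epsilon(M)c(M)^{-s}L(M,-s)$ to pin down parity. In the function field case there are no archimedean places, so $\epsilon(M)=\prod_v\epsilon_v(M)$ runs only over closed points of $B$. The good-reduction hypothesis means the toric rank $e$ of the N\'eron model of $\Jac(X)$ vanishes at every $v$. Setting $e=0$ in Theorem 1.5.2(3), the exponent $e(e-1)(e-2)/6+ge$ collapses to zero, and $\tau_v$ is the determinant of Frobenius on the trivial character group, which is $+1$ by the empty-product convention. So every $\epsilon_v(M)=+1$, hence $\epsilon(M)=+1$, and the functional equation forces $\ord_{s=0}L(M,s)$ to be even. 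Combined with the previous lower bound, this yields $\ord_{s=0}L(M,s)\ge 2$.

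The substantive machinery is entirely packaged into the cited results: Arakelov--Szpiro for the positivity of $\omega_{\CX/B}^2$, Tate's conjecture (in the form giving inequality (1.5.2)) for the motivic piece $M$, and the local root-number computation of Theorem 1.5.2(3). The essential conceptual point, which is really the only obstacle, is that good reduction at every place is precisely what makes each non-archimedean root number equal to $+1$; once this is established, the parity constraint from the functional equation together with the Beilinson--Bloch inequality pushes the order of vanishing past the naive lower bound of $1$.
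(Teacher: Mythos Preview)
Your proposal is correct and follows essentially the same route as the paper: the argument sketched just before the theorem uses exactly the three ingredients you name --- Corollary~1.3.4 to get $\Delta_\xi\neq 0$, inequality~(1.5.2) to get $\ord_{s=0}L(M,s)\ge 1$, and the observation that the sign of the functional equation is $+1$ to force even order. Your computation of the local root number via Theorem~1.5.2(3) with $e=0$ (good reduction meaning trivial toric part, hence $\tau_v=1$ as an empty determinant) makes explicit the step the paper states as ``we can show that the sign of the functional equation is $1$.''
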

 In view of Tate's conjecture, we have
 $$\rank \Ch (M)=\ord _{s=0}L(M,s)\ge 2.$$
 Thus we have a natural question: {\em how to find another cycle in
 $\Ch ^2(M)^0$ which is linear independent of $\Delta _\xi$?}

 In general it is very difficult to compute
  the special values or derivatives of $L(M,
 s)$ at $s=0$. However the following is a consequence of
Theorem 1.3.5 and  Beilinson-Bloch's conjecture, we conclude the
 following:
 \begin{conjecture}
 Let $Y\lra T$ be a flat family of smooth and projective curves of genus $g\ge 3$
 over a projective variety $T$ over a number field $k$. Assume
 the induced map $T\lra \CM_g$ from $T$ to the coarse moduli
 space of curves of genus $g$ is finite, then we have a Northcott
 property: for any positive numbers $D$,
 $$\#\left\{t\in T(\bar k): \quad \deg t\le D,\quad
 L(M(Y_t), 0)\ne 0\right\}<\infty.$$
 \end{conjecture}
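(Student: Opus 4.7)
The plan is to reduce Conjecture 1.5.4 directly to the Northcott property for heights of Gross--Schoen cycles already established in Theorem 1.3.5. The key transition is the implication
\[
L(M(Y_t),0)\ne 0 \;\Longrightarrow\; \pair{\Delta_\xi(Y_t),\,\Delta_\xi(Y_t)} = 0,
\]
supplied by the refined Beilinson--Bloch Conjecture 1.5.1; the finiteness statement then comes for free.

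First I would apply Conjecture 1.5.1 fiberwise: if $L(M(Y_t),0)\ne 0$, then $\ord_{s=0}L(M(Y_t),s)=0$, so $\rank\Ch(M(Y_t))=0$, which means $\Ch(M(Y_t))_{\BQ}=0$. By the discussion in \S5.1 of the excerpt, the canonical Gross--Schoen cycle $\Delta_\xi(Y_t)$ is an element of $\Ch(M(Y_t))$, so its class in $\Ch(M(Y_t))_{\BQ}$ is zero. Since the height pairing is bilinear on the rational Chow group, a torsion class has vanishing self-pairing, giving $\pair{\Delta_\xi(Y_t),\Delta_\xi(Y_t)}=0$.

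With this implication in hand, Theorem 1.3.5 does the rest: for any fixed $H>0$ we have the inclusion
\[
\{t\in T(\bar k):\deg t\le D,\ L(M(Y_t),0)\ne 0\}\;\subseteq\;\{t\in T(\bar k):\deg t\le D,\ \pair{\Delta_\xi(Y_t),\Delta_\xi(Y_t)}\le H\},
\]
and the right-hand set is finite by the Northcott statement of Theorem 1.3.5 (which applies because $T\to\CM_g$ is assumed finite and $g\ge 3$).

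The main obstacle is not in the deduction itself, which is essentially formal, but in the hypothesis: Conjecture 1.5.1 is a deep open problem, and this is precisely why Conjecture 1.5.4 is stated conditionally rather than as a theorem. A secondary point worth verifying carefully is that the height pairing of \S1.1, constructed via the Gillet--Soul\'e / K\"unnemann formalism, is genuinely well-defined on $\Ch(M)_{\BQ}$ so that bilinearity really does annihilate torsion classes; this reduces to the compatibility of the arithmetic extension with the motivic projectors (symmetrization, and the two pushforward conditions) cutting out $M$ in \S5.1, and should be recorded explicitly before invoking the vanishing above.
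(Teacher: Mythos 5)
Your argument is exactly the paper's intended deduction: apply the Beilinson--Bloch Conjecture 1.5.1 fiberwise to turn $L(M(Y_t),0)\ne 0$ into $\ord_{s=0}L(M(Y_t),s)=0$, hence $\rank\Ch(M(Y_t))=0$, hence vanishing of the Gross--Schoen height, and then invoke the Northcott property of Theorem 1.3.5. One small simplification you can make: since $\Ch(\cdot)$ is defined in the paper with $\BQ$-coefficients, rank zero forces $\Ch(M(Y_t))=0$ outright, so $\Delta_\xi(Y_t)=0$ in the Chow group and its height vanishes trivially; there is no torsion to worry about, and your secondary caveat about bilinearity annihilating torsion classes is not needed.
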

 Over function field, this is a theorem induced from Theorem 1.3.5
 and formula (1.5.2).

In the following, we want to apply our result to the tautological
algebraic cycles in the Jacobian defined by Ceresa \cite {ceresa}
and Beauville \cite{beauville3}. We will use Fourier--Mukai
transform of Beauville (\cite{beauville1, beauville2}) and height
pairing of K\"unnemann (\cite{kunnemann2}).

Let $X\lra J$ be an embedding given by taking $x$ to the class of
$x-\xi$. Then we define the tautological classes  $\CR$  to the
smallest subspace of $\Ch ^*(J)$ containing $X$ closed under the
following operations:
\begin{itemize}
\item intersection pairing ``$\cdot$";
\item Pontriajan's star operator ``$*$";
$$x*y:=m_*(p_1^*x\cdot p_2^*y)$$
where $p_1, p_2, m$ are projection and addition on $J^2$;
\item Fourier--Mukai transform
$$\CF: \Ch ^*(J)_\BQ\lra \Ch ^*(J)_\BQ$$
$$x\mapsto \CF (x):=p_{2*}(p_1^*x\cdot e^\lambda)$$
where $\lambda$ is the Poincar\'e class:
$$\lambda =p_1^*\theta+p_2^*\theta -m^*\theta.$$
\end{itemize}
Using Fourier--Mukai transform, we have spectrum decomposition
$$X=\sum_{s=0}^{g-1} X_s, \qquad X_s\in \Ch ^{g-1}(J)$$
with $[k]_*X_s=k^{2+s}X_s.$ By Beauville \cite{beauville3}, the ring
$\CR$ under the intersection pairing is generated by $\CF (X_s)\in
\Ch^{1+s}(J)$. The pull-back of these cycles on $X^3$ under the
morphism $f_3: X^3\lra J$ can be computed explicitly. In particular,
we can prove the following formulae proposed by Wei Zhang
\cite{weizhang}:
\begin{theorem} Consider the addition morphism $f_{3}: X^3\lra J$.
Then
$$ f_3^*\CF (X_1)=\Delta, \qquad f_{3*}\Delta _\xi=\sum
_s(3^{2+s}-3\cdot 2^{2+s}+3)X_s,$$
$$X_s=(3^{2+s}-3\cdot 2^{2+s}+3)^{-1}\sum _{i+j+k=s-1}(X_i*X_j*X_k)\cdot \CF (X_1), \qquad s>0.$$
Moreover, the following are equivalent:
\begin{enumerate}
\item $\Delta _\xi=0$ in $\Ch ^2(X^3)_\BQ$;
\item $X-[-1]^*X=0$ in $\Ch ^{g-1}(J)_\BQ$;
\item $X_1=0$ in $\Ch ^{g-1}(J)_\BQ$;
\item $X_s=0$ in $\Ch ^{g-1}(J)_\BQ$ for all $s>0$.
\end{enumerate}
\end{theorem}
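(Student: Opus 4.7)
The plan is to establish the two cycle identities in order, combine them via the projection formula to extract the formula for $X_s$, and then derive the four equivalences.

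\medskip

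\emph{Pushforward.} Expanding $\Delta_\xi = \Delta_{123} - \sum \Delta_{ij} + \sum \Delta_i$ and pushing each constituent forward under $f_3 = m \circ \iota^{\times 3}$, where $\iota\colon X \to J$ sends $x$ to $x - \xi$: the small diagonal satisfies $f_{3*}\Delta_{123} = [3]_*X$; by symmetry each $f_{3*}\Delta_{ij} = [2]_*X$; and each $f_{3*}\Delta_i = X$ (the apparent translations by $p_i - \xi$ cancel because $\sum_i a_i(p_i - \xi) = e - \xi = 0$ in $J$). Applying the eigenvalue relation $[k]_*X_s = k^{s+2}X_s$ gives $f_{3*}\Delta_\xi = \sum_s (3^{s+2} - 3\cdot 2^{s+2} + 3)X_s$, the $s=0$ coefficient vanishing as required by the homological triviality of $\Delta_\xi$.

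\medskip

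\emph{Pullback.} Writing $\CF(X_1) = p_{2*}(p_1^*X_1 \cdot e^\lambda)$ with Poincar\'e class $\lambda = p_1^*\theta + p_2^*\theta - m^*\theta$, apply proper base change for the square attached to $f_3\colon X^3 \to J$ and expand $e^\lambda$ in powers of $\lambda$. The surviving terms correspond under Abel--Jacobi to the small diagonal and the partial diagonals on $X^3$, and the coefficients reassemble into $\Delta_\xi$. Equivalently, one verifies that $f_3^*\CF(X_1)$ satisfies the three conditions (permutation symmetry, $p_{12*}$-vanishing, $p_{2*}i^*$-vanishing) defining $\Ch(M) \subset \Ch^2(X^3)$ and has the same $f_{3*}$-image as $\Delta_\xi$ by the previous step; a uniqueness argument within $\Ch(M)$ then identifies the two cycles. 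This Poincar\'e-class bookkeeping is the main technical obstacle.

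\medskip

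\emph{Formula for $X_s$ and equivalences.} Combining the two identities via the projection formula $f_{3*}f_3^*y = (X*X*X)\cdot y$ gives
$$(X*X*X)\cdot \CF(X_1) = \sum_s (3^{s+2} - 3\cdot 2^{s+2} + 3)X_s.$$
Expanding $X = \sum_i X_i$ and using that the Beauville $(s)$-grading is additive under both $\cdot$ and $*$, the term $(X_i*X_j*X_k)\cdot \CF(X_1)$ lies in $\Ch^{g-1}(J)_{(i+j+k+1)}$; isolating the $X_s$-piece forces $i+j+k = s-1$, and for $s>0$ the coefficient is nonzero, giving the stated inversion. For the equivalences: (3)$\Rightarrow$(1) is immediate from $\Delta_\xi = f_3^*\CF(X_1)$; (1)$\Rightarrow$(4) follows from the pushforward identity since distinct $X_s$ lie in distinct $[k]_*$-eigenspaces and each nonzero coefficient forces vanishing; (4)$\Rightarrow$(3) is trivial; and (2)$\Leftrightarrow$(3) uses $[-1]^*X_s = (-1)^sX_s$, so $X - [-1]^*X = 2\sum_{s\text{ odd}}X_s$, which by the direct sum decomposition vanishes iff every odd-$s$ component does, equivalent via (3)$\Leftrightarrow$(4) to $X_1 = 0$.
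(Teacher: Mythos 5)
The overall architecture of your argument matches the paper's: pushforward identity, pullback identity, combine via the projection formula $f_{3*}f_3^*y = X^{*3}\cdot y$ to extract the formula for $X_s$, then read off the equivalences from the Beauville $s$-grading. The pushforward computation, the grading bookkeeping for $(X_i*X_j*X_k)\cdot\CF(X_1)$, and the derivation of the equivalences are all essentially the paper's argument and are correct.

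The gap is the pullback identity $f_3^*\CF(X_1)=\Delta_\xi$, which you defer twice and never actually prove. Your first version (``apply proper base change for the square attached to $f_3$ and expand $e^\lambda$ \dots the coefficients reassemble into $\Delta_\xi$'') names the framework but does not carry out the computation; this is precisely the content of the paper's Theorem~5.3.1, which proceeds by introducing the map $g:X^4\to J\times J$, $(x_0,x_1,x_2,x_3)\mapsto(x_0-\xi,\ x_1+x_2+x_3-3\xi)$, invoking the theorem of the cube to prove $g^*\lambda = \sum_{i=1}^3 p_{0i}^*\delta_\xi$ with $\delta_\xi = p_1^*\xi+p_2^*\xi-\Delta$, and then pushing forward $\exp(g^*\lambda)$ along $p_{123}$. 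That calculation is the technical heart of the theorem; acknowledging it as ``the main technical obstacle'' does not discharge it. Your fallback route---show $f_3^*\CF(X_1)\in\Ch(M)$, observe it has the same $f_{3*}$-image as $\Delta_\xi$, then appeal to ``uniqueness within $\Ch(M)$''---is not valid as stated: nothing in the paper establishes (and nothing obvious implies) that $f_{3*}$ is injective on $\Ch(M)$, and the two membership conditions $p_{12*}(\cdot)=0$, $p_{2*}i^*(\cdot)=0$ for $f_3^*\CF(X_1)$ would themselves require a computation of comparable difficulty. You should either reproduce the theorem-of-the-cube argument for Theorem~5.3.1 or find a genuine replacement; as written the pullback formula is asserted, not proven.

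A secondary, smaller point: in the pushforward step you claim $f_{3*}\Delta_{ij}=[2]_*X$ and $f_{3*}\Delta_i=X$ because ``the apparent translations by $p_i-\xi$ cancel.'' For a general degree-one divisor $e=\sum a_ip_i$ with $[e]=\xi$, the image of $\Delta_{12}$ is a weighted sum of translates $\sum_i a_i\,T_{p_i-\xi}([2]_*X)$, and weighted sums of translates do not in general collapse to a single translate in $\Ch^*(J)_\BQ$. The paper quotes the identity $f_{3*}\Delta_\xi=[3]_*X-3[2]_*X+3X$ without proof, so I will not press this, but be aware that the ``cancellation'' you invoke needs either a torsion/unipotence argument on the Beauville decomposition or a reduction to the case that $e$ is a single point.
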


By this theorem,  under the operators $\cdot$ and $*$,  the ring $R$
is generated by $X_0$ and any one of three canonical classes $X_1$,
Gross-Schoen cycle $f_{3*}\Delta _\xi$, and Ceresa cycle
$X-[-1]^*X$.  The following give a more precise relation between the
height of $\Delta _\xi$ and the height of class $X_1$ and $\CF
(X_1)$:
\begin{theorem}
The cycle  $\CF (X_1)$ is primitive with respect to theta divisor
$\theta$, homologically trivial  in $\Ch ^2(J)_\BQ$, and
$$\pair{\CF (X_1), X_1}_J=\frac 16\pair{\Delta _\xi, \Delta _\xi}_{X^3}
=\frac 1{(g-3)!} \pair{\CF (X_1), \theta^{g-3}\CF (X_1)}_J.$$
\end{theorem}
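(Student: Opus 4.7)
The plan is to handle the three assertions in turn, in each case using Beauville's weight decomposition of $\Ch^*(J)_\BQ$ together with the formulas of Theorem~1.5.5 and the projection formula for K\"unnemann's arithmetic height pairing. Write $\Ch^p(J)_{(s)}$ for the $[k]_*$-eigenspace with eigenvalue $k^{2g-2p+s}$, so that $X_s\in \Ch^{g-1}(J)_{(s)}$.

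For the structural properties of $\CF(X_1)$, the Fourier--Mukai transform intertwines $[k]_*$ and $[k]^*$ up to a power of $k$, hence carries $\Ch^{g-1}(J)_{(1)}$ into $\Ch^2(J)_{(1)}$; so $\CF(X_1)\in \Ch^2(J)_{(1)}$. Non-zero pieces with $s>0$ are automatically homologically trivial, because the cohomology class of any $(p,p)$-cycle is an eigenvector of $[k]_*$ with eigenvalue $k^{2g-2p}$, forcing $s=0$. Hence $\CF(X_1)$ and $\theta^{g-3}\CF(X_1)\in \Ch^{g-1}(J)_{(1)}$ are both homologically trivial, which is what ``primitive with respect to $\theta$'' means here (ensuring both right-hand pairings are defined).

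For the first identity, combining $f_3^*\CF(X_1)=\Delta_\xi$ from Theorem~1.5.5 with the projection formula for K\"unnemann's arithmetic height pairing gives
$$\pair{\Delta_\xi,\Delta_\xi}_{X^3}=\pair{\CF(X_1),f_{3*}\Delta_\xi}_J=\sum_{s\ge 0}(3^{2+s}-3\cdot 2^{2+s}+3)\pair{\CF(X_1),X_s}_J.$$
Since the pairing is $[k]_*$-equivariant and $[k]$ has degree $k^{2g}$, distinct Beauville weights are orthogonal, so only the $s=1$ summand survives with coefficient $3^3-3\cdot 2^3+3=6$. This yields $\pair{\CF(X_1),X_1}_J = \tfrac{1}{6}\pair{\Delta_\xi,\Delta_\xi}_{X^3}$.

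For the second identity, I use the explicit formula $X_1=\tfrac{1}{6}(X_0*X_0*X_0)\cdot \CF(X_1)$ from Theorem~1.5.5. Pairing with $\CF(X_1)$ then reduces the claim to the Chow-level Poincar\'e identity
$$\tfrac{1}{3!}X_0^{*3} \;\equiv\; \tfrac{1}{(g-3)!}\theta^{g-3} \quad \text{in } \Ch^{g-3}(J)_{(0)}$$
modulo the annihilator of $\CF(X_1)\cdot \CF(X_1)\in \Ch^4(J)_{(2)}$ under the height pairing. The cohomological version $[X^{*d}/d!]=[\theta^{g-d}/(g-d)!]$ is Poincar\'e's classical formula, and projecting onto the weight-$(0)$ piece of the tautological subring lifts it to Chow by Beauville's work on the tautological ring of a Jacobian. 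The main obstacle will be this final step: establishing the Chow-Poincar\'e identity in the weight-$(0)$ piece with enough precision (or, what is weaker but suffices, showing that the difference lies in the annihilator of $\CF(X_1)^2$) and checking its compatibility with K\"unnemann's arithmetic height pairing. Once this is in hand, substitution into the reduction above produces the desired second identity.
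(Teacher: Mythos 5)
For the two height-pairing identities your argument tracks the paper's closely. The first identity is handled exactly as in the paper: pull back by $f_3$, apply the projection formula for K\"unnemann's pairing, expand $f_{3*}\Delta_\xi=\sum_s(3^{2+s}-3\cdot 2^{2+s}+3)X_s$, and kill all but the $s=1$ term by weight orthogonality (one small imprecision: the orthogonality rule is $s+t=2$, not ``distinct weights are orthogonal''; here $t=1$ so only $s=1$ survives, and your conclusion stands). For the second identity the paper computes $f_{3*}f_3^*\CF(X_1)=X^{*3}\cdot\CF(X_1)$ and then replaces $X^{*3}$ by its weight-$0$ part $X_0^{*3}=\tfrac{6}{(g-3)!}\theta^{g-3}$; you instead start from $X_1=\tfrac{1}{6}X_0^{*3}\cdot\CF(X_1)$, but both routes reduce to the same Chow-level Poincar\'e identity $X_0=\theta^{g-1}/(g-1)!$ in the weight-$0$ tautological ring. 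You flag that step as a gap; the paper invokes it silently as Beauville's result, so this is a matter of how much you wish to cite rather than a missing idea.

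The genuine gap is in the primitivity claim. You assert that being of Beauville weight $s>0$ makes the class homologically trivial ``which is what primitive with respect to $\theta$ means here.'' That conflation is wrong. Homological triviality (weight $s\ge 1$) is what makes the height pairings defined, but ``primitive'' in the paper is the stronger condition that $\CF(X_1)$ lies in the kernel of $L^{g+2-2p}=L^{g-2}$, i.e.\ $\theta^{g-2}\cdot\CF(X_1)=0$ in $\Ch^g(J)^0_\BQ$. This does not follow from homological triviality: $\Ch^g(J)^0_\BQ$ is far from zero. The paper proves primitivity by an actual computation: using $L^{g-3}\CF(X_1)$ and the formula $L\cdot L^{g-3}\CF(X_1)=\tfrac{(g-3)!}{6}\bigl(\theta\cdot X^{*3}\CF(X_1)\bigr)_1=\tfrac{(g-3)!}{6}f_{3*}\bigl(f_3^*\theta\cdot\Delta_\xi\bigr)$, and then showing $f_3^*\theta\cdot\Delta_\xi=0$ by expanding $f_3^*\theta=-f_3^*\CF(X_0)=g\sum_ip_i^*\xi+\sum_{ij}p_{ij}^*\delta_\xi$ and checking each term intersects $\Delta_\xi$ trivially. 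You need to supply an argument of this type; as written your proof establishes homological triviality but not primitivity.
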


\subsection*{Plan of proof}
The proof of Theorem 1.3.1 is proceeded in several steps in \S2-3:
\begin{enumerate}
\item Reduction from $X^3$ to $X^2$: we  express the height as a triple  product on $X\times X$ of an adelic  line bundles
with generic fiber (Theorem 2.3.5):
$$\Delta -p_1^*\xi-p_2^*\xi.$$
\item Reduction form $X^2$ to $X$: we  express the triple as the self-intersection of the
canonical sheaf plus some local triple integrations (Theorem 2.3.5).
\item Local triple pairing: we develop an intersection theory on the reduction complex of the
product $X\times X$ at a non-archimedean place (Theorem 3.4.2) and
use this to complete the proof of Theorem 1.3.1.
\end{enumerate}

The proof of other results about the estimate of the height follows
form detailed calculation of constants $\phi$ and $\lambda$ in \S4.
We first express these constants in terms of integration of
resistance on metrized graph and reduce the computation to 2-edge
connected graphs, and finitely compute everything for $1$-edge
graphs.

The last section is devoted to study the Beilinson--Bloch conjecture
and the Beauville tautological cycles. We first define a minimal
Chow motive $M$ so that its Chow group contains $\Delta _\xi$. Then
we compute the $\epsilon$-constant of its L-series. Finally, we
translate the statements to tautological cycles in the Jacobian
varieties.

\subsection*{Acknowledgement} I would like to thank Benedict Gross
for explaining to me his joint work with  Steve Kudla in
\cite{gross-kudla} and Chad Schoen \cite{gross-schoen} which
inspired this work, to Johan de Jong for explaining to me  many
facts in algebraic geometry, and to Xander Faber, Xinyi Yuan, and
Wei Zhang for many helpful conversations during  a workshop on this
work. This work has been supported by the National Science
Foundation  and the Chinese Academy of Sciences.

\section{Gross-Schoen cycles and correspondences}
The aim of this section is to prove some global formulae for the
heights of Gross--Schoen cycles in terms of the self-intersections
of the relative dualising sheaves and some local intersections:
$$\frac {2g+1}{2g-2}\omega ^2+\text {local contributions, \qquad (Theorem 2.5.1)}.$$
These local contributions will be computed in the next section. More
generally, for any correspondences $t_1, t_2, t_3$ on $X\times X$,
we compute the height pairing
$$\pair{\Delta_e,  (t_1\otimes t_2\otimes t_3)\Delta _e}.$$
This pairing is positive if $t_i$ are correspondence of positive
type by Gillet--Soul\'e's Conjectures 2.4.1 and 2.4.2. We will show
that this is equal to the intersection number $\wh t_1\cdot \wh
t_2\cdot \wh t_3$ on  $X\times X$ (Theorem 2.3.5).

\subsection{Cycles and Heights}

In this subsection, we will review intersection theory of
Gillet--Soul\'e and some adelic extensions. The basic reference are
Gillet--Soul\'e \cite{gillet-soule1, gillet-soule2}, and Deligne
\cite{deligne}, and our previous paper \cite{adelic}.

\subsubsection*{Arithmetical intersection theory}
  Let $k$ be a number field with the ring of
integers $\CO_k$. By an arithmetical variety over $\CO_k$, we mean a
flat and projective morphism $\CX\lra \Spec \CO_k$ such that $\CX_k$
is regular.  We have  a homological arithmetical Chow group
$\wh\Ch_*(\CX)$,
 formed by cycles $(Z, g)$ where $Z$ is a cycle on $\CX$ and $g$ is
a current such that $\pp g+\delta _Z$ is smooth on $\CX(\BC)$,
modulo the relations: \begin{itemize} \item  $(\div (f), -\log
|f|)=0$ for a rational function $f$ on an integral  subscheme $\CY$
of $\CX$; \item $(0, \partial \alpha +\bar\partial \beta)=0$.
\end{itemize}
We also have a cohomological arithmetical Chow group $\wh\Ch^*(\CX)$
formed by Chern classes of Hermitian vector bundles. Then the tensor
product among vector bundles gives  $\wh \Ch^*(\CX)$  a ring
structure. The intersection pairing between $\wh \Ch ^*(\CX)$ and
$\wh \Ch_*(\CX)$ gives $\wh \Ch_*(\CX)$ a module structure over $\wh
\Ch ^*(\CX)$. When  $\CX$ is regular, these two groups are
isomorphic.

By a (relative) arithmetical correspondence, we mean a pair $(C,
\gamma)$ on $\CX\otimes_{\CO_k}\CX$ formed by a (homological) cycle
$C$ of dimension equal to $\dim X$ and a current $\gamma$ such that
$h_C:=\pp\gamma +\delta _C$ is regular in the sense that for any
smooth form $\alpha$ on $X(\BC)$, the currents
$$p_{2*}(h_C\cdot p_1^*\alpha), \qquad p_{1*}(h_C\cdot p_2^*\alpha)$$
are both smooth. Let $C(\CX)$ denote the group of arithmetical
correspondences modulo the same relations as above on $\CX\otimes
\CX$. An arithmetical correspondence $c:=(C, \gamma)$ defines maps
$c_*$ and $c^*$ as usual from the group $\wh \Ch ^*(\CX)$ to $\wh
\Ch _*(\CX)$. If $\CX$ is smooth over $\CO_k$, then $\CX^2$ is
smooth and we can define composition on correspondences to make
$C(\CX)$ a ring. In this case,  the morphism $c\lra c_*$ is a ring
homomorphism form $C(X)$ to $\End (\wh\Ch^*(\CX))$.

\subsubsection*{Cycles homologous to zero}

Let $X$ be a smooth and projective variety of dimension $n$ over a
number field or a function field $k$. Let $\Ch (X)$ denote the group
of Chow cycles with coefficient in $\BQ$. Then we have a class map
to $\ell$-adic cohomology:
$$\Ch (X)\lra H^*(X)$$
where $H^*(X)=H^*(X\otimes \bar k, \BQ_\ell )$ with $\ell$ a prime
different than the characteristic of $k$. The kernel $\Ch (X)^0$ of
this map is called the group of  homologically trivial cycles.
Beilinson (\cite{beilinson1, beilinson2}) and Bloch (\cite{bloch1})
have given a conditional definition of height pairing between cycles
in $\Ch(X)^0$. We will focus on the case of number fields but all
the results hold for case where $k$ is the function field of a
smooth and projective curve $B$ over some field $k_0$, and where we
have the same height pairing with $\Spec \CO _k$ replaced by $B$ and
with the condition about green's function dropped.

\subsubsection*{Height pairing}
 One construction of this height pairing in number
field case is based on Gillet and Soul\'e's intersection theory as
follows. Assume that $X$ has a regular model $\CX$ over $\Spec
\CO_k$,  and that every cycle $z\in \Ch (X)^0$ has an extension $\wh
z=(\bar z, g_z)$ to an arithmetic cycle which has trivial
intersection to vertical arithmetic cycles of dimension $2$:
\begin{enumerate}
\item $\bar z$ is a cycle on $\CX$ extending  $z$;
\item $g_z$ has curvature $h_z=0$;
\item the restriction of $\bar z$ on each component in the special
fibers of $\CX$ is numerically trivial.
\end{enumerate}
Then for any $z'\in \Ch (X)^0$ extended to an arithmetic cycle $\wh
z'$ on $\CX$,  the height pairing is defined by
$$\pair{z, z'}:=\wh z\cdot \wh z'.$$
It is clear that this definition does not depend on the choice of
$\wh z'$, and that the pairing is linear and symmetric.

Let $C(X)=\Ch ^n (X\times X)$ denote the ring of (degree $0$)
correspondences on $X$. Then $C(X)$ acts on $\Ch (X)$ and preserve
$\Ch (X)^0$. Recall that the composition law is given by the
intersection pairing on $X\times X\times X$ and various projections
to $X\times X$:
$$t_2\circ t_1=p_{13*}(p_{12}^*t_1 \cdot p_{23}^*t_2),
\qquad t_1, t_2\in C(X).$$ For any $t\in C(X)$, $z\in \Ch (X)$, the
push-forward and pull-back of $z$ under $t$ are defined by
$$t_*(z)=p_{2*}(p_1^*z\cdot t), \qquad t^*(z)=p_{1*}(t\cdot p_2^*z).$$
Let $t\lra t^\vee$ be the involution defined by the permutation on
$X^2$ then we have $t^*=(t^\vee )_*$.
 It can be shown that the involution operator is the adjoint operator
 for the height pairing:
\begin{lemma}
$$\pair{t_*z, z'}=\pair{z, t^*z'}=\pair {z, t^\vee _*z'}.$$
\end{lemma}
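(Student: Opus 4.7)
The plan is to reduce both equalities to the projection formula for arithmetic intersections on $\CX \times_{\CO_k} \CX$, after first identifying $t^*$ with $(t^\vee)_*$ at the level of cycles.

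First I would dispose of the second equality $\pair{z, t^* z'} = \pair{z, t^\vee_* z'}$ by checking directly that $t^* = (t^\vee)_*$ as operators on $\Ch(X)$. Let $\sigma : X \times X \to X \times X$ be the involution $(x,y) \mapsto (y,x)$, so that $t^\vee = \sigma^* t = \sigma_* t$ and $p_i \circ \sigma = p_{3-i}$. Unwinding the definition and using that $\sigma$ is an isomorphism:
$$
t^\vee_* z' = p_{2*}(p_1^* z' \cdot \sigma_* t) = p_{2*} \sigma_*(\sigma^* p_1^* z' \cdot t) = p_{1*}(p_2^* z' \cdot t) = t^* z'.
$$
This is purely formal and yields the second equality in both the arithmetic and generic settings.

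For the main adjunction $\pair{t_* z, z'} = \pair{z, t^* z'}$, I would pick admissible arithmetic extensions $\wh z$, $\wh{z'}$ of $z$, $z'$ (satisfying conditions (1)–(3) above) and some arithmetic extension $\wh t$ of $t$ to $\CX \times \CX$. Set
$$
\wh z_t := p_{2*}\bigl(p_1^* \wh z \cdot \wh t\bigr), \qquad \wh{z'}_t := p_{1*}\bigl(\wh t \cdot p_2^* \wh{z'}\bigr),
$$
which are arithmetic extensions of $t_* z$ and $t^* z'$ respectively (though not necessarily admissible in the sense of (1)–(3)). The key step is then the projection formula on $\CX \times \CX \to \Spec \CO_k$, which gives, as arithmetic numbers,
$$
\wh z_t \cdot \wh{z'} = p_1^* \wh z \cdot \wh t \cdot p_2^* \wh{z'} = \wh z \cdot \wh{z'}_t.
$$
Here the middle term denotes the triple arithmetic intersection on $\CX \times \CX$, and both outer equalities are instances of the projection formula for $p_1$ and $p_2$ applied to arithmetic Chow groups as in Gillet--Soul\'e.

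The last thing to verify, and the place where one has to be a bit careful, is that $\wh z_t \cdot \wh{z'}$ actually computes $\pair{t_* z, z'}$ even though $\wh z_t$ need not be admissible, and symmetrically for $\wh z \cdot \wh{z'}_t$. The point is that the height pairing is well-defined whenever \emph{one} of the two extensions satisfies conditions (1)–(3): any two extensions of the same homologically trivial cycle differ by a sum of vertical divisors and an exact current, and such differences pair to zero with an extension satisfying the admissibility conditions (the vertical part kills $\wh{z'}$ because of condition (3), and the current part kills it because $h_{z'} = 0$ by condition (2)). Applied once to $\wh z_t$ on the left and once to $\wh{z'}_t$ on the right, this shows
$$
\pair{t_* z, z'} = \wh z_t \cdot \wh{z'} = \wh z \cdot \wh{z'}_t = \pair{z, t^* z'},
$$
completing the proof. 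The main (though still mild) obstacle is precisely this final admissibility check; the projection formula itself is formal once both sides are interpreted as triple intersections on $\CX \times \CX$. In the function field case the same argument applies verbatim with $B$ replacing $\Spec \CO_k$ and no green's current data.
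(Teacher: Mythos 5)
Your proof is correct and follows the paper's approach: both arguments rest on the projection formula applied to the triple arithmetic intersection $p_1^*\wh z\cdot\wh t\cdot p_2^*\wh z'$ on $\CX\times\CX$, plus the fact that the Beilinson--Bloch pairing is well defined once at least one of the two extensions is admissible. The only organizational difference is that the paper takes the extra step of verifying directly that $\wh t^*(\wh z)=p_{1*}(\wh t\cdot p_2^*\wh z)$ is itself an admissible extension (zero curvature and numerically trivial on each fiber), whereas you skip this and invoke the one-sided well-definedness principle symmetrically on both $\wh z_t$ and $\wh z'_t$; since the paper also relies on that same principle implicitly in its last step (pairing $\wh z$ against the not-necessarily-admissible extension $p_{2*}(\wh t\cdot p_1^*\wh z')$), the two versions are genuinely equivalent, and your handling of $t^*=(t^\vee)_*$ via the flip $\sigma$ is likewise standard.
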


\begin{proof}
 For each $t\in C(X)$, let $\wh t=(\bar t,
g_t)$ be an arithmetic model of $t$ over $\wt{\CX^2}$. Then we can
define a correspondence on arithmetical cycles by the same formula:
$$\wh z\mapsto \wh t^* (\wh z):=p_{1*}(\wh t\cdot p_2^*\wh z).$$
We claim that $\wh t(\wh z)$ is numerically trivial fiber wise. The
curvature is given by $p_{1*}(h_t\cdot p_2^*h_z)=0$. To check the
numerical triviality over non-archimedean places, we let $x$ be a
cycle in the fiber $\CX_v$ over a finite place $v$ of $k$. Then
$$\wh t^*(\wh z)\cdot x=p_2^*\wh z\cdot \wh t\cdot p_1^*x
=\wh z\cdot p_{2*}(\wh t\cdot p_1^*x)=0$$ as $p_{2*}(\wh t\cdot
p_1^*x)$ is still a vertical cycle over $v$. Thus we have
$$\pair {t^*z, z'}=\wh t^*(\wh z)\cdot \wh z'
=p_2^*\wh z \cdot \wh t \cdot p_1^* \wh z' =\wh z\cdot p_{2*} (\wh
t\cdot p_1^*\wh z').$$ It is clear that  $p_{2*} (\wh t\cdot
p_1^*\wh z')$ is an extension of
 $$p_{2*}(t\cdot p_1^* z')=t_*z'.$$ Thus we have
shown the adjoint property of $t$.
\end{proof}

\subsubsection*{Adelic metrized line bundles}
In the following, we want to review some facts about the adelic
metrized bundles developed in \cite{adelic}. For a smooth variety
$X$ defined over a number field $k$, let us consider the category of
arithmetic models  $\CX$ with generic fiber $X$, i.e. an arithmetic
variety $\CX\lra \Spec \CO_k$ and an isomorphism $\CX_k\simeq X$. As
this category is partially ordered by morphisms, we can define the
direct limit
$$\lim _{\lra}\wh\Pic (\CX)\otimes \BQ.$$
Every element in this group defines an algebraically metrized line
bundle on $X$. The group $\bar \Pic (X)$ of integral metrized line
bundles are certain limits of these algebraically metrized line
bundles. The intersection pairing
$$c_1(\CL_1)\cdots c_1(\CL_n)\cdot \alpha\cdot [\CX]\in \BR,
\qquad \CL _i\in \Pic (\CX), \alpha \in \Ch ^{\dim \CX-n}(\CX)$$ can
be extend to a pairing with $\CL_i\in \bar\Pic (X)$. The following
lemma shows that the pairing can be represented by a (homological)
element on $\CX$.

\begin{lemma}
Let $\CX$ be an arithmetical scheme and  let $\bar\CL_1, \cdots,
\bar\CL_n$ be some adelic metrized line bundles on $X_k$. Then the
functional
$$\Ch ^n(\CX)\lra \BR: \quad \alpha \mapsto \alpha \cdot c_1(\bar
\CL _1)\cdots c_1(\bar \CL _n)$$ is represented by an element in
$\wh \Ch _{\dim X-n}(\CX)\otimes\BR$ denoted by
$$c_1(\bar \CL_1)\cdots c_1(\bar\CL _n)\cdot [\CX]\in \wh\Ch _{\dim
X-n}(\CX)\otimes \BR.$$ Moreover, this element has the following
restriction on the generic fiber:
$$c_1(\CL_1)\cdots c_1(\CL_n)[\CX_k].$$
\end{lemma}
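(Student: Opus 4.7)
The plan is to construct the element $c_1(\bar\CL_1)\cdots c_1(\bar\CL_n)\cdot[\CX]$ first for algebraically metrized line bundles arising from a single model, and then extend to the adelic case by passing to a uniform limit. First I would handle the algebraic case. Suppose each $\bar\CL_i$ arises as the restriction to $X_k$ of a Hermitian line bundle $\CM_i$ on some arithmetic model $\CX_i$. Choose a common model $\wt\CX$ dominating $\CX$ and all the $\CX_i$ via morphisms $\rho\colon \wt\CX\to\CX$ and $\rho_i\colon \wt\CX\to\CX_i$, and set
$$z := \rho_*\bigl(c_1(\rho_1^*\CM_1)\cdots c_1(\rho_n^*\CM_n)\cdot[\wt\CX]\bigr) \in \wh\Ch_{\dim X - n}(\CX).$$
For any $\alpha\in\Ch^n(\CX)$, the projection formula of Gillet--Soul\'e gives
$$\alpha\cdot z = \rho^*\alpha\cdot c_1(\rho_1^*\CM_1)\cdots c_1(\rho_n^*\CM_n)\cdot[\wt\CX],$$
which by the definition recalled above equals $\alpha\cdot c_1(\bar\CL_1)\cdots c_1(\bar\CL_n)$. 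That $z$ is independent of the choice of common model is routine by further refining and applying the projection formula again.

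Next I would pass to the adelic case. By the construction of $\ol{\Pic}(X)$ in \cite{adelic}, each $\bar\CL_i$ is a uniform limit of algebraically metrized line bundles $\bar\CL_{i,m}$ whose metrics at every place of $k$ converge uniformly. For each $m$ the previous paragraph supplies an element $z_m \in \wh\Ch_{\dim X - n}(\CX)\otimes\BR$. The uniform convergence of metrics translates into $L^\infty$-convergence of the associated Green's currents on $\CX(\BC)$ and sup-norm convergence on the reduction graphs at finite places. Using the multilinearity and continuity of the Gillet--Soul\'e pairing in the Green's input, the numerical sequence $\alpha\cdot z_m$ is Cauchy for every fixed $\alpha\in\Ch^n(\CX)$. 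I would then define the desired element as the corresponding weak limit of $z_m$ in $\wh\Ch_{\dim X - n}(\CX)\otimes\BR$; by construction its evaluation at $\alpha$ is the adelic intersection number $\alpha\cdot c_1(\bar\CL_1)\cdots c_1(\bar\CL_n)$, as required.

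For the statement about the generic fiber, observe that the pushforward step in the algebraic construction does not affect the generic fiber, since $\rho$ is a birational isomorphism over $X_k$; hence for each $m$ the restriction of $z_m$ to $\CX_k$ equals the classical intersection $c_1(\CL_1)\cdots c_1(\CL_n)[\CX_k]$, which is independent of $m$ because only the metrics vary. The weak limit therefore has the same restriction. The main obstacle in this plan is giving precise meaning to the convergence of $z_m$ in step two: the group $\wh\Ch_*(\CX)\otimes\BR$ is not a priori topologized, and one must either equip it with an $L^\infty$-type topology on the Green's data, or, more economically, interpret the limiting element as a functional on $\Ch^n(\CX)$ obtained as $\lim_m (\alpha\mapsto\alpha\cdot z_m)$; both routes are implicit in the adelic intersection theory of \cite{adelic}, and the functional interpretation is all that is needed for the remainder of the paper.
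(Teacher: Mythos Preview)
Your approach is broadly the same as the paper's---first treat the algebraic case via projection formula, then pass to a limit---and the algebraic step and the generic-fiber remark are fine. But your limit step has a real gap, and you half-acknowledge it in your final paragraph without resolving it.

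The lemma does not merely assert that the functional $\alpha\mapsto\alpha\cdot c_1(\bar\CL_1)\cdots c_1(\bar\CL_n)$ exists; it asserts that this functional is \emph{represented by an element of} $\wh\Ch_{\dim X-n}(\CX)\otimes\BR$. You produce a sequence $z_m$ and argue that $\alpha\cdot z_m$ is Cauchy for each $\alpha$, but pointwise convergence of the functionals $\langle\,\cdot\,,z_m\rangle$ does not by itself yield a limiting $z\in\wh\Ch_{\dim X-n}(\CX)\otimes\BR$: that group is not finite-dimensional, there is no obvious completeness, and a ``weak limit'' need not lie in it. Saying that the functional interpretation suffices for the rest of the paper is not a proof of the lemma as stated.

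The paper closes this gap by a finite-dimensionality trick you are missing. Fix once and for all models $\CM_{01},\dots,\CM_{0n}$ of the $\CL_j$ on $\CX$ with the correct archimedean metrics. Then the difference
\[
V_m := z_m - c_1(\CM_{01})\cdots c_1(\CM_{0n})\cdot[\CX]
\]
is a \emph{vertical} cycle supported on a fixed finite union $\CF$ of special fibers (the places where the approximating models differ from the base model). Pairing with $\alpha\in\wh\Ch^n(\CX)$ factors through the image $M$ of $i^*\wh\Ch^*(\CX)$ in the numerical-equivalence group $N^*(\CF)$, which is finite-dimensional. A Cauchy sequence of functionals on a finite-dimensional space is represented by a convergent sequence of elements, so the $V_m$ converge to some $V\in\Ch_*(\CF)\otimes\BR$, and the desired class is $c_1(\CM_{01})\cdots c_1(\CM_{0n})\cdot[\CX]+V$. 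This reduction to a finite-dimensional vertical piece is the missing idea in your argument.
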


\begin{proof}
It suffices to deal with the case where  bundles are ample and are
limits of some  integral-ample models $(\CX_i, \CM_{i1}, \cdots,
\CM_{in})$ of $(X_k,\CL_k ^{e_i}, \cdots \CL _k^{e_i})$. Without
loss of generality, we may assume that $\CX_i$ dominates
 $\CX$,  that $\CL_{1k}, \cdots \CL _{nk}$
 have arithmetic modes $\CM _{01}, \cdots, \CM _{0n}$, and that the
 metrics on the archimedean places induce the same metrics on each $\CL
 _{ik}(\BC)$. Let $\pi _i$ denote the projection $\CX _i\lra \CX$.

For any cohomological arithmetical cycles $\alpha$  on $\CX$,
 we can define intersection
pairings:$$ \frac 1{e_i^n} c_1(\CM_{i1})\cdots c_1(\CM_{in}))\cdot
\pi _i^*\alpha=\pi _{i*}\left(\frac 1{e_i^n} c_1(\CM_{i1})\cdots
c_1(\CM_{in}))\right)\cdot \alpha$$ which has a limit denoted by
$$c_1(\bar\CL_1)\cdots c_1(\bar \CL _n)\cdot \alpha.$$
We claim that the cycles $$\pi _{i*}\left(\frac 1{e_i^n}
c_1(\CM_{i1})\cdots c_1(\CM_{in}\cdot [\CX_i])\right)$$ have a limit
in $\wh\Ch ^*(\CX)\otimes \BR$. Indeed, subtract them   by
$$c_1(\CM_{01})\cdots c_1(\CM_{0n})[\CX],$$
we obtain vertical cycles
$$V_i:=\pi _{i*}\left(\frac
1{e_i^n} c_1(\CM_{i1})\cdots c_1(\CM_{in})\cdot
[\CX_i]\right)-c_1(\CM_{01})\cdots c_1(\CM_{0n}\cdot [\CX]$$
supported in finitely many fibers of $\CX$ over $\CO_k$. Let $\CF$
be the union of these fibers as a closed subscheme of $\CX$, let $i:
\CF\lra \CX$ denote the embedding. Then $V_i$ can be written as
$V_i=i_*W_i$ with $W_i$ a cycle on $\CF $. In this way,
$$V_i\cdot \beta =W_i\cdot i^*\beta.$$
In other words, the intersection pairing of $V_i$ with $\wh \Ch^*
(\CX)$ can be written as intersection of $W_i$ with $i^*\wh \Ch
^*(\CX)$. So we may work on the intersections between the  quotients
$N_*(\CF)$ and  $N^*(\CF)$ of homological and cohomological cycles
of $\CF $ modulo numerical equivalence. Let $M$ be the image of
$i^*\wh \Ch^* (\CX)$ in $N^*(\CF)$. The elements $W_i$ thus defines
a sequence of convergent functionals on $M$. As $N^*(\CF)$ is finite
dimensional, this sequence will convergent to a functional
represented by an element $W$ of $\Ch_*(\CF )\otimes \BR$. Thus we
have shown that
$$\lim _i\pi _{i*}\left(\frac
1{e_i^n} c_1(\CM_{i1})\cdots c_1(\CM_{in})\cdot
[\CX_i]\right)=c_1(\CM_{01})\cdots c_1(\CM_{0n})\cdot [\CX]+V.$$ In
this way we define a correspondence
$$c_1(\bar\CL_1)\cdots c_1(\bar \CL _n)\cdot [\CX]=\lim _i\pi _{i*}
\left(\frac 1{e_i^n} c_1(\CM_{i1})\cdots c_1(\CM_{in})\cdot
[\CX_n]\right).$$ \end{proof}

\subsubsection*{Deligne pairing} In the following, we want to
construct Deligne pairing of  metrized line bundles. Let $f: X\lra
Y$ be a flat and projective morphism  of two smooth varieties over
valuation field $k$ of relative dimension $n$. Let $\bar \CL_0=(\CL
_0, \|\cdot\|_0)$, $\bar\CL_1=(\CL_1, \|\cdot \|)$, $\cdots$,
$\bar\CL_n=(\CL_n, \|\cdot \|_n)$ be $n+1$ integral metrized line
bundles over $X$. We want to define a Deligne paring
$$\pair {\bar \CL_0, \bar \CL_1, \cdots, \bar \CL _n}
=(\pair {\CL_0,  \CL_1, \cdots, \CL _n}, \quad \|\cdot \|)$$ as an
adelic metrized line bundle over $Y$.  Recall that $\bar \CL_i$ can
be approximated by models over $\CO _k$:
$$(\CX _i, \CM_{i0}, \cdots, \CM_{in})$$
of $(X, \CL _0^{e_i}, \cdots, \CL _n^{e_i})$ for some $e_i\in \BN$.
Without loss of generality,  we may assume that $\CX_i$ is flat and
projective over a model $\CY_i$ over $\CO_k$. Then we have a
Deligne's pairing:
$$\pair{\CM _{i0}, \cdots, \CM_{in}}\in \Pic (\CY_i).$$
This sequence of bundles on models $\CM _i$ defines an adelic
metrized line bundle on $Y$.

In the following, we would like to describe a formula for computing
norm of a section of Deligne's pairing. Let $\ell _0, \ell_1, \cdots,
\ell_n$ be non-zero sections of $\CL_i$ on $X$. By writing $\CL_i$
as linear combination of very ample line bundles and applying
Bertini's theorem, we may assume that any intersection of any subset
of $\div (\ell _i)$'s is a linear combination of subvarieties which
are smooth over $Y$. Then the pairing $\pair{\ell _0, \cdots, \ell
_n}$ is well-defined as a section of $\pair{\CL_0, \cdots, \CL_n}$.
The norm of this section can be defined by the following induction
formula:\begin{align} \log \|\pair{\ell_0, \cdots, \ell _n}\| =&\log
\|\pair{\ell
_0|_{\div \ell _n}, \cdots, \ell _{n-1}|_{\div \ell _n}}\|\notag\\
&+ \int _{X/Y}\log \|\ell _n\|c_1(\bar\CL_0)\cdots c_1(\bar \CL
_{n-1}).\end{align} We need to explain the integration in the above
formula in terms of models $(\CX_i, \CM_{i0}, \cdots, \CM_{in})$ as
above. In this case $\ell _n ^{e_i}$ extends to a rational section
$m$ of $\CM_n$. The divisor $\div (m)$ has a decomposition of Weil
divisor:
$$\div (m)=e_i\overline{\div (\ell _n)}+V_i$$
where $\overline{\div (\ell _n)}$ is the Zariski closure of $\div
(\ell _n)$ on $\CX_i$ and $V_i$ is  a divisor in the special fiber
of $\CX_i$ over $\Spec \CO_k$. Then the integral is defined as
$$\int _{X/Y}\log \|\ell _n\|c_1(\bar\CL_0)\cdots c_1(\bar \CL
_{n-1})=\lim _{i\to\infty} \frac 1{e_i^n}V_i\cdot c_1(\CM
_{i0}\cdots \CM _{i{n-1}}).$$

\subsection{Correspondences on a curve} In this subsection we want to construct
arithmetic classes for divisors on a product without using regular
models.

\subsubsection*{Decompositions}

\begin{lemma}
Let $X_1$ and $X_2$ be two varieties over a field $k$ with product
$Y=X_1\times_k X_2$. Let $e_1, e_2$
 be two rational points on $X_1$ and $X_2$, and let  $\Pic ^-(Y)$
 be the subgroup of line bundles which
 are trivial when restrict on $\{e_1\}\times X_2$ and $X_1\times
 \{e_2\}$. Then we have a decompositions of line bundles on $Y$:
 \begin{equation}
 \Pic (Y)\simeq p_1^*\Pic (X_1)\oplus p_2^* \Pic (X_2)
 \oplus \Pic ^-(Y).\end{equation}
 \end{lemma}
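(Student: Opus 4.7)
The plan is to construct an explicit splitting by using the two sections of the projections given by the base points. Let
$$i_1\colon X_1\hookrightarrow Y,\quad x\mapsto (x,e_2), \qquad i_2\colon X_2\hookrightarrow Y,\quad y\mapsto (e_1,y)$$
be the two closed immersions. Since $p_1\circ i_1=\mathrm{id}_{X_1}$ and $p_2\circ i_2=\mathrm{id}_{X_2}$, the pullbacks $p_1^*\colon \Pic(X_1)\to \Pic(Y)$ and $p_2^*\colon \Pic(X_2)\to \Pic(Y)$ are injective with left inverses $i_1^*$ and $i_2^*$ respectively, so the summands $p_1^*\Pic(X_1)$ and $p_2^*\Pic(X_2)$ really are subgroups of $\Pic(Y)$.

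For the decomposition itself, define the map
$$\Psi\colon \Pic(Y)\longrightarrow p_1^*\Pic(X_1)\oplus p_2^*\Pic(X_2)\oplus \Pic^-(Y), \qquad L\longmapsto \bigl(p_1^*i_1^*L,\ p_2^*i_2^*L,\ L\otimes p_1^*i_1^*L^{-1}\otimes p_2^*i_2^*L^{-1}\bigr).$$
The main point to verify is that the third component really lies in $\Pic^-(Y)$. For the restriction to $X_1\times\{e_2\}$ one computes $i_1^*p_1^*i_1^*L=i_1^*L$ (since $p_1\circ i_1=\mathrm{id}$) and $i_1^*p_2^*i_2^*L=(p_2\circ i_1)^*i_2^*L$, where $p_2\circ i_1$ is the constant map $X_1\to\{e_2\}\hookrightarrow X_2$; pullback along a constant morphism gives the trivial bundle, so the two unwanted factors cancel and the restriction is trivial. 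The restriction to $\{e_1\}\times X_2$ is symmetric.

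The inverse map is the obvious tensor product
$$(p_1^*M_1,\ p_2^*M_2,\ N)\longmapsto p_1^*M_1\otimes p_2^*M_2\otimes N,$$
and the two compositions equal the identity by direct computation, using once more that $i_j^*p_j^*=\mathrm{id}$ and that $i_j^*p_k^*$ (with $j\neq k$) factors through the trivial bundle. Thus $\Psi$ is an isomorphism and the decomposition (2.2.1) holds. The only subtlety is the triviality-via-constant-map check in the previous paragraph; everything else is formal bookkeeping.
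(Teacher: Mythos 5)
Your proof is correct and takes essentially the same route as the paper: the paper notes that the decomposition $t = p_1^*\alpha_1 + p_2^*\alpha_2 + s$ with $s\in\Pic^-(Y)$ forces $\alpha_1 = t|_{X_1\times\{e_2\}}$ and $\alpha_2 = t|_{\{e_1\}\times X_2}$, which are exactly your $i_1^*L$ and $i_2^*L$. You have simply spelled out the verification that the remainder lies in $\Pic^-(Y)$ (the constant-map cancellation), which the paper leaves implicit.
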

 \begin{proof} For any class $t\in \Pic (Y)$,  the equation
$$t=p_1^*\alpha_1 +p_2^*\alpha_2 +s, \qquad \alpha_i\in \Pic (X_i), \quad s\in \Pic ^-(Y)$$ is equivalent to
$$\alpha_2 =t|_{\{e_1\}\times X_2}, \quad \alpha _1=t|_{X\times \{e_2\}}.$$
\end{proof}

Now we assume that $X_i$ are curves over a number field. Consider an
embedding
 $$Y=X_1\times X_2\lra A:=\mathrm{Alb} (Y)=\Jac (X_1)\times \Jac (X_2)$$
 $$(x_1, x_2)\mapsto (x_1-e_1, x_2-e_2).$$
 This induces a homomorphism of groups of line bundles:
 $$\Pic (A)\lra \Pic (Y).$$
  We also have a decomposition for line
bundles on $A$ with respect to the base points $(0, 0)$ on $A$:
\begin{equation} \Pic (A)=p_1^*\Pic (\Jac (X_1))\oplus p_2^*\Pic
(\Jac (X_2))\oplus \Pic ^-(A).\end{equation}

\begin{lemma} The morphism $\Pic (A)\lra \Pic (Y)$ is
surjective. More precisely, it induces the following:
\begin{enumerate}
\item An isomorphism $\Pic ^0(A)\simeq \Pic^0(Y)$;
\item An isomorphism $\Pic ^-(A)\simeq \Pic ^-(Y)$.
\end{enumerate}
\end{lemma}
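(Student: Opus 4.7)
The plan is to use Lemma~2.2.1 with base points $(e_1,e_2)$ on $Y$ and $(0,0)$ on $A$ to split the pullback $\phi^*\colon \Pic(A)\to\Pic(Y)$ along the Albanese map $\phi\colon Y\to A$, $(x_1,x_2)\mapsto(x_1-e_1,x_2-e_2)$, as a diagonal sum
\[
\phi^*=\phi_1^*\oplus\phi_2^*\oplus\phi_-^*,
\]
where $\phi_i^*\colon\Pic(\Jac X_i)\to\Pic(X_i)$ is pullback along $x\mapsto x-e_i$ and $\phi_-^*\colon\Pic^-(A)\to\Pic^-(Y)$ is the induced map on minus-parts. This block-diagonal form is automatic since $\phi$ sends $\{e_1\}\times X_2$ into $\{0\}\times\Jac X_2$ and $X_1\times\{e_2\}$ into $\Jac X_1\times\{0\}$, so the subgroups defining the ``$-$''-summands correspond under $\phi^*$. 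Claims (1) and (2) can then be checked on the appropriate summands, and surjectivity of the full map will follow.

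Claim (1) is essentially a restatement of Albanese--Picard duality: since $A=\mathrm{Alb}(Y)$ and $\phi$ is the Albanese morphism, one has $\Pic^0(A)=\mathrm{Alb}(Y)^\vee\simeq\Pic^0(Y)$ canonically, the isomorphism being realized precisely by $\phi^*$.

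Claim (2) is where I expect the main obstacle, and the bulk of the argument to lie. The strategy is to identify both $\Pic^-(Y)$ and $\Pic^-(A)$ with the group $\Hom(\Jac X_1,\Jac X_2)$, and to check that $\phi_-^*$ corresponds to the identity. On the curve side, any $L\in\Pic^-(Y)$ restricts by see-saw to a family of degree-zero line bundles on $X_2$ parametrized by $X_1$, yielding a morphism $X_1\to\Jac X_2$ that sends $e_1$ to $0$; by the universal property of $\Jac X_1$ this extends uniquely to a homomorphism $\bar f\colon\Jac X_1\to\Jac X_2$, the inverse being given by pulling back the Poincar\'e bundle on $\Jac X_1\times\Jac X_2$ along $\mathrm{id}\times\bar f$ and restricting to $Y$. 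Applied to $A=\Jac X_1\times\Jac X_2$, the same see-saw argument together with auto-duality of Jacobians gives $\Pic^-(A)\simeq\Hom(\Jac X_1,(\Jac X_2)^\vee)\simeq\Hom(\Jac X_1,\Jac X_2)$. Since $\phi$ is the product of the canonical maps $X_i\hookrightarrow\Jac X_i$, these two identifications manifestly intertwine $\phi_-^*$ with the identity. Finally, for the surjectivity of $\phi^*$ itself, beyond (1) and (2) one needs $\phi_i^*\colon\Pic(\Jac X_i)\to\Pic(X_i)$ to be surjective; this holds rationally---as is implicit in the paper's conventions---because the pullback of the theta divisor is $g_i\cdot[p]$ for any $p\in X_i$, so its image generates $\NS(X_i)\otimes\BQ$.
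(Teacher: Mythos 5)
Your proposal is correct, but it establishes part (2) by a genuinely different route than the paper. The paper works on the level of N\'eron--Severi groups and cohomology: after quotienting by $\Pic^0$ it passes to $H^{1,1}\subset H^2$, uses the K\"unneth decompositions of $H^2(Y)$ and $H^2(A)=\wedge^2 H^1(A)$, observes that both $\Pic^-$-groups sit inside the common summand $H^1(X_1)\otimes H^1(X_2)$ on which the comparison map is the identity, and gets surjectivity of $\NS(\Jac X_i)\to\NS(X_i)$ from the perfectness of the cup-product pairing on $H^1(X_i)$. Implicitly this invokes the Lefschetz $(1,1)$-theorem to pass back from Hodge classes to line bundles. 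You instead avoid cohomology entirely: you identify both $\Pic^-(Y)$ and $\Pic^-(A)$ with $\Hom(\Jac X_1,\Jac X_2)$ via see-saw, autoduality, and the Albanese universal property, and check that $\phi^*$ becomes the identity under these identifications. This is more elementary and more explicitly functorial; the paper's cohomological argument is shorter to state but relies on a bigger black box. One cosmetic caveat: in describing the inverse of the see-saw map, ``the Poincar\'e bundle on $\Jac X_1\times\Jac X_2$'' should really be the Poincar\'e bundle on $\Jac X_2\times\widehat{\Jac X_2}$ transported via the canonical principal polarization $\Jac X_2\simeq\widehat{\Jac X_2}$ and precomposed with $\bar f\times\mathrm{id}$; as written it doesn't quite parse, though the intended construction is clear. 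Finally, your observation that the surjectivity of $\phi_i^*\colon\Pic(\Jac X_i)\to\Pic(X_i)$ only holds rationally (the theta divisor pulls back to a class of degree $g_i$) is correct and in fact slightly more careful than the paper's phrasing, which reads as if it claims integral surjectivity of $\NS(\Jac X_i)\to\NS(X_i)$.
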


\begin{proof}
For $\Pic^0$-part this is clear as it is determined by the theorem
of square and by the corresponding isomorphism on $\Pic^0$'s on $X$
and $\Jac (X)$. To prove other parts in Proposition, we consider the
following  standard exact sequences:
$$0\lra \Pic ^0(Y)\lra \Pic (Y)\lra \NS(Y)\lra 0$$
$$0\lra \Pic ^0(A)\lra \Pic (A)\lra \NS(A)\lra 0.$$
Modulo $\Pic ^0$ in equations (2.2.1) and (2.2.2), we have
$$\NS (Y)\simeq p_1^*\NS (X_1)\oplus p_2^*\NS (X_1)\oplus \Pic
^-(Y), $$
$$\NS (A)\simeq p_1^*\NS (\Jac (X_1))\oplus p_2^*\NS (X_2)\oplus
\Pic ^-(A).$$ It remains to show that $\NS (\Jac (X_i))\lra \NS
(X_i)$ is surjective and $\Pic ^-(A)\lra \Pic ^-(Y)$ is isomorphic.
We need only prove these statements by working on $H^{1,1}$'s.
Consider the decompositions:
$$H^2(Y)=H^2(X_1)\oplus H^2(X_2)\oplus H^1(X_1)\otimes H^1(X_2)$$
$$H^2(A)=\wedge ^2H^1(A)=\wedge ^2H^2(X_1)\oplus
\wedge ^2H^1(X_2) \oplus H^1(X_1)\otimes H^1(X_2).$$ The map
$H^2(A)\lra H^2(Y)$ is induced by the identification on $H^1\otimes
H^1$'s parts and by the canonical alternative product on $\wedge ^2
H^1$-part:
$$\wedge ^2H^1(X_i)\lra H^2(X_i).$$
 Notice that
the projections
$$H^2(A)\lra \wedge ^2H^1(\Jac(A_1)), \qquad H^2(Y)\lra H^2(X_1)$$
correspond to the pull-back of the following maps:
$$\Jac (X_1)\lra A, \quad x\mapsto (x, 0)$$
$$X_1\lra Y, \quad x\lra (x, e_2).$$
Thus the decompositions in $H^2$'s are compatible with
decompositions in $\NS$'s. The surjectivity of $\NS (\Jac (X_1))\lra
\NS (X_i)$ follows from the fact that the alternative pairing on
$H^1(X_i)$ is perfect with values in $\BZ$.
\end{proof}

\subsubsection*{Admissible metrics}
Since the class map gives an embedding from $\Pic ^-(A)$ to
$H^2(A)$,  every line bundle in $\Pic ^-(A)$ is even under action by
$[-1]^*$ and thus have eigenvalues $n^2$ under action $[n]^*$. In
this way, we may construct admissible, integral, and adelic  metrics
on $\|\cdot\|$ on each line bundle in $\Pic ^-(A)$. In other words,
each $\CL$ in $\Pic ^-(A)$ can be extended into a integrable
metrized line bundle $\wh \CL=(\CL, \|\cdot\|)$ such that
$$[n]^*\wh \CL\simeq \wh \CL ^{n^2}.$$
See our previous paper \cite{adelic} for details.

The  abelian variety $A$ has an action by $\BZ^2$ by double
multiplications: for $m, n\in \BZ$,
$$[m, n]:\,  A=\Jac (X_1)\times \Jac (X_2)\lra A$$
$$(x, y)\mapsto (mx, ny).$$
In this notation the multiplication on $A$ by $\BZ$ is diagonally
embedded into $\BZ^2$. In particular these action are commutative.
By the uniqueness of the admissible metrics, the admissible metrics
is admissible with respect to the multiplication by $\BZ^2$:
$$[m, n]^*\wh \CL\simeq \wh \CL ^{mn}.$$
This shows that the bundle $\wh\CL$ is admissible in each fiber of
$A^2\lra A$ of two projection.  Thus we have shown the following:
\begin{lemma}For any class $t\in \Pic ^-(A)$, the restriction on $Y$ with
admissible metric gives an adelic metrized line bundle $\wh t$
satisfies the following conditions
\begin{itemize}
\item $\wh t$ has zero intersection with components in the
fibers over closed points for the two projects $ Y\lra X_i$;
\item $\wh t$ is trivial on $\{e_1\}\times X_2$ and on $X_1\times
\{e_1\}$.
\end{itemize}
Moreover, such an adelic structure over $t$ is unique.
\end{lemma}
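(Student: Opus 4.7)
The plan is to verify the two stated conditions directly from the construction of the admissible metric $\hat t$ on $A$ recalled in the preceding paragraph, and then to establish uniqueness. The key input is the bi-equivariance $[m, n]^* \hat t \simeq \hat t^{mn}$: since the class map $\Pic^-(A) \hookrightarrow H^2(A)$ lands in the K\"unneth summand $H^1(\Jac X_1) \otimes H^1(\Jac X_2)$, the bi-multiplication $[m, n]$ on $A$ scales $c_1(t)$ by $mn$, and by injectivity of the class map on $\Pic^-$ the line bundle $t$ itself satisfies $[m, n]^* t \simeq t^{mn}$; Tate's averaging promotes this equivariance to the admissible adelic metric.

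For the triviality on coordinate curves, the embedding $\{e_1\} \times X_2 \hookrightarrow Y \hookrightarrow A$ factors through the subvariety $\{0\} \times \Jac(X_2)$, on which every class of $\Pic^-(A)$ is trivial by the very definition of that summand in the decomposition (2.2.2); the admissible metric on the trivial bundle is itself trivial. The symmetric argument handles $X_1 \times \{e_2\}$. For the vanishing of the intersection with fiber components of $p_1 \colon Y \to X_1$, I would specialize $[1, n]^* \hat t \simeq \hat t^n$ to a geometric fiber $\{a\} \times \Jac(X_2)$ of $A \to \Jac(X_1)$ to obtain an admissible adelic line bundle $L$ on $\Jac(X_2)$ satisfying $[n]^* L \simeq L^n$. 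This equivariance characterizes the canonical (N\'eron--Tate) metric on a degree-zero line bundle, which by the construction in \cite{adelic} has trivial intersection with every vertical component at every non-archimedean place. Pulling further back to $\{x_1\} \times X_2 \subset \{x_1 - e_1\} \times \Jac(X_2)$ yields the claim for $p_1$, and the argument for $p_2$ is identical after swapping factors.

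For uniqueness, suppose $\hat t'$ is another adelic structure on $t|_Y$ satisfying both properties. Then $\hat t' \otimes \hat t^{-1}$ is an adelic metric on the trivial line bundle on $Y$, encoded by a family of continuous functions $f_v$ on the reduction graphs $R(Y_v)$ and on $Y_v(\BC)$. The trivialization on the two coordinate curves forces $f_v = 0$ on $\{e_1\} \times X_2$ and on $X_1 \times \{e_2\}$, while the zero-vertical-intersection condition forces $f_v$ to be harmonic along each fiber of $p_1$ and $p_2$ with respect to the admissible measure. A maximum-principle argument along either family of fibers then yields $f_v \equiv 0$. The main obstacle I anticipate is making this harmonicity step precise in the adelic reduction-graph setting, where one must confirm that the notion of ``vertical component'' in the hypothesis genuinely agrees with the components of the reduction complex of $Y$; the machinery developed in \cite{admissible} should handle this without extra work.
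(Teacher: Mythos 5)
Your proof is correct and follows essentially the same approach as the paper: the $\BZ^2$-equivariance $[m,n]^*\wh t\simeq \wh t^{mn}$ of the admissible metric supplies existence of both required properties, and uniqueness comes from restricting the difference of two such structures (viewed as an adelic structure on $\CO_Y$) to the coordinate curves $\{p_1\}\times X_2$ and $X_1\times\{p_2\}$. The paper's own proof is considerably terser (it addresses only uniqueness, in two sentences, with existence taken as given from the preceding paragraph); one small point is that your appeal to a ``maximum principle'' is unnecessary --- the zero-vertical-intersection condition forces the Laplacian of $f_v$ to vanish along each connected fiber, which already gives constancy, and the boundary value $f_v=0$ on $X_1\times\{e_2\}$ then pins the constant to zero directly.
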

\begin{proof} The difference of two different adelic structures on
$t$ satisfying the above conditions will give an adelic structure
$\wh t_0$ on the trivial bundle $t_0=\CO_Y$ satisfying the condition
in the lemma. This is certainly trivial by checking on the curves
$\{p\}\times X_1$ and $X_1\times \{p_2\}$ on closed points $p_i$ on
$X_i$.
\end{proof}

Our method above also shows that  the line bundles in $\Pic ^0(A)$
(which is odd) on any abelian variety $A$ also have integrable,
admissible, integrable metrics. Indeed, let $\CP$ be the Poincar\'e
universal  bundle on $A\times \Pic ^0(A)$ which trivial restriction
on $\{0\}\times \Pic ^0(A)$ and $A\times \{0\}$. Then $\CP$ is an
even line bundle thus admits an integrable metrized bundles. The
action by $\BZ^2$ shows that this admissible metric is admissible
fiber-wise. The following are some expressions for bundles on $\Pic
^-(A)$ and $\Pic (Y)$:

\begin{lemma}
\begin{enumerate}
\item Any line bundle $\CL\in \Pic ^-(A)$ is induced from a unique endomorphism
$\alpha\in \End (\Jac (X))$ by the following way: $$\CL =(\alpha,
1)^*\CP.$$ Moreover $\CL$ is symmetric if and only if $\alpha $ is
symmetric with respect to Rosatti involution;
\item a bundle $\CL $
in $\Pic ^-(Y)$ is symmetric (with respect to involution on
$Y=X\times X$) if and only if there is a symmetric line bundle $\CM$
on $\Pic (X)$ such that
$$\CL ^2\simeq s(\CM):= m^*\CM\otimes p_1^*\CM ^{-1}\otimes p_2^*\CM
^{-1}\otimes 0^*\CM.$$ Moreover such an $\CM$  is isomorphic to
$\Delta ^*\CL$ where $\Delta$ is the diagonal embedding $\Jac(X)\lra
A$.
\end{enumerate}
\end{lemma}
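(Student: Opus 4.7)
For (1), my plan is to invoke the see-saw principle on $A=\Jac(X)\times\Jac(X)$. A line bundle $\CL$ restricting trivially to $\{0\}\times\Jac(X)$ and $\Jac(X)\times\{0\}$ is classified, via the universal property of $\wh{\Jac(X)}=\Pic^0(\Jac(X))$, by a homomorphism $\Jac(X)\to \wh{\Jac(X)}$; composing with the inverse of the principal polarization $\phi_\Theta\colon \Jac(X)\simeq\wh{\Jac(X)}$ produces a unique $\alpha\in\End(\Jac(X))$, and by construction $\CL\simeq(\alpha,1)^*\CP$ where $\CP$ is the Poincar\'e bundle on $\Jac(X)\times\wh{\Jac(X)}$ and the ``$1$'' refers to $\phi_\Theta$ on the second factor. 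The swap involution $\sigma$ on $A$ carries $(\alpha,1)^*\CP$ to $(1,\alpha)^*\CP$, and by the symmetry of $\CP$ under biduality this equals $(\alpha^\vee,1)^*\CP$. Since $\alpha\mapsto \alpha^\vee$ is precisely the Rosati involution attached to $\phi_\Theta$, symmetry of $\CL$ becomes equivalent to $\alpha=\alpha^\vee$.

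For (2), I would first extend $\CL\in\Pic^-(Y)$ to a unique $\tilde\CL\in\Pic^-(A)$ using Lemma 2.2.3; since $Y\hookrightarrow A$ intertwines the swap involutions, $\CL$ is symmetric iff $\tilde\CL$ is, so by (1) it corresponds to a Rosati-symmetric $\alpha\in\End(\Jac(X))$. The map $\CM\mapsto\phi_\CM$ identifies symmetric line bundles on $\Jac(X)$ (up to $2$-torsion) with Rosati-symmetric endomorphisms, so there is a symmetric $\CM\in\Pic(\Jac(X))$ with $2\alpha=\phi_\CM$; this is precisely the reason one must work with $\CL^{\otimes 2}$ rather than $\CL$. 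Combining the defining property $\phi_\CM(x)=t_x^*\CM\otimes\CM^{-1}$ with the theorem of the cube then yields the classical identity $(\phi_\CM,1)^*\CP\simeq m^*\CM\otimes p_1^*\CM^{-1}\otimes p_2^*\CM^{-1}\otimes 0^*\CM=s(\CM)$, whence $\CL^{\otimes 2}\simeq s(\CM)$ after restriction to $Y$ and identification of $\CM$ with its restriction to $X\hookrightarrow\Jac(X)$.

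For the ``moreover'' clause, pull $\tilde\CL$ back along the diagonal $\Delta\colon\Jac(X)\to A$: the see-saw dictionary sends $\Delta^*(\alpha,1)^*\CP$ to the bundle whose associated polarization is $\alpha+\alpha^\vee=2\alpha=\phi_\CM$, and a polarization determines its underlying symmetric line bundle, so $\Delta^*\tilde\CL\simeq\CM$ on $\Jac(X)$, whence $\Delta^*\CL\simeq\CM$ on $X$. The main technical hurdle I anticipate is the bookkeeping around the factor of $2$ that forces passage from $\CL$ to $\CL^{\otimes 2}$, together with the careful verification of the identity $(\phi_\CM,1)^*\CP\simeq s(\CM)$ by direct manipulation with the Poincar\'e bundle on $\Jac(X)\times\wh{\Jac(X)}$; everything else is a bookkeeping exercise in see-saw and the theorem of the cube.
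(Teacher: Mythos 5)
Your argument is correct and follows essentially the same route as the paper's (very brief) proof: use the universal property of $\CP$ to write $\CL=(\alpha,1)^*\CP$, match the swap involution with the Rosati involution, and obtain $s(\CM)=\CL^{\otimes 2}$ for $\CM=\Delta^*\CL$ via the identity $(1,\phi_\CM)^*\CP\simeq s(\CM)$ together with $\phi_{\Delta^*\CL}=\alpha+\alpha^\vee=2\alpha$. Two small slips worth fixing: the extension lemma you want is 2.2.2, not 2.2.3; and a polarization determines a symmetric line bundle only up to a $2$-torsion twist in $\Pic^0$, so for the ``moreover'' clause it is cleaner to set $\CM:=\Delta^*\CL$ at the outset and verify $\phi_\CM=2\alpha$, rather than to select an $\CM$ abstractly from $\phi_\CM=2\alpha$ and then claim it must coincide with $\Delta^*\CL$.
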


\begin{proof}
Indeed, any such $\CL$ induces an endomorphism $$\alpha: \Jac
(X)\lra \Pic (\Jac (X))=\Jac (X), \quad x\mapsto \CL |_{x\times \Jac
(X)}.$$ By universality of the Poincar\'e bundle we have that
$$\CL=(\alpha, 1)^*\CP.$$
The rest of statements in (1) is clear. If $\CL$ is symmetric, we
take
$$\CM=\Delta ^*\CL=\Delta^*(\alpha, 1)^*\CP.$$
Then we can show that
$$s(\CM)=\CL ^2.$$
\end{proof}

\subsubsection*{Example}
Let $e$ be a class in $\Pic ^1(X)$. The class $s(\Delta):=\Delta
-p_1^*e-p_2^*e$ on $X\times X$ is the pull-back of Poincar\'e bundle
via the embedding $X\lra A$ via $e$. It is also induced from the
theta divisor:
$$2s(\Delta)=-s(\Theta)|_{X\times X}.$$

\subsubsection*{Composition of arithmetic correspondences}
Let $X_i$ ($i=1,2,3$) be three curves over a number field. Let
$(\bar\CL, \|\cdot\|)$ and $\bar\CM=(\CM, \|\cdot\|)$ be integral
metrized line bundles on $X_1\times X_2$ and $X_2\times X_3$
respectively. We can define a composition $\bar\CL \circ \bar\CM$ by
Deligne pairing for the projection
$$\pi _{13*}: \quad X_1\times X_2\times X_3\lra X_1\times X_3$$
$$\bar\CL\circ \bar \CM:=\pair{\pi _{12}^*\CM, \pi _{23}^*\CL}.$$
It is easy to see that the composition is compatible with the
induced action on Chow groups of models of $X_i$.

If $\CL$ and $\CM$ are in $\Pic ^-(X_1\times X_2)$ and $\Pic
^-(X_2\times X_3)$ with respect to some base points $e_i$ and the
metrics are admissible then the composition is also an admissible
class in $\Pic ^-(X_1\times X_3)$.

\subsection{Gross--Schoen cycles}
In this subsection, we will study the height of Gross--Schoen
cycles. We will deduce a formula between the height of Gross--Schoen
cycles and the triple pairing of correspondences in Theorem 2.3.5.

Let  $X^3$ be a triple product of a smooth and projective curve $X$
over $k$. Let $e$ be a rational point on $X$. For each subset $T$ of
$\{1,2, 3\}$ define an embedding from $X$ to $X^3$ which takes $x$
to $(x_1, x_2, x_3)$ where $x_i=x$ if $i\in T$ and $x_i=e$
otherwise. Then we define the modified diagonal by
$$\Delta _e=\sum _{T\ne \emptyset}(-1)^{\#T-1}\Delta _T.$$
We may extend this cycle for case where $e$ is a divisor on $X$ of
degree $1$ as in Introduction:

\begin{lemma}[Gross--Schoen]
The cycle $\Delta _e$ is cohomologically trivial. In other words,
its class in $H^4(X^3)$ has zero cup product with elements in
$H^2(X^3)$. \end{lemma}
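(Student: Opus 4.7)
\medskip

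\noindent\textbf{Proof proposal.} The statement amounts to showing that the cup product
$[\Delta_e] \cup \eta \in H^6(X^3) = \BQ$ vanishes for every $\eta \in H^2(X^3)$. The plan is to reduce to a finite list of generators of $H^2(X^3)$ via the K\"unneth decomposition, compute the contribution of each partial diagonal $\Delta_T$ by pullback along the parametrization $\delta_T: X \to X^3$, and then verify that the combinatorial sum weighted by the signs $(-1)^{\#T-1}$ vanishes by a simple inclusion--exclusion.

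First I would fix the K\"unneth description $H^2(X^3)$, which is spanned by the two types of classes
$$\text{(i)}\quad p_i^*\omega, \quad i=1,2,3, \qquad \text{(ii)}\quad p_i^*\alpha \cup p_j^*\beta, \quad i<j,\ \alpha,\beta\in H^1(X),$$
where $\omega \in H^2(X)$ is the point class. Since $\Delta_e = \sum_{T\neq\emptyset}(-1)^{\#T-1}\Delta_T$ and each $\Delta_T$ is parametrized by $\delta_T: X \to X^3$ whose $i$-th component is the identity if $i\in T$ and the constant $e$ otherwise, we have $(p_i\circ \delta_T)^*$ equal to the identity on $H^*(X)$ if $i\in T$ and to the zero map on $H^{>0}(X)$ if $i\notin T$. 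Hence
$$\int_{\Delta_T}p_i^*\omega = \begin{cases}1, & i\in T,\\ 0, & i\notin T,\end{cases} \qquad \int_{\Delta_T}p_i^*\alpha\cup p_j^*\beta = \begin{cases}\int_X\alpha\cup\beta, & \{i,j\}\subseteq T,\\ 0, & \text{otherwise.}\end{cases}$$
(Here I am implicitly using that $e$ has degree $1$, so $[e]=\omega$ in $H^2(X)$; the extension to the divisor $e=\sum a_ip_i$ with $\sum a_i\deg p_i=1$ is immediate by linearity.)

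Plugging in, for type (i) the pairing becomes
$\sum_{T\ni i,\ T\neq\emptyset}(-1)^{\#T-1} = \sum_{S\subseteq \{1,2,3\}\setminus\{i\}}(-1)^{\#S} = (1-1)^2 = 0,$
and for type (ii) with $i\neq j$ the sum is restricted to $T=\{i,j\}$ and $T=\{1,2,3\}$, giving $(-1)^1+(-1)^2=0$. Since every element of $H^2(X^3)$ is a linear combination of classes of types (i) and (ii), the pairing $[\Delta_e]\cup\eta$ vanishes for all such $\eta$, which is the statement of the lemma.

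I do not anticipate any serious obstacle: the argument is pure bookkeeping once one observes that the signs in $\Delta_e$ are exactly the signs of the inclusion--exclusion formula over the poset of non-empty subsets of $\{1,2,3\}$, and that the only $\Delta_T$ contributing to a given test class are those whose index set $T$ contains the indices appearing in the test class. In fact the same calculation shows the stronger assertion that $[\Delta_e]=0$ in $H^4(X^3)$ by Poincar\'e duality, which is consistent with Gross and Schoen's original statement.
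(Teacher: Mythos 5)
Your proof is correct and takes essentially the same route as the paper: both decompose $H^2(X^3)$ via K\"unneth and compute the pairing against each partial diagonal $\Delta_T$. The paper phrases this more compactly by noting that every class in $H^2(X^3)$ is a sum of pullbacks $p_{ij}^*\alpha$ and then reducing by adjunction to the assertion $p_{ij*}\Delta_e=0$ (left to the reader), which is exactly the inclusion--exclusion you spell out explicitly; your final remark that Poincar\'e duality upgrades this to $[\Delta_e]=0$ in $H^4(X^3)$ is also correct and is what "cohomologically trivial" means.
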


\begin{proof}
As
$$H^2(X^3)=\oplus _{i+j+k=2}H^i(X)\otimes H^j(X)\otimes
H^k(X),$$ any element in the above group is a sum of elements of the
form $p_{ij}^*\alpha$ where $p_{ij}$ is the projection to $(i, j)$
factors $X\times X$ and $\alpha \in H^2(X^2)$. For such form, the
pairing is given by
$$\pair{\Delta _e, p_{ij}^*\alpha}=\pair {{p_{ij*}\Delta _e},
\alpha}.$$ It is easy to show that $p_{ij*}\Delta _e=0$. Thus
$\Delta_e$ is homologically trivial.
\end{proof}

\subsubsection*{Arithmetical Gross--Schoen cycles and heights}
 Gross and Schoen have constructed a vertical
2-cycle $V$ in certain regular model of $X^3$ such that $\bar\Delta
_e-V$ is numerically trivial on each fiber,  where $\bar \Delta _e$
is the Zariski closure of $\Delta _e$. One may further extend this
to an arithmetic cycle $\wh \Delta _e=(\bar \Delta _e-V, g)$ by
adding a green current $g$ for $\Delta _e$ with curvature $0$. Thus
we have a well define pairing. More generally, let $t_1, t_2, t_3$
be three correspondence, then $t=t_1\otimes t_2\otimes t_3\in \Ch
^3(X^3\times X^3)$ is a correspondence of $X^3$, and we have a
pairing
\begin{equation} \pair{\Delta _e, t^*\Delta
_e}.\end{equation}

\subsubsection*{Triple pairing on correspondences}
In the following we want to sketch a process to relate this pairing
to some intersection numbers of cycles $t_i$ on $X^2$.

First let $\delta$ denote an idempotent  correspondence on $X$
defined by the cycle
$$\delta:=\Delta _{12}-p_1^*e.$$
Let $\delta ^3=\delta \otimes \delta\otimes \delta\in
\Ch^3(X^3\times X^3)$ denote the corresponding correspondence on
$X^3$. Then it is not difficult to show that \begin{equation} \Delta
_e=(\delta ^3)^*(\Delta _{123}).\end{equation} Indeed, the pull-back
of the cycle $p_1^*e\in C(X)$ takes every point to $e$ on $X$.

The projection in Lemma 2.2.1 is given  by the idempotent $\delta$:
$$t\mapsto t_e:=\delta\circ t\circ \delta^\vee=t-p_1^*(t^*e)-p_2^*(t_*e)
\in \Pic ^-(Y).$$ Since $\delta \circ \delta =\delta$, $(\delta^3)
^{*}\Delta _e=\Delta _e$,
$$\pair{\Delta_e, t^*\Delta _e}
=\pair{(\delta^3)^*\Delta_e, t^*(\delta^3) ^*\Delta_e } =\pair
{\Delta_e, (\delta _e^3)_*t^*(\delta^3) ^*\Delta_e} =\pair{\Delta_e,
t_e^*\Delta _e}.$$ If follows that in the expression (2.3.1) we may
assume that $t\in C(X)_e$.

Notice that the cycle $\delta $ in $X^2$ has degree $0$ for the
second projection. Thus we can construct arithmetic class $\wh
\delta$ extending $\delta$ as an integrable adelic metrized line
bundles so that it is numerically zero on fibers
 of $X^2$ via the second projection. In other words, for any point $p\in X$
 and vertical divisor $V$ on $X$, the intersection
 $$V\cdot i_p^*(\wh\delta)=0$$
 where $i_p$ is the embedding $x\lra (x, p)$.  For example, we may construct
 such a metric by decomposition
 $$\delta =(\Delta _{12}-p_1^*e-p_2^*e)+p_2^*e$$
 and put the admissible metric on the first class as in the
last subsection, and put any pull-back metric on $p_2^*e$. We may
further assume that $\wh \delta$ has trivial restriction on $X\times
\{e\}$.
\begin{lemma}
$$\wh \delta \circ \wh \delta =\wh\delta.$$
\end{lemma}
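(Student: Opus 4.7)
The strategy is to prove the identity in two stages: first at the level of cycles, then at the level of adelic metrics.

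At the level of cycles, I expand $\delta=\Delta_{12}-p_1^*e$ and compute the four terms of $\delta\circ\delta$ directly from $t_2\circ t_1=p_{13*}(p_{12}^*t_1\cdot p_{23}^*t_2)$. The diagonal is idempotent so $\Delta_{12}\circ\Delta_{12}=\Delta_{12}$; each mixed term $\Delta_{12}\circ p_1^*e$ and $p_1^*e\circ\Delta_{12}$ equals $p_1^*e$, since pulling back to $X^3$ pins one coordinate to $e$ while the diagonal identifies the other two; and $p_1^*e\circ p_1^*e=p_1^*e$ because $\deg e=1$. The alternating sum is $\Delta_{12}-p_1^*e=\delta$.

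At the level of adelic metrics, I invoke the uniqueness built into the construction of $\wh\delta$. An adelic extension of $\delta$ is characterized by: (a) triviality on $X\times\{e\}$, and (b) for every closed point $p\in X$, the restriction $i_p^*\wh\delta$ on the fiber $X\times\{p\}$ of the second projection is admissible of degree zero. Indeed, any two such structures differ by an adelic metric on the trivial bundle which is trivial on $X\times\{e\}$ and admissible on every fiber $X\times\{p\}$, and such a metric is forced to be zero. It therefore suffices to verify that $\wh\delta\circ\wh\delta$ satisfies (a) and (b).

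For (a), base change of the Deligne pairing along $X_1\times\{e\}\hookrightarrow X_1\times X_3$ gives
$$(\wh\delta\circ\wh\delta)|_{X_1\times\{e\}}=\langle\wh\delta,\ p_2^*(\wh\delta|_{X\times\{e\}})\rangle_{p_1},$$
which is trivial because $\wh\delta|_{X\times\{e\}}$ is. For (b), similarly
$$(\wh\delta\circ\wh\delta)|_{X_1\times\{p\}}=\langle\wh\delta,\ p_2^*(\wh\delta|_{X\times\{p\}})\rangle_{p_1}.$$
The inner factor $\wh\delta|_{X\times\{p\}}$ is admissible of degree zero by (b) for $\wh\delta$; approximating it by divisors $\sum n_i[q_i]$ with $\sum n_i=0$ and using bilinearity together with the formula $\langle\wh\delta,\ p_2^*[q_i]\rangle_{p_1}=\wh\delta|_{X_1\times\{q_i\}}$, the pairing becomes a degree-zero combination of the bundles $\wh\delta|_{X_1\times\{q_i\}}$, each of which is admissible by another application of (b) to $\wh\delta$. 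Hence the pairing is admissible of degree zero on $X_1$.

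The main technical obstacle is justifying the adelic base-change compatibility and bilinearity of the Deligne pairing used above: one has to check that the limit construction of \S2.1 producing the metric on $\wh\delta\circ\wh\delta$ is compatible with restriction to fibers of $p_{13}$, and that it splits bilinearly over approximating models in the form asserted. Both are formal consequences of the functorial properties of Deligne's pairing and the definition of adelic intersection, but they need to be spelled out to close the argument.
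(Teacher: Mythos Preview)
Your approach is genuinely different from the paper's. The paper computes $\wh\delta\circ\wh\delta$ directly via the induction formula (2.1.1) for the norm of a Deligne-pairing section: the restriction term produces $\wh\delta$ (using triviality on $X\times\{e\}$), and the integral term vanishes because $c_1(i_p^*\wh\delta)=0$ at every place. No uniqueness statement is invoked.

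Your argument has a real gap in the uniqueness step. The paper's defining conditions on $\wh\delta$ are only (i) trivial restriction on $X\times\{e\}$ and (ii) \emph{numerically zero on fibers} of the second projection, i.e.\ $V\cdot i_p^*\wh\delta=0$ for every vertical $V$. Condition (ii) is a curvature-zero condition, not full admissibility. Two adelic structures on $\delta$ satisfying (i) and (ii) can differ by $p_2^*\psi$ for any adelic function $\psi$ on $X$ with $\psi(e)=0$: such a $\psi$ restricts to a constant on each fiber $X\times\{p\}$ (hence satisfies (ii)) and vanishes on $X\times\{e\}$ (hence satisfies (i)), yet need not be zero. So showing that $\wh\delta\circ\wh\delta$ satisfies (i) and (ii) does not force it to equal $\wh\delta$. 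If instead your (b) means the stronger ``$i_p^*\wh\delta$ is the canonical admissible metric on $\CO(p-e)$,'' then uniqueness would hold, but you have not verified that the paper's $\wh\delta$ enjoys this stronger property (the paper explicitly allows ``any pull-back metric'' on the $p_2^*e$ piece).

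A secondary issue: your verification of (b) for $\wh\delta\circ\wh\delta$ by ``approximating $i_p^*\wh\delta$ by divisors $\sum n_i[q_i]$'' conflates the underlying divisor with the metrized bundle; the identity $\langle\wh\delta,\,p_2^*[q_i]\rangle_{p_1}=\wh\delta|_{X_1\times\{q_i\}}$ holds for the bare divisor $[q_i]$, but $i_p^*\wh\delta$ carries a metric that is not simply a formal sum of such terms. The paper's direct computation sidesteps both difficulties.
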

\begin{proof}
Let $\bar \CL$ be the adelic metrized line bundle with a section
$\ell$ and a divisor $\div (\ell)=\wh \delta$.  By definition,
$\wh\delta\circ \wh \delta$ is a divisor of a rational section
$$\pair{\pi _{12}^*\ell, \pi _{23}^*\ell}$$ of the line bundle
$\pair{\pi _{12}^*\CL, \pi _{23}^*\CL}$ for the projection $\pi
_{13*}: X^3\lra X^2$. By formula (2.1.1), the norm $\pair {\pi
_{12}^*\ell, \pi _{23}^*\ell}$ at a place $v$ can be written as
$$\log \|\pair {\pi _{12}^*\ell, \pi _{23}^*\ell}\|=
\log \|\pair{\pi _{12}^*\ell|_{\div \pi _{23}^*\ell}}\|+\pi
_{13*}(\log \|\pi _{12}^*\ell\|c_1(\pi _{23}^*\bar\CL)).$$

For the first term, notice that
$$\div \pi_{23}^*\ell=\pi _{23}^*\delta=X\times \Delta -X\times
\{e\}\times X.$$ Both term are isomorphic to $X\times X$ via
projection $\pi_{13}$. Thus the Deligne's pairing is given by
inversion of $\pi _{13}$
$$\pair{\pi _{12}^*\CL|_{\div\pi _{23}^*\ell}}
=\CL\otimes \alpha ^*\CL^{-1}=\CL$$ where $\alpha$ is the morphism
$$\alpha:\quad X^2\lra X^2, \quad (x, y)\mapsto (x, e).$$
The second equality is given by the assumption that $\CL$ has
trivial restriction on $X\times \{e\}$. It is easy to check that
this isomorphism takes $\pair{\pi _{12}^*\CL|_{\div\pi
_{23}^*\ell}}$ to $\ell$.

For the second term,  the restriction of  the integration on a point
$(q, p)\in X^2$ is given by integration
$$\int _X\log \|j_q^*\ell \|c_1(\bar i_p^*\bar\CL),$$
where $j_q: X\lra X^2$ is a morphism sending $x$ to $(q, x)$. This
integral is a limit of  intersection of $\bar \CL$ with some
vertical (adelic) divisors on $X$. Thus it is zero by assumption of
$\wh \delta$.
\end{proof}

We want to apply the above Lemma to construct an extension  $\wh
\Delta_e$ on $\Delta _e$ on some model which are numerically trivial
on special fiber. We will use regular models $X^3$ constructed in
Gross--Schoen \cite{gross-schoen}. Let $\CX\lra B$ be a good model
$\CX$ in the sense that the morphism  has only ordinary double
points as singular point, and that every component of fiber is
smooth. Then we can get a good model $\wt{\CX^3}$ of $X^3$ by
blowing up all components in fiber product $\CX^3$ in any fixed
order of components. Let $\wh \Delta _{123}$ be any arithmetical
cycle on $\wt{\CX^3}$ extending $\Delta _e$. By Lemma 2.1.2, the
divisors $\wh\delta^3$ defines a correspondence on $\wt{\CX^3}\times
\wt {\CX^3}$. Thus we have well defined arithmetical cycle
$(\wh\delta^3)^*\wh\Delta _e$.

\begin{lemma} The cycle  $(\wh\delta ^3)^*\wh \Delta_{123}$ is
numerically zero on every fiber of $\wh {\CX^3}$ over $\Spec \,\CO
_k$.
\end{lemma}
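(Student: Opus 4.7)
The plan is to verify numerical triviality by intersecting $\wh\Delta := (\wh\delta^3)^*\wh\Delta_{123}$ with an arbitrary irreducible vertical divisor $W \subset \wt{\CX^3}_v$ and showing this intersection number vanishes. By the arithmetic adjointness for correspondences defined by adelic metrized bundles (analogous to the adjoint relation in Lemma 2.1.4), together with the tensor structure $\wh\delta^3 = \wh\delta\otimes\wh\delta\otimes\wh\delta$ acting on the three factors of $X^3 \times X^3$, the intersection can be rewritten as
$$W \cdot \wh\Delta = \wh\Delta_{123} \cdot (\wh\delta^3)^\vee_* W,$$
where the transposed action $(\wh\delta^3)^\vee_*$ factors as the composition of three commuting single-factor operators $(\wh\delta^\vee_i)_*$, each acting on the $i$-th copy of $\CX$ and leaving the others fixed.

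Next, I would decompose $\wh\delta = \wh s + \wh p$, where $\wh s = \widehat{\Delta_{12}-p_1^*e-p_2^*e}$ carries the admissible metric from \S2.2 and $\wh p = \widehat{p_2^*e}$ is the pullback correction chosen so that $\wh\delta$ is trivial along $X\times\{e\}$. Expanding $(\wh\delta^\vee)^{\otimes 3}_* W$ yields eight terms. By the admissibility property of Lemma 2.2.3, any term containing at least one admissible factor $\wh s_i$ annihilates $W$ on the $i$-th coordinate, since the projection of the vertical divisor $W$ to the $i$-th copy of $\CX$ is a sum of components of fibers over closed points of $\CX$, against which $\wh s$ has zero intersection. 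The single remaining term (pure pullback on each of the three factors) collapses the image onto the horizontal section $(e,e,e)$; its pairing with the diagonal $\wh\Delta_{123}$ then reduces to intersections of $W$ with horizontal sections of $\wh e$, which are forced to vanish by the normalization of $\wh\delta$ along $X\times\{e\}$ (the same normalization used in the proof of Lemma 2.3.2).

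The main obstacle lies in handling the blowups intrinsic to the Gross--Schoen regular model $\wt{\CX^3}$: not every component of the special fiber $\wt{\CX^3}_v$ is a straight product of components of $\CX_v$, and some arise as exceptional divisors from the iterated blowups. To deal with this, I would push all intersections forward along the blow-down map $\wt{\CX^3} \to \CX^3$ and reduce to the product-component case, using that an adelic metrized bundle such as $\wh\delta$ and its admissibility property pull back cleanly through any birational morphism of models, so that the argument applied to the (finitely many) product components suffices to conclude.
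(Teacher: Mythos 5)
Your overall strategy—reduce to pairing against an irreducible vertical cycle $W$, use the adjointness $W\cdot(\wh\delta^3)^*\wh\Delta_{123}=\wh\Delta_{123}\cdot\wh\delta^3_*W$, and then annihilate $\wh\delta^3_*W$—is the right starting point, and it matches the paper's opening move. But the core of the paper's nonarchimedean argument is a geometric step you do not have: an irreducible vertical $2$-cycle $V$ lies inside a proper transform $\wt{A_1A_2A_3}$ of a product of components, and by the theorem of the cube $V$ is linearly equivalent inside $A_1\times A_2\times A_3$ to a sum $p_{12}^*V_{12}+p_{23}^*V_{23}+p_{31}^*V_{31}$. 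Only after this reduction can one write each resulting piece as a product (e.g.\ $p_{12}^*V_{12}\cdot p_3^*A_3$) and exploit the tensor structure $\wh\delta^3_*V=(\wh\delta^2_*V_{12})\,(\wh\delta_*A_3)$, with $\wh\delta_*A_3=0$ coming from the fact that $A_3\times\{y\}\cdot\wh\delta=0$ for every closed point $y$ (the defining normalization of $\wh\delta$). Your proposal treats $(\wh\delta^\vee)^{\otimes 3}_*W$ as if it factors over an arbitrary $W$, but that factorization is not available until $W$ has been put into product form; without the theorem-of-cube step the ``eight-term expansion'' never actually decomposes $W$ and the argument does not get off the ground.

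Beyond that structural gap, the decomposition $\wh\delta=\wh s+\wh p$ with $\wh p=p_2^*\wh e$ doesn't by itself close the argument. Even granting that any term carrying an admissible factor $\wh s_i$ kills a product-form vertical cycle, you still have the pure term $(\wh p^\vee)^{\otimes 3}_*W$, and your assertion that it ``collapses onto the horizontal section $(e,e,e)$'' and then vanishes ``by the normalization of $\wh\delta$'' is not a computation: $p_2^*\wh e$ is not zero on fibers over $e$, and the resulting pairing with $\wh\Delta_{123}$ requires an explicit check that isn't supplied. Likewise, ``push all intersections forward along the blow-down map'' is not enough: exceptional $2$-cells can collapse under the blow-down, and collapsing $W$ does not license transporting the pairing. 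Finally, the paper handles the archimedean places by a separate cohomological-degree argument on the curvature form of $\wh\delta^3$ (its degree in the first three variables is $\le 3$, so it integrates to zero against any degree-$2$ form in those variables); your proposal doesn't give an argument that applies at archimedean places.
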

\begin{proof} In other words, we want to show that for any vertical
cycle $V$
$$0=(\wh \delta ^3)^*\wh \Delta_{123} \cdot V=\wh \Delta_{123}\cdot \wh\delta
^3_*V=0.$$ Actually we will to show the following
\begin{equation}\wh \delta ^3_*V=0.\end{equation}

First let us consider an archimedean place. The curvature of
$\wh\delta $ is zero on each fiber of $p_2$. Thus it has a class in
$$p_2^*H^2(X)+p_1^*H^1(X)\otimes p_2^*H^1(X).$$
In particular it is represented by a form $\omega (x, y)$ of degree
$2$ whose degree on $x$ is at most $1$. It follows that the
curvature of  $\wh\delta ^3$ is represented by a form $$\omega (x_1,
y_1)\omega (x_2, y_2)\omega (x_3, y_3)$$ on $X^3\times X^3$ whose
total degree in $x_i$'s is at most $3$. It follows that for any
smooth form $\phi$ on the first three variable $(x_1, x_2, x_3)$ of
degree $2$ the integral on $x$-variable
$$p_{456*}(\omega (x_1, y_1)\omega (x_2, y_2)\omega (x_3, y_3) \phi
(x_1, x_2, x_3))=0.$$

Now let us  consider finite places. Now let $V$ be an irreducible
vertical $2$-cycle on $\wt{\CX^3}$ over a prime $v$ of $\CO_k$. Then
there are three components $A_1, A_2, A_3$ of $\CX$ over $v$ such
that $V$ is included in the proper transformation $\wt {A_1 A_2A_3}$
of the product $A_1\times A_2\times A_3$ in $\CX^3$. Notice that
$\wt {A_1A_2A_3}$ is obtained from $A_1\times A_2\times A_3$ by
blowing up from some curves of the form $A_1\times \{p\}\times
\{q\}$, etc. Thus $V$ is a linear combination of exceptional divisor
and pull-back divisors from $A_1\times A_2\times A_3$.
 By the theorem of cube, $V$
is linear equivalent to a sum of pull-back of divisors $V_{i, j}$
via the $(i, j)$-projection:
$$V\equiv p_{12}^*V_{12}+p_{23}^*V_{23}+p_{31}^*V_{31}.$$
We may assume that $V$ is one of this term in the right, say
$$V=(p_{12}^*V_{12})_{A_1\times A_2\times A_3}
=(p_{12}^*V_{12})_{\CX^3}\cdot p_3^*A_3.$$ Now the intersection con
be computed as follows:
$$\wh \delta ^3_*V=(\wh \delta ^2_*V_{12})(\wh \delta _* A_3).$$
By definition,
$$\wh\delta _*A_3=p_{2*}(p_1^* A_3\cdot \wh \delta)$$
The cycle $p_1^*A_3\cdot \wh \delta $ in $\CX_v^2$ over each point
$y$ of $\CX_v$ is a divisor $A_3\times \{y\} \cdot \wh \delta $.
This is zero by assumption on $\wh \delta$. Thus we have shown
(2.3.3).
\end{proof}

Now we go back to the intersection number in (2.3.1) for $t_i\in
C(X)_e$. Let $\wh t_i$ be any arithmetic model of $t_i$.  There
product $\wh t$ is an arithmetic extension of the product $t$ of
$t_i$. By our construction, we see that
 $$\pair {\Delta _e,
t^*\Delta _e}= (\wh \delta^3)^*\wh\Delta_{123} \cdot \wh t^*(\wh
\delta ^3)^*\wh \Delta_{123} =\wh \Delta _{123}\cdot \wh \delta
^3_*\wh t^*(\wh \delta ^3)^*\wh \Delta _{123}.$$ Recall that $t\in
C(X)_e^3$, $\delta ^3\circ t\circ (\delta ^\vee)^3=t$. We may
replace $\wh t_i$ by $\wh \delta \circ \wh t_i\circ \wh\delta^\vee$
to assume that
\begin{equation}
\wh t_i=\wh\delta \circ \wh t_i=\wh t_i\circ \wh\delta ^\vee.
\end{equation}

 Under this assumption, the height pairing is given by
\begin{align*}
\pair{\Delta _e, t^*\Delta _e} =&\wh \Delta _{123}\cdot (\wh
t_1\otimes \wh t_2\otimes \wh t_3)^*\wh \Delta _{123}\\
=&p_{123}^*\wh \Delta _{123}\cdot (\wh t_1 \otimes \wh t_{2} \otimes
\wh t_{3})\cdot p_{456}^*\wh \Delta _{123}.
\end{align*}
Here the intersection is taken $X^6$.

As the product of the operators $\wh t_{i}$ annihilated any vertical
cycles, the above intersection number is equal to the following
expression on $\CX\times \CX$ via embedding
$$\CX^2\lra \CX^6, \qquad (x, y)\mapsto (x,x,x, y,y,y).$$
This is simply the intersection product of $\wh t_{i}$ since the
tensor product of cycles $\wh t_{i}$ are the pull-back via $p_{i,
3+i}$. Thus  we have shown the following identity:
\begin{equation}
\pair{\Delta _e, (t_1\otimes t_2\otimes t_3)^*\Delta _e} =\wh
t_{1}\cdot \wh t_{2}\cdot \wh t_{3}
\end{equation}
for cycle  $t_i\in C(X)_e$ and its extension satisfying  equation
(2.3.4).

In the following we describe  the arithmetic class $\wh t_{i}$
satisfying (2.3.4).

\begin{lemma}
The arithmetic divisors $\wh t$ on $\CX^2$ satisfying (2.3.4) are
exactly the arithmetic divisors $t\in \Pic ^-(X\times X)$ with
admissible metrics.
\end{lemma}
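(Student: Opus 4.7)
The plan is to decompose condition (2.3.4) into a statement at the level of generic fibers and one at the level of adelic metrics, and then to exploit the uniqueness of the admissible adelic structure from Lemma 2.2.3.

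First, I would analyze (2.3.4) on the generic fiber. Using the definition $t_2\circ t_1 = p_{13*}(p_{12}^*t_1\cdot p_{23}^*t_2)$ and writing $p_{23}^*\delta = \{x_2=x_3\} - \{x_2=e\}$ and $p_{12}^*\delta^\vee = \{x_1=x_2\} - \{x_2=e\}$ on $X^3$, the projection formula gives
\[
\delta\circ t \;=\; t - p_1^*(t|_{X\times\{e\}}), \qquad t\circ\delta^\vee \;=\; t - p_2^*(t|_{\{e\}\times X})
\]
in $\Pic(X\times X)$. Since $p_1^*,p_2^*\colon\Pic(X)\hookrightarrow\Pic(X\times X)$ are injective, (2.3.4) on the generic fiber is therefore equivalent to $t|_{X\times\{e\}}=0$ and $t|_{\{e\}\times X}=0$ in $\Pic(X)$, i.e. to $t\in\Pic^-(X\times X)$ by Lemma 2.2.1.

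Next, for $t\in\Pic^-(X\times X)$, I would show that (2.3.4) at the adelic level is equivalent to the admissibility of $\wh t$ as in Lemma 2.2.3. For the direction admissible $\Rightarrow$ (2.3.4), the strategy is to verify that $\wh\delta\circ\wh t$ is itself an admissible extension of $t$, so that the uniqueness part of Lemma 2.2.3 gives $\wh\delta\circ\wh t=\wh t$. Concretely, one must check (a) the metric on $\wh\delta\circ\wh t$ is trivial on $\{e\}\times X$ and on $X\times\{e\}$, and (b) $\wh\delta\circ\wh t$ has zero intersection with components of fibers of both projections $X\times X\to X$. Both properties can be extracted from the induction formula (2.1.1) for the norm in Deligne's pairing: the ``divisor term'' $\pair{\ell_t|_{\div\ell_\delta}}$ reproduces the generic-fiber identity and inherits the trivial restriction from the chosen trivialization of $\wh\delta$ on $X\times\{e\}$, while the ``fiber-integral term'' $\int_{X^3/X^2}\log\|\ell_\delta\|\,c_1(\pi_{12}^*\wh t)$ vanishes against vertical cycles on $X^2$ because $\wh\delta$ was constructed to be numerically trivial on fibers of the second projection, by exactly the mechanism used in Lemmas 2.3.2 and 2.3.3. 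The converse, (2.3.4) $\Rightarrow$ admissible, then follows from the same uniqueness: letting $\wh t_{\mathrm{adm}}$ denote the admissible adelic structure on $t$ provided by Lemma 2.2.3, the difference $\wh t - \wh t_{\mathrm{adm}}$ is an adelic metric on the trivial bundle that also satisfies (2.3.4); the rigidity argument from the proof of Lemma 2.2.3 (restricting to the curves $\{p\}\times X$ and $X\times\{p\}$) then forces this difference to be trivial.

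The main obstacle is the adelic computation sketched in the previous paragraph, namely verifying that the fiber-integral term appearing in Deligne's pairing vanishes for the relevant vertical cycles, and that the divisor term restricts trivially on $X\times\{e\}$ and $\{e\}\times X$. This is essentially an adelic analogue of the vanishing $\wh\delta^3_*V=0$ established in the proof of Lemma 2.3.3, and morally the same calculation as the idempotence $\wh\delta\circ\wh\delta=\wh\delta$ of Lemma 2.3.2. Everything else is formal once the generic-fiber identity and the uniqueness of the admissible adelic structure are in hand.
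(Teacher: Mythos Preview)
Your approach is correct but organized in the reverse order from the paper, and as a result does somewhat more work. The paper proves the direction ``(2.3.4) $\Rightarrow$ conditions of Lemma 2.2.3'' first, and does so in one line: from the construction of $\wh\delta$ one has $\wh\delta_*(v)=0$ for every vertical class $v$ and $\wh\delta^*(\bar e)=0$; then $\wh t=\wh\delta\circ\wh t$ gives $\wh t_*(v)=\wh\delta_*(\wh t_*v)=0$ and $\wh t^*(\bar e)=\wh t^*(\wh\delta^*\bar e)=0$, and the remaining two vanishings come from $\wh t=\wh t\circ\wh\delta^\vee$. No Deligne-pairing expansion is needed --- the formal composition rule for correspondences does all the work. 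For the reverse direction the paper sets $\wh s=\wh t-\wh\delta\circ\wh t\circ\wh\delta^\vee$; since $\wh\delta\circ\wh t\circ\wh\delta^\vee$ manifestly satisfies (2.3.4), the forward direction just proved shows it meets the conditions of Lemma 2.2.3, so $\wh s$ is an admissible structure on the trivial bundle and hence zero. Then (2.3.4) follows from $\wh\delta\circ\wh\delta=\wh\delta$.

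Your step 2 (admissible $\Rightarrow$ (2.3.4)) does go through, but the Deligne-pairing calculation you sketch is heavier than necessary: the vanishing you need is exactly $\wh\delta_*(v)=0$ and $\wh\delta^*(\bar e)=0$, which you use implicitly anyway. More importantly, your step 3 has a small gap as written. You want to apply the rigidity of Lemma 2.2.3 to $\wh s=\wh t-\wh t_{\mathrm{adm}}$, but that rigidity requires $\wh s$ to satisfy the two bullet conditions there; knowing only that $\wh s$ satisfies (2.3.4) is not \emph{a priori} enough. Filling this gap means checking $\wh s_*(v)=0$, $\wh s^*(v)=0$, $\wh s^*(\bar e)=0$, $\wh s_*(\bar e)=0$ from (2.3.4) --- and that is precisely the paper's forward-direction computation. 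So once you isolate the observations $\wh\delta_*(v)=0$ and $\wh\delta^*(\bar e)=0$ and use them up front, both directions become short and your explicit Deligne-pairing analysis can be dropped.
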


\begin{proof} By Lemma 2.2.3, we need only check conditions in Lemma 2.2.3.
Assume that $\wh t$ satisfies (2.3.4). From the definition of $\wh
\delta$ we see that for any vertical component
$$\wh \delta _*(v)=0, \qquad \wh \delta ^*(\bar e)=0.$$
From the expression $\wh t=\wh \delta \circ \wh t$ we see that
$$\wh t_*(v)=\wh \delta _*(\wh t _*v)=0,
\qquad \wh t^*(\bar e)=\wh t^*\delta ^* \bar e =0.$$ Similarly we
can prove other two equalities by expression $\wh t=\wh t\circ \wh
\delta$.

Now assume that $\wh t$ satisfies the condition in the Lemma.
Consider the divisor
$$\wh s:=\wh t-\wh \delta \circ \wh t\circ \wh \delta ^\vee.$$
By what we have proved, $\wh s$ is trivial on fibers over closed
points and divisor $\{\bar e\}$ for both projection. This divisor
must be trivial. Thus we must have $\wh t=\wh \delta \circ \wh
t\circ \wh \delta ^\vee$. Then the property (2.3.4) follows
immediately.
\end{proof}

In summary, we have shown the following:
\begin{theorem}For any correspondences $t_1, t_2, t_3$ in $\Pic ^-(X\times X)$ we
have
$$\pair{\Delta _e, (t_1\otimes t_2\otimes t_3)^*\Delta _e)}=
\wh t_1\cdot \wh t_2\cdot \wh t_3$$
 where $\wh t_i$ are arithmetic cycles on some model of $X^2$
 extending $t_i$ and
 satisfying conditions in Lemma 2.2.3.
\end{theorem}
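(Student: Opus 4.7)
The plan is to exploit the idempotent decomposition $\Delta_e=(\delta^3)^*\Delta_{123}$, where $\delta=\Delta_{12}-p_1^*e\in C(X)$. First, I would lift $\delta$ to an adelic admissible arithmetic correspondence $\wh\delta$ via the $\Pic^-$-decomposition from Lemma 2.2.1 (writing $\delta=(\Delta_{12}-p_1^*e-p_2^*e)+p_2^*e$ and putting admissible metrics on the $\Pic^-$ piece). This gives $\wh\delta\circ\wh\delta=\wh\delta$ by the computation already carried out preceding Lemma 2.3.2, using Deligne's induction formula (2.1.1) together with the vanishing of the fiberwise integrals coming from admissibility.

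Next, given any regular Gross--Schoen model $\wt{\CX^3}$ and any arithmetic extension $\wh\Delta_{123}$ of $\Delta_{123}$, I would define the arithmetic Gross--Schoen cycle as $(\wh\delta^3)^*\wh\Delta_{123}$ and verify that it is numerically trivial fiberwise (Lemma 2.3.3). Archimedeanly, the curvature of $\wh\delta$ lies in $p_2^*H^2(X)+p_1^*H^1\otimes p_2^*H^1$, so the degree in the $x$-variables of the curvature of $\wh\delta^3$ is at most $3$, killing pairings with $(2,2)$-forms pulled back from $X^3$. Non-archimedeanly, one uses the cube theorem to write any vertical $2$-cycle $V$ as a sum of pullbacks $p_{ij}^*V_{ij}$ and reduces the vanishing $\wh\delta^3_*V=0$ to the vanishing of $\wh\delta_*(A)$ for vertical curves $A$, which is built into the construction of $\wh\delta$.

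With these two ingredients the height pairing is well defined, and for any $\wh t_i$ satisfying (2.3.4) one has the chain of identities
\begin{align*}
\pair{\Delta_e,(t_1\otimes t_2\otimes t_3)^*\Delta_e}
&=(\wh\delta^3)^*\wh\Delta_{123}\cdot\wh t^*(\wh\delta^3)^*\wh\Delta_{123}\\
&=\wh\Delta_{123}\cdot(\wh t_1\otimes\wh t_2\otimes\wh t_3)^*\wh\Delta_{123}\\
&=p_{123}^*\wh\Delta_{123}\cdot(\wh t_1\otimes\wh t_2\otimes\wh t_3)\cdot p_{456}^*\wh\Delta_{123}.
\end{align*}
Since each $\wh t_i$ annihilates vertical cycles (that is precisely the content of (2.3.4) via $\wh t_i=\wh\delta\circ\wh t_i$), the intersection on $X^6$ collapses onto the diagonal triple $\CX^2\hookrightarrow\CX^6$, $(x,y)\mapsto(x,x,x,y,y,y)$, giving the triple product $\wh t_1\cdot\wh t_2\cdot\wh t_3$ on a model of $X^2$.

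The main obstacle is to identify correctly which arithmetic lifts $\wh t$ of a given $t_i$ are permitted. I would prove this as a separate lemma (Lemma 2.3.4 in the text): the lifts satisfying (2.3.4) are exactly the $t\in\Pic^-(X\times X)$ equipped with admissible metrics. The ``only if'' direction is immediate from $\wh\delta_*(v)=0=\wh\delta^*(\bar e)$. For the converse, if $\wh t$ satisfies the conditions of Lemma 2.2.3 then the difference $\wh s:=\wh t-\wh\delta\circ\wh t\circ\wh\delta^\vee$ is trivial on every fiber over a closed point and on $X\times\{e\}$ and $\{e\}\times X$ for both projections, and one checks by bilinearity and the uniqueness part of Lemma 2.2.3 that such a class must vanish. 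This characterization closes the loop and gives the theorem.
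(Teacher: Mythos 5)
Your proposal follows essentially the same route as the paper: the idempotent decomposition $\Delta_e=(\delta^3)^*\Delta_{123}$, the admissible lift $\wh\delta$ with $\wh\delta\circ\wh\delta=\wh\delta$, the fiberwise numerical triviality of $(\wh\delta^3)^*\wh\Delta_{123}$ (split into the archimedean curvature-degree argument and the non-archimedean theorem-of-the-cube argument), the collapse of the $\CX^6$-intersection onto $\CX^2$, and finally the characterization of admissible lifts of $t\in\Pic^-(X\times X)$ via the difference $\wh s=\wh t-\wh\delta\circ\wh t\circ\wh\delta^\vee$. This is the paper's proof in the same order and with the same key lemmas.
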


\subsection{Gillet--Soul\'e's Conjectures}
By the standard conjecture of Gillet--Soul\'e \cite{gillet-soule2}
the pairing should be positively on the primitive cohomologically
trivial cycles. This implies the following
\begin{conjecture}
The following triple  pairing is semi-positive definite:
$$C(X)_e^{\otimes 3} \times C(X)_e^{\otimes 3} \lra \BR$$
\begin{align*} (t_1\otimes
t_2\otimes t_3, s_1\otimes s_2\otimes s_3) \mapsto &\wh {s_1\circ
t_1^\vee}\cdot \wh {s_2\circ
t_2^\vee} \cdot \wh{s_3\circ t_3^\vee}\\
&=\pair{(t_1\otimes t_2\otimes t_3)^*\Delta _e,\quad (s_1\otimes
s_2\otimes s_3)^*\Delta_e}.\end{align*}
\end{conjecture}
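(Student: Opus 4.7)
The plan is to reduce Conjecture 2.4.1 to the Gillet--Soul\'e standard conjecture (the stronger form underlying Conjecture 1.4.1) applied to the cohomologically trivial primitive cycles $T^*\Delta_e \in \Ch^2(X^3)^0$ for $T \in C(X)_e^{\otimes 3}$.

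\emph{Step 1: rewrite via Theorem 2.3.5.} For elementary tensors $T = t_1\otimes t_2\otimes t_3$ and $S = s_1\otimes s_2\otimes s_3$ in $C(X)_e^{\otimes 3}$, each composition $s_i\circ t_i^\vee$ again lies in $C(X)_e = \Pic^-(X\times X)$ (by the composition discussion preceding Lemma 2.2.3), so Theorem 2.3.5 gives
$$\widehat{s_1\circ t_1^\vee}\cdot \widehat{s_2\circ t_2^\vee}\cdot \widehat{s_3\circ t_3^\vee} = \pair{\Delta_e,\ \bigl((s_1\circ t_1^\vee)\otimes (s_2\circ t_2^\vee)\otimes (s_3\circ t_3^\vee)\bigr)^*\Delta_e}.$$

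\emph{Step 2: identify with the height pairing of $T^*\Delta_e$ and $S^*\Delta_e$.} On $X^3\times X^3$ the tensor-product correspondence in the second argument factors as $(s_1\otimes s_2\otimes s_3)\circ (t_1^\vee\otimes t_2^\vee\otimes t_3^\vee)$, and the $\vee$-involution on this product correspondence agrees with the tensor product of the $\vee$-involutions on each factor. Applying $(A\circ B)^* = B^*\circ A^*$ together with Lemma 2.1.1 (the adjointness $\pair{z, t^*z'} = \pair{t^\vee_* z, z'}$) in the setting of $X^3$, I obtain
$$\pair{\Delta_e,\ \bigl((s\circ t^\vee)\otimes\cdots\bigr)^*\Delta_e} = \pair{T^*\Delta_e,\ S^*\Delta_e}.$$
By bilinearity, for general $T = \sum_\alpha c_\alpha T_\alpha\in C(X)_e^{\otimes 3}$, the diagonal value of the triple pairing equals $\pair{T^*\Delta_e,\ T^*\Delta_e}$, so the conjecture reduces to the semi-positivity of the height pairing on cycles of the form $T^*\Delta_e$.

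\emph{Step 3: primitivity and homological triviality of $T^*\Delta_e$.} Since $\Delta_e$ is homologically trivial (Lemma 2.3.1) and correspondences respect the class map to $\ell$-adic cohomology, $T^*\Delta_e\in \Ch^2(X^3)^0$. For primitivity with respect to an ample class $L = \sum_i p_i^*L_0$ on $X^3$, one uses that each $t_i\in \Pic^-(X\times X)$ has trivial restriction to $\{e\}\times X$ and $X\times \{e\}$, together with the identities $p_{ij*}\Delta_e = 0$ from the proof of Lemma 2.3.1. A direct projection-formula calculation then yields $p_i^*L_0\cdot T^*\Delta_e = 0$ in $\Ch^3(X^3)_\BQ$ for each $i$, so $T^*\Delta_e$ is primitive.

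\emph{Step 4: invoke the standard conjecture.} The full Grothendieck--Gillet--Soul\'e standard conjecture asserts positive semi-definiteness of the Beilinson--Bloch height pairing on primitive homologically trivial cycles of the appropriate codimension. Combined with Steps 1--3, this yields $\pair{T^*\Delta_e, T^*\Delta_e}\ge 0$ for every $T\in C(X)_e^{\otimes 3}$, which is precisely the claimed semi-positivity.

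The principal obstacle is Step 3: while the ingredients (namely $t_i\in \Pic^-(X^2)$ and $p_{ij*}\Delta_e = 0$) are exactly what one wants, the verification requires a careful bookkeeping of how the product correspondence $T = t_1\otimes t_2\otimes t_3$ on $X^6$ interacts with the $p_i^*$-directions on $X^3$, using the defining vanishing conditions of $C(X)_e$. Modulo this, the triple-pairing conjecture is unconditional in precisely those cases (function fields of characteristic $0$, hyperelliptic curves) where the standard conjecture is presently known.
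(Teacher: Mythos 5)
The statement you are proving is labeled a \emph{conjecture} in the paper, and the paper does not attempt a proof: the sentence immediately preceding Conjecture~2.4.1 simply asserts that the Gillet--Soul\'e standard conjecture ``implies the following,'' and moves on. Your proposal is exactly the elaboration of that implicit implication, using precisely the tools the paper has set up --- Theorem~2.3.5 for the triple-product identity, Lemma~2.1.1 for the adjunction $\pair{t_*z,z'}=\pair{z,t^\vee_*z'}$ applied on $X^3$, and the Gillet--Soul\'e standard conjecture as the final input. In that sense the route you take coincides with the paper's (unstated) reasoning, and your Steps~1--2 correctly recover the displayed equality in the conjecture as a direct consequence of Theorem~2.3.5 together with $\pair{T^*\Delta_e, S^*\Delta_e}=\pair{\Delta_e,(S\circ T^\vee)^*\Delta_e}$ and the component-wise identity $S\circ T^\vee=(s_1\circ t_1^\vee)\otimes(s_2\circ t_2^\vee)\otimes(s_3\circ t_3^\vee)$.

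The one genuine gap --- which you honestly flag in your final paragraph --- is Step~3. The paper (in \S1.4) asserts primitivity of $\Delta_e$ with respect to $\sum_i p_i^*\CL$ but never verifies that \emph{every} cycle of the form $T^*\Delta_e$ with $T\in C(X)_e^{\otimes 3}$ is still primitive, and without that the standard conjecture alone does not give semi-positivity on the whole image of $T\mapsto T^*\Delta_e$ (the arithmetic Hodge index theorem imposes alternating signs on the non-primitive pieces). Your sketch of the verification, via the vanishing conditions defining $\Pic^-(X\times X)$ together with $p_{ij*}\Delta_e=0$, is the right idea, but the projection-formula bookkeeping across the six factors of $X^6$ is not carried out; this is exactly the point at which the argument is incomplete. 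Since this gap is shared by the paper itself (which makes no such verification and stops at ``This implies the following''), your proposal is faithful to the paper's level of rigor, but the reader should understand that both treatments are conditional on the standard conjecture \emph{and} on an unverified primitivity claim.
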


Notice that for any $t\in C(X)_e$, the correspondence $t\circ
t^\vee$ is a symmetric and positive correspondence in $C(X)_e$ in
the sense that there is a morphism $\phi: \, X\lra A$ from $X$ to an
abelian variety $A$ with ample and symmetric lines bundle $\CL$ such
that $-t\circ t^\vee$ (up to a positive multiple) is the restriction
on $X\times X$ of the Chern class of the following Poincar\'e bundle
on $A\times A$:
$$-t\circ t^\vee=s(\CL ):=m^*\CL \otimes p_1^*\CL^{-1}
\otimes p_2^*\CL ^{-1}\otimes 0^*\CL$$ where $m: A^2\lra A$ is the
addition map.

Based on the conjectured positivity of height pairing of zero
cycles; we make the following:

\begin{conjecture}
Let $X$ be a curve in abelian variety $A$ passing through $0$. Let
$\CL_i$ be three semipositive and symmetric line bundle on $A$ and
let $s(\CL_i)$ be the induced Poincar\'e bundles:
$$s(\CL_i):=m^*\CL _i\otimes p_1^*\CL_i^{-1}\otimes p_2^*\CL _i^{-1}
\otimes 0^*\CL _i.$$ Let $t_i$ be the correspondence induced by the
restriction of $s(\CL _i)$ in $X\times X$. Then
$$s(\wh \CL _1)\cdot s(\wh \CL_2)\cdot s(\wh \CL _3)|_{X\times X}\le
0,$$ where $\wh \CL_i $ is the admissible adelic metric on $\CL _i$.
Then this number vanishes if and only if cycle
$$(t_1\otimes t_2\otimes t_3)^*\Delta _e$$
is trivial.
\end{conjecture}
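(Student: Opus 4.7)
The plan is to deduce Conjecture 2.4.2 from Conjecture 2.4.1 by extracting a Rosati ``square root'' of each $s(\CL_i)|_{X\times X}$ in the space of correspondences with $\BR$-coefficients, and then invoking the triple-pairing identity of Theorem 2.3.5.

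First I would set $t_i := s(\CL_i)|_{X\times X} \in \Pic^-(X\times X)$, equipped with its admissible adelic metric; by uniqueness in Lemma 2.2.3, this metric agrees with the restriction of $s(\wh\CL_i)$. Theorem 2.3.5 then gives
$$s(\wh\CL_1)\cdot s(\wh\CL_2)\cdot s(\wh\CL_3)\bigm|_{X\times X} = \pair{\Delta_e,\,(t_1\otimes t_2\otimes t_3)^*\Delta_e},$$
reducing everything to a height statement on $X^3$. Next I would use Lemma 2.2.4 to identify $\Pic^-(X\times X)\otimes \BR$ with the space $\End^s(\Jac(X))\otimes\BR$ of Rosati-symmetric endomorphisms, under which $t\circ t^\vee$ corresponds to $\alpha\alpha^\dagger$; thus the cone $\{-t\circ t^\vee:t\in C(X)_e\}$ is exactly the Rosati-positive cone (the remark after Conjecture 2.4.1, read as an equivalence). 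For semipositive symmetric $\CL_i$ on $A$ and the Albanese morphism $j:\Jac(X)\to A$, the correspondence $t_i$ corresponds to the Rosati-negative endomorphism $-j^\vee\circ\lambda_{\CL_i}\circ j$; the spectral theorem applied on the finite-dimensional real inner product space $\End(\Jac(X))\otimes \BR$ then produces a Rosati-symmetric square root, i.e.\ an element $u_i \in \Pic^-(X\times X)\otimes \BR$ with
$$t_i = -u_i\circ u_i^\vee.$$
For ample $\CL_i$ this already works over an algebraic extension of $\BQ$; the semipositive case is handled by perturbing $\CL_i\mapsto \CL_i+\epsilon\CM$ with $\CM$ ample, then letting $\epsilon\to 0$ using continuity of the admissible metric and of the triple intersection.

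With the square root in hand, a second application of Theorem 2.3.5 (with $s_i = t_i = u_i$) yields
$$s(\wh\CL_1)\cdot s(\wh\CL_2)\cdot s(\wh\CL_3)\bigm|_{X\times X} = -\pair{v,v},\qquad v := (u_1\otimes u_2\otimes u_3)^*\Delta_e \in \Ch^2(X^3)^0\otimes \BR,$$
and the desired inequality is immediate from Conjecture 2.4.1. For the vanishing criterion I would invoke the stronger form of Conjecture 2.4.1 predicted by the Gillet--Soul\'e standard conjecture (positive definiteness modulo the numerical kernel), which forces $v=0$ whenever $\pair{v,v}=0$. Since each $t_i = -u_i\circ u_i^\vee$ is symmetric, Lemma 2.1.1 gives $t_i^* = t_{i*}$, and
$$(t_1\otimes t_2\otimes t_3)^*\Delta_e = -(u_1\otimes u_2\otimes u_3)_*(u_1\otimes u_2\otimes u_3)^*\Delta_e = -(u_1\otimes u_2\otimes u_3)_* v,$$
so $v=0$ implies $(t_1\otimes t_2\otimes t_3)^*\Delta_e = 0$; the converse direction is automatic from the first display.

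The hard part will be Step~2 together with the extension to semipositive $\CL_i$: producing the Rosati square root in $\Pic^-(X\times X)\otimes\BR$ and checking that the admissible adelic metric varies continuously under $\CL_i\mapsto \CL_i+\epsilon\CM$, so that the triple pairing is continuous in the limit $\epsilon\to 0$. Once these analytic points are settled, the deduction is formal and mirrors the classical derivation of the three-divisor Hodge-index inequality from the two-divisor version.
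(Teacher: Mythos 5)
This statement is Conjecture~2.4.2 in the paper; the paper supplies no proof. It is introduced only with the sentence ``Based on the conjectured positivity of height pairing of zero cycles; we make the following,'' and is motivated by the preceding remark that any $-t\circ t^\vee$ with $t\in C(X)_e$ has the form $s(\CL)|_{X\times X}$ for some abelian variety and ample symmetric $\CL$.  So there is no proof in the paper to compare your argument against: what you have written is a sketch of a \emph{reduction} of Conjecture~2.4.2 to Conjecture~2.4.1, not a reconstruction of a proof the author gives.

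Your plan is the natural one and is consistent with the paper's intent, but it contains genuine gaps beyond the one you flag, and I would not call it a proof.
(1) The remark after Conjecture~2.4.1 asserts only one inclusion of cones: every $-t\circ t^\vee$ is of the form $s(\CL)|_{X\times X}$.  You invoke the converse (every $s(\CL)|_{X\times X}$ is $-u\circ u^\vee$), and that converse is precisely what must be proved; the Jordan-algebra square root in $\End^s(\Jac(X))\otimes\BR$ is the right tool, but the step deserves its own argument and is not a ``reading'' of the remark.
(2) Even granting the square root in $\Pic^-(X\times X)\otimes\BR$, all of Lemma~2.2.3 (existence and uniqueness of the admissible adelic metric), Lemma~2.3.4 (characterization of $\wh t$ by the idempotent identity), and Theorem~2.3.5 are developed for correspondences with $\BQ$-coefficients via models over $\CO_k$; extending them to $\BR$-coefficients requires a limit argument that is not automatic, and the final continuity step in the semipositive degeneration $\CL_i\mapsto\CL_i+\epsilon\CM$ compounds this.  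You do acknowledge these as ``the hard part,'' which is correct, but that means the deduction as written is incomplete, not formal.
(3) For the vanishing criterion you appeal to a positive-definiteness form of the Gillet--Soul\'e conjecture; this matches Conjecture~1.4.1 in the paper, where vanishing of the height is conjectured to force rational equivalence to zero, so the appeal is consistent, but it should be stated which version (numerical versus rational triviality) is being assumed, since the argument $\pair{v,v}=0\Rightarrow v=0\Rightarrow (u_1\otimes u_2\otimes u_3)_*v=0$ is only as strong as the definiteness hypothesis you feed into it.

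In short: correct strategy, matching the paper's implicit motivation, but the key steps (square root with admissible metric, $\BR$-linear extension of the triple pairing, semipositive limit) are left as gaps, so what you have is a conjectural reduction rather than a proof.
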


\subsection{Height pairing and relative dualising sheaf}
In this subsection we want to give a formula for the
self-intersection of $\Delta _e$ in terms of intersection theory of
admissible metrized line bundles in our previous paper
\cite{admissible}. Recall that in this theory, an adelic line bundle
$\CO(\wh \Delta):=(\CO (\Delta),\|\cdot\|)$ has been constructed for
a curve over a global field.  More precisely, for an archimedean
place $v$, $-\log \|1\|_v$  is the usual Arakelov function on the
Riemann surface $X_v(\BC)$. For non-archimedean place $v$, $$-\log
\|1\|(x, y)=i_v(x, y)+G_v(x, y),$$ where $i_v(x, y)$ is  the local
intersection index and $G_v(x, y)$ is a green's function on the
metrized graph $R(X_v)$. We will prove in \S3.5 that this adelic
metric line bundle is actually integrable in sense of \cite{adelic}.
In the following we assume this fact and try to prove a formula for
height of Gross--Schoen cycle.

Now fix a divisor $e$ on $X$ of degree $1$ and put a metric on it by
restriction of $\CO (\wh\Delta)$ on $X\times \{e\}$. Then we have
the admissible (adelic) divisor on $X\times X$ satisfying conditions
2.2.3:
$$\wh t_e=\wh \Delta -p_1^*\wh e-p_2^*\wh e+\wh e^2 \cdot F.$$
Here the last number $\wh e^2 \cdot F$ means  $\wh e^2$ multiple of
a vertical fiber $F$.

\begin{theorem} Assume that $g\ge 2$ and that the adelic metric line bundle
$\CO (\wh \Delta)$ is integrable. Then with notation as above
\begin{align*}
\pair{\Delta _e, \Delta_e}=\wh t^3_e=&\frac {2g+1}{2g-2}\wh\omega
^2+6(g-1)\|x_e\|^2 \\
&-\log \|1_\Delta\|\cdot (\wh\Delta ^2-6\wh \Delta \cdot p_1^*\wh
e+6p_1^*\wh e\cdot p_2^*\wh e).\end{align*} Here the last term is an
abbreviation for the adelic integration in (2.1.1) of $-\log
\|1_\Delta\|$ against the product of the first Chern class of
various arithmetic divisors involved.
\end{theorem}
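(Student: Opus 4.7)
The plan is to split the proof into two pieces following the theorem's two equalities. For the first, $\pair{\Delta_e,\Delta_e} = \wh t_e^{\,3}$, I would apply Theorem~2.3.5 with $t_1 = t_2 = t_3 = t_e := \Delta - p_1^*e - p_2^*e$, which lies in $\Pic^-(X\times X)$. Equipping $t_e$ with the admissible adelic metric coming from $\CO(\wh\Delta)$ and $\wh e$, and adding the vertical correction $\wh e^2 \cdot F$ so that the result has vanishing numerical intersection with vertical components under both projections and trivial restriction to $\{e\}\times X$ and $X\times\{e\}$, produces exactly the $\wh t_e$ of the theorem. Uniqueness in Lemma~2.2.3 identifies this with the admissible model of $t_e$ required by Theorem~2.3.5, and the first equality follows.

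For the second equality I would expand
$$\wh t_e^{\,3} = (\wh\Delta - p_1^*\wh e - p_2^*\wh e + \wh e^2 F)^3$$
by trilinearity of the adelic intersection on integrable metrized line bundles and group the monomials by the number of $\wh\Delta$ factors they contain. Adjunction for the diagonal $\Delta\colon X\hookrightarrow X\times X$ with the admissible metric identifies $\CO(\wh\Delta)|_\Delta$ with $-\wh\omega$, whence $\wh\Delta^{3} = \wh\omega^{2}$, $\wh\Delta^{2}\cdot p_i^*\wh e = -\wh\omega\cdot\wh e$, and $\wh\Delta\cdot p_1^*\wh e\cdot p_2^*\wh e = \wh e^{2}$; the projection formula dispatches the pure $\wh e$-monomials, while the $F$-terms (of which only those linear in $F$ survive, since $F^{2}=0$) absorb the vertical numerical defects of the schematic intersections. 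The resulting algebraic contribution is a linear combination of $\wh\omega^{2}$, $\wh\omega\cdot\wh e$ and $\wh e^{2}$ which, after applying the Faltings--Hriljac identity $\|x_e\|^2 = -\bigl(\wh e - \wh\omega/(2g-2)\bigr)^{\!2}$ to the degree-zero class $x_e$, rearranges into $\tfrac{2g+1}{2g-2}\wh\omega^{2} + 6(g-1)\|x_e\|^2$. The gap between the full adelic triple intersection $\wh t_e^{\,3}$ and this algebraic expression, governed by formula~(2.1.1) for the Deligne pairing, is then precisely the integration of the Green's function $-\log\|1_\Delta\|$ against the cycle-class combination $\wh\Delta^{2}-6\wh\Delta\cdot p_1^*\wh e+6\,p_1^*\wh e\cdot p_2^*\wh e$, reproducing the last term of the theorem.

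The hard part will be the standing hypothesis that $\CO(\wh\Delta)$ is \emph{integrable} as an adelic metrized line bundle: its metric is defined through admissible Green's functions on reduction graphs at non-archimedean places and the Arakelov Green's function at archimedean places, so integrability does not follow from any algebraic model and must be proved separately --- this is the task deferred to \S3.5 and is what makes trilinear intersections $\wh t_e^{\,3}$ meaningful at all. Once integrability is in hand, the remaining technical hurdle is the bookkeeping around the vertical correction $\wh e^2 F$ and the Green's-function remainder: one must verify simultaneously that the $F$-terms cancel every stray vertical defect of the schematic intersections, and that exactly the combination $\wh\Delta^{2}-6\wh\Delta\cdot p_1^*\wh e+6\,p_1^*\wh e\cdot p_2^*\wh e$ (and nothing else) emerges as the coefficient of $-\log\|1_\Delta\|$ in the final integration.
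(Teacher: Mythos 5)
Your plan follows the paper's proof: apply Theorem~2.3.5 with $t_1=t_2=t_3=t_e$ (with the vertical correction $\wh e^2 F$ so that $\wh t_e$ meets the conditions of Lemma~2.2.3), expand $\wh t_e^{\,3}$ trilinearly, use adjunction $\wh\Delta|_\Delta=-\wh\omega$ to extract the algebraic part, and rewrite the resulting quadratic form in $\wh\omega,\wh e$ via Faltings--Hriljac.

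Two cautions on the exposition. (i) The asserted equalities ``$\wh\Delta^3=\wh\omega^2$'', ``$\wh\Delta^2\cdot p_i^*\wh e=-\wh\omega\cdot\wh e$'' are false as stated --- each holds only modulo a $(-\log\|1_\Delta\|)$-remainder, and that remainder \emph{is} the content of the theorem. The paper avoids the ambiguity by first simplifying $\wh t_e^{\,3}$ entirely within the adelic intersection ring, using the projection formula (e.g.\ $\wh\Delta\cdot p_1^*(\wh e^2)=p_{1*}\wh\Delta\cdot\wh e^2=\wh e^2$, with no Green's-function residue because $p_{1*}(-\log\|1_\Delta\|)=0$) to reach
$$\wh t_e^{\,3}=-6g\,\wh e^2+\wh\Delta\cdot\bigl(\wh\Delta^2-6\wh\Delta\cdot p_1^*\wh e+6\,p_1^*\wh e\cdot p_2^*\wh e\bigr),$$
and only then splitting the single outer $\wh\Delta$-factor into $\wt\Delta+(-\log\|1_\Delta\|)$; that is how the $(1,-6,6)$ pattern in front of the Green's function arises and why no other monomials appear there. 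Splitting term-by-term as you propose can be made to work, but you would need to argue separately that the $p_i^*\wh e^2$-monomials and the $F$-monomials contribute nothing to the Green's-function residue. (ii) ``The $F$-terms absorb the vertical defects'' must be made quantitative: the paper computes $3\wh e^2 F\cdot(\wh\Delta-p_1^*\wh e-p_2^*\wh e)^2=-6g\,\wh e^2$, and this explicit $-6g\wh e^2$ is exactly what combines with the restriction terms and the Faltings--Hriljac identity to produce the $\tfrac{2g+1}{2g-2}$ coefficient, so it cannot be left implicit.
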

\begin{proof}
By Theorem 2.3.5, we have  a formula \begin{align*} \pair{\Delta _e,
\Delta _e} =&\wh t_e^3=(\wh \Delta -p_1^*\wh e-p_2^*\wh e)^3+3\wh
e^2\cdot (\wh
\Delta -p_1^*\wh e-p_2^*\wh e)^2\\
=&\wh \Delta ^3-3\wh \Delta ^2\cdot (p_1^*\wh e+p_2^*\wh e) +3\wh
\Delta\cdot
(p_1^*\wh e+p_2^*\wh e)^2\\
&-(p_1^*\wh e+p_2^*\wh e)^3+3\wh e^2F\cdot (\wh \Delta -p_1^*\wh
e-p_2^*\wh e)^2.
\end{align*}

The last  four  terms can be simplified as follows:
$$-3\wh \Delta ^2\cdot (p_1^*\wh e+p_2^* \wh e)=-6\wh \Delta ^2\cdot p_1^*\wh
e,$$
$$3\wh \Delta \cdot
(p_1^*\wh e+p_2^*\wh e)^2 =3\wh \Delta \cdot (p_1^*\wh e^2+p_2\wh
e_2^2+2 p_1^*\wh e\cdot p_2^*\wh e) =6\wh e^2+6\wh \Delta \cdot
p_1^*\wh e\cdot p_2^*\wh e,
$$
$$
3\wh e^2F\cdot (\wh\Delta -p_1^*\wh e-p_2^*\wh e)^2 =3\wh
e^2(2-2g-2-2+2)=-6g\wh e^2,$$
$$
-(p_1^*\wh e +p_2^*\wh e)^3=-( p_1^*\wh e^3+p_2^*\wh e^3+3p_1^*\wh
e^2\cdot p_2^*\wh e+3p_1^*\wh e \cdot p_2^*\wh e^2) =-6\wh e^2.
$$
In this way we have the following expression:
\begin{align*}
\pair{\Delta _e, \Delta _e} =&\wh \Delta ^3-6\wh \Delta ^2\cdot
p_1^*\wh e+6\wh \Delta \cdot p_1^*\wh e\cdot p_2^*\wh e-6g \wh
e^2\\
=&-6g\wh e^2 +\wh \Delta \cdot (\wh \Delta ^2-6\wh \Delta \cdot
p_1^*\wh e+6p_1^*\wh e\cdot p_2^*\wh e). \end{align*}

The last term  can be written as a sum of the restriction on $\wh
\Delta$, and an intersection of  $-\log \|1_\Delta\|$ against other
cycles:
\begin{align*}
\pair{\Delta _e, \Delta _e} =&-6g \wh e^2+\wh \omega ^2-6\wh \omega
\cdot \wh e+6\wh e^2\\
& -\log \|1_\Delta\|\cdot (\wh \Delta ^2-6\wh \Delta \cdot p_1^*\wh
e+6p_1^*\wh e\cdot p_2^*\wh e).
\end{align*}
\end{proof}

When the genus of $X$ is one this formula gives $\wh t^3_e=0$.
Assume that $g>1$. Then we can get a formula  in terms of the class
of $x_e:=e-\frac 1{2g-2}\omega$ in $\Pic ^0(X)_\BQ$ using  the
formula for the Neron-Take height:
\begin{align*}
-\|x_e\|^2=&\left(\frac {\wh\omega} {2g-2}-\wh e\right)^2=\frac
{\wh\omega ^2}{4(g-1)^2}-
\frac {\wh\omega \wh e}{g-1}+\wh e^2 \\
=&\frac {\wh\omega ^2}{4(g-1)^2}-\left(\frac {\omega \wh e}{g-1}-\wh
e^2\right).\end{align*}

\begin{corollary} The pairing $\pair{\Delta _e, \Delta _e}$ gets its
minimum when $e=\xi$. More precisely, we have
$$
\pair{\Delta _e, \Delta_e}=\pair {\Delta _\xi, \Delta _\xi}+
6(g-1)\|x_e\|^2 $$
\end{corollary}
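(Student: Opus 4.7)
Proof plan:

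The plan is to derive the corollary directly from Theorem 2.5.1. First observe that the minimum claim is a formal consequence of the displayed equality, by the positivity of the Neron--Tate height pairing on $\Pic^0(X)_\BQ$: $\|x_e\|^2 \geq 0$ with equality precisely when $x_e$ is torsion (equivalently, $e = \xi$ up to torsion). Hence the equality forces $\pair{\Delta_e, \Delta_e} \geq \pair{\Delta_\xi, \Delta_\xi}$ with equality in exactly this case, so the minimum is attained at $e = \xi$. It thus suffices to prove the equality.

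Apply Theorem 2.5.1 at both $e$ and $\xi$. Using $x_\xi = 0$, so $\|x_\xi\|^2 = 0$, and subtracting gives
$$\pair{\Delta_e, \Delta_e} - \pair{\Delta_\xi, \Delta_\xi} = 6(g-1)\|x_e\|^2 + \bigl(A(e) - A(\xi)\bigr),$$
where
$$A(f) := -\log\|1_\Delta\|\cdot\bigl(\wh\Delta^2 - 6\wh\Delta\cdot p_1^*\wh f + 6\,p_1^*\wh f\cdot p_2^*\wh f\bigr)$$
denotes the adelic integration term appearing in Theorem 2.5.1. The corollary is therefore equivalent to the assertion $A(e) = A(\xi)$, i.e., the $e$-independence of $A$.

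To verify this, I would write $\wh e = \wh\xi + \wh y$, where $\wh y$ is the admissible adelic divisor representing $x_e = e - \xi \in \Pic^0(X)_\BQ$. The difference $A(e) - A(\xi)$ is then the adelic integration of $-\log\|1_\Delta\|$ against the correction cycle
$$-6\wh\Delta\cdot p_1^*\wh y + 6\,p_1^*\wh\xi\cdot p_2^*\wh y + 6\,p_1^*\wh y\cdot p_2^*\wh\xi + 6\,p_1^*\wh y\cdot p_2^*\wh y.$$
Each of the four pieces will be evaluated in the admissible adelic intersection theory of \cite{admissible}. The first, via adjunction along $\Delta \cong X$, reduces to a multiple of $\wh\omega\cdot\wh y$; the two cross terms reduce to multiples of $\wh\xi\cdot\wh y$; and the quadratic term reduces, via the identification of the Green's function of $\Delta$ on $X \times X$ with the admissible Green's function on $X$, to the Neron--Tate self-pairing of $\wh y$. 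Direct computation then checks that the sum of these four contributions cancels.

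The hard part is precisely this cancellation: one must track signs and normalizations so that the linear-in-$\wh y$ and quadratic-in-$\wh y$ contributions add to zero, rather than producing an extra multiple of $\|x_e\|^2$ that would alter the coefficient $6(g-1)$. This is an algebraic identity in the admissible adelic intersection formalism of \cite{admissible}, resting on the orthogonality of degree-zero classes against the canonical admissible structure. Alternatively and more cleanly, once the Main Theorem is established the integration $A(f)$ is shown to equal $-\sum_v \varphi(X_v)\log\RN(v)$, which is manifestly independent of $f$; this provides an a posteriori confirmation that $A(e) = A(\xi)$.
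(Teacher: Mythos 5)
Your structural approach---apply Theorem 2.5.1 at both $e$ and $\xi$, use $x_\xi = 0$, and reduce everything to the $e$-independence of the adelic integration term $A(f)$---is exactly what the paper leaves implicit (the corollary appears without a written proof), and the formal step from the displayed equality to the minimum claim via N\'eron--Tate positivity is correct.

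Two details in your proposed verification of $A(e)=A(\xi)$ are off, however. First, $A(f)$ is by definition only the Green's-function-integration part of $\wh\Delta\cdot(\wh\Delta^2-6\wh\Delta\cdot p_1^*\wh f+6p_1^*\wh f\cdot p_2^*\wh f)$; the restriction-to-$\Delta$ contribution produced by adjunction is a \emph{separate} piece in the proof of Theorem 2.5.1, already absorbed into $\frac{2g+1}{2g-2}\wh\omega^2+6(g-1)\|x_e\|^2$. Invoking ``adjunction along $\Delta\cong X$'' to evaluate a piece of $A(e)-A(\xi)$ therefore conflates two distinct ingredients and does not apply. Second, no sign-sensitive cancellation among the linear- and quadratic-in-$\wh y$ contributions is needed or occurs: each building block vanishes on its own. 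At a finite place this is precisely the content of Lemmas 3.5.2 and 3.5.3, namely $(G,\wh\Delta,p_1^*\wh e)=0$ and $(G,p_1^*\wh e,p_2^*\wh e)=0$ for any degree-one $e$ (the identical computations handle the mixed pairing $(G,p_1^*\wh e,p_2^*\wh\xi)$), while at an archimedean place it is the observation in the proof of Proposition 2.5.3 that all terms containing a $p_i^*d\mu$ factor integrate to zero, leaving only $\int G\,h_\Delta^2$. By bilinearity each of your four correction pieces pairs to zero against $G$ term by term, so there is no risk of an extraneous multiple of $\|x_e\|^2$ distorting the coefficient $6(g-1)$.

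Your ``a posteriori'' alternative is in fact the cleanest formulation and is not circular: Theorem 1.3.1 is derived from Theorem 2.5.1 together with Propositions 2.5.3 and 3.5.1 without appealing to this corollary, and it identifies $A(f)$ with $-\sum_v\varphi(X_v)\log\RN(v)$, which does not involve $f$.
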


The last term in Theorem 2.5.1 is a sum of local contribution over
places of $k$. The contributions from archimedean place is easy to
compute:
\begin{proposition} At an archimedean place, the contribution in the
last term of Theorem 2.5.1 is given by $$-2\sum _{i, j, \ell}\frac
1{\lambda _\ell}\left|\int _X \phi _\ell (x)\bar\omega _i(x)\omega
_j (x)\right|^2.$$
\end{proposition}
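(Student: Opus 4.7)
The plan is to unwind the definitions at an archimedean place $v$.  Writing $g_a(x,y):=-\log\|1_\Delta\|_v$ for the Arakelov Green's function on $X_v(\BC)^2$, and $\eta:=c_1(\wh\Delta)_v$, $\mu := c_1(\wh e)_v=d\mu_v$ for the smooth Chern forms, the archimedean contribution to the last term of Theorem 2.5.1 is
\[
\int_{X_v(\BC)^2} g_a(x,y)\,\bigl(\eta^2 - 6\,\eta\wedge p_1^*\mu + 6\,p_1^*\mu\wedge p_2^*\mu\bigr).
\]

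The first step is to describe $\eta$ by a K\"unneth decomposition.  Since the Arakelov metric on $\CO(\Delta)|_{\{x\}\times X}=\CO_X(x)$ has curvature $d\mu$, the form $\eta$ restricts to $d\mu$ on every fiber of $p_1$ and $p_2$.  Writing $\eta = p_1^*d\mu + p_2^*d\mu + H$, the primitive part $H$ has vanishing fiber restrictions, and its cohomology class is the $H^1\otimes H^1$-component of $[\Delta]$.  Computing this K\"unneth component against the Poincar\'e dual basis of $\{\omega_i,\bar\omega_i\}$ yields
\[
H = -\tfrac{i}{2}\sum_i\bigl(p_1^*\omega_i\wedge p_2^*\bar\omega_i - p_1^*\bar\omega_i\wedge p_2^*\omega_i\bigr),
\]
and a sanity check reproduces $\int_{X_v^2}\eta^2 = \chi(X) = 2-2g$.

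Next I expand the 4-form.  Because $X_v$ is one-dimensional, $d\mu\wedge\omega_i = d\mu\wedge\bar\omega_i = (d\mu)^2 = 0$, so every wedge of $p_j^*d\mu$ with any summand of $H$ vanishes; all cross terms collapse and the integrand simplifies to $H^2 + 2\,p_1^*d\mu\wedge p_2^*d\mu$.  Expanding $H^2$ by wedge-product swaps and using $\omega_i\wedge\omega_j = \bar\omega_i\wedge\bar\omega_j = 0$ produces a double sum of products $p_1^*(\omega_i\wedge\bar\omega_j)\wedge p_2^*(\bar\omega_i\wedge\omega_j)$, with an explicit real coefficient that is pinned down by the test $\int_{X_v^2}H^2 = -2g$.

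The integration against $g_a$ is then completed using the spectral expansion of the Arakelov Green's function.  From $\pp_y g_a = \delta_x - d\mu$ combined with $\int g_a(x,y)\,d\mu(y) = 0$, we have $g_a(x,y) = \sum_{\ell\ge 1}\phi_\ell(x)\phi_\ell(y)/\lambda_\ell$.  Orthogonality of eigenfunctions to constants kills the $\int g_a\cdot p_1^*d\mu\wedge p_2^*d\mu$ piece.  For the $H^2$ piece, Fubini and the spectral expansion split each summand into
\[
\frac{1}{\lambda_\ell}\Bigl(\int_X\phi_\ell\,\omega_i\wedge\bar\omega_j\Bigr)\Bigl(\int_X\phi_\ell\,\bar\omega_i\wedge\omega_j\Bigr);
\]
since $\phi_\ell$ is real, the two factors are complex conjugates up to a sign absorbed by anticommutativity, producing $\lambda_\ell^{-1}\bigl|\int_X\phi_\ell\,\bar\omega_i\omega_j\bigr|^2$.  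Summing over $i,j,\ell$ and collecting the constants gives the claimed formula.

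The main obstacle is pure book-keeping: identifying $H$ precisely from the K\"unneth decomposition of $[\Delta]$, and then carefully tracking every sign arising from anticommutations of $1$-forms together with the normalizations of $\pp$, $d\mu$ and $g_a$, so that the final coefficient is exactly $-2$.  Beyond this there are no substantive inputs other than the explicit K\"unneth form of $\eta$ and the spectral representation of the Arakelov Green's function.
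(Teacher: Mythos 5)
Your proof follows the same route as the paper's: decompose the curvature form of $\CO(\wh\Delta)$ into $p_1^*d\mu + p_2^*d\mu$ plus the harmonic K\"unneth part $H$, observe that all cross terms with $p_i^*d\mu$ die because $d\mu$ is a top-degree form on a curve, and feed $H^2$ into the spectral expansion of the Arakelov Green's function. The only stylistic difference is that you pin down the scalar in front of $H$ by the self-intersection check $\int_{X^2}H^2=-2g$ rather than by direct computation of the K\"unneth component, which is a clean and equally valid way to avoid sign-chasing.
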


\begin{proof} At an archimedean place, $-\log \|1_\Delta\|=G(x, y)$
is the usual Arakelov Green's function, and $\CO (\wh \Delta)$ has
curvature
$$h_\Delta (x, y)=d\mu (x)+d\mu (y)-\sqrt {-1}\sum_i \left(\omega _i(x)\bar
\omega_i (y)+\omega _i(y)\bar\omega_i (y)\right)$$ where $\omega _i$
is a bases of $\Gamma (X, \Omega )$ such that
$$\sqrt {-1}\int \omega _i \bar \omega _j=\delta _{i, j}.$$
It follows that
$$h_\Delta d\mu (x)=h_\Delta d\mu (y)=d\mu (x)d\mu (y)$$
also $\int G(x, y) d\mu (x)d\mu (y)=0$. Thus we get the formula
$$\int G  \cdot [h_\Delta ^2-3h_\Delta \cdot
(p_1^*d\mu+p_2^*d\mu)+3(p_1^*d\mu+p_2^*d\mu)^2]=\int G h_\Delta
^2.$$

Let $\phi_\ell $ be the real eigen function on $X$ of the Laplacian
for the Arakelov metric with eigenvalue $\lambda_\ell >0$ then
$$G (x, y)=\sum _\ell \frac {\phi _\ell (x)\phi _\ell
(y)}{\lambda _\ell}.$$ Since $\int_X G(x, y)d\mu (x)=\int _X g(x,
y)d\mu (y)=0$, it follows that
\begin{align*}
\int G h_\Delta ^2=&\int_{X^2} G(x, y)[d\mu (x)+d\mu (y)-\sqrt
{-1}\sum (\omega _i(x)\bar \omega_i (y)+\omega _i(y)\bar\omega
(x)]^2\\
=&-\int _{X^2}G(x, y)(\sum (\omega _i(x)\bar \omega_i (y)+\omega
_i(y)\bar\omega (x))^2 \\
=&-\int G(x, y)\sum _{i, j}\left[\omega _i(x)\bar\omega _j(x)\bar
\omega _i(y)\omega _j(y) +\bar \omega _i (x)\omega _j (x)\omega _i
(y)\bar \omega _j (y)\right]\\
=&-\sum _{i, j, \ell}\frac 1{\lambda _\ell} \left[\int \phi _\ell
(x)\omega _i(x)\bar \omega _j (x)\int\phi _\ell (y)\bar\omega
_i(y)\omega _j (y)\right. \\
&\left. +\int _X \phi _\ell (x)\bar\omega _i(x)\omega _j (x)\int _X
\phi _\ell (y)\omega _i(y)\bar \omega _j(y)\right].
\end{align*}
Since $\phi_\ell$ are all real, it follows that $$\int G h_\Delta ^2
=-2\sum _{i, j, \ell}\frac 1{\lambda _\ell}\left|\int _X \phi _\ell
(x)\bar\omega _i(x)\omega _j (x)\right|^2.$$
\end{proof}

\subsubsection*{Remark 1}
The quality in the Proposition is negative. Indeed it vanishes only
when $\bar \omega _i\omega _j$ is perpendicular to all $\phi _\ell$.
As $d\mu$ is the only measure satisfying this property, $\omega
_i\bar \omega _j$ are all proportional to each other. Thus we must
have $g=1$, and  thus a contradiction. This leads to a conjecture
that all local contributions at bad place are all negative.

\subsubsection*{Remark 2} When $X$ is hyperelliptic, Gross and
Schoen have shown that $\Delta _\xi$ is rationally equivalent to
$0$. It follows that $\wh t_\xi ^3=0$. Our conjecture thus gives a
formula for $\wh \omega ^2$ in terms of local contributions.

\section{Intersections on reduction complex}
The aim of this section is to describe an intersection theory on the
product $Z$ of two curves $X$ and $Y$ over a local field $k$ and use
this to  finish the proof of  Main Theorem 1.3.1.  The reduction map
on the usual curves over local field gives a reduction map
$$Z(\bar k)\lra R(Z):=R(X)\times R(Y)$$
where the right hand side is the product of the reduction graphs for
$X$ and $Y$. The semistable models $\CX$ and $\CY$ over  finite
extensions $k'$ gives a model $\CX\times _{\CO _{k'}}\CY$. Blow-up
these models at its singular points to get regular (but not
semistable) models $\CZ$ for $Z$. We will show that the vertical
divisors of $\CZ$ can be naturally identified with  piece-wise
linear functions on $R(Z)$. The intersection pairing on vertical
divisors can be extended into a pairing on functions $f_i$
($i=1,2,3$) on $R(Z)$ such that  each $f_{ix}$ (resp. $f_{iy}$) is
continuous as a function of $y$ (resp. $x$) except at some diagonals
$D$ as follows:
\begin{align*} (f_1, f_2, f_3)=&
\int _{R(X)\times R(Y)} \left(\Delta _x(f_1)(f_{2y}f_{3y})+\Delta
_x(f_2)(f_{3y}f_{1y})+\Delta _x(f_3)(f_{1y}f_{2y})
\right)dxdy\\
&+\frac 14\int _D \delta (f_1)\delta (f_2)\delta
(f_3)dx.\end{align*} where $\Delta _x$ and $\Delta _y$ are Laplacian
operators on piece-wise smooth functions, and $\delta (f_i)$ are
some invariants of $f_i$ on the diagonal to measure the difference
of two limits of first derivatives.

\subsection{Regular models}
In this subsection, we will study local intersection theory on a
product of two curves with semi-stable reduction. We first blow-up
the singular points in the special fiber to get a regular model.
This model has non-reduced exceptional divisors isomorphic to $\BP
^1\times \BP ^1$ but is canonical in the sense that it does not
depend on the order of blowing-ups. Also one can get semistable
models by blowing-down exceptional divisor to one of two factors
$\BP^1$. Then we give an explicit description of the  intersections
of curves and surfaces in this threefold. Finally, we show that the
inverse of the relative dualising sheaf on a semistable model can be
written as in a similar way as the restriction of the ideal sheaf on
the proper transformation of the diagonal.

 Let $R$ be a discrete valuation ring with fraction field $K$ and
algebraically closed residue field $k$. Let $X$ and $Y$ be two
smooth, absolute connected, and projective curves over $K$, and
$Z=X\times Y$  their fiber product over $R$. Assume that $X$ and $Y$
have regular and semistable models $X_R$ and $Y_R$ with no self
intersections. Then $Z$ has a model $X_{R}\times _RY_{R}$ which is
singular at products of  two singular points on special fibers $X_k$
and $Y_k$. Blowing-up these singular points we obtain a regular
model $Z_R$ over $R$.

\subsubsection*{Covering charts}
The special fiber of $Z_R$ consists of proper transformations $\wt
{AB}$ of the products of components $A$ and $B$ of $X_R$ and $Y_R$
and exceptional divisors $E_{p, q}$ indexed by singular points $p$
and $q$ of $X_k$ and $Y_k$. To see this, we cover $X_R$ and $Y_R$
formally  near their singular points by local completions of the
open affine schemes of the form:
$$V=\Spec R[x_0, x_1]/(x_0x_1-\pi), \qquad
W=\Spec R[y_0, y_1]/(y_0y_1-\pi).$$ Then $Z_R$ is covered by blow-up
 at the singular point $(x_0, x_1, y_0, y_1)$ of
$$V\times _RW=\Spec R[x_0, x_1, y_0, y_1]/(x_0x_1-\pi, y_0y_1-\pi).$$
It is clear that $Z_R$ is covered by four  charts of spectra of
subrings of  the fraction field $K(x_0, y_0)$ of $V\times W$:

$$U_{x_0}=\Spec R[x_0, y_0/x_0,
y_1/x_0]/((y_0/x_0)(y_1/x_0)x_0^2-\pi),$$
$$U_{x_1}=\Spec R[x_1, y_0/x_1,
y_1/x_1]/((y_0/x_1)(y_1/x_1)x_1^2-\pi),$$
$$U_{y_0}=\Spec R[y_0,
x_0/y_0, x_1/y_0]/((x_0/y_0)(x_1/y_0)y_0^2-\pi),$$
$$U_{y_1}=\Spec R[y_1, x_0/y_1,
x_1/y_1]/((x_0/y_1)(x_1/y_1)y_1^2-\pi).$$

In terms of valuations on $\bar K$ normalized such that $\ord
(\pi)=1$, then $\alpha _i=\ord (x_i)$, $\beta _i=\ord (y_i)$ are
non-negative with sum
$$\alpha _0+\alpha _1=\beta _0+\beta _1=1$$ and $\bar R$-points in
these charts are defined by domains of $(\alpha _0, \beta _0)\in [0,
1]^2$:
$$U_{x_0}:\qquad \min (\alpha _0, 1-\alpha _0)\ge \beta_0,$$
$$U_{x_1}:\qquad \min (\beta _0, 1-\beta _0)\ge 1-\alpha _0,$$
$$U_{y_0}:\qquad \min (\beta _0, 1-\beta _0)\ge \alpha _0,$$
$$U_{y_1}:\qquad \min (\alpha _0, 1-\alpha _0)\ge 1-\beta _0.$$
These are exactly four domains in the unit square divided by two
diagonals.

Let $A_0, A_1, B_0, B_1$ be divisors in $V$ and $W$ defined by $x_1,
x_0, y_1, y_0$ respectively. Then the special fiber of $Z_R$ is a
union of five divisors
$$\wt{A_0B_0}, \quad \wt {A_0B_1}, \quad \wt {A_1B_0}, \quad
\wt{A_1B_1}, \quad E.$$ Here the first four terms are proper
transforms of the products of curves in $V\times_R W$ and $E$ is the
exceptional divisor. Each divisor is defined by an element in each
of the above charts. For example, $E$ is defined by equations $x_0,
x_1, y_0, y_1$ in the above four charts, and $\wt {A_0B_0}$ is
defined by $y_1/x_0, 1, x_1/y_0, 1$ respectively.

\begin{figure}[ht]

    \begin{picture}(200,165)(-100,0)
        \put(0,0){\includegraphics{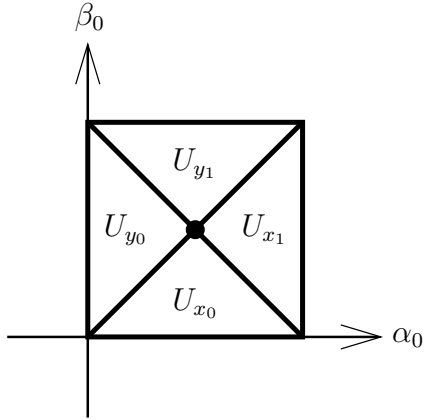}}

        \put(147,30){$\alpha_0$}
        \put(27, 150){$\beta_0$}
        \put(64, 42){$U_{x_0}$}
        \put(90, 70){$U_{x_1}$}
        \put(38, 70){$U_{y_0}$}
        \put(64, 95){$U_{y_1}$}
    \end{picture}
\caption{Reduction complex}

    \label{Fig: Reduction complex}

\end{figure}
 {Figure 1 shows the reduction complex associated to $Z_R$ placed on
        $(\alpha_0, \beta_0)$-coordinate axes. The four corners and the center point of the square
        correspond to the four product components and the exceptional divisor of $Z_R$, respectively.
        The eight segments in the square correspond to the curves of intersection of the components in
        $Z_R$. The four $2$-cells labeled $U_{x_0}, U_{x_1}, U_{y_0}, U_{y_1}$ correspond to the four
        points of $Z_R$ where three components meet transversally.}

\begin{figure}[ht]
\begin{picture}(200,165)(-25,-25)
        \put(0,0){\includegraphics{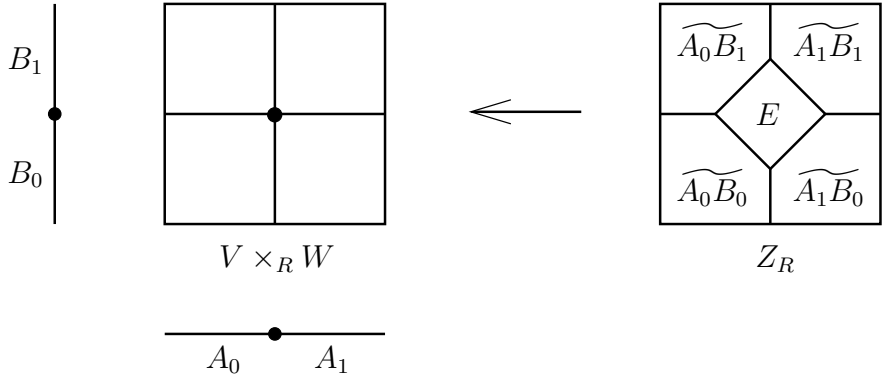}}

        \put(-15, 60){$B_0$}
        \put(-15, 103){$B_1$}
        \put(60, -10){$A_0$}
        \put(101, -10){$A_1$}
        \put(65, 28){$V \times_R W$}

        \put(268,28){$Z_R$}
        \put(238, 53){$\widetilde{A_0B_0}$}
        \put(282, 53){$\widetilde{A_1B_0}$}
        \put(238, 108){$\widetilde{A_0B_1}$}
        \put(282, 108){$\widetilde{A_1B_1}$}
        \put(268, 82){$E$}
    \end{picture}

    \caption{Special fibers of blow-ups }
    \label{Fig: Blow-up square}

\end{figure}

Figure 2 shows the configuration of special fibers before and after
blow-ups. On the left is a diagram representing the various product
components in the
        special fiber of $V \times_R W$ and how they project onto each factor. On the right is
        a diagram representing $Z_R$, which is the blow-up of $V \times_R W$. The components
        $\widetilde{A_iB_j}$ are strict transforms of product components in $V \times_R W$.
        The component shaped like a diamond in the diagram on the right collapses to the singular
        point at the center of the cross in the left diagram.

\subsubsection*{Intersections} Back to the global situation. The
following properties are easy to verified:
\begin{itemize}
\item  each component $\wt{AB}$ is obtained from $A\times
B$ by blowing at the singular points of $X_R\times Y_R$  on $A\times
B$, and has multiplicity one in divisor $(\pi)$;
\item two different  components $\wt {A_0B_0}$ and $\wt
{A_1B_1}$ intersect if and only if either $A_0=A_1$ and $B_0\cap
B_1\ne \emptyset $, or $B_0=B_1$ and $A_0\cap A_1\ne \emptyset$;
\item each exceptional divisor is isomorphic to $\BP ^1\times \BP^1$
and has multiplicity $2$ in $(\pi)$;
\item two component $\wt {AB}$ and $E_{p,q}$
insect if and only if $p\in A$ and $q\in B$; \item two different
exceptional divisors do not intersect.
\end{itemize}

In the following we want to compute the intersection numbers more
precisely. Assume that $p=A_0\cdot A_1$, $q=B_0\cdot B_1$. The
intersection can be described as follows: \begin{itemize}
\item $\wt {A_0B_0}\cdot \wt {A_0B_1}$
is given by the proper transformations $\wt {A_0 q}$ of $A_0\times
q$ in $\wt {A_0B_0}$ and $\wt {A_0B_1}$;
\item one may choose an isomorphism $E_{p,q}\simeq \BP
^1\times \BP ^1$ so that the following hold:
$$\wt {A_0B_0}\cdot E_{p,q}= \BP ^1\times 0, \qquad \wt
{A_1B_1}=\BP ^1\times \infty,$$
$$\wt {A_0B_1}\cdot E_{p, q}=0\times \BP ^1, \qquad \wt
{A_1B_0}=\infty \times \BP ^1.$$
\end{itemize}

We may also compute the self intersection of vertical divisors in
Chow group using the following equation: for any vertical divisor
$F$ in $Z_k$,
$$0=F\cdot (\pi)=F\cdot (\sum_{A, B} \wt {AB}+2\sum _{p, q}E_{p, q}),$$
where the sums are over components $A, B$ and singular points $p$
and $q$ of $X_{k}, Y_{k}$. It follows that
$$\wt {AB}^2=-\sum _{q}\wt {A q}-\sum_{p} \wt {pB}-2\sum_{p, q}
\text{exceptional divisors over $(p, q)$},$$
$$E_{p, q}^2=-\BP ^1\times 0-0\times \BP ^1.$$
Here the sums are over singular points $p, q$ in $A, B$.

In the following we want to compute the intersection numbers between
a curve $C$ and a surface $F$ included in the special fiber $\CZ_k$
of $\CZ$. Assume that $C$ is included in a surface $G$ then
$$C\cdot F=(C\cdot F_G)_G$$
where $F_G$ is the pull-back of $F$ in $G$ via the inclusion $G\lra
Z_R$, and the right hand is an  intersection  in $G$. Thus to study
intersection pairing it suffices to study the intersection pairing
of $Z$ with the subgroup $B(G)$ of divisors of $G$ generated by
$F_G=F\cdot G$ in $\NS(G)$, the N\'eron--Severi group of $G$.

\begin{lemma}
The intersection pairing on $B(G)$ is non-degenerate. \end{lemma}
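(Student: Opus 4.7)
The plan is to argue case-by-case on the component $G \subset \CZ_k$, since by the description in the excerpt it is either an exceptional divisor $E_{p,q} \cong \BP^1 \times \BP^1$ or a proper transform $\wt{AB}$ of a product of components.

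For $G = E_{p,q}$, I would invoke the explicit identification $G \cong \BP^1 \times \BP^1$ already recorded in the excerpt: the four neighboring components $\wt{A_iB_j}$ restrict to the four fibers $\BP^1 \times \{0\}$, $\BP^1 \times \{\infty\}$, $\{0\} \times \BP^1$, $\{\infty\} \times \BP^1$, and $G \cdot G = -\BP^1\times \{0\} - \{0\}\times \BP^1$ is already a linear combination of these. Hence $B(G)$ is the rank-two sublattice of $\NS(G)$ spanned by the two rulings, and in this basis the intersection form is $\bigl(\begin{smallmatrix} 0 & 1\\ 1 & 0 \end{smallmatrix}\bigr)$, which is manifestly non-degenerate.

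For $G = \wt{AB}$, let $\pi\colon \wt{AB} \to A\times B$ denote the blow-down of the exceptional curves $E_{ij} := E_{p_i,q_j} \cdot G$ above the singular pairs $(p_i,q_j)$, with $p_1,\dots,p_{n_A}$ the singular points of the special fiber of $\CX$ lying on $A$ and $q_1,\dots,q_{n_B}$ those on $B$. The non-trivial generators of $B(G)$ are the strict transforms $X_i := \wt{p_iB}$ and $Y_j := \wt{Aq_j}$ (which satisfy $X_i \equiv \pi^* f_B - \sum_j E_{ij}$ and $Y_j \equiv \pi^* f_A - \sum_i E_{ij}$ in $\NS(\wt{AB})$, where $f_A, f_B$ are the ruling classes on $A\times B$), together with the $E_{ij}$; the self-restriction $G\cdot G = -\sum_i X_i - \sum_j Y_j - 2\sum_{i,j} E_{ij}$ is already in their span. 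A direct blow-up calculation yields
\[ X_i^2 = -n_B,\quad Y_j^2 = -n_A,\quad E_{ij}^2 = -1,\quad X_i \cdot E_{i'j'} = \delta_{ii'},\quad Y_j \cdot E_{i'j'} = \delta_{jj'}, \]
with all remaining pairings among distinct generators equal to zero.

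To conclude non-degeneracy I would take $D = \sum_i a_i X_i + \sum_j b_j Y_j + \sum_{i,j} c_{ij} E_{ij}$ with $D\cdot F = 0$ against every generator. The equations $D\cdot E_{ij}=0$ force $c_{ij}=a_i+b_j$; feeding this into $D\cdot X_i = 0$ and $D\cdot Y_j = 0$ reduces them to $\sum_j b_j = 0$ and $\sum_i a_i = 0$, respectively. Substituting $X_i = \pi^* f_B - \sum_j E_{ij}$ and $Y_j = \pi^* f_A - \sum_i E_{ij}$ back into $D$, the coefficients of $\pi^* f_A$ and $\pi^* f_B$ vanish by the summation constraints, and the remaining $E_{ij}$-coefficients collapse to $-a_i - b_j + c_{ij} = 0$. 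Hence $D = 0$ already in $\NS(\wt{AB})$, so a fortiori in $B(G)$. The one potentially delicate point is keeping the blow-up formulae for the strict transforms straight so that the pairing matrix has the stated form; once that is in place the non-degeneracy is elementary linear algebra.
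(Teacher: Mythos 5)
Your proof is correct and rests on the same geometric input as the paper's, namely treating $\wt{AB}$ as the blow-up $\pi\colon \wt{AB}\to A\times B$ at the points $(p_i,q_j)$. The difference is in the choice of generating set. The paper observes that $B(G)$ is equivalently generated by $\pi^*f_A$, $\pi^*f_B$ (pull-backs of the two ruling classes of $A\times B$, which the paper phrases as ``$\NS(A)$, $\NS(B)$ via projections'') together with the exceptional curves $E_{ij}$; in that basis the Gram matrix is the hyperbolic block $\bigl(\begin{smallmatrix}0&1\\1&0\end{smallmatrix}\bigr)$ orthogonally summed with $-\mathrm{Id}$, so non-degeneracy requires no further computation. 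You instead use the strict transforms $X_i=\wt{p_iB}$, $Y_j=\wt{Aq_j}$ and $E_{ij}$ — which are the literal restrictions $F\cdot G$ of components $F$ of $\CZ_k$ — compute the Gram matrix in that basis, and solve the resulting linear system. Your computations ($X_i^2=-n_B$, $X_i\cdot E_{i'j'}=\delta_{ii'}$, etc.) and the linear algebra are all correct, and passing from $c_{ij}=a_i+b_j$, $\sum a_i=\sum b_j=0$ to $D=0$ in $\NS(\wt{AB})$ is exactly the non-degeneracy statement. Since $X_i=\pi^*f_B-\sum_jE_{ij}$ and $Y_j=\pi^*f_A-\sum_iE_{ij}$, the two generating sets span the same sublattice once $n_A,n_B\ge 1$, so the two proofs are transpositions of one another; the paper's basis just makes the conclusion visible at a glance, while yours requires unwinding a small system. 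Both arguments tacitly assume $n_A,n_B\ge 1$ (each factor genuinely singular, which is the only case of interest here); without that, $\pi^*f_A$, $\pi^*f_B$ need not lie in $B(G)$ and the pairing on the one-dimensional span of a ruling would be trivially zero — worth a sentence if one wants the lemma to be literally universal.
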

\begin{proof}
If $G=\wt{AB}$, this group is generated by $\NS(A)$, $\NS (B)$ via
projections and the exceptional divisors. It is clear that the
intersection pairing on $B(G)$ is non-degenerate. If $G=\BP ^1\times
\BP ^1$, then $B(G)=\NS (G)$ and the intersection pairing is clearly
non-degenerate.
\end{proof}

By this lemma, we may replace $C$ by its projection $B(C)$ in
$B(G)$. As all $B(G)$'s are generated by intersections $F\cdot G$,
we need only describe the intersection of three surfaces in $Z_k$.
Let $F_1, F_2, F_3$ be three components. \begin{itemize} \item If
they are all distinct, then the intersection is non-zero only if
they have the following forms after an reordering
$$F_1=\wt {AB_0}, \qquad F_1=\wt{AB_1}, \qquad F_3=E_{p,
q}$$ where $p$ is a singular point on $A$ and $q=B_0\cdot B_1$. In
this case the intersection is $1$:
$$F_1\cdot F_2\cdot F_3=1.$$
\item
If $F_1=F_2\ne F_3$ then
$$F_1\cdot F_2\cdot F_3=i^*(F_1)^2, \qquad i: F_3\lra X.$$
Furthermore if $F_1=\wt {AB_0}$, $F_3=\wt {AB_1}$, then $i^*F_1=\wt
{Aq}$ and
$$F_1^2\cdot F_3=i^*(F_1)^2=-s(A):=-\text {number of singularity of
$X_{k}$ on $A$}.$$ \item If $F_1=\wt {AB}$, $F_3=E_{p, q}$ with
$p\in A$ and $q\in B$, then $i^*F_1$ is a $\BP ^1$ on $F_3\simeq \BP
^1\times \BP ^1$ of degree $(1, 0)$ or $(0,1)$. It follows that
$$F_1^2\cdot F_3=0.$$
\item If $F_1=E_{p, q}, F_3=\wt{AB}$ with $p\in A$, $q\in B$, then
$i^*F_1$ is one exceptional divisor on $F_3$ and then
$$F_1^2\cdot F_3=-1.$$
\item Finally if $F_1=F_2=F_3$ then
$$0=F^2\cdot (\pi)=F^2\cdot (\sum_{A, B} \wt {AB}+2\sum _{p, q}E_{pq}).$$
It follows that
$$\wt {AB}^3=2s(A) s(B), \qquad \wt E_{p, q}^3=2.$$
\end{itemize}

\subsubsection*{Relative dualising sheaf} Now assume that $X=Y$. Let
$\wt \Delta_R\subset Z_R$ be the Zariski closure of the diagonal in
$Z_R$. Then $\wt \Delta_R$ is the blowing-up of $X_R$ at its double
points in the special fiber. Let $i: \wt \Delta_R\lra Z_R$ and $f:
\wt \Delta_R\lra X_R$ be the induced morphisms and let $\omega $ be
the relative dualising sheaf on $X_R$.
\begin{lemma}
$$f^*\omega =i^*\CO _{Z_R}(-\wt \Delta_R).$$
\end{lemma}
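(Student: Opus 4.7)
My plan is to verify the identity locally on $\wt\Delta_R$. The key observation is that $\wt\Delta_R$ is a regular surface --- via $f$ it is canonically identified with the blow-up of $X_R$ at its nodes --- and that both $f^*\omega$ and $i^*\CO_{Z_R}(-\wt\Delta_R)$ are line bundles on it; since $\wt\Delta_R$ is connected, it will suffice to exhibit matching nowhere-vanishing trivialisations in each of finitely many charts and check agreement on overlaps.

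Away from the singular points $(p,p)$ of $X_R\times_RX_R$ with $p$ a node of $X_k$, the blow-up $\pi\colon Z_R\to X_R\times_RX_R$ is an isomorphism and $\wt\Delta_R$ coincides with the classical diagonal, so the standard conormal identification
\[
\CI_{\wt\Delta_R}/\CI_{\wt\Delta_R}^2 \;\xrightarrow{\;\sim\;}\; \Omega^1_{X_R/R}=\omega_{X_R/R},\qquad g\otimes 1-1\otimes g\mapsto dg,
\]
pulled back along $f$, supplies the desired isomorphism. To extend across the exceptional curves, I would compute in each of the four charts $U_{x_0},U_{x_1},U_{x'_0},U_{x'_1}$ surrounding a blown-up singular point. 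In $U_{x_0}=\Spec R[a,b,c]/(a^2bc-\pi)$ with $a=x_0$, $b=x'_0/x_0$, $c=x'_1/x_0$, the proper transform $\wt\Delta_R$ is cut out by $b-1$, so $i^*\CO_{Z_R}(-\wt\Delta_R)$ is trivialised by the nowhere-vanishing section $b-1$ on $\wt\Delta_R\cap U_{x_0}$. Meanwhile $f$ restricts to $x_0\mapsto a$, $x_1\mapsto ac$, so $f^*\omega_{X_R/R}$ is trivialised by $f^*(dx_0/x_0)=da/a$. The conormal map from the smooth case takes $b-1=-(x_0-x'_0)/x_0$ to $-dx_0/x_0$ on the open $\{a\ne 0\}$, and this extends tautologically to an isomorphism of line bundles on the whole chart because both generators remain nowhere-zero. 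The three remaining charts are handled by the evident symmetries (swapping the two factors, or $x_0\leftrightarrow x_1$), and the resulting local isomorphisms patch via their common value on the dense smooth locus.

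The main technical point will be ensuring that the conormal-to-cotangent isomorphism, canonical on the smooth locus of $X_R$, genuinely extends across the exceptional curves; the chart calculation above pins this down directly. A more conceptual route would combine adjunction on the regular threefold $Z_R$, namely $\omega_{\wt\Delta_R/R}\cong\omega_{Z_R/R}(\wt\Delta_R)|_{\wt\Delta_R}$, with the blow-up formula $\omega_{\wt\Delta_R/R}\cong f^*\omega_{X_R/R}\otimes\CO(E_\Delta)$ (for the surface $\wt\Delta_R\to X_R$ blown up at the regular points over nodes) and a Poincar\'e-residue computation of $\omega_{Z_R/R}$; but this route requires tracking the discrepancy of $\pi$ at the threefold ordinary double points $(p,p)$, so the direct chart argument is cleaner.
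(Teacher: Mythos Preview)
Your proposal is correct and follows essentially the same route as the paper: reduce to a local computation near a node, work in the blow-up chart $U_{x_0}$ with coordinates $(x_0,\,y_0/x_0,\,y_1/x_0)$, observe that $\wt\Delta_R$ is cut out by the single equation $y_0/x_0-1$, and check that under the conormal map $(y_0-x_0)\mapsto dx_0$ this generator goes to the generator $dx_0/x_0$ of $f^*\omega_{X_R/R}$. The paper is slightly more economical in noting that $\wt\Delta_R$ only meets two of the four charts (it sits over the diagonal $\alpha_0=\beta_0$ of the reduction square), so one really only needs $V_{x_0}$ and $V_{x_1}$ rather than appealing to the full fourfold symmetry; but this is a cosmetic difference, and your alternative adjunction sketch is a reasonable aside.
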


\begin{proof}

Since the question is local, we may assume that $X_R$ is given by
$$\Spec R[x_0, x_1]/(x_0x_1-\pi)$$
then the relative dualising sheaf is given by the subsheaf of
$\Omega ^1_{X_R/R}\otimes K(X)$ generated by $dx_0/x_0=-dx_1/x_1$.
The scheme $Z_R$ is obtained by blowing up the singular point on
$X_R\times _R X_R$ and is  covered by $$U_{x_0},\quad U_{x_1},\quad
U_{y_0}, \quad U_{y_1}.$$ The subscheme $\wt \Delta_R$ is defined by
an ideal $I$ generated by  $y_0/x_0-1$, $y_1/x_1-1$ in these charts
and has coverings given by
$$V_{x_0}=\Spec R[x_0,
x_1/x_0]/((x_1/x_0)x_0^2-\pi),$$
$$V_{x_1}=\Spec R[x_1, x_0/x_1]/((x_0/x_1)x_1^2-\pi).$$
 As $dx_0$ and $dx_1$ are
the image of $y_0-x_0$ and $y_1-x_1$ on $\wt \Delta_R$, we see that
$I/I^2$  is generated by $dx_0/x_0=-dy_0/y_0$.
\end{proof}

\subsection{Base changes and reduction complex}
In this subsection, we describe the pull-back of vertical divisors
respect to base changes. The direct limit of vertical divisors can
be identified with piece-wise linear functions on the reduction
complex which is the product of metrized graphs.

 Let $S$ be a ramified extension of $R$ of
degree $n$ with fraction field $L$. Let $Z_S$ be the model of $Z_L$
obtained by the same way as $Z_R$. In the following we want to
describe the morphism $\CZ_S\lra \CZ_R$ in terms of charts. As this
question is local, we may assume that $X_{R}$ and $Y_{R}$ are given
by
$$X_{R}=\Spec R[x_0, x_1]/(x_0x_1-\pi), \qquad
Y_{R}=\Spec R[y_0, y_1]/(y_0y_1-\pi).$$ Then $Z_R$ is obtained by
blowing up at the singular point $(x_0, x_1, y_0, y_1)$ of
$X_R\times _R Y_R$ and is covered by four charts of spectra  of
subrings of  the fraction field $K(x_0, y_0)$ of $X_R\times Y_R$:
$$U_{x_0}, \quad U_{x_1}, \quad U_{y_0}, \quad U_{y_1}.$$

Let $t$ be a local parameter of $S$ such that $\pi=t^n$. For
integers $a, b\in [0, n-1]$, set
$$x_{0a}=x_0/t^a, \quad x_{1a}=x_1/t^{n-1-a},\quad
y_{0b}=y_0/t^b, \quad y_{1b}=y_1/t^{n-1-b}.$$ Then $X_{S}$ and $Y_S$
are   unions of the following spectra:
$$V_{a}=\Spec R[x_{0,a}, x_{1, a}]/(x_{0,a}\cdot x_{1, a}-t),
\qquad 0\le a\le n-1.$$
$$W_{b}=\Spec R[y_{0b}, y_{1b}]/(y_{0b}\cdot y_{1b}-t),
\qquad 0\le b\le n-1.$$ In terms of valuations on $\bar R$-points,
$\alpha =\ord _\pi (x_0)$, $\beta =\ord _\pi (y_0)$, $V_\alpha$ and
$W_\beta$ are  defined by inequalities
$$a/n\le \alpha \le (a+1)/n, \qquad b/n\le \beta \le (b+1)/n.$$

The special component of $X_S$ has $n$-singular points with one on
$V_a$ each and is the union of $n+1$-components $A_{n, a}$
($a=0\cdots n$) defined by $x_{1a}$ in $U_{a}$ if $a\le n-1$, by
$x_{0, a-1}$ in $U_{ a-1}$ if $a\ge 1$, and by $1$ on other
components. Similarly, we have components $B_{n, b}$ for $Y_S$. The
product $V_a\times W_b$ has the special fiber to be a union of
$$A_{n, a}\times B_{n, b}, \quad A_{n, a}\times B_{n, b+1},
\quad A_{n, a+1}\times B_{n, b}, \quad A_{n, a+1}\times B_{n,
b+1}.$$ The scheme $Z_S$ is covered by  the blowing up $U_{a, b}$ of
$V_a\times W_b$ at its singular point.
\begin{figure}[ht]

    \begin{picture}(200,245)(-100,-20)
        \put(-80,10){\includegraphics{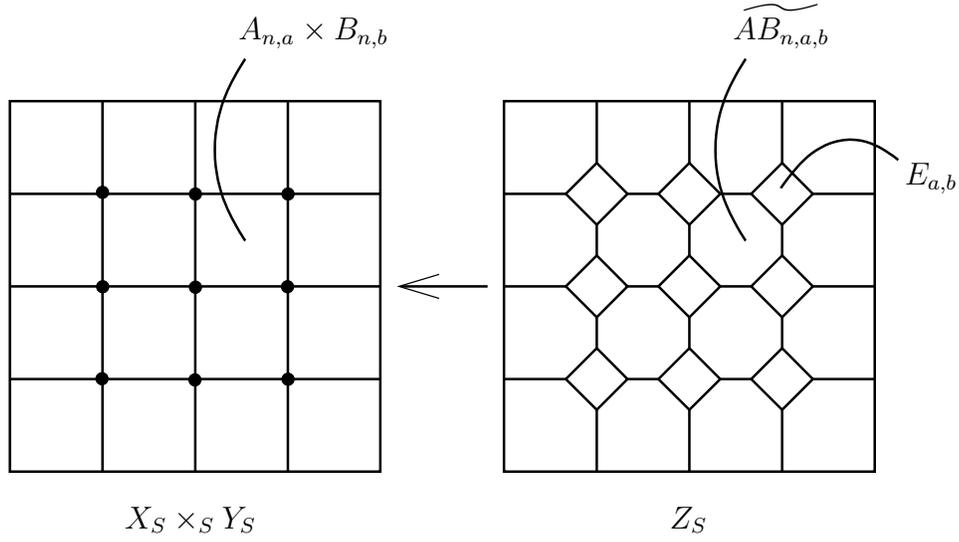}}

        \put(-35, -10){$X_S \times_SY_S$}
        \put(171, -10){$Z_S$}
        \put(8, 175){$A_{n,a} \times B_{n,b}$}
        \put(195, 175){$\widetilde{AB_{n,a,b}}$}
        \put(260, 120){$E_{a,b}$}
    \end{picture}

    \caption{Special fibers of base changes}
    \label{Fig: Base Change}

\end{figure}
Here we have diagrams of the schemes $X_S \times_S Y_S$ and the
blow-up
    along the singular points, denoted $Z_S$ (for the case $n=3$). We have labeled a general
    product component $A_{n,a} \times B_{n,b}$ as well as its strict transforms
    $\widetilde{AB_{n,a,b}}$. We have also labeled one of the exceptional divisors $E_{a,b}$ that
    arises from the blow-up

The scheme $U_{a, b}$ is covered by  four affine schemes with
equations:
$$U_{x_0, a, b}: \quad \left(\frac {y_{0b}}{x_{0a}}\right)
\left(\frac {y_{1b}}{x_{0a}}\right)(x_{0a})^2=t$$
$$U_{x_1, a, b}:\quad \left(\frac {y_{0b}}{x_{1a}} \right)
\left(\frac {y_{1b}}{x_{1a}}\right)(x_{1a})^2=t$$
$$U_{y_0, a, b}: \quad \left(\frac {x_{0a}}{y_{0b}}\right)
\left(\frac {x_{1a}}{y_{0b}}\right)(y_{0b})^2=t$$
$$U_{y_1, a, b}: \quad \left(\frac {x_{0a}}{y_{1b}}\right)
\left(\frac {x_{1a}}{y_{1b}}\right)(y_{1b})^2=t.$$ The divisor $(t)$
has five components over $V_a\times W_b$:
$$\wt {AB_{n, a, b}}, \quad \wt{AB_{n, a, b+1}},
\quad \wt {AB_{n, a+1,b}}, \quad \wt {AB_{n, a+1, b+1}}, \quad E_{a,
b}.$$

In terms of valuations $\alpha =\ord _\pi (x_0)$, $\beta =\ord _\pi
(y_0)$, $U_{a, b}$ are defined by following inequalities:
$$\min (\alpha _0-a/n, 1/n-(\alpha _0-a/n))\ge \beta_0-b/n$$
$$\min (\beta _0-b/n, 1/n-(\beta _0-b/n))\ge 1/n-(\alpha _0-a/n)$$
$$\min (\beta _0-b/n, 1/n-(\beta _0-b/n))\ge (\alpha _0-a/n)$$
$$\min (\alpha _0-a/n, 1/n-(\alpha _0-a/n))\ge 1/n-(\beta _0-b/n)$$
These are  parts divided by diagonals in the square $[a/n,
(a+1)/n]\times [b/n, (b+1)/n]$. Thus the morphism from $Z_S$ to
$Z_R$ is given by the inclusion of the parts in $[0, 1]^2$.

\begin{figure}[ht]

    \begin{picture}(200,225)(-75,-10)
        \put(0,0){\includegraphics{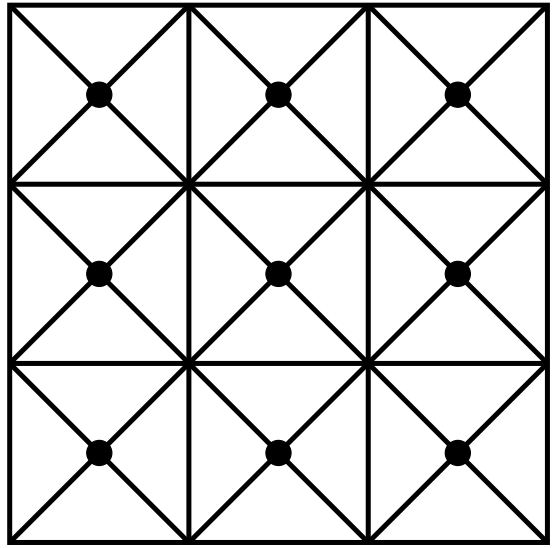}}

    \end{picture}

    \caption{Reduction complex of base changes}
    \label{Fig: Base Change Reduction Complex}

\end{figure}
Here is a diagram of the reduction complex associated to the special
fiber in
        Figure~\ref{Fig: Base Change}. The vertices of the complex correspond to the components
        of $Z_S$, with the nine fat points corresponding to the 9 exceptional divisors. (They have
        multiplicity $2$ in the special fiber.)

\subsubsection*{Pull-back of vertical divisors} In the following we
want to compute the pull-back of vertical divisors $Z_R$ in $Z_S$.
Let $\varphi: Z_S\lra Z_R$ denote the morphism. Let us index
divisors using set $\Lambda _n$  on $[0,1]^2$ of the form
$\left(\frac a{2n}, \frac b{2n}\right)$:
$$D_{a/n, b/n}:=\wt {AB_{n, a, b}}, \qquad a, b\in \BZ$$
$$D_{a/n, b/n}:=2E_{a-1/2, b-1/2}\qquad a, b \in \BZ+\frac 12$$
\begin{lemma}
$$\varphi ^*\wt {AB}=n\sum _{(a, b))\in \Lambda _n}
\max (1-a-b, 0)D_{a, b}.$$
$$\varphi ^*(E)
=n\sum _{(a, b)\in \Lambda}\min (a, 1-a, b, 1-b)D_{a, b}.$$
\end{lemma}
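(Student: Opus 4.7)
The plan is a direct local computation. In the chart $U_{x_0,a,b}$ of $Z_S$, with coordinates $x_{0,a}$, $u=y_{0,b}/x_{0,a}$, $v=y_{1,b}/x_{0,a}$ satisfying $uv\,x_{0,a}^2=t$, the principal divisor splits as $\div(t)=\wt{AB_{n,a,b}}+\wt{AB_{n,a,b+1}}+2E_{a,b}$, while $\div(v)=\wt{AB_{n,a,b}}$, $\div(u)=\wt{AB_{n,a,b+1}}$, and $\div(x_{0,a})=E_{a,b}$. Recall that in $Z_R$ the divisors $\wt{A_0B_0}$ and $E$ are effective Cartier divisors whose local equations in the chart $U_{x_0}$ are $y_1/x_0$ and $x_0$ respectively, with analogous equations in $U_{x_1},U_{y_0},U_{y_1}$ by symmetry; on chart overlaps these local equations differ only by units, so the resulting Cartier divisors are well-defined globally.

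The pullbacks of the local equations via $y_1=y_{1,b}t^{n-1-b}$ and $x_0=x_{0,a}t^{a}$ become
\[
\varphi^{*}(y_1/x_0)=v\,t^{n-1-a-b},\qquad \varphi^{*}(x_0)=x_{0,a}\,t^{a}.
\]
When these are regular functions on $U_{x_0,a,b}$, their divisors expand as
\begin{align*}
\div(v\,t^{n-1-a-b})&=(n-a-b)\wt{AB_{n,a,b}}+(n-1-a-b)\wt{AB_{n,a,b+1}}+2(n-1-a-b)E_{a,b},\\
\div(x_{0,a}\,t^{a})&=a\,\wt{AB_{n,a,b}}+a\,\wt{AB_{n,a,b+1}}+(2a+1)E_{a,b}.
\end{align*}
A direct check shows that each coefficient matches $n\max(1-\alpha-\beta,0)$, respectively $n\min(\alpha,1-\alpha,\beta,1-\beta)$, evaluated at the corresponding vertex of the reduction complex and scaled by the special-fiber multiplicity $\mu_C\in\{1,2\}$ of each component in $(t)$; this reproduces the claim of the lemma for the three components visible in $U_{x_0,a,b}$.

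The main technical obstacle is that $U_{x_0,a,b}$ embeds into $U_{x_0}$ of $Z_R$ only when $b\ge a$ and $a+b\le n-1$; outside this region one of the pullback monomials above fails to be regular. In each of the remaining sub-triangles of the sub-square $(a,b)$, I would use instead whichever of the charts $U_{x_1},U_{y_0},U_{y_1}$ of $Z_R$ yields a regularly pulled-back local equation of $C$. The identities $x_{0,a}x_{1,a}=t$ and $y_{0,b}y_{1,b}=t$ force these alternative pullbacks to produce monomials whose divisors realize the values of $\max(1-\alpha-\beta,0)$ and $\min(\alpha,1-\alpha,\beta,1-\beta)$ dictated at each vertex, and where $\wt{A_0B_0}$ has no set-theoretic support (namely $a+b\ge n$) the pullback vanishes as the formula predicts. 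Performing this case analysis across the four chart types $U_{x_0,a,b},U_{x_1,a,b},U_{y_0,a,b},U_{y_1,a,b}$, and invoking the evident symmetries of the construction, assembles the chart-local computations into the global identities asserted.
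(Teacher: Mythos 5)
Your approach is essentially the paper's for the first formula (direct monomial expansion in the small charts of $Z_S$), but differs for the second. The paper's primary argument for $\varphi^*(E)$ avoids chart case analysis entirely: it establishes $\varphi^*\wt{A_0B_0}$ and, by symmetry, $\varphi^*\wt{A_iB_j}$ for all $(i,j)$, then uses $\varphi^*\div(\pi)=n\div(t)$ together with $\div(\pi)=\sum\wt{A_iB_j}+2E$ and $\div(t)=\sum D_{a,b}$ to solve for $\varphi^*(E)$ algebraically. Your direct computation of $\varphi^*(E)$ is the approach the paper sketches only in the subsequent ``Remarks'' paragraph (with the monomial expansions of $x_0,x_1,y_0,y_1$), not in the proof itself. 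What the paper's derivation buys is exactly the thing you flag as the ``main technical obstacle'': one never needs to decide which of $U_{x_0},U_{x_1},U_{y_0},U_{y_1}$ a given small chart of $Z_S$ lands in.

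Your chart computations that you actually write out are correct (in particular $\div(v)=\wt{AB_{n,a,b}}$, $\div(u)=\wt{AB_{n,a,b+1}}$, $\div(x_{0,a})=E_{a,b}$, and the multiplicities $a$, $a$, $2a+1$, $n-a-b$, $n-1-a-b$, $2(n-1-a-b)$), and the criterion $b\ge a$, $a+b\le n-1$ for $\varphi(U_{x_0,a,b})\subset U_{x_0}$ is right. However, the multiplicity of $\varphi^*(E)$ along $\wt{AB_{n,a,b}}$ genuinely depends on which big chart contains the image of the generic point of $\wt{AB_{n,a,b}}$: pulling back $x_0$ gives multiplicity $a$, whereas pulling back $x_1$ would give $n-a$, and these only agree with $\min(a,n-a,b,n-b)$ in the appropriate subregions. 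So the final paragraph, ``use whichever chart yields a regularly pulled-back local equation and invoke the evident symmetries,'' is where the content of the second formula actually lies, and it is left at a sketch. To make this rigorous, you should either carry out the four-way case division (tracking where the generic point of each $\wt{AB_{n,a,b}}$ and each $E_{a,b}$ maps, and verifying the resulting multiplicity is $\min$ of the four candidates in every case), or, more efficiently, adopt the paper's route: prove the first formula as you do, deduce the analogues for $\wt{A_1B_0}$, $\wt{A_0B_1}$, $\wt{A_1B_1}$ by symmetry, and then extract $\varphi^*(E)$ from $\varphi^*\div(\pi)=n\div(t)$, which reduces the second identity to the elementary scalar identity $1-|1-a-b|-|a-b|=2\min(a,1-a,b,1-b)$ on $[0,1]^2$.
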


\begin{proof}
First we notice that   $\varphi^*\wt {A_0B_0}$  is defined as zeros
of $x_1/y_0=y_1/x_0$ on $U_{x_0}$ and $U_{y_0}$,  and $1$  on
$U_{x_1}$ and $U_{y_1}$. Thus it is defined by $1$ on $U_{x_i, a,
b}$ and $U_{y_i, a, b}$ if $a+b\ge n$. It follows that the
multiplicity of $\wt {AB_{n, a, b}}$ and $E_{a, b}$ are zero if
$a+b\ge n$.  Now we assume that $a+b<n $. Then $x_1/y_0=y_1/x_0$ on
$U_{x_0}$ has the following expressions in the charts $U_{x_0,a, b}$
and $U_{y_0, a, b}$:
$$\frac {y_1}{x_0}=\frac {y_{1b}}{x_{0a}}t^{n-1-a-b}
 =\left(\frac {y_{0b}}{x_{0a}}\right)^{n-a-b}
\left(\frac
{y_{1b}}{x_{0a}}\right)^{n-1-a-b}(x_{0a})^{2(n-1-a-b)}.$$
$$\frac {x_1}{y_0}=\frac
{x_{1a}}{y_{0b}}t^{n-1-a-b}=\left(\frac
{x_{0a}}{y_{0b}}\right)^{n-1-a-b} \left(\frac
{x_{1a}}{y_{0b}}\right)^{n-a-b}(y_{0b})^{2(n-1-a-b)}$$

Either one  of these formulae shows that the pull-back of $\wt
{A_0B_0}$ has multiplicity $n-a-b$ at $\wt{AB_{n, a, b}}$, and
$2(n-1-a-b)$ at $E_{a, b}$. This proves the first formula in Lemma.

For exceptional divisor, we may using the following decompositions
$$\div (\pi)=\sum_{i, j=0}^1 \wt {A_iB_j}+2E,$$
$$\div (t)=\sum _{(a, b)\in \Lambda }D_{a, b}.$$
The fact $\varphi^*\div (\pi)=n\div (t)$ implies that
$$\varphi ^*(E)
=n\sum _{(a, b)\in \Lambda}\min (a, 1-a, b, 1-b)D_{a, b}.$$
\end{proof}

\subsubsection*{Remarks}
Alternatively, we may compute pull-back of an exceptional divisor
directly by using charts:
$$x_0=\left(\frac {y_{0b}}{x_{0a}}\right)^{a} \left(\frac
{y_{1b}}{x_{0a}}\right)^{a}(x_{0a})^{2a+1}$$
$$x_1=\left(\frac {y_{0b}}{x_{1a}} \right)^{n-a-1}
\left(\frac {y_{1b}}{x_{1a}}\right)^{n-a-1}(x_{1a})^{2n-2a-1}$$
$$y_0=\left(\frac {x_{0a}}{y_{0b}}\right)^b
\left(\frac {x_{1a}}{y_{0b}}\right)^b(y_{0b})^{2b+1}$$
$$y_1=\left(\frac {x_{0a}}{y_{1b}}\right)^{n-b-1}
\left(\frac {x_{1a}}{y_{1b}}\right)^{n-b-1}(y_{1b})^{2n-b-1}.$$

\subsubsection*{Reduction complex} Let $\CC(X)$ and $\CC(Y)$ be the
reduction graphs of $X$ and $Y$ respectively with reduction
morphisms
$$r_X: \quad X(\bar K)\lra \CC (X), \qquad r_Y:\quad Y(\bar K)\lra \CC(Y).$$
Recall that $R(X)$ and $R(Y)$ are metrized graphs with edges of
lengths $1$ parameterized by irreducible components and singular
points in special fibers of $X_R$ and $Y_R$. The reduction map is
given as follows. An edge $E\simeq [0,1]$ corresponds to a singular
point near which $X_R$ has local structure
$$R[x_0, x_1]/(x_0x_1-\pi)$$
such that $0$ and $1$ correspond to $x_1=0$ and $x_0=0$
respectively. Then the reduction morphism is given by
$$(x_0, y_0)\lra \ord (x_0).$$
Here $\ord (x_0)$ is a valuation on $\bar K$ such that $\ord
(\pi)=1$. The reduction map of a point is a vertex  (resp. a smooth
point in a edge) if and only if the reduction modulo $\pi$ of this
point is in a corresponding smooth point in a component (resp.  a
singular point). After a base change $L/K$ of degree $n$, the dual
graph is unchanged if we change the lengths of edges to be $1/n$. In
other words, the irreducible components of $X_L$ and singular points
corresponding to rational points on $\CC (X)$ with denominator $n$
and intervals between them.

Let us define the reduction complex of $Z=X\times Y$ to be
$$\CC(Z_R):=\CC(X_R)\times \CC(Y_R)$$
with a triangulation by adding diagonals. We have induced reduction
map. The vertices in the complex correspond to irreducible
components in $Z_R$; the edges correspond to intersection of two
components; the triangle correspond to intersection of three
components.

The reduction complex of base change $[L:K]=n$ after a change of
size coincides with the same complex with an $n$-subdivision of
squares and then an triangulation on it. Thus we may define $\CC
(Z)$ the complex without triangulation.

Let $V(X_R)$ denote the group of divisors with real coefficients
supported in the special fiber. Let $\BR(\CC(Z))$ denote the space
of continuous real functions on $\CC (Z)$. Then we can define a map
$$V(X_R)\lra \BR (\CC (Z)), \quad F\mapsto f _{R, F}$$ with following
properties: write
$$F=\sum_C a_CC+\sum _E 2b_EE$$
where $C$ runs through all non-exceptional components of $Z_R$, and
$E$ all exceptional components, then
\begin{itemize}
\item $f _{R, F}$ is linear on all triangles.
\item $f_{R, F}(r(C))=a_C$,
\item $f _{R, F}(r(E))=b_E$.
\end{itemize}

Let $V(X)$ denote the direct limit of $V(X_S)$ via pull-back map in
the projective system $X_S$ defined by finite extensions of $R$ in
$\bar R$. The main result in the last subsection gives the
following:

\begin{lemma} The map $[S:R]^{-1}f_{S}$ induces a map
$$\phi: \quad V(X)=\lim V(X_S)\lra \BR (\CC(X)).$$
Moreover the image of this map are continuous function which are
linear on some $n$-triangulation.  So it is dense in the space of
continuous functions.
\end{lemma}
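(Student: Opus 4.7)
The plan is to prove the lemma in three steps: well-definedness on the direct limit, identification of the image, and density. First, I would verify compatibility with the transition pull-back maps. Given a ramified extension $S/R$ of degree $n$ with induced morphism $\varphi: Z_S\to Z_R$ and $F\in V(X_R)$, the claim is that $n^{-1}f_{S,\varphi^*F}=f_{R,F}$. By linearity this reduces to the generators $F=\wt{AB}$ and $F=E$. Applying Lemma~3.2.1 to $F=\wt{AB}$, the coefficient of $\wt{AB_{n,a,b}}$ in $\varphi^*F$ is $n\max(1-a-b,0)$, and of $E_{a,b}$ is $2n\min(a,1-a,b,1-b)$; dividing by $n$ (and by $2$ for exceptional divisors, per the convention $F=\sum a_CC+\sum 2b_EE$), these are precisely the values at the refined vertices $(a/n,b/n)\in\Lambda_n$ of the original piecewise-linear function $f_{R,\wt{AB}}$, which takes value $1$ at one corner of the unit square and $0$ at the other corners and center. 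The case $F=E$ is analogous. Since both $n^{-1}f_{S,\varphi^*F}$ and $f_{R,F}$ are continuous and linear on each triangle of the $n$-refinement, agreement at all vertices forces equality on $\CC(Z)$.

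Second, each $\phi(F)$ is by construction linear on every triangle of some $n$-refined triangulation of $\CC(Z)$, hence continuous. Conversely, any continuous piecewise-linear function $g$ on the $n$-refinement of $\CC(Z)$ is realized by $\phi$: setting
\[
F \;=\; \sum_C g(r(C))\,C \;+\; \sum_E 2\,g(r(E))\,E \;\in\; V(Z_S),
\]
one gets $f_{S,F}=ng$ at every vertex and both are linear on triangles, so $\phi(F)=g$. Thus the image of $\phi$ is exactly the union, over all $n$, of continuous piecewise-linear functions on $n$-refined triangulations of $\CC(Z)$.

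Third, density is a standard uniform-approximation argument on the compact complex $\CC(Z)$: the mesh of the $n$-refinement is $O(1/n)$, and the piecewise-linear interpolant of a continuous function $h$ at the $n$-refined vertices differs from $h$ by at most the modulus of continuity of $h$ at scale $1/n$, which tends to $0$ as $n\to\infty$. Hence the image of $\phi$ is uniformly dense in $\BR(\CC(Z))$.

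The main obstacle is the first step, where the sampling identity rests entirely on the pull-back formulas of Lemma~3.2.1. The subtle point is aligning the several factors of $2$ that arise at exceptional divisors --- namely, the multiplicity $2$ of $E$ in $\div(\pi)$, the factor $2$ in the pull-back formula $\varphi^*\wt{AB}$ (at the refined exceptional $D_{a,b}$ with $(a,b)\in\BZ+\tfrac12$), and the factor $2$ in the definition $F=\sum a_CC+\sum 2b_EE$ --- so that the correspondence between divisors and piecewise-linear functions is genuinely an isometry after normalization by $[S:R]^{-1}$ without spurious factors.
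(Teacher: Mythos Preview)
Your approach is correct and is exactly what the paper intends: the lemma is stated without proof, merely as an immediate consequence of the pull-back formulas of Lemma~3.2.1, and your three steps (compatibility with transition maps, identification of the image, density by piecewise-linear interpolation) spell this out in full.

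One slip to correct: when treating $F=\wt{AB}$, the coefficient of the refined exceptional component in $\varphi^*\wt{AB}$ is $2n\max(1-a-b,0)$ (from the first formula of Lemma~3.2.1 at a half-integer point, via $D_{a,b}=2E_{a-1/2,b-1/2}$), not $2n\min(a,1-a,b,1-b)$ as you wrote; the latter belongs to $\varphi^*E$. With the right formula your sampling identity holds. You should also make explicit the small check that $f_{R,F}$ is itself linear on each triangle of the $n$-refinement --- its non-smooth locus (the anti-diagonal for $\wt{A_0B_0}$, the two diagonals for $E$) is a union of edges of the refined triangulation --- so that agreement at vertices genuinely forces equality as functions.
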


\subsection{Triple pairing}
In this section we are try to define a triple pairing for functions
in $F(\CC (Z))$. More precisely, let $f_1, f_2, f_3$ be three
continuous functions on $\CC (Z)$. Then for any positive integer
$n$, let us define piece wise linear functions $f_{i, n}$ such that
$f_{i,n}$ is linear on each triangle of the $n$-triangulation, and
has the same values as $f_i$ at vertices of triangles. Then
$f_{i,n}$ will correspond to vertical divisors $F_{i,n}$ in
$V(Z_{R_n})$ where $R_n$ is a ramified extension of degree $n$. Lets
define the triple pairing
$$(f_{1,n}, f_{2,n}, f_{3,n})=n^2 (F_{1,n}\cdot F_{2,n}\cdot F_{3,n})$$
where the right hand side is the intersection pairing on $Z_{R_n}$.
Notice that if $f_i=f_{i, 1}$, then $f_{i,n}=f_i$ and
$F_{i,n}=n^{-1}\varphi _n^*D_{i,1}$ by \S3.2, where $\varphi_n$ is
the projection $Z_{R_n}\lra Z_R$. It follows that
$$n^2(F_{1,n}\cdot F_{i,n}\cdot F_{i,n})=n^{-1}\varphi _n^*F_{1,1}\cdot
\varphi_n ^*F_{2,1}\cdot \varphi _n ^* F_{3,1}=F_{1,1}\cdot
F_{2,1}\cdot F_{3,1}.$$ Thus the above pairing does not depend on
the choice of $n$ if every $f_i=f_{i1}$. We want to examine when the
limit does exist and what expression we can get for this limit.

\begin{proposition}
Assume that  the functions $f_1, f_2, f_3$ on $\CC(Z)$ are smooth on
each square with bounded first and second derivatives.  Then the
intersection pairing on vertical divisors induces a trilinear
pairing
$$(f_1, f_2, f_3):=\int _{\CC
(Z)}(f_{1x}f_{2y}f_{3xy}+\text{permutations})dxdy,$$  where the
integrations are taken on the smooth part of the complex and
$f_{1x}$, $f_{2y}$, etc are partial derivatives for any directions
on edges of $R(X)$ and $R(Y)$.
\end{proposition}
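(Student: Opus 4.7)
The plan is to expand the intersection $n^2\, F_{1,n}\cdot F_{2,n}\cdot F_{3,n}$ trilinearly in terms of the basis of vertical components $D_v$ of $Z_{R_n}$ indexed by $v\in\Lambda_n$, use the explicit classification of triple intersections from \S3.1 to localize the sum to the individual small triangles of the triangulated complex $\CC(Z)$, and then pass to a Riemann-sum limit as $n\to\infty$.

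First I would write $F_{i,n}=\sum_{v\in\Lambda_n}f_i(v)\,D_v$, where $D_v=\wt{AB_{n,a,b}}$ at an integral vertex $v=(a/n,b/n)$ and $D_v=2E_{a-1/2,b-1/2}$ at a half-integral vertex, so that
\[
n^{2}F_{1,n}\!\cdot\!F_{2,n}\!\cdot\!F_{3,n}
=n^{2}\!\!\sum_{v_1,v_2,v_3\in\Lambda_n}\!\! f_1(v_1)f_2(v_2)f_3(v_3)\,D_{v_1}\!\cdot\!D_{v_2}\!\cdot\!D_{v_3}.
\]
By the formulas in \S3.1 the triple intersection $D_{v_1}\!\cdot\!D_{v_2}\!\cdot\!D_{v_3}$ vanishes unless the three vertices all lie in the closure of a single fine triangle. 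Using the tabulated values $\wt{AB_0}\cdot\wt{AB_1}\cdot E=1$, $\wt{AB_0}^{2}\cdot\wt{AB_1}=-s(A)$, $E^{2}\cdot\wt{AB}=-1$, $\wt{AB}^{3}=2s(A)s(B)$, $E^{3}=2$, together with the relation $D\cdot(\pi)=0$ that reduces each self-intersection to combinations of transverse ones, each local contribution attached to one fine unit square can be organized as a trilinear polynomial in the values of the $f_i$ at the four corners and the center of that square.

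Next I would rewrite this local polynomial as a sum of products of finite differences of the $f_i$, the total derivative order being four because of the mixed signs. Since the spacing is $1/n$, each such term is of size $O(n^{-4})$; combined with the prefactor $n^{2}$ and the fact that there are $n^{2}$ fine squares in each unit square of $\CC(Z)$, this yields a Riemann sum of constant order. Under the smoothness hypothesis, the finite-difference quotients converge uniformly to the partial derivatives $f_{ix}$, $f_{iy}$, $f_{ixy}$, so the sum converges to the integral, over the interior of each square of $\CC(Z)$, of a fixed trilinear polynomial in these partial derivatives. A direct identification on one reference square, cross-checked against the symmetries in $(x\leftrightarrow y)$ and under permutations of $(f_1,f_2,f_3)$, shows that this polynomial is precisely $f_{1x}f_{2y}f_{3xy}$ summed over the permutations of the indices, as claimed. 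Contributions coming from triples of vertices that sit on the $1$-skeleton of $\CC(Z)$ (the edges and vertices of the unit squares) involve only $O(n)$ terms of size $O(n^{-3})$ and vanish in the limit, so no boundary integral appears for $f_i$ smooth on each square.

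The main obstacle will be the combinatorial bookkeeping in the second step: the self-intersection contributions $D_v^{2}\cdot D_w$ and $D_v^{3}$ carry opposite signs to the transverse ones and must cancel in precisely the right way for the discrete trilinear form on each fine square to assemble into a clean discrete analog of $f_{1x}f_{2y}f_{3xy}$ plus permutations, rather than into some other combination of mixed fourth-order derivatives. Once this local identification is in hand the passage to the limit is standard, and the additional Laplacian and diagonal-jump terms appearing in the full formula announced at the start of \S3 can later be recovered by the same method applied to functions that fail to be smooth across the one-skeleton of $\CC(Z)$.
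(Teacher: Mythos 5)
Your strategy is the same as the paper's: expand $n^{2}F_{1,n}\cdot F_{2,n}\cdot F_{3,n}$ trilinearly over $\Lambda_n$, localize to the fine squares (not triangles --- the relevant local unit is one small square with its four strict-transform corners plus the central exceptional divisor, and the triangulation is irrelevant here), identify the local contribution as a centered fourth-order finite difference, and pass to a Riemann sum. So the outline is sound. But you have left the decisive step --- what you call ``the combinatorial bookkeeping'' --- entirely unresolved, and that step is the proof.

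Concretely, the issue is how to organize the local trilinear form in the five coefficients $f_i(\text{corners}),f_i(\text{center})$ so that the various signed intersection numbers ($+1$ for the three-distinct transverse triple with the exceptional divisor, $-1$ and $0$ for squares times a third, $+2$ for cubes) actually collapse to a single $O(n^{-4})$ term of the announced form. The paper's key device, which you do not mention, is the relation
$$\Bigl(D_{a,b}+D_{a+1/n,b}+D_{a,b+1/n}+D_{a+1/n,b+1/n}+D_{a+1/2n,b+1/2n}\Bigr)\cdot(\,\cdot\,)\cdot(\,\cdot\,)=0,$$
coming from $(\pi)=0$ in the Chow group. This allows one to subtract the center value $f_i(a+\tfrac1{2n},b+\tfrac1{2n})$ from all five coefficients at once; the exceptional divisor then carries coefficient $0$ and drops out, and the local form becomes a genuine 4-corner expression $\prod_{i}(a_iD_{00}+b_iD_{10}+c_iD_{01}+d_iD_{11})$ with $a_i,b_i,c_i,d_i=O(1/n)$. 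Only after this normalization do the cube/square/transverse rules combine into a closed expression (of shape $2\sum a_1a_2a_3-(a_1a_2+d_1d_2)(b_3+c_3)-\cdots$ plus permutations). The second ingredient is then a parity observation: writing the centered differences in terms of the symmetric Taylor coefficient $\alpha_i=\tfrac1{2n}(f_{ix}+f_{iy})$, antisymmetric $\gamma_i=\tfrac1{2n}(f_{ix}-f_{iy})$, and the $O(1/n^{2})$ terms $\beta_i,\delta_i$, one checks the local form is \emph{even} in each of $\alpha$ and $\gamma$, so the leading contribution is at total order four and equals $\tfrac1{n^{4}}(f_{1x}f_{2y}f_{3xy}+\text{perm.})+O(n^{-5})$. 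Without the subtraction trick you are forced to carry the exceptional divisor, the $s(A)s(B)$ factors in $\wt{AB}^{3}$, etc., and the identification ``by symmetry on a reference square'' will not go through as stated; and without the parity argument you cannot justify why no lower-order (e.g.\ $O(n^{-3})$, involving $f_{1x}f_{2x}f_{3x}$-type) terms survive. These are precisely the missing ideas, and I would not accept ``cross-check against symmetries'' as a substitute for them.
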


\begin{proof}

Our first remark is that from the formulae given in \S3.1, the
computation can be taken as a sum of intersections  on squares. In
other words, we may assume that both $X_R$ and $Y_R$  have one
singular point. Then the complex $\CC (Z)$ can be identified with
the square $[0,1]^2$. By calculation in \S3.1 and \S3.2, we have the
following expression of divisors:
$$F_{i,n}=\sum _{(a, b)\in \Lambda _n}f_i(a, b) D_{a, b}.$$
Again the intersection can be taken on sum of small squares:
\begin{align*}&n^2(F_{1n}\cdot F_{2n}\cdot F_{3n})\\ =&n^2\sum _{a,
b=0}^{n-1}\prod _{i=1}^3(f_i(a, b)D_{a, b}+f_i(a+1/n, b)D_{a+1/n,
b}+f_i
(a, b+1/n)D_{a, b+1/n}\\
&+f_i(a+1/n, b+1/n)D_{a+1/n, b+1/n}+f_i (a+1/2n, b+1/2n)D_{a+1/2n,
b+1/2n})_{a, b}\end{align*} where the last product is the
intersection on the square starting at $(a, b)$. As the sum
$$D_{a, b}+D_{a+1/n, b}+D_{a, b+1/n}+D{a+1/n, b+1/n}+D_{a+1/2n,
b+1/2n}$$ has zero intersection with products, we subtract each
coefficient by $D_{a+1/2n}$. Thus the last product has the form
$$\prod _{i=1}^3(a_i D_{a, b}+b_iD_{a+1/n, b}+c_i D_{a, b+1/n}
+d_iD_{a+1/n, b+1/n})$$ with
$$a_i=f_i(a, b)-f_i(a+1/2n, b+1/2n),$$
$$ b_i=f_i(a+1/n, b)-f_i(a+1/2n, b+1/2n),$$
$$c_i=f_i(a, b+1/n)-f_i(a+1/2n, b+1/2n), $$
$$d_i=f_i(a+1/n, b+1/n)-f_i(a+1/2n, b+1/2n).$$

We use the following facts to compute this product among the
divisors in the sum:
\begin{itemize}
\item the product of three distinct element will be $0$;
\item the product of square of one divisor with another divisor is
$-1$, if they intersect, and $0$ otherwise;
\item the cube of any element is $2$.
\end{itemize}
Then we have \begin{align*} &\prod _{i=1}^3(a_i D_{a,
b}+b_iD_{a+1/n, b}+c_i D_{a, b+1/n}
+d_iD_{a+1/n, b+1/n})\\
=&2(a_1a_2a_3+b_1b_2b_3+c_1c_2c_3+d_1d_2d_3)\\
&-(a_1a_2+d_1d_2)(b_3+c_3)-(b_1b_2+c_1c_2)(a_3+d_3)+\text
{permutations}
\end{align*}

Write  Taylor expansions for $a_i, b_i, c_i, d_i$ at $a'=a+1/2n,
b'=b+1/2n$:
$$\alpha _i=\frac 1{2n}(f_{ix}(a', b')+f_{iy}(a', b')),\quad \beta _i=
\frac 1{8n^2}(\Delta f _i(a', b')+2f_{ixy}(a', b'))$$
$$\gamma  _i=\frac 1{2n}(f_{ix}(a', b')-f_{iy}(a', b')),\quad \delta _i=
\frac 1{8n^2}(\Delta f _i(a', b')-2f_{ixy}(a', b'))$$ Then
$$a_i=-\alpha _i+\beta _i+O(1/n^3),$$
$$b_i=\gamma _i
+\delta _i+O(1/n^3),$$
$$c_i=-\gamma _i
+\delta _i+O(1/n^3),$$
$$d_i=\alpha _i
+\beta _i+O(1/n^3).$$ It is clear that the product is an even
function in $\alpha _i$ and $\gamma _i$. It follows that their
appearance in the product have the even total degree. Also we have
neglected term $O(1/n^5)$. Thus we can write
\begin{align*} &\prod _{i=1}^3(a_i D_{a,
b}+b_iD_{a+1/n, b}+c_i D_{a, b+1/n}
+d_iD_{a+1/n, b+1/n})\\
=&4(\alpha _1\alpha _2\beta _3+ \gamma _1\gamma _2\delta _3)-4\alpha
_1\alpha _2\delta _3-4\gamma _1\gamma _2\beta _3+\text
{permutations}\\
=&4(\alpha _1\alpha _2-\gamma _1\gamma _2)(\beta _3-\delta _3)+\text
{permutations}
\end{align*}

By a direct computation, we see that
$$\alpha _1\alpha _2-\gamma_1\gamma _2=\frac 1{2n^2}
(f_{1x}(a', b')f_{2y}(a', b')+f_{1y}(a', b')f_{2x}(a', b'))$$
$$\beta _3-\delta _3=\frac 1{2n^2}f_{ixy}(a', b').$$
It follows that
\begin{align*} &\prod _{i=1}^3(a_i D_{a,
b}+b_iD_{a+1/n, b}+c_i D_{a, b+1/n}
+d_iD_{a+1/n, b+1/n})\\
=&\frac 1{n^4}f_{1x}(a', b')f_{2y}(a', b')f_{ixy}(a', b')+\text
{permutations}+O(1/n^5)
\end{align*}
Put everything together to obtain
$$n^2(F_{1n}\cdot F_{2n}\cdot F_{3n})=\frac 1{n^2}\sum _{a, b}^{n-1}
(f_{1x}(a', b')f_{2y}(a', b')f_{ixy}(a', b')+\text
{permutations})+O(1/n)$$ This is of course convergent to
$$\int _0^1\int _0^1(f_{1x}f_{2y}f_{3xy}+\text{permutations})dxdy.$$
Adding  all integrals over squares we obtain the identity in Lemma.
\end{proof}

\subsection{Intersection of  functions in diagonals}

Now we want to treat case where $f_i$ has some singularity. We
assume that all singularity lies on edges of some $n$-triangulation.

 As
the additive property stated in the last subsection, we need only
consider the diagonal in a square. More precisely, we consider
functions $f_i$ on the square $[0,1]^2$ with following properties:
\begin{itemize}
\item $f_i$ is continuous and smooth in two triangles divided by
diagonal $y=x$;
\item the first and second derivatives of $f_i$ are bounded on two
triangles; \item the restriction of the function on the diagonal is
smooth.
\end{itemize}

Let us write
$$\delta f_i=f_{ix}^+-f_{ix}^-=f_{iy}^--f_{iy}^+.$$
where we use super script $\pm$ to denote the limits of derivatives
in upper and down triangles on the diagonals. The second identity
follows from the fact that the restriction of $f$ on the diagonal is
smooth.

\begin{proposition}If $f_i$ has only singularity on
some union $\CD$ of diagonals in $n$-triangulation, the singular
contribution is given by
$$\int _\CD\left(-\frac 12 \delta f_1\delta f_2\delta f_3+f_{1x}f_{2y}\delta f_3
+\text{permutations}\right)dx.$$
\end{proposition}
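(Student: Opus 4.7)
\medskip

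\textbf{Proof proposal.} The plan is to reduce to a single local model and then carefully redo the Taylor-expansion argument of Proposition~3.3.1 while keeping track of the one-sided derivatives across the diagonal.

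First, by the additivity of the intersection pairing across squares of the reduction complex, I would fix a single square $[0,1]^2$ whose only singularity locus for the $f_i$ is the diagonal $y=x$, and subdivide it by the $n$-subdivision into $n^2$ small squares $S_{a,b}$. For each $a\neq b$ the restriction of every $f_i$ to the closed square $\bar S_{a,b}$ is smooth, so the computation in the proof of Proposition~3.3.1 applies unchanged and produces, after summation and passage to the limit, the smooth bulk integral $\int (f_{1x}f_{2y}f_{3xy}+\text{perms})\,dx\,dy$. The content of the proposition is therefore the extra contribution coming from the $n$ diagonal squares $S_a := [a/n,(a+1)/n]^2$.

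On each $S_a$ I would use the same algebraic identity for the triple intersection of the five divisors $D_1,\dots,D_5$ (four corner components and the central exceptional divisor) derived in the proof of Proposition~3.3.1, namely
\begin{align*}
\prod_{i=1}^3\bigl(a_iD_1+b_iD_2+c_iD_3+d_iD_4\bigr)
&=2\bigl(a_1a_2a_3+b_1b_2b_3+c_1c_2c_3+d_1d_2d_3\bigr)\\
&\quad -(a_1a_2+d_1d_2)(b_3+c_3)-(b_1b_2+c_1c_2)(a_3+d_3)+\text{perms},
\end{align*}
but with one crucial difference: the corner values entering $a_i,d_i$ and the central value lie \emph{on} the diagonal (and are therefore unambiguous), while the corner values entering $b_i$ and $c_i$ must be Taylor-expanded using respectively the lower and upper one-sided derivatives of $f_i$. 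Writing $\gamma_i^\pm=\tfrac1{2n}(f_{ix}^\pm-f_{iy}^\pm)$ and using that $f_i$ is smooth along the diagonal gives the identity $\gamma_i^+-\gamma_i^-=\tfrac1n\delta f_i$, which is the source of the new singular contributions.

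Substituting these expansions and comparing with the smooth case, the ``averaged'' part of the expansion reproduces the smooth integrand, and only two groups of terms survive: the cubic-in-jumps part $\gamma_1^-\gamma_2^-\gamma_3^--\gamma_1^+\gamma_2^+\gamma_3^+$ appearing in $b_1b_2b_3+c_1c_2c_3$, and the mixed terms arising from $b_3+c_3=-\tfrac1n\delta f_3+O(1/n^2)$ (and its cyclic permutations) appearing in $(a_1a_2+d_1d_2)(b_3+c_3)$. The cubic term is handled by the elementary difference-of-cubes identity
\[
(g-h)_1(g-h)_2(g-h)_3-(g+h)_1(g+h)_2(g+h)_3
=-2(g_1g_2h_3+\text{perms})-2h_1h_2h_3
\]
with $g_i=\tfrac12(\gamma_i^++\gamma_i^-)$ and $h_i=\tfrac1{2n}\delta f_i$, which isolates the pure-jump term $\tfrac{1}{8n^3}\delta f_1\delta f_2\delta f_3$. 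Summing over the $n$ diagonal squares and multiplying by $n^2$ turns the resulting expression into a Riemann sum with increment $1/n$ converging to the claimed line integral on $\CD$, and the tangential continuity $f_x^++f_y^+=f_x^-+f_y^-$ ensures that the remaining combinations $\alpha_i\alpha_j-g_ig_j$ evaluate to the familiar $\tfrac1{2n^2}(f_{ix}f_{jy}+f_{iy}f_{jx})$ already identified in Proposition~3.3.1.

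The main obstacle is purely combinatorial accounting: one must correctly identify which groups of terms conspire to give the standard smooth integrand (and are therefore subsumed by Proposition~3.3.1 applied to an $n^2$-rather-than-$(n^2-n)$ sum), and which produce genuinely new singular contributions in the limit. In particular, the coefficient $-\tfrac12$ in front of $\delta f_1\delta f_2\delta f_3$ reflects the factor of $2$ in $2(b_1b_2b_3+c_1c_2c_3)$ together with the $-2h_1h_2h_3=-\tfrac{1}{4n^3}\delta f_1\delta f_2\delta f_3$ from the difference of cubes, while the $f_{1x}f_{2y}\delta f_3$ terms come half from the cubic expansion and half from the $(a_1a_2+d_1d_2)(b_3+c_3)$ correction, and matching the signs requires care.
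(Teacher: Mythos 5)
Your proposal is correct and follows essentially the same route as the paper: reduce to the $n$ diagonal subsquares, reuse the quintic-divisor identity from Proposition~3.3.1, Taylor-expand $a_i,b_i,c_i,d_i$ at the centre $(a',a')$ using the appropriate one-sided derivatives, observe that $a_i+d_i=O(1/n^2)$ so only $2(b_1b_2b_3+c_1c_2c_3)-2a_1a_2(b_3+c_3)+\text{perms}$ survives at order $1/n^3$, and pass to a Riemann sum on the diagonal. The paper simply substitutes $b_i=\tfrac1{2n}(f_{ix}-f_{iy}-\delta f_i)$, $c_i=\tfrac1{2n}(-f_{ix}+f_{iy}-\delta f_i)$ directly and expands, whereas you package the same algebra via $\gamma_i^\pm=\tfrac1{2n}(f_{ix}^\pm-f_{iy}^\pm)$ and an explicit difference-of-cubes identity; the two are identical once you note $g_i=\tfrac12(\gamma_i^++\gamma_i^-)=\tfrac1{2n}(f_{ix}-f_{iy})$ and $h_i=\tfrac1{2n}\delta f_i$, and your sign/coefficient accounting ($-4h_1h_2h_3=-\tfrac1{2n^3}\delta f_1\delta f_2\delta f_3$ and $4(\alpha_1\alpha_2-g_1g_2)h_3=\tfrac1{n^3}(f_{1x}f_{2y}+f_{1y}f_{2x})\delta f_3$) reproduces the paper's final line exactly.
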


\begin{proof}
The total contribution  in the diagonal is given by
\begin{align*}&n^2(F_{1n}\cdot F_{2n}\cdot F_{3n})_\CD\\ =&n^2\sum _{a,
0}^{n-1}\prod _{i=1}^3(f_i(a, a)D_{a, a}+f_i(a+1/n, a)D_{a+1/n,
a}+f_i
(a, a+1/n)D_{a, a+1/n}\\
&+f_i(a+1/n, a+1/n)D_{a+1/n, a+1/n}+f_i (a+1/2n, a+1/2n)D_{a+1/2n,
a+1/2n})_{a, a}.\end{align*}  Again we may replace the last product
by \begin{align*} &\prod _{i=1}^3(a_i D_{a, a}+b_iD_{a+1/n, a}+c_i
D_{a, a+1/n} +d_iD_{a+1/n, a+1/n})\\
=&2(a_1a_2a_3+b_1b_2b_3+c_1c_2c_3+d_1d_2d_3)\\
&-(a_1a_2+d_1d_2)(b_3+c_3)-(b_1b_2+c_1c_2)(a_3+d_3)+\text
{permutations}
\end{align*}
 with
$$a_i=f_i(a, a)-f_i(a+1/2n, a+1/2n),$$
$$ b_i=f_i(a+1/n, b)-f_i(a+1/2n, a+1/2n),$$
$$c_i=f_i(a, a+1/n)-f_i(a+1/2n, a+1/2n), $$
$$d_i=f_i(a+1/n, a+1/n)-f_i(a+1/2n, a+1/2n).$$

We have Taylor expansions for $a_i, b_i, c_i, d_i$ at $(a', a')$
with $a'=a+1/2n$:
$$a_i=-\frac 1{2n}(f_{ix}^-+f_{iy}^-)(a', a')+O(1/n^2),$$
$$b_i=\frac 1{2n}(f_{ix}^--f_{iy}^-)(a', a')+O(1/n^2),$$
$$c_i=\frac 1{2n}(-f_{ix}^++f_{iy}^+)(a',a')+O(1/n^2),$$
$$d_i=\frac 1{2n}(f_{ix}^++f_{iy}^+)(a', a')+O(1/n^2).$$
Write
$$f_{ix}=\frac 12 (f_{ix}^++f_{ix}^-), \quad f_{iy}=\frac 12 (f_{iy}^++f_{iy}^-),$$
then,
$$f_{ix}^\pm =f_{ix}\pm \frac 12 \delta f_i, \qquad
f_{iy}^\pm =f_{iy}\mp \frac 12\delta f_i.$$

Then we have the following expressions:
$$a_i=-\frac 1{2n}(f_{ix}+f_{iy})(a')+O(1/n^2),$$
$$b_i=\frac 1{2n}(f_{ix}-f_{iy}-\delta f_i)(a')+O(1/n^2),$$
$$c_i=\frac 1{2n}(-f_{ix}+f_{iy}-\delta f_i)(a')+O(1/n^2),$$
$$d_i=\frac 1{2n}(f_{ix}+f_{iy})(a')+O(1/n^2).$$

As $a_i+d_i=O(1/n^2)$, it follows that
\begin{align*} &\prod _{i=1}^3(a_i D_{a, a}+b_iD_{a+1/n, a}+c_i
D_{a, a+1/n} +d_iD_{a+1/n, a+1/n})\\
=&2(b_1b_2b_3+c_1c_2c_3)-2a_1a_2(b_3+c_3)+\text
{permutations}+O(1/n^4)\\
=&\frac {-1}{2n^3}\delta f_1\delta f_2\delta f_3(a') +\frac 1{n^3}
f_{1x}f_{2y}\delta f_3(a')+\text{permutation}+O(1/n^4)
\end{align*}

The total diagonal  intersection is
\begin{align*}&n^2(F_{1n}\cdot F_{2n}\cdot F_{3n})_\CD\\ =&
\frac 1n\sum_{a=0}^{n-1}(
 \frac {-1}{2}\delta f_1\delta f_2\delta
f_3(a') + f_{1x}f_{2y}\delta
f_3(a')+\text{permutation})+O(1/n).\end{align*} Taking limits of sum
over all , we get singular contribution:
$$\int _0^1(-\frac 12 \delta f_1\delta f_2\delta f_3+f_{1x}f_{2y}\delta f_3
+\text{permutations})dx.$$
\end{proof}

In the following we want to apply integration by parts to the smooth
formulae in Proposition 3.3.1 and 3.4.1:

\begin{theorem}
Assume that each $f_{ix}$ (resp. $f_{iy}$) is continuous as a
function of $y$ (resp. $x$) except at some diagonals. Then the
intersection is given by
\begin{align*} (f_1, f_2, f_3)=&
\int _{R(C)^2} \left(\Delta _x(f_1)(f_{2y}f_{3y})+\Delta
_x(f_2)(f_{3y}f_{1y})+\Delta _x(f_3)(f_{1y}f_{2y})
\right)dxdy\\
&+\frac 14\int _\CD \delta (f_1)\delta (f_2)\delta
(f_3)dx.\end{align*}
\end{theorem}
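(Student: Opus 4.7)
The plan is to deduce Theorem 3.4.2 from Propositions 3.3.1 and 3.4.1 by integration by parts in the $x$-variable on each smooth triangular piece of the reduction complex. The continuity hypotheses on the partial derivatives $f_{ix},f_{iy}$ off the diagonals are exactly what ensures that the one-dimensional inner integrals after integration by parts are well defined. Combining Propositions 3.3.1 and 3.4.1, the pairing is the sum of the bulk
$\int_{R(X)\times R(Y)}\sum_\sigma f_{\sigma(1)x}f_{\sigma(2)y}f_{\sigma(3)xy}\,dx\,dy$
and the singular contribution along $\CD$, namely
$\int_\CD\bigl(-\tfrac12\delta f_1\delta f_2\delta f_3+\sum_\sigma f_{\sigma(1)x}f_{\sigma(2)y}\delta f_{\sigma(3)}\bigr)\,dx$,
where $\sigma$ ranges over the six permutations of $(1,2,3)$, the bulk derivatives are classical inside each triangle, and the diagonal values $f_{ix},f_{iy}$ denote arithmetic means of one-sided limits.

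The key move is to regroup the bulk integrand as $\sum_{i=1}^{3} f_{ix}\,\partial_x(f_{jy}f_{ky})$ (with $\{j,k\}=\{1,2,3\}\setminus\{i\}$) and to integrate by parts in $x$ on each triangular region $T^{\pm}$ of each square. Contributions from non-diagonal edges cancel pairwise between adjacent squares of the reduction complex by continuity of the $f_i$, leaving only the diagonal residues
$B_{(f_i)}=\int_\CD\bigl(f_{ix}^{+}f_{jy}^{+}f_{ky}^{+}-f_{ix}^{-}f_{jy}^{-}f_{ky}^{-}\bigr)\,dx$.
Thus the bulk triple integral equals $-\int\sum_{i}f_{ixx}f_{jy}f_{ky}\,dx\,dy+\sum_{i}B_{(f_i)}$.

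Substituting $f_{ix}^{\pm}=\bar{f_{ix}}\pm\tfrac12\delta f_i$ and $f_{iy}^{\pm}=\bar{f_{iy}}\mp\tfrac12\delta f_i$ into $B_{(f_i)}$, a direct expansion gives integrand $-\bar{f_{ix}}\bar{f_{jy}}\delta f_k-\bar{f_{ix}}\bar{f_{ky}}\delta f_j+\delta f_i\bar{f_{jy}}\bar{f_{ky}}+\tfrac14\delta f_1\delta f_2\delta f_3$. Summing over $i$ and adding the singular contribution from Proposition 3.4.1, the six terms $\bar{f_{\sigma(1)x}}\bar{f_{\sigma(2)y}}\delta f_{\sigma(3)}$ cancel, while the cubic terms combine as $\tfrac34-\tfrac12=\tfrac14$, producing $\tfrac14\int_\CD\delta f_1\delta f_2\delta f_3\,dx$. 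The surviving mixed terms $\int_\CD\sum_{i}\delta f_i\bar{f_{jy}}\bar{f_{ky}}\,dx$ combine with $-\int\sum_{i}f_{ixx}f_{jy}f_{ky}\,dx\,dy$ to reconstruct $\int\sum_{i}\Delta_x(f_i)\,f_{jy}f_{ky}\,dx\,dy$: viewing $\Delta_x(f_i)$ as the distributional second $x$-derivative on the metrized graph, its mass $\delta f_i$ along $\CD$ paired with $f_{jy}f_{ky}$ via the product of one-sided averages supplies exactly the needed correction.

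The principal obstacle is careful sign and jump bookkeeping, using the relation $f_{ix}^{+}-f_{ix}^{-}=f_{iy}^{-}-f_{iy}^{+}=\delta f_i$ to tie together the $x$- and $y$-jump conventions, and verifying that the cancellations among the six $\bar{f}\bar{f}\delta$ terms and the distributional-Laplacian correction proceed as claimed.
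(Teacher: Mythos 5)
Your proof follows the paper's argument essentially verbatim: regroup the smooth (Prop.~3.3.1) integrand as $\sum_i f_{ix}\partial_x(f_{jy}f_{ky})$, integrate by parts in $x$ on each triangle, expand the diagonal boundary residue using $f_{ix}^{\pm}=f_{ix}\pm\tfrac12\delta f_i$ and $f_{iy}^{\pm}=f_{iy}\mp\tfrac12\delta f_i$, cancel the six $f_{\sigma(1)x}f_{\sigma(2)y}\delta f_{\sigma(3)}$ terms against those of Prop.~3.4.1, and combine the cubic coefficients $\tfrac34-\tfrac12=\tfrac14$; the remaining $\delta f_i\,f_{jy}f_{ky}$ terms on $\CD$ are then absorbed into $\Delta_x(f_i)$. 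The one place you are looser than the paper is the handling of the non-diagonal boundary terms of the integration by parts: you assert they ``cancel pairwise between adjacent squares by continuity of the $f_i$'' and consequently write the bulk contribution as $-\int\sum_i f_{ixx}f_{jy}f_{ky}\,dx\,dy$. That cancellation holds at an interior point of an edge of $R(X)$, but at a genuine vertex $v$ the several outward $x$-derivatives do not sum to zero under the theorem's hypotheses (which constrain only continuity of $f_{ix}$ in $y$, not in $x$); those residual vertex terms are precisely the vertex-delta mass of the graph Laplacian, which the paper keeps by writing $\int \Delta_x' f_1(f_{2y}f_{3y})\,dx\,dy$ (the off-diagonal restriction of $\Delta_x f_1$) after integration by parts. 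Your final reconstruction of $\int\Delta_x(f_i)f_{jy}f_{ky}$ from $-\int f_{ixx}f_{jy}f_{ky}$ plus the diagonal mass therefore silently reinstates exactly what the intermediate step dropped; the result is correct, but for the proof to be self-consistent you should say that the non-diagonal boundary terms become the vertex-delta part of $\Delta_x f_i$ rather than that they cancel.
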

\begin{proof}
 Let us first group the smooth contribution as product of
derivatives of $x$:
$$(f_1, f_2, f_3)_\text{smooth}=
\int _{R(X)^2}[ f_{1x}(f_{2y}f_{3y})_x+f_{2x}(f_{3y}f_{1y})_x+
f_{3x}(f_{1y}f_{2y})_x]dxdy.$$ Now we apply integration by parts to
obtain $$(f_1, f_2, f_3)_\text{smooth}= \int _{R(X)^2} \Delta
_x'f_1(f_{2y}f_{3y})dxdy+\int
_\CD(f_{1x}^+f^+_{2y}f^+_{3y}-f^-_{1x}f^-_{2y}f_{3y}^-)dy+\cdots
$$
Here   $\CD$ is some union of diagonals in an $n$-triangulation, and
the coordinates in each square are  chosen such that $\CD$ is given
by $x=y$, $\Delta _x'f_i$ is the restriction of $\Delta _x f_i$ on
the complement of $\CD$, and $f_i^+$ and $f_i^-$ are restrictions of
$f_i$ in the upper and lower triangles respectively. Define the
derivatives of $f_i$ at the diagonal as the average of two
directions diagonals. Then we have the formulae:
$$f_{ix}^\pm =f_{ix}\pm \frac 12 \delta f_i, \qquad f_{iy}^\pm
=f_{iy}\mp \frac 12 \delta f_i.$$ The integrand in the diagonal on
$\CD$ has the following expression:
$$(f_{1x}+\frac 12 \delta f_1)(f_{2y}-\frac 12 \delta
f_2)(f_{3y}-\frac 12 \delta f_3)- (f_{1x}-\frac 12 \delta
f_1)(f_{2y}+\frac 12 \delta f_2)(f_{3y}+\frac 12 \delta f_3).$$ It
is clear that the above expression is odd in $\delta$; thus it has
an expression
$$\delta
(f_1)f_{2y}f_{3y}+\frac 14 \delta (f_1)\delta (f_2)\delta (f_3)
-f_{1x}f_{2y}\delta (f_3)-f_{1x}f_{3y}\delta (f_2).$$ As the
restriction of $\Delta _xf_i$ on $\CD$ is $\delta (f_i)$,
\begin{align*}
(f_1, f_2, f_3)_\text{smooth}= &\int _{R(X)^2} \Delta
_xf_1(f_{2y}f_{3y})dxdy+\cdots\\
&+\int _\CD\left(\frac 34 \delta (f_1)\delta (f_2)\delta
(f_3)-f_{1x}f_{2y}\delta (f_3)-\cdots\right)dy
\end{align*}
Combined with singular contribution, we have the following
expression for the total pairing: \begin{align*} (f_1, f_2, f_3)=&
\int _{R(X)^2} \left(\Delta _x(f_1)(f_{2y}f_{3y})+\Delta
_x(f_2)(f_{3y}f_{1y})+\Delta _x(f_3)(f_{1y}f_{2y})
\right)dxdy\\
&+\frac 14\int _\CD \delta (f_1)\delta (f_2)\delta
(f_3)dx.\end{align*}
\end{proof}

\subsection{Completing proof of main theorem}
In this subsection, we will complete the proof of Main Theorem
1.3.1. By Theorem 2.5.1 and Proposition 2.5.3, it remains to compute
the quantity in 2.5.1 in the local setting. More precisely, let
$X=Y$ be a curve of genus $g\ge 2$ on over a local field $K$. Let
$G$ be the admissible green's function on the reduction graph
constructed in our inventiones paper \cite{admissible}. Then we have
a metrized line bundle $\CO (\wh \Delta)$ with norm $\|\cdot\|$
given by
$$-\log \|1_\Delta\|=i(x, y)+G(R(x), R(y))$$ where $R: X(\bar K)\lra
R(X)$ is the reduction map. We want to show the following

\begin{proposition} The adelic metrized bundle $\CO (\wh \Delta)$ is
integrable and
\begin{align*}
&-\log \|1_\Delta\|\cdot (\wh \Delta^2-6\wh \Delta\cdot p_1^*\wh
e+6p_1^*\wh e\cdot p_2^*\wh e)\\
=&-\frac 14\delta (X)+\frac 14 \int _{R(X)}G(x,
x)((10g+2)d\mu_a-\delta _{K_X}).\end{align*}
\end{proposition}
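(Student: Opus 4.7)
The plan is to reduce the local triple intersection to a triple pairing of functions on the reduction complex $R(X)\times R(X)$ via Theorem 3.4.2. As a first step I will establish integrability of $\CO(\wh\Delta)$ by writing $\wh\Delta = \overline\Delta + V$, where $\overline\Delta$ is the Zariski closure on a regular model and $V$ is a ``vertical'' correction encoding the admissible Green function. Using the sequence of regular models $Z_{R_n}$ from \S3.1--3.2 and the correspondence of Lemma 3.2.2 between vertical divisors and piecewise-linear functions on the reduction complex, I approximate $V$ by piecewise-linear functions on successively finer triangulations converging uniformly to $g(x,y) := G_v(R(x), R(y))$. The divisors $p_i^*\wh e$ are represented in the same way by functions of one variable, $\phi(x)$ and $\phi(y)$.

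The local intersection number then becomes a triple pairing of functions on $R(X)\times R(X)$ which can be evaluated by Theorem 3.4.2. The right-hand side of the proposition is manifestly $e$-independent, and the left-hand side must be as well by the projection formula applied to the terms $p_i^*\wh e$; this lets me cancel the $\phi$-dependent contributions at the outset and reduce to the triple pairing associated with the symmetric combination $g \cdot (g^2 - 6g\phi_1 + 6\phi_1\phi_2)$ in which the $\phi_i$-pieces drop out of the final answer, leaving an expression built only from $g$.

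Applying Theorem 3.4.2, the singular contribution $\tfrac14\int_\CD \delta(f_1)\delta(f_2)\delta(f_3)\,dx$ reduces to $\tfrac14\int_\CD \delta(g)^3\,dx$; since $G_v(x,y)$ has a unit normal-derivative jump across the diagonal at each of the $\delta(X_v)$ singular points of the special fiber, this integral evaluates to $-\tfrac14\delta(X_v)$, matching the first term on the right. For the smooth contribution, I substitute the Laplacian identity $\Delta_x G_v(x,y) = \delta_\CD - d\mu_a$ into the integrand and then integrate out $y$, so that each term localizes either on the diagonal (producing integrands involving $G_v(x,x)$) or against $d\mu_a$. Summing the six permuted terms and applying the admissible adjunction relation for $K_{X_v}$ on the graph yields the integrand $G_v(x,x)\bigl((10g+2)\, d\mu_a - \delta_{K_{X_v}}\bigr)$ with the coefficient $\tfrac14$. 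The main obstacle will be the bookkeeping: symmetrizing the non-symmetric combination $\wh\Delta^2-6\wh\Delta\cdot p_1^*\wh e+6\,p_1^*\wh e\cdot p_2^*\wh e$ into a form to which Theorem 3.4.2 applies, verifying the cancellation of the $e$-dependent contributions, and correctly normalizing the $\delta$-jumps of $G_v$ across the diagonal so that the constants $10g+2$ and $-1$ emerge precisely.
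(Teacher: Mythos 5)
Your high-level plan — reduce the local contribution to a triple pairing of functions on $R(X)\times R(X)$ and evaluate via Theorem 3.4.2, splitting into the diagonal (singular) piece and the smooth piece — is the right one and matches the paper. But there are two genuine gaps.

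First, and most seriously, your claim that ``the left-hand side must be $e$-independent by the projection formula applied to the terms $p_i^*\wh e$'' is not a valid argument. A priori the local quantity depends on $e$: both $\wh\Delta\cdot p_1^*\wh e$ and $p_1^*\wh e\cdot p_2^*\wh e$ enter the intersection, and the global $e$-independence of $\omega_a^2$ only forces the sum over all places to be $e$-independent, not each local term separately. The paper proves the vanishing of the $e$-dependent contributions directly: Lemma 3.5.2 computes $(G,\wh\Delta,p_1^*\wh e)$ by expanding $\wh\Delta=\wt\Delta+G$ and $\wh e=\bar e+G_e$, obtaining three pieces $\int G(x,x)\,d\mu$, $-G(e,e)$, and $G(e,e)-\int G(x,x)\,d\mu$, which cancel to $0$; Lemma 3.5.3 is a similar explicit computation giving $(G,p_1^*\wh e,p_2^*\wh e)=0$. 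These cancellations are delicate and cannot be assumed. Your plan needs these two lemmas spelled out; without them you cannot ``cancel the $\phi$-dependent contributions at the outset.''

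Second, your integrability argument does not actually establish integrability. Exhibiting $\wh\Delta$ as a limit of model metrics (via piecewise-linear approximation of the Green function) shows the metric is adelic, but integrability requires the approximants to be differences of semipositive model metrics, which you have not verified. The paper sidesteps this: integrability of $\wh t=\wh\Delta-p_1^*\wh\xi-p_2^*\wh\xi$ was already established in \S2, and the adjunction formula $\Delta^*\wh t=-\wh\omega_X-2\wh\xi$ shows $\wh\xi$ is integrable, whence $\wh\Delta=\wh t+p_1^*\wh\xi+p_2^*\wh\xi$ is. The explicit $\Proj$-construction of models (which resembles your approximation step) is used only to justify replacing $-\log\|1_\Delta\|$ by $G$ in the pairing, not to prove integrability per se. Finally, in the $(G,\wh\Delta,\wh\Delta)$ computation you should not forget the contribution $\int G(x,x)(K_X-4(g-1)\,d\mu)$ coming from the restriction of $\wh\Delta$ to the diagonal and the Laplacian of $G(x,x)$; the constant $10g+2$ emerges only after combining that term with the $(G,G,G)$ pairing, and your sign of the diagonal jump integral ($\delta(G)=1$ gives $+\tfrac14\delta(X)$ for $(G,\wh\Delta,\wh\Delta)$, which matches the proposition's $-\tfrac14\delta(X)$ only after accounting for the overall sign convention of the adelic integration) needs to be tracked more carefully than your sketch indicates.
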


To see that $\CO(\wh \Delta)$ is integrable, we let $\xi$ denote a
class of degree $1$ such that $(2g-2)\xi=\omega _X$ and put an
admissible metric on it. Then we have seen that  the class
$$\wh t:=\wh \Delta -p_1^*\wh\xi-p_2^* \wh\xi$$
is integrable. On the other hand the adjunction formula gives
$$\Delta ^*\xi=-\wh\omega _X-2\wh\xi=-2g\wh \xi.$$
Thus $\wh \xi$ is integrable and then $\wh \Delta$ is integrable.

In the following, let us give  precise models of $\Delta$ over $\CO
_K$ which converges to $\wh \Delta$. We consider integral models
$\CZ_{\CO_L}$  of $X_L\times X_L$ for finite Galois extension $L$ of
$K$. The special fiber $\CZ_w$ of $\CZ_{\CO _L}$ over a finite place
$w$ over a place $v$ of $K$ has components parameterized by some
$e(w)$-division points in the reduction complex $R(Z_w)$ where
$e(w)$ is the ramification index of $w$ over $v$. Let $G_w$ be the
restriction of the Green's function $G$ on these points. Then we get
a vertical divisor $V_w$ in $\CZ_w$ with rational coefficients. The
divisor $\wt \Delta +\sum_w V_w$ with Green's function at
archimedean place defines an arithmetic divisor $\wh \Delta _L$.

We claim that this divisor is the pull-back of some divisor on some
model $\CZ_{\CO_K}^L$ over $\CO_K$. Indeed, let $\CL$ be an ample
line bundle on $\CZ_{\CO_L}$ invariant under $\Gal (L/K)$; for
example, we may take
$$\CL=\CO (\sum \text{-Exceptional divisors})
\otimes \pi_1^*\omega^n\otimes \pi _2^*\omega ^n,$$ where $n$ is
some big positive number. In this way, we may write
$$\CZ_{\CO _L}=\Proj\oplus _{m\ge 0}\pi _*\CL ^m$$
where $\pi$ is the projection $\CZ_{\CO_L}\lra \Spec \CO_K$. It is
well known that the algebra $(\oplus _{m\ge 0}\pi _*\CL ^m)^{\Gal
(L/K)}$ of $\Gal (L/K)$ invariants is finitely generated and thus
defines a $\CO_K$-scheme:
$$\CZ_{\CO_K}^L=\Proj(\oplus _{m\ge 0}\pi _*\CL ^m)^{\Gal (L/K)}.$$
It is well known that the inclusion of rings defines a morphism:
$$\phi_{L/K}:\quad \CZ_{\CO_L}\lra \CZ_{\CO_K}^L.$$
In this way, we get a divisor
$$\wh \Delta ^L:=[L:K]^{-1}\phi _{L/K *}(\wh \Delta _L).$$
To get a metrized line bundle on $\CZ_{\CO_K}^L$, we may take
positive integer $t$ such that $t\wh \Delta$ has integral
coefficients, and then take a norm
$$\RN _{\phi_{L/K}}(\CO (t\wh\Delta _L))$$
which is an arithmetical model of $t\Delta$. By our definition the
integration of $-\log \|1_\Delta\|$ against curvatures of line
bundles is the limit of intersection of $V_i$ with arithmetic
divisors. Thus we may replace $-\log \|1_\Delta\|$ in the
proposition by Green's function $G$ on the reduction complex. We
will finish the proof of the Proposition by computing the triple
pairings one by one in the following three lemmas.

\begin{lemma}
$$(G, \wh \Delta, p_1^*\wh e)=0.$$
\end{lemma}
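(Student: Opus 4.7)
The plan is to exploit that $p_1^*\wh e$ is pulled back via the first projection, so its first Chern class $c_1(p_1^*\wh e)=p_1^*c_1(\wh e)$ has no dependence on the second variable. A projection-formula argument then reduces the triple pairing to an inner fiber integral that will vanish by two admissibility conditions.

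More precisely, I would first interpret $(G,\wh\Delta,p_1^*\wh e)$ as the local contribution at $v$ to the adelic integral $\int_{X_v^2}(-\log\|1_\Delta\|_v)\cdot c_1(\wh\Delta)\cdot c_1(p_1^*\wh e)$ and rewrite it via the projection formula as
$$(G,\wh\Delta,p_1^*\wh e)=\int_{X_v}c_1(\wh e)(x)\cdot F(x),\qquad F(x):=\int_{\{x\}\times X_v}G(x,\cdot)\cdot c_1(\wh\Delta).$$
For the fiber integral $F(x)$, I would decompose $\CO(\wh\Delta)=\bigl(\CO(\wh\Delta)-p_1^*\CO(\wh e)-p_2^*\CO(\wh e)\bigr)+p_1^*\CO(\wh e)+p_2^*\CO(\wh e)$ along the splitting of Lemma 2.2.3. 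Restricted to $\{x\}\times X_v$, the $\Pic^-$ summand becomes a degree-zero bundle with admissible metric, whose Chern current vanishes; the $p_1^*$-summand becomes trivial; and the $p_2^*$-summand becomes $\CO_{X_v}(\wh e)$ with its admissible metric, whose Chern current is the admissible measure $d\mu_v$ of total mass $1$. Thus the restriction of $c_1(\wh\Delta)$ to each fiber is exactly $d\mu_v$, and the defining normalization $\int_{R(X_v)}G(x,y)\,d\mu_v(y)=0$ of the admissible Green's function then forces $F(x)\equiv 0$.

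The main subtlety to address is that the delta-current $\delta_\Delta$ inside $c_1(\wh\Delta)$ naively collides with the logarithmic singularity of $G$ along the diagonal, so one must justify that the admissibility equation $\delta_\Delta-\pp G_\Delta=d\mu$ absorbs this delta into the $\pp G_\Delta$ piece and leaves only the smooth admissible form on each fiber. Once this is confirmed via the fiberwise $\BZ^2$-equivariance of the admissible structure on $\Pic^-(A)$ from Lemma 2.2.3, the argument is purely formal, and the same vanishing can alternatively be read off from Theorem 3.4.2 by noting that the function on $R(Z)$ attached to $p_1^*\wh e$ depends only on the $x$-coordinate, forcing $f_{3y}=0$ and $\delta(f_3)=0$ and hence killing every term of the triple pairing in which this factor appears.
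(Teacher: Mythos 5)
Your main idea — push forward along $p_1$, use that the metric on $\CO(\wh\Delta)$ restricts to the admissible metric on each fiber $\{x\}\times X_v$, and then invoke $\int_{R(X_v)} G(x,y)\,d\mu_v(y)=0$ — is a genuinely different and more conceptual route than the paper's. The paper instead splits $\wh\Delta = \wt\Delta + G$ and $\wh e = \bar e + G_e$ into horizontal and vertical pieces and computes the three resulting terms $\bigl(G,p_1^*\wh e\bigr)_{\wt\Delta}$, $(G,G)_{p_1^*e}$ and $(G,G,p_1^*G_e)$ explicitly, showing they cancel as $\int G(x,x)\,d\mu - G(e,e) + \bigl(G(e,e)-\int G(x,x)\,d\mu\bigr)=0$. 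Your approach buys a shorter argument with a structural reason for vanishing rather than a cancellation; the cost is that the Fubini/projection-formula step must be justified in the non-archimedean setting (on the reduction complex, the ``integral of $G$ against two curvature measures'' is not literally what the triple pairing computes — it requires the identification $\delta_\Delta - \pp G_\Delta = d\mu$ as an equality of currents and then checking compatibility with the product structure in (2.1.1)), precisely the work the paper's explicit decomposition does for you. Your parenthetical acknowledgment that this must ``be confirmed'' is exactly where the real content lies.

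However, your fallback argument at the end is incorrect and should be dropped. First, Theorem~3.4.2 computes the triple pairing of three \emph{vertical} divisors (functions on $R(X)^2$), whereas $\wh\Delta$ and $p_1^*\wh e$ have nontrivial horizontal parts, so the theorem does not apply to them without first decomposing. Second, even for the purely vertical piece $(G,G,p_1^*G_e)$, setting $f_3=p_1^*G_e$ gives $f_{3y}=0$ and $\delta(f_3)=0$, which kills the terms $\Delta_x f_1\cdot f_{2y}f_{3y}$, $\Delta_x f_2\cdot f_{3y}f_{1y}$ and the diagonal term — but \emph{not} the term $\Delta_x f_3\cdot f_{1y}f_{2y}$, which is exactly what the paper computes: $\int \Delta_x(p_1^*G_e)\,G_y^2\,dx\,dy = G(e,e) - \int_{R(X)} G(x,x)\,d\mu$, generically nonzero. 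So it is false that the single-variable dependence of $p_1^*\wh e$ ``kills every term''; the cancellation in the paper comes from combining this surviving term with the contributions of the horizontal parts.
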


\begin{proof}
Using  formulae $\wh \Delta =\wt \Delta+G$ and $\wh e=\bar e+G_e$,
we have decomposition \begin{align*} (G, \wh \Delta , p_1^*\wh e)
=&(G, p_1^*\wh e)_{\wt \Delta}+(G, G, p_1^*\wh e)\\
=&(G, p_1^*\wh e)_{\wt \Delta } +(G, G)_{p_1^*e}+(G, G, p_1^*
G_e).\end{align*} Let us to compute each of these of term:
$$(G, p_1^*\wh e)_{\wt \Delta}=\int _{R(X)}G(x, x)d\mu.$$
\begin{align*}
(G, G)_{p_1^*e}=&-\int _{R(X)}G(e, y)\Delta _y G(e, y)dy\\
=&-\int _{R(X)}G(e, y)(\delta _e(y)-d\mu (y))=-G(e, e)
\end{align*}
\begin{align*}
(G, G, p_1^*G_e) =&\int \Delta _x p_1^*G_e (G_y)^2dxdy\\
=&\int_{R(X)^2} (\delta _e (x)-d\mu (x))(G_y(x, y)^2)dy\\
=&\int _{R(X)}G_y(e, y)^2dy-\int _{R(X)^2}G_y(x, y)^2dyd\mu (x)\\
=&\int _{R(X)}\Delta _yG_y(e, y)\cdot G_y(e, y)dy-\int
_{R(X)^2}\Delta _y
G_y(x, y)\cdot G_y(x, y)dyd\mu (x)\\
=&G(e, e)-\int _{R(X)^2}(\delta _x(y)-d\mu (y)) G_y(x, y)dyd\mu
(x)\\
=&G(e, e)-\int _{R(X)^2}G_y(x, x)d\mu (x).
\end{align*}
The lemma follows from the above three formulae.
\end{proof}

\begin{lemma}
$$(G, p_1^*\wh e, p_2^*\wh e)=0.$$
\end{lemma}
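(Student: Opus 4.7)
My plan is to follow the same decomposition strategy as in Lemma 3.5.2: write $p_i^*\wh e = p_i^*\bar e + p_i^* G_e$ for $i = 1, 2$, expand the triple pairing into the four resulting terms, and check that they cancel pairwise.

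The pure horizontal term $(G, p_1^*\bar e, p_2^*\bar e)$ reduces, by successive restriction of $G$ to $\{e\} \times X$ and then to the point $(e, e)$, to the value $G(e, e)$. Each of the two mixed terms $(G, p_1^*\bar e, p_2^* G_e)$ and $(G, p_1^* G_e, p_2^*\bar e)$ is handled by restricting to the corresponding horizontal fiber; on that curve both restrictions give the admissible Green's function $y \mapsto G(e, y)$, so each mixed term equals the Green-Green pairing $(G,G)_{p_1^*e} = -\int G(e, y)\,\Delta_y G(e, y)\,dy = -G(e, e)$ already computed in Lemma 3.5.2. For the remaining term $(G, p_1^* G_e, p_2^* G_e)$, I invoke Theorem 3.4.2 with $f_1 = G(x, y)$, $f_2 = G_e(x)$, $f_3 = G_e(y)$: since $f_{2y} = f_{3x} = 0$ and $f_2, f_3$ are smooth across the diagonal, all but one term of the trilinear formula vanishes, and the surviving integral
$$\int_{R(X)^2} \bigl(\delta_e(x) - d\mu(x)\bigr)\,G_e'(y)\,G_y(x, y)\,dx\,dy$$
collapses via admissibility $\int G(x, y) d\mu(x) = 0$ to $\int G_e'(y)^2\,dy$, which integration by parts evaluates to $G(e, e)$. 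Summing the four contributions, $G(e, e) - G(e, e) - G(e, e) + G(e, e) = 0$.

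A quicker, purely conceptual version of the same argument observes that $p_1^*\wh e$ and $p_2^*\wh e$ both have admissible curvature $d\mu$, so the triple pairing collapses at once to $\int_{R(X)^2} G(x, y)\,d\mu(x)\,d\mu(y) = 0$ by the admissibility of $G$. The main obstacle, as in Lemma 3.5.2, is purely bookkeeping: tracking the Laplacian sign conventions so the four decomposition terms cancel precisely as claimed; no new geometric input beyond the previous lemma and Theorem 3.4.2 is required.
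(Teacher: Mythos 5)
Your proposal is correct and follows essentially the same approach as the paper's proof. The only superficial difference is the grouping: the paper keeps $(G, p_2^*\wh e)_{p_1^*e}$ as a single term and evaluates it directly as $\int G(e,y)\,d\mu(y) = 0$, whereas you split it further into $(G, p_1^*\bar e, p_2^*\bar e) = G(e,e)$ and $(G, p_1^*\bar e, p_2^*G_e) = -G(e,e)$; the remaining two terms — the mixed pairing on $X\times\{e\}$ and the application of Theorem 3.4.2 with $f_2 = G_e(x)$, $f_3 = G_e(y)$ — match the paper's computation.
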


\begin{proof}
Using  the decomposition of cycles, we have the following
expression:
$$(G, p_1^*\wh e, p_2^*\wh e)
=(G, p_2^*\wh e)_{p_1^*e}+(G, p_1^*G_e)_{p_2^*e}+(G, p_1^*G_e,
p_2^*G_e).$$ We compute each term as follows:
$$(G, p_2^*\wh e)_{p_1^*e}=\int G(e, y)d\mu (y)=0$$
$$(G, p_1^*G_e)_{p_2^*e}=-\int G(x, e)\Delta _x G(x, e)=-G(e, e)$$
\begin{align*}
(G, p_1^*G_e, p_2^*G_e)=&\int \Delta _x G_e(x, e) \cdot (G_y(x, y)
G_{y}(e, y))dxdy\\
=&\int (\delta _e(x)-d\mu (x) \cdot (G_y(x, y)
G_{y}(e, y))dy\\
=&\int_{R(X)}G_y(e, y) G_{y}(e, y)dy- \int G_y(x, y) G_{y}(e,
y))d\mu (x)dy\\
=&\int \Delta _y G(e, y)\cdot G(e, y)dy=G(e, e)
\end{align*}
The lemma follows from the above three computations.
\end{proof}

\begin{lemma} Let $g$ be the genus of the curve, then
$$(G,\wh \Delta, \wh \Delta)=\frac 14 \delta(X)+\frac 14\int G(x,
x)(K_X-(10g+2)d\mu ).$$ \end{lemma}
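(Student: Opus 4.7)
Following the pattern of Lemmas 3.5.1 and 3.5.2, I will decompose $\wh\Delta$ into its geometric cycle $\wt\Delta$ and Green's function contribution $G$, and expand the triple pairing trilinearly as
\[ (G, \wh\Delta, \wh\Delta) = (G, \wh\Delta)_{\wt\Delta} + (G, G)_{\wt\Delta} + (G, G, G), \]
where the subscript $\wt\Delta$ denotes restriction to the diagonal and the last term is the pure triple pairing of the function $G(x,y)$ on the reduction complex $R(X)^2$ in the sense of Theorem 3.4.2.

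For $(G, \wh\Delta)_{\wt\Delta}$ I invoke Lemma 3.1.2, which identifies $\CO_{Z_R}(-\wt\Delta)|_{\wt\Delta}$ with $f^*\omega_{X_R/R}$ for the blow-down $f : \wt\Delta \to X_R$. Translated to the admissible adelic framework on $R(X)$, the restriction $\wh\Delta|_{\wt\Delta}$ has underlying divisor $-K_X$ together with corrections supported on the $\delta(X)$ exceptional components produced by blowing up the nodes of $X_R \times X_R$. The double pairing of $G(x,x)$ with this adelic divisor contributes $-\int G(x,x) K_X$ from the divisor part together with a multiple of $\int G(x,x) d\mu_a$ coming from the admissible Green's function of $\wh\omega_a$ (the factor $(2g-2)$ enters through admissibility), plus node terms feeding into the eventual $\tfrac14\delta(X)$. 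For $(G,G)_{\wt\Delta}$, restricting one factor of $G$ to the diagonal gives $G(x,x)$ on $R(X)\simeq \wt\Delta$, which is paired against the admissible curvature of the remaining $G$-factor restricted to $\wt\Delta$; using $\Delta_y G(x,y) = \delta_x(y) - d\mu_a(y)$ this produces a further multiple of $\int G(x,x) d\mu_a$.

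The third piece $(G,G,G)$ is handled by Theorem 3.4.2 applied directly to $G$ on $R(X)^2$. The smooth integrand $\Delta_x G(x,y) \cdot G_y(x,y)^2$ (plus permutations), after invoking $\Delta_x G = \delta_y - d\mu_a$ and integrating by parts, reduces to further combinations of $\int G(x,x) d\mu_a$. The diagonal contribution
\[ \tfrac14 \int_\CD \delta(G)^3\,dx \;=\; \tfrac14\,\delta(X) \]
arises because $\delta(G) = 1$ (the unit jump of the normal derivative of $G$ across $y=x$ forced by $\Delta G = \delta - d\mu_a$), and the total edge-length of the diagonal in $R(X)^2$ equals the number of nodes of the special fiber.

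Summing the three contributions will yield the claimed identity. The main obstacle is the delicate bookkeeping of the $d\mu$-coefficient: admissibility of $\wh\omega_a$ yields a $(2g-2)$ in the first piece, the middle piece contributes another summand, and the smooth integrals arising in $(G,G,G)$ contribute still more; these must combine to give exactly $-(10g+2)$. A secondary technical issue is the proper handling of $G_y(x,y)^2$ near the diagonal where $G_y$ has a unit jump, which is absorbed into the $\delta(G)^3$ diagonal term by the integration-by-parts derivation of Theorem 3.4.2.
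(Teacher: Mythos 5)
Your decomposition
\[
(G,\wh\Delta,\wh\Delta) = (G,\wh\Delta)_{\wt\Delta} + (G,G)_{\wt\Delta} + (G,G,G)
\]
and the appeal to Lemma 3.1.2 (adjunction) and to Theorem 3.4.2 for the pure triple pairing are exactly the paper's skeleton, and your identification of the diagonal jump $\delta(G)=1$ (forced by $\Delta_x G = \delta_y - d\mu$) yielding $\tfrac14\int_\CD (\delta G)^3\,dx = \tfrac14\delta(X)$ is correct. So the approach is the same.

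However, there is a genuine gap, and also a mistaken attribution that would derail the bookkeeping if pursued literally. You say the $(G,\wh\Delta)_{\wt\Delta}$ term contributes ``$-\int G(x,x)K_X$ from the divisor part together with a multiple of $\int G(x,x)d\mu_a$'' and ``node terms feeding into the eventual $\tfrac14\delta(X)$.'' In fact this first piece contributes \emph{only} $-\int G(x,x)\, c_1(\wh\omega_a) = -(2g-2)\int G(x,x)\,d\mu$: the curvature of the admissibly metrized $\wh\omega_a$ is $(2g-2)d\mu$, with no $K_X$ and no node contribution. The $K_X$ term enters only through the middle piece $(G,G)_{\wt\Delta} = -\int G(x,x)\,\Delta_x\bigl(G(x,x)\bigr)$, and to evaluate that Laplacian you need the identity from the admissible-pairing paper
\[
c + G(K_X,x) + G(x,x) = 0,\qquad\text{hence}\qquad \Delta_x G(x,x) = (2g-2)d\mu - \delta_{K_X},
\]
which you do not invoke. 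The $\tfrac14\delta(X)$ comes \emph{entirely} from the diagonal term in $(G,G,G)$, not from exceptional components in the first piece.

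Finally, you explicitly leave the coefficient $(10g+2)$ unverified, calling it ``the main obstacle.'' But producing that exact coefficient \emph{is} the content of the lemma. The missing steps are: (i) the Laplacian identity above, giving the first two pieces the net contribution $\int G(x,x)\bigl(K_X - 4(g-1)d\mu\bigr)$; and (ii) the evaluation of the smooth integral in $(G,G,G)$ via
\[
3\int \Delta_x G\cdot G_y^2 = 3\int G_y^2(y,y)\,dy - 3\int G_y^2(x,y)\,d\mu(x)\,dy,
\]
together with the symmetry observation $G_y(y,y) = \tfrac12\,\partial_y\bigl(G(y,y)\bigr)$, which after one more integration by parts yields $\int G(x,x)\bigl(\tfrac32(g-3)d\mu - \tfrac34 K_X\bigr)$. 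Summing these is what gives $\tfrac14 K_X - \tfrac14(10g+2)d\mu$. Without these concrete evaluations the proof is not done.
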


\begin{proof} Using the formula, $\wh \Delta =\wt \Delta +G$,
 the left hand side can be
decomposed as follows:
\begin{align*}
&(G, \wh \delta  )_\Delta+(G, G)_{\Delta}+(G, G, G)\\ =&\int
_{\CC(X)}-G(x, x)c_1(\omega )-\int _{\CC (X)}G(x, x) \Delta G(x, x)
+ (G, G, G).\end{align*} The curvature $c(\omega )$ is $(2g-2)d\mu$
where $d\mu$ is the admissible metric. To compute Laplacian of $G(x,
x)$ we use the following formula
$$c+G_a(K_X, x)+G(x, x)=0$$
It follows that the curvature of $\Delta (G(x, x))$ is given by
$$-\Delta G(K_X, x)=(2g-2)d\mu -K_X.$$ Here $K_X$ is the canonical divisor
on $\CC(X)$. Thus we have the formula
$$(G, \wh \Delta, \wh \Delta)=\int _{\CC (X)}G(x, x)(K_X-4(g-1)d\mu)
+(G, G, G).$$

It remains to compute the triple pairing $(G, G, G)$. Notice that
$G_x$ (resp. $G_y$) are continuous in $y$ (resp. $x$) except on
diagonal. By the main formula in the last section, we have $$(G, G,
G)=\frac 14\int _D(\delta G)^3dx+3\int \Delta _x G G_y^2dxdy.$$ By
definition,
$$(\Delta _x G) dx=\delta _y (x)-d\mu.$$
It follows that $\delta (G)=1$ on the diagonal and thus the first
integral is $$\frac 14 \ell(R(X))=\frac 14\delta (X).$$ The second
integral is given by
$$3\int _{\CC (X)}G_y^2 (y, y)dy-3\int_{\CC(X)^2} G_y ^2 (x, y)d\mu (x)dy.$$
Recall that $G_y(y, y)$ is defined to be $$\frac  12 (G_y^+(y,
y)+G_y^-(y, y))= \frac  12 (G_y^+(y, y)+G_x^+(y, y))=\frac 12 G(y,
y)_y.$$ It follows that the above integral  is given by
\begin{align*}
&\frac 34\int G^2(y, y)_ydy-3\int _{\CC(X)^2}\Delta _y G(x, y)G(x,
y)d\mu (x)dy\\
=&\frac 34\int \Delta _yG(y, y)G(y, y)dy-3\int _{\CC(X)^2}\Delta _y
G(x, y)G(x, y)d\mu (x)dy\\
=&\frac 34\int G(y, y)((2g-2)d\mu -K_X)-3\int _{\CC(X)^2}G(x,
y)(\delta _x(y)-d\mu (y))d\mu (x)\\
=&\frac 34\int G(y, y)((2g-2)d\mu -K_X)-3\int _{\CC(X)}G(x, x)d\mu\\
=&\int _{\CC (X)}G(x, x)\left(\frac 32( g-3)d\mu-\frac 34K_X\right)
\end{align*}
The lemma follows from the above computations.
\end{proof}

\section{Integrations on metrized graph}
In this section,  we reformulate Conjectures 1.4.2 and 1.4.5 in
terms of metrized graphs. We will verify the conjectures in the
elementary graphs where every edge is included in at most one
circle. We will conclude the section by reducing the conjecture to
the case that the graph is $2$-edge connected in the sense that the
complement of any point is still connected.

\subsection{Some conjectures on metrized graphs}
In this subsection we want to reformulate Conjectures 1.4.2 and
1.4.5 in terms of metrized graphs. We will also give some trivial
formula which can be used to prove Theorem 1.3.5.

 Let $\Gamma $ be a connected metrized graph and
let $q$ be a   function on $\Gamma $ with a finite support. We
define the canonical divisor of $(\Gamma , q)$ by
$$K:=\sum _{x\in \Gamma }(v(x)+2q(x)-2)x.$$
The genus of the  metrized graph is defined to be
$$g=1+\frac 12 \deg K=\sum _x q(x)+b(\Gamma )$$
where $b(\Gamma)$ is the first Betti number of the (topological)
graph $\Gamma$ without metric.  We say that the pair $(\Gamma, q)$
is a {\em polarized metrized graph} if the following conditions hold
\begin{itemize}
\item $q$ is non-negative; \item  $K$ is effective.\end{itemize}
Notice that the reduction graph $R(X)$ of any semistable curve $X$
of genus $g$  over a discrete valuation ring is a polarized metrized
graph of genus $g$.

 Let $G(x, y)$
and $d\mu$ be the admissible green's function and metric associate
to the pair $(\Gamma , q)$. We are interested in the following
constants:
$$\varphi (\Gamma ):=-\frac 14\ell (\Gamma )+\frac 14\int _\Gamma G(x, x)((10g+2)d\mu -\delta
_K),$$
$$\lambda (\Gamma ):=\frac {g-1}{6(2g+1)}\varphi (\Gamma)+\frac 1{12}(\epsilon(\Gamma)+\ell (\Gamma)),$$
where $\ell (\Gamma)$ is the total length of $\Gamma$ and
$$\epsilon(\Gamma):=\int G(x, x)[(2g-2)d\mu +\delta _K].$$
When $\Gamma =R(X)$ is the reduction graph for a curve, then
notation of  invariants here coincides with the invariant defined in
the introduction except we use $\ell (\Gamma)$ for $\delta (X)$
there.

A point $p\in \Gamma$ is called {\em a smooth point} if it is not in
the support of $K$. For such a smooth point $p$, let $\Gamma_p$ be
the subgraph obtained from $\Gamma$ by removing $p$ and attached two
points $p_1, p_2$. More precisely, $\Gamma_p$ is a metrized graph
with a surjective map to $\Gamma$ which is injective and isometric
over $\Gamma\setminus \{p\}$ and two-to-one over $p$. The function
$q $ defines a function on $\Gamma_p$. We call $p$  of {\em type
$0$} if $\Gamma_p$ is connected. In this case   $\Gamma_p$ has genus
$g-1$. If $p$ is not of type $0$, then $\Gamma_p$ is a union of two
connected graphs of genus $i$ and $g-i$ for some $i\in (0, g/2]$. In
this case, we say that $p$ is of {\em type $i$}. For each number $i$
in the interval $[0, g/2]$ let $\Gamma_i$ be the subgraph of $\Gamma
$ of points of type $i$. Let $\ell _i(\Gamma)$ denote the length of
$\Gamma_i$. It is easy to see that there are only finitely many
$i\in [0, g/2]$ with non-zero $\ell _i(\Gamma)$.

\begin{figure}[ht]

    \begin{picture}(200,205 )(-100,0)
        \put(-85,0){\includegraphics{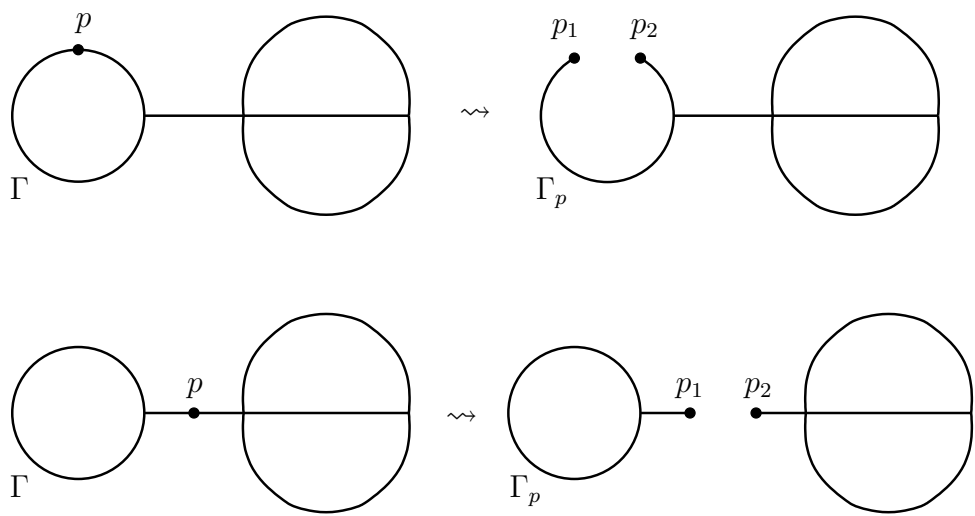}}
        \put(-85, 120){$\Gamma$}
        \put(-85,7){$\Gamma$}
        \put(114, 120){$\Gamma_p$}
        \put(104,7){$\Gamma_p$}

        \put(-60, 185){$p$}
        \put(120, 183){$p_1$}
        \put(150, 183){$p_2$}

        \put(-18, 47){$p$}
        \put(167, 47){$p_1$}
        \put(193, 47){$p_2$}

        \put(85, 150){$\rightsquigarrow$}
        \put(80, 35){$\rightsquigarrow$}

    \end{picture}

    \label{Fig: Point Types}
    \caption{Type of smooth points}

\end{figure}
Here is a diagram illustrating the definition of the type of a point
for a graph with $q=0$. In the left figures, $p$
        is a smooth point of $\Gamma$. In the top, $p$ is of type~0 because $\Gamma_p$ is connected.
        In the bottom, $p$ is of type~$1$ because the minimum genus of the two connected
        components of $\Gamma_p$ is $1$.

\begin{conjecture} There is positive function $c(g)$ of $g>1$ such
that
$$\varphi (\Gamma)\ge c(g)\ell _0(\Gamma)+\sum _{i\in (0, g/2]}
\frac {2i(g-i)}g\ell _i(\Gamma).$$
$$\lambda (\Gamma)\ge \frac g{8g+4}\ell _0(\Gamma)+\sum _{i\in (0, g/2]}
\frac{i(g-i)}{2g+1}\ell _i(\Gamma).$$
\end{conjecture}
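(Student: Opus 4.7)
The plan imitates the strategy sketched for Theorem 1.4.3 but pushes it beyond the elementary case. The argument splits naturally into an additive reduction followed by a base-case estimate, with the latter being the step that remains open in general.

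The first step is to establish a decomposition principle for $\varphi$, $\lambda$, and $\epsilon$ under cutting at a smooth point $p$ of type $i \in (0, g/2]$. Such a cut produces polarized metrized graphs $\Gamma_1,\Gamma_2$ of genera $i$ and $g-i$ to which the polarization $q$ and the canonical divisor restrict in the natural way after attaching a weight-zero point at the cut. The admissible Green's function $G$ and admissible measure $d\mu$ on $\Gamma$ should decompose into the corresponding admissible objects on $\Gamma_1,\Gamma_2$ plus explicit boundary contributions coming from the segment of $\Gamma_i$ containing $p$. The identity I expect is of the shape
$$\varphi(\Gamma)=\varphi(\Gamma_1)+\varphi(\Gamma_2)+\frac{2i(g-i)}{g}\,\ell(\text{segment at }p),$$
the coefficient $2i(g-i)/g$ being produced by the Jensen-type convexity of $i\mapsto i(g-i)$; analogous identities should hold for $\lambda$ with coefficient $i(g-i)/(2g+1)$ and for $\epsilon$ with a further boundary term. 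Induction on the first Betti number of $\Gamma$ then reduces both inequalities to the case where no smooth point is of type $i>0$, i.e., where $\Gamma$ is $2$-edge connected. This is precisely the reduction promised at the start of the section.

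In the $2$-edge connected case only the $\ell_0(\Gamma)=\ell(\Gamma)$ term survives on the right, so one must establish $\varphi(\Gamma)\ge c(g)\,\ell(\Gamma)$ with an explicit positive $c(g)$. The natural tool is to rewrite $G(x,x)$ via the effective resistance function on $\Gamma$, which turns the integral
$$\int_\Gamma G(x,x)\bigl((10g+2)d\mu-\delta_K\bigr)$$
into an integral of resistances against a measure determined by the polarization. On an elementary graph this calculation is explicit and gives $c(g)=(g-1)/(6g)$, proving Theorem 1.4.3. Beyond that setting I would perform a further combinatorial reduction inside the $2$-edge connected class by contracting or shortening edges one at a time and verify that $\varphi(\Gamma)-c(g)\,\ell(\Gamma)$ is monotone under each such surgery, until one lands on a finite list of minimal models (banana graphs, thetas, and their iterates) where a direct computation closes the argument. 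The $\lambda$-inequality then follows from the definition $\lambda=\tfrac{g-1}{6(2g+1)}\varphi+\tfrac{1}{12}(\epsilon+\ell)$ combined with a matching lower bound on $\epsilon(\Gamma)+\ell(\Gamma)$; the residual coefficient $i(g-i)/(3g)$ needed for $i>0$ should emerge from the same additivity applied to $\epsilon$ in Step~1.

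The main obstacle is producing a sharp uniform $c(g)>0$ in the $2$-edge connected case. When many cycles share edges the effective resistance decreases and the pointwise control of $G(x,x)$ degrades, so no naive pointwise inequality can succeed. The delicate task is to identify the correct monotonicity under graph surgery that reduces matters to a tractable finite family of minimal graphs; this is precisely why the conjecture is at present verified only in the elementary case of Theorem 1.4.3 and, by Faber \cite{faber}, for small $g$.
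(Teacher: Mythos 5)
The statement you are addressing is Conjecture~4.1.1, which the paper does not prove; the paper's own contribution is the reduction to the $2$-edge connected case (Lemma~4.3.1, Theorem~4.3.2, Proposition~4.4.1, Corollary~4.4.2) together with a proof in the elementary-graph case (Theorem~1.4.3, Proposition~4.4.3). You correctly identify that this is where matters stand, and you correctly isolate the open obstacle (a uniform $c(g)>0$ for $2$-edge connected graphs). The issue to review is therefore whether your proposed reduction step is correct; it is not.

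The decomposition you posit --- cutting at a type-$i$ point to produce polarized graphs $\Gamma_1,\Gamma_2$ of genera $i$ and $g-i$ with a weight-zero point at the cut, and asserting an identity of the shape $\varphi(\Gamma)=\varphi(\Gamma_1)+\varphi(\Gamma_2)+\tfrac{2i(g-i)}{g}\,\ell(\text{bridge})$ --- is false, and it is not what the paper does. In Theorem~4.3.2 the paper forms quotient graphs $(\Gamma_i,q_i)$ via the projections $\pi_i:\Gamma\to\Gamma_i$, assigning to each $A\in\Gamma_i$ the weight $q_i(A)$ equal to the genus of the collapsed fiber $\Gamma_A$; every piece is thereby kept at the \emph{same} total genus $g$, and additivity $\varphi(\Gamma)=\sum_i\varphi(\Gamma_i,q_i)$ holds. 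This is essential because each ingredient of $\varphi$ --- the admissible measure $d\mu$, the admissible Green's function $G$, the coefficient $10g+2$, and the canonical divisor $K$ --- depends on the total genus, and $d\mu$ always has mass $1$, so it cannot decompose into admissible measures of mass $1$ on each of your genus-$i$ and genus-$(g-i)$ pieces.

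A concrete counterexample makes this vivid. Let $\Gamma$ be the figure-eight: two circles of length $\ell$ joined at one point, $q=0$, $g=2$. Your cut produces two genus-$1$ circles with $q=0$; for such a circle $\tau=\ell/12$ and $\epsilon=0$, so by $\varphi=3g\tau-\tfrac14(\epsilon+\ell)$ one gets $\varphi=0$, and your formula predicts $\varphi(\Gamma)=0$. But a direct computation for the figure-eight gives $\tau(\Gamma)=\ell/8$, $\epsilon(\Gamma)=\ell/3$, hence $\varphi(\Gamma)=\tfrac{3\ell}{4}-\tfrac14\bigl(\tfrac{\ell}{3}+2\ell\bigr)=\ell/6$; the paper's quotient decomposition reproduces this as $2\cdot\tfrac{\ell}{12}$ from Proposition~4.4.3 applied to the two quotient circles, each carrying a weight-$1$ point at the join. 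The value $\ell/6$ is exactly the lower bound $c(2)\,\ell_0(\Gamma)=\tfrac{1}{12}\cdot 2\ell$ of Theorem~1.4.3 for this elementary graph. If your decomposition were valid, this same graph would already falsify the conjecture. The remainder of your plan --- resistance-function formulas for $G(x,x)$, the elementary-graph calculation yielding $c(g)=(g-1)/(6g)$, and the assessment that the general $2$-edge connected case remains open --- does track \S\S4.1--4.4 of the paper faithfully; only the genus-splitting step needs to be replaced by the genus-preserving quotient construction.
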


\subsubsection*{Formulae for Green's functions and admissible
metrics}
 We need to have a formula for $G(x, x)$ in
terms of resistance $r(x, y)$. Recall that we always have a formula
like \begin{equation} r(x, y)=G(x, x)-2G(x, y)+G(y,
y).\end{equation} See formula (3.5.1) in \cite{admissible}. Double
integrations gives
\begin{equation} \tau (\Gamma):=\int G(x, x)d\mu(x)=\frac 12 \int
r(x, y)d\mu (x)d\mu (y).\end{equation} One integral with $d\mu (y)$
gives
\begin{equation}
G(x, x)=\int r(x, y)d\mu (y)-\frac 12 \int r(x, y)d\mu (x)d\mu
(y).\end{equation} Bring this to the definition of $\epsilon
(\Gamma)$ to obtain
\begin{equation} \epsilon(\Gamma)=\int r(x, y)\delta _K (x)d\mu
(y).\end{equation} The constants $\varphi (\Gamma)$ and $\lambda
(\Gamma)$ can be expressed in terms of $\ell (\Gamma)$, $\tau
(\Gamma)$ and $\epsilon (\Gamma)$:
\begin{equation}
\varphi (\Gamma)=3g\tau (\Gamma )-\frac 14(\epsilon (\Gamma )+\ell
(\Gamma))\end{equation} \begin{equation} \lambda (\Gamma )=\frac
{g(g-1)}{2(2g+1)}\tau (\Gamma )+\frac {g+1}{8(2g+1)}(\ell
(\Gamma)+\epsilon (\Gamma )).\end{equation}

 Recall form Lemma 3.7 in \cite{admissible} that $d\mu$ has an expression
\begin{equation}
d\mu =\frac 1{g}\left(\sum q(x)\delta _x+\sum \frac {dx_e}{\ell
_e+r_e}\right).\end{equation}

We will reduce Conjecture 4.1.1 to the case where $\Gamma $ is
2-edge connected. In this case, the conjecture is equivalent to the
following

\begin{conjecture}
Assume that $\Gamma$ is 2-edge connected.  Then the following two
inequalities hold: $$\frac {g-1}{g+1}(\ell (\Gamma )-4g\tau (\Gamma
)) \le \epsilon (\Gamma )\le 12 g\tau (\Gamma )-(1+c(g))\ell (\Gamma
),$$ here $c(g)$ is a positive number for each $g>1$.
\end{conjecture}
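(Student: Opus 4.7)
My plan is to convert both inequalities into positivity statements about integrals of the resistance function $r(x,y)$ and attack the two sides separately, since they capture rather different phenomena.

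First, using (4.1.2), (4.1.4), and (4.1.7), one may write
\[
\tau(\Gamma) = \tfrac{1}{2}\int_{\Gamma\times\Gamma} r(x,y)\,d\mu(x)d\mu(y), \qquad
\epsilon(\Gamma) = \int_{\Gamma\times\Gamma} r(x,y)\,\delta_K(x)\,d\mu(y),
\]
so both $\epsilon$ and $\tau$ become bilinear pairings $\int r\,d\nu_1 d\nu_2$ for explicit signed measures built from $\delta_K$, $d\mu$, and Lebesgue measure on edges. Each of the two target inequalities is then an assertion $\int r(x,y)\,d\sigma(x)d\sigma'(y) \le C\cdot\ell(\Gamma)$ for signed measures $\sigma,\sigma'$ determined by $g$ and the polarization.

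For the lower bound $(g+1)\epsilon + 4g(g-1)\tau \ge (g-1)\ell$, the natural approach is to exploit the conditional negative-definiteness of the resistance kernel: for any signed measure $\sigma$ on $\Gamma$ with total mass $0$, one has $\int r(x,y)\,d\sigma(x)d\sigma(y) \le 0$. Applied to $\sigma = \delta_K - (2g-2)d\mu$ (which has total mass $0$ since $\deg K = 2g-2$), this expresses a suitable combination of $\epsilon$, $\tau$, and the unknown quantity $\int r\,\delta_K \delta_K$ as $\le 0$. A Foster-type identity relating $\sum_e \ell_e$, $\sum_e r_e$, and the first Betti number of $\Gamma$ is then used to convert this into an inequality involving $\ell(\Gamma)$; the 2-edge connected hypothesis enters by forcing $r_e < \ell_e$ on every edge, giving the correct sign.

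For the upper bound $\epsilon \le 12g\tau - (1+c(g))\ell$, equivalent by (4.1.5) to $\varphi(\Gamma) \ge \tfrac{c(g)}{4}\ell(\Gamma)$ with $c(g) > 0$, I would proceed by compactness and induction on the number of edges. Fix a combinatorial type of 2-edge connected polarized metrized graph of genus $g$; there are only finitely many such types up to contraction, and on each, normalizing $\ell(\Gamma) = 1$ places the edge-length parameters in a compact simplex on which $\varphi(\Gamma)$ is continuous. The induction step is a boundary degeneration: if some edge length tends to $0$, the contraction yields a 2-edge connected polarized graph of the same genus with one fewer edge (the weight at the contracted vertex absorbing the missing cycle contribution), and one verifies that $\varphi/\ell$ extends continuously through this degeneration. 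The base case is Theorem 1.4.3, which establishes the conjecture for elementary 2-edge connected graphs with $c(g) = (g-1)/(6g)$.

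The main obstacle is establishing strict positivity of $\varphi/\ell$ in the interior of each combinatorial stratum. The integral $\int G(x,x)\bigl((10g+2)d\mu - \delta_K\bigr)$ is a signed quantity where contributions from different parts of $\Gamma$ can partly cancel, and ruling out a zero of $\varphi$ for highly connected graphs of large Betti number requires genuine analytic input beyond the combinatorial 2-edge connectedness used in Theorem 1.4.3. This is why the statement is offered as a conjecture: absent a new analytic ingredient, the best one can hope for from the plan above is a proof by reduction to finitely many combinatorial base cases whose strict positivity must still be verified, and the elementary-graph case handled in Theorem 1.4.3 should be viewed as a proof of concept rather than a template for the general argument.
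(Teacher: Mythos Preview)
The statement you are addressing is Conjecture~4.1.2 in the paper, and the paper does \emph{not} prove it. The only argument the paper supplies is the remark immediately preceding the conjecture: in the 2-edge connected case one has $\ell_i(\Gamma)=0$ for $i>0$ and $\ell_0(\Gamma)=\ell(\Gamma)$, so substituting formulae (4.1.5) and (4.1.6) into the two inequalities of Conjecture~4.1.1 and clearing denominators yields exactly the two inequalities on $\epsilon(\Gamma)$ stated here (up to renaming the constant $c(g)$). That algebraic equivalence is the entire content; the inequalities themselves remain open, with only the elementary-graph case settled in Proposition~4.4.3.

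Your proposal is therefore not competing with a proof in the paper but sketching a possible attack on an open problem, and you acknowledge as much in your final paragraph. A couple of comments on the sketch itself. For the lower bound, applying conditional negative-definiteness of $r$ to $\sigma=\delta_K-(2g-2)d\mu$ yields
\[
\int r\,\delta_K\delta_K \;-\; 2(2g-2)\,\epsilon(\Gamma) \;+\; 2(2g-2)^2\,\tau(\Gamma)\;\le\;0,
\]
which bounds the new quantity $\int r\,\delta_K\delta_K$ from above but does not by itself bound $\epsilon$ from below; the ``Foster-type identity'' you invoke to close this is left unspecified, and it is not clear what identity would convert this into the target inequality with the correct coefficients $(g-1)/(g+1)$ and $4g$. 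For the upper bound, your compactness-and-degeneration outline is a reasonable heuristic (for fixed $g$ there are only finitely many stable combinatorial types), but as you say it at best reduces the problem to a finite check per genus rather than producing a uniform $c(g)$, and you correctly flag interior strict positivity as the missing analytic input. In short, your write-up is an honest assessment of why the statement is a conjecture, not a proof of it; there is nothing to compare against in the paper because the paper offers no proof either.
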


\subsection{Proof of Theorem 1.3.5}
In this subsection, we give a trivial bound for $\varphi (\Gamma )$
and use it to complete the proof of Theorem 1.3.5.

\begin{lemma}
$$-\frac {2g-1}4 \ell (\Gamma )\le \varphi (\Gamma )\le \frac {3g}2\ell (\Gamma ).$$
\end{lemma}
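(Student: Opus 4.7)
The plan is to use the expression (4.1.5), namely $\varphi(\Gamma)=3g\tau(\Gamma)-\tfrac14\epsilon(\Gamma)-\tfrac14\ell(\Gamma)$, and bound $\tau(\Gamma)$ and $\epsilon(\Gamma)$ separately, using only the trivial pointwise estimate on the effective resistance. Throughout, recall that $d\mu$ is a probability measure on $\Gamma$ and that, because $\Gamma$ is connected, any two points are joined by a path whose length is at most the total length, so the effective resistance satisfies
$$0 \le r(x,y) \le \ell(\Gamma) \qquad \text{for all } x,y\in\Gamma.$$

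First I would bound $\tau$. By identity (4.1.2), $\tau(\Gamma)=\tfrac12\int r(x,y)\,d\mu(x)\,d\mu(y)$, so the above estimate and $r\ge 0$ immediately give $0 \le \tau(\Gamma) \le \tfrac12\ell(\Gamma)$. Next I would bound $\epsilon$. Formula (4.1.4) reads $\epsilon(\Gamma)=\int r(x,y)\,\delta_K(x)\,d\mu(y)$. Since $(\Gamma,q)$ is polarized, the canonical divisor $K$ is effective, i.e.\ $K=\sum_p K(p)\,p$ with $K(p)\ge 0$ and $\sum_p K(p)=\deg K=2g-2$. Therefore
$$\epsilon(\Gamma) = \sum_{p} K(p) \int_\Gamma r(p,y)\,d\mu(y),$$
and the resistance bound together with $\int d\mu=1$ yields $0 \le \epsilon(\Gamma) \le (2g-2)\,\ell(\Gamma)$.

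Finally I would combine these in (4.1.5). For the upper bound, using $\epsilon(\Gamma)\ge 0$ and $\tau(\Gamma)\le \tfrac12\ell(\Gamma)$,
$$\varphi(\Gamma) \;\le\; 3g\cdot\tfrac12\ell(\Gamma) - 0 - \tfrac14\ell(\Gamma) \;\le\; \tfrac{3g}{2}\ell(\Gamma).$$
For the lower bound, using $\tau(\Gamma)\ge 0$ and $\epsilon(\Gamma)\le (2g-2)\ell(\Gamma)$,
$$\varphi(\Gamma) \;\ge\; 0 - \tfrac14(2g-2)\ell(\Gamma) - \tfrac14\ell(\Gamma) \;=\; -\tfrac{2g-1}{4}\ell(\Gamma).$$

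There is no real obstacle here; the only subtlety worth stating clearly is the justification that $K(p)\ge 0$ (needed so that the integrand defining $\epsilon$ is a nonnegative combination of resistances), which follows directly from the polarized hypothesis. The proof is a short assembly of the identity (4.1.5) with the two elementary estimates on $\tau$ and $\epsilon$ above.
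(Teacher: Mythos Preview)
Your proof is correct and follows essentially the same route as the paper: both use identity (4.1.5) together with the elementary bounds $0\le r(x,y)\le\ell(\Gamma)$ applied to (4.1.2) and (4.1.4), with the effectiveness of $K$ (from the polarized hypothesis) ensuring $\epsilon(\Gamma)\ge 0$. The only cosmetic difference is that you retain the $-\tfrac14\ell(\Gamma)$ term in the upper estimate before discarding it, whereas the paper drops the entire negative term $-\tfrac14(\epsilon+\ell)$ at once.
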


\begin{proof}
From formulae (4.1.5) and (4.1.2), we obtain  $$\varphi (\Gamma )\le
3g\tau (\Gamma )\le \frac {3g\ell (\Gamma )}2$$ where we use an
inequality $r(x, y)\le \ell (\Gamma )$ for any points $x, y\in
\Gamma$. Similarly, we can get a lower bound:
$$\varphi (\Gamma )
\ge -\frac 14(\epsilon (\Gamma )+\ell (\Gamma )) \ge -\frac
14(2g-2+1)\ell (\Gamma ).$$
\end{proof}

\subsubsection*{Proof of Theorem 1.3.5}
To prove Theorem 1.4.4, we need only to prove that the following
difference function is bounded, for all closed point $t\in T$:
$$f(t)=\frac 1{\deg t}\left((2g-2)\pair{\Delta _\xi(Y_t), \Delta _\xi
(Y_t)}-(2g+1)\pair{\omega _{Y/T}, \omega _{Y/T}}\right).$$ Replace
$T$ by a finite covering, we may assume that the family can be
extended into an semi-stable family $\CY\lra \CT$ of integral
schemes over $\CO_K$.  Let $\delta _\CT$ be the boundary divisor
induced from the morphism $\CT\lra \bar \CM_g$ and the boundary
divisor $\bar\CM_g\setminus \CM_g$. Then $\delta _\CT$ is supported
over finitely many closed fibers of $\CT\lra \Spec \CO_k$, say over
points in a finite subset $S$ of $\Spec \CO_k$. Now by Theorem
1.3.1, the function is given by
$$f(t)=\frac {2g-2}{\deg t}\sum _w\varphi _w(Y_t)$$
where the sum is over all places of $K(t)$. When $w$ is archimedean
over an archimedean place $v$ of $K$ , $\phi _w(Y_t)$ is a
continuous function on $t\in T_v(\BC)$ thus it is bounded by a
constant $C_v$ depends only on place $v$.

If $w$ is archimedean, then by Lemma 4.2.1, $\varphi _w(Y_t)$ is
bounded by a constant multiple of the length $\ell (\Gamma )$ of the
reduction graph of $Y_t$ at $w$. We notice that this length is equal
to the number of singular points on $Y_t$ over $w$ and can be
computed by divisor $\delta _{\CT}$:
$$\ell (\Gamma )=(\delta _{\CT}\cdot \bar t)_w$$
where the right hand side is the local intersection number of
$\delta _\CT$ and the Zariski closure $\bar t$ of $t$ over $w$. This
number is also bounded by a number $C_v$ as $\delta _\CT$ is a
vertical divisor. In summary we have shown that
$$|f(t)|\le \frac 1{\deg t}\sum _w C_v=\sum C_v$$
where $C_v$ are some constant which is zero at all but finitely many
places of $k$. Thus this is a finite number. This shows the
boundedness of $f(t)$.

\subsection{Additivity of constants}

In this section we want to reduce Conjecture 4.1.1 to the case where
$\Gamma$ is either a line segment or a $2$-edge connected in the
case that for any smooth point $p\in \Gamma $, the complement
$\Gamma _p$ is still connected. If $\Gamma _p$ is not connected,
then it is the union of two graphs $\Gamma _1$ and $\Gamma _2$ and
$\Gamma $ is a pointed sum  of $\Gamma _1$ and $\Gamma _2$.

\begin{lemma} Any metrized graph $\Gamma $ is a successive pointed sum of
graphs $\Gamma _i$ such that each $\Gamma _i$ is either $2$-edge
connected or an edge with  all inner points smooth.
\end{lemma}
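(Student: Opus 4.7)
My approach is to apply the classical block decomposition to the combinatorial graph underlying $\Gamma$. I first refine the combinatorial structure of $\Gamma$ by declaring every point in the support of $q$ to be a vertex; this leaves the metrized graph unchanged but guarantees that every (combinatorial) edge has smooth interior. I then recall the standard fact that the blocks of a connected finite graph---its maximal subgraphs without a cut vertex---are each either a single bridge edge or a maximal $2$-edge connected subgraph, and that the block--cut incidence forms a tree (the block--cut tree).

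I then induct on the number $b(\Gamma)$ of blocks. The base case $b(\Gamma)=1$ is immediate: $\Gamma$ is itself a single block, so either $2$-edge connected or a single edge with smooth interior. For $b(\Gamma)\ge 2$, I pick a leaf $B$ of the block--cut tree, attached to the rest of $\Gamma$ at a unique cut vertex $v$; setting $\Gamma'$ to be the union of the other blocks (containing $v$), I have the pointed sum $\Gamma=B\vee_v\Gamma'$ with $b(\Gamma')=b(\Gamma)-1$. Applying the inductive hypothesis to $\Gamma'$ and prepending $B$ yields the desired decomposition, since $B$ itself is either $2$-edge connected or a single edge with smooth interior.

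The only subtlety is the interpretation of ``pointed sum'', since a cut vertex $v$ typically fails to be smooth in the sense of the paper (a cut vertex of valence $\ge 2$ contributes positively to $K$ unless $v(p)=2,\,q(p)=0$, in which case it is really an interior edge point and not a cut vertex). I read ``pointed sum'' as the general topological operation of identifying one point in each of two metrized graphs---the construction preceding the lemma (splitting at a smooth point $p$ where $\Gamma_p$ is disconnected) being the special case in which the gluing occurs in the interior of a bridge, and then recovered for non-smooth cut vertices by instead cutting at a smooth interior point of a bridge incident to $v$ (which exists once $b(\Gamma)\ge 2$ and $B$ is not itself the leaf to split off). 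Under this reading the lemma is a purely combinatorial statement, and the only real content is the block decomposition theorem for finite graphs; the main task in writing up the proof is just to verify that the induction can always be driven by a splitting at a smooth point, which follows from the observation that a failure of $2$-edge connectedness produces a bridge whose interior provides such a splitting point.
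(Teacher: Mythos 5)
Your block-decomposition plan is a legitimate route to the lemma and is close in spirit to the paper's argument, but it differs in one structural point and contains two inaccuracies worth flagging.

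The paper's proof does not induct on blocks. It takes $\Gamma_+$ to be the closure of the set of smooth points $p$ for which $\Gamma_p$ is disconnected (i.e. the union of the bridges together with their endpoints), observes that $\Gamma_+$ is a forest and that the closed complement $\Gamma_0$ is the disjoint union of the \emph{maximal} $2$-edge connected subgraphs, and then subdivides the bridges at the finitely many non-smooth interior points. The pieces are thus subgraphs glued along cut vertices, which are in general not smooth; the ``pointed sum'' of the lemma is simply the wedge of subgraphs at a common point, and the earlier discussion of splitting at a smooth $p$ with $\Gamma_p$ disconnected is motivational rather than definitional. You reach the same reading of ``pointed sum,'' which is correct. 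Your decomposition into blocks is strictly finer than the paper's: blocks are the $2$-\emph{vertex}-connected components together with the bridges, and a single maximal $2$-edge connected subgraph can decompose into several blocks meeting at cut vertices (two circles sharing a vertex is the basic example). For the lemma this does not matter, since each block with at least three vertices is still $2$-edge connected; but your stated ``standard fact'' that a block is ``either a single bridge edge or a maximal $2$-edge connected subgraph'' is false as written, and the word ``maximal'' must be dropped.

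The more substantive slip is the parenthetical attempt to recover a smooth splitting point. You assert that once $b(\Gamma)\ge 2$ there is a bridge incident to the chosen cut vertex $v$ whose interior provides a smooth splitting point. This is simply not so: a graph can have several blocks and no bridges at all (again, two circles sharing a vertex), in which case it is already $2$-edge connected and your induction should not attempt to split it. More fundamentally, a ``smooth-point-only'' recursion does not terminate: split a theta-graph-with-a-tail at a smooth interior point of the tail, and you obtain a theta-with-a-shorter-tail which still has a bridge, ad infinitum. The correct resolution is the one you already stated in the previous sentence and should have committed to: the pointed sum is taken at the cut vertex itself, smooth or not, exactly as the paper's own proof does; the genus bookkeeping for non-smooth gluing points is handled later by the $q_i$ in Theorem~4.3.2, not by requiring the gluing point to be smooth. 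If you delete the parenthetical reconciliation and state plainly that the wedge is taken at cut vertices (and add the base case ``if $\Gamma$ is $2$-edge connected, stop''), the proof is sound.
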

\begin{proof}
Let $\Gamma _+$ be closure of the subgraph of points $p$ such that
$\Gamma _p$ is not connected. Then $\Gamma _+$ is a finite disjoint
union of trees, and the closed complement $\Gamma _0$ of $\Gamma _+$
in $\Gamma $ is a finite disjoint union of the maximal $2$-edge
connected subgraphs. The graph $\Gamma _+$ can be further decomposed
to edges with smooth inner points. We let $\Gamma _i$ be the
components of these $2$-edge connected points or edges with smooth
inner points.
\end{proof}

Assume that we have a decomposition of $\Gamma $ into a pointed sum
of connected subgraphs $\Gamma _i$ as in Lemma 4.3.1. For each  $i$
and each $A\in \Gamma_i $, let $\Gamma _{A}$ be the closure of the
connected component of $A$ in complement the $\Gamma _i\setminus
\{A\}$. Then for all but finitely many $A$, $\Gamma _A=A$. We have a
map $\pi _i: \Gamma \lra \Gamma _i$ with fiber $\Gamma _A$ over
$A\in \Gamma _i$. Let $q_i(A)$ be the genus of the polarized graph
$(\Gamma _A, q|_{\Gamma _A})$.

\begin{figure}[ht]

    \begin{picture}(200,185)(-100,0)
        \put(-100,0){\includegraphics{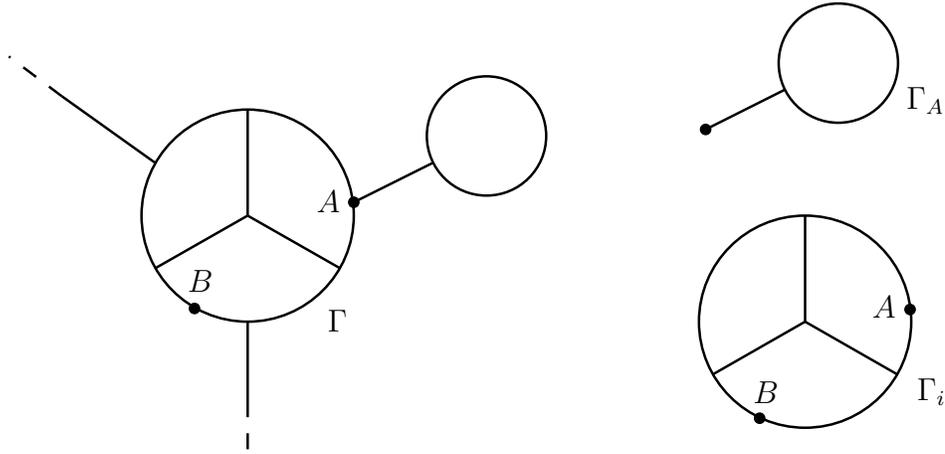}}

        \put(245,22){$\Gamma_i$}
        \put(22, 48){$\Gamma$}
        \put(18, 93){$A$}
        \put(-31, 63){$B$}
        \put(241, 132){$\Gamma_A$}
        \put(228, 53){$A$}
        \put(183, 21){$B$}
    \end{picture}

    \label{Fig: Components and fibers}
    \caption{Quotient graphs}

\end{figure}
Here is a figure of a graph $\Gamma$ that draws attention to one of
its $2$-edge connected
        components $\Gamma_i$. The point $A \in \Gamma_i$ gives rise to
        the graph $\Gamma_A$,
        which is the fiber over $A$ of the projection map
        $\pi_i: \Gamma \to \Gamma_i$. The point $B$
        satisfies $\pi_i^{-1}(B) = \{B\}$.

Our main result is as follows:

\begin{theorem} Each pair $(\Gamma _i, q_i)$ is a polarized  metrized graph
with  the same genus $g$ as $(\Gamma , q)$. Moreover all invariants
have the additivity:
$$\epsilon (\Gamma )=\sum _i\epsilon (\Gamma _i, q_i), \qquad \tau (\Gamma )=
\sum _i\tau (\Gamma _i, q_i).$$
$$\varphi (\Gamma )=\sum _i \varphi (\Gamma _i, q_i),\qquad
\lambda (\Gamma )=\sum _i\lambda (\Gamma _i, q_i).$$
\end{theorem}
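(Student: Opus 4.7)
The plan is to reduce the additivity of $\epsilon, \tau, \varphi, \lambda$ to two compatibilities with each projection $\pi_i \colon \Gamma \to \Gamma_i$ (push-forward of the admissible measure and of the canonical divisor), together with a decomposition of the resistance function. Given these, additivity of $\tau$ and $\epsilon$ is immediate from formulas (4.1.2) and (4.1.4), additivity of $\ell$ is clear since the $\Gamma_i$ partition the edges, and additivity of $\varphi$ and $\lambda$ then follows from (4.1.5) and (4.1.6) once we know the genus $g$ is common.

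First I would verify that each $(\Gamma_i, q_i)$ is a polarized metrized graph of genus $g$. Since the fibers $\Gamma_A$ for distinct $A \in \Gamma_i$ are disjoint except at branch points and cover $\Gamma$, bookkeeping of first Betti numbers and $q$-weights yields
\[
b_1(\Gamma_i) + \sum_{A \in \Gamma_i} q_i(A) \;=\; b_1(\Gamma) + \sum_{x \in \Gamma} q(x) \;=\; g.
\]
Non-negativity of $q_i$ is automatic (genera of polarized subgraphs), and effectivity of $K_{\Gamma_i,q_i}$ will drop out of the push-forward computation below.

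The central tool is the \emph{cut-vertex invariance of effective resistance}: because each hanging piece $\Gamma_A$ is attached to the rest of $\Gamma$ at the single point $A$, current entering $\Gamma_A$ must also leave through $A$, so excursions into $\Gamma_A$ contribute nothing to conductance between points outside. Hence $r_{e,\Gamma} = r_{e,\Gamma_i}$ for $e \subset \Gamma_i$ and $r_{e,\Gamma} = r_{e,\Gamma_A}$ for $e \subset \Gamma_A$. Inducting on the decomposition of Lemma 4.3.1, the series law gives the master identity
\[
r_\Gamma(x,y) \;=\; \sum_i r_{\Gamma_i}(\pi_i(x), \pi_i(y)).
\]
Combined with formula (4.1.7), the continuous part of $(\pi_i)_* d\mu_\Gamma$ on an edge of $\Gamma_i$ matches $d\mu_{\Gamma_i,q_i}$ verbatim, while the atomic mass at $A$ equals $d\mu_\Gamma(\Gamma_A) = q_i(A)/g$, obtained by integrating (4.1.7) over $\Gamma_A$ and using that $(\Gamma_A, q|_{\Gamma_A})$ is polarized of genus $q_i(A)$. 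A direct count of coefficients gives the canonical push-forward
\[
((\pi_i)_* K_\Gamma)(A) \;=\; \sum_{x \in \Gamma_A}(v_\Gamma(x) + 2q(x) - 2) \;=\; \deg K_{\Gamma_A} + v_{\Gamma_i}(A) \;=\; v_{\Gamma_i}(A) + 2q_i(A) - 2,
\]
matching $K_{\Gamma_i,q_i}$ coefficient-wise and, being non-negative, confirming effectivity.

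The main obstacle is proving the cut-vertex invariance and the associated resistance decomposition in the setting of weighted metrized graphs (rather than discrete electrical networks), in a form that lets the admissible Green's functions on $\Gamma$ and on each $\Gamma_i$ be compared consistently. Once these local identifications are in place, substituting the master resistance identity into
\[
\tau(\Gamma) = \tfrac12\!\int r_\Gamma\,d\mu_\Gamma\,d\mu_\Gamma, \qquad \epsilon(\Gamma) = \int r_\Gamma\,\delta_{K_\Gamma}\,d\mu_\Gamma
\]
and changing variables via $\pi_i$ yields $\tau(\Gamma) = \sum_i \tau(\Gamma_i, q_i)$ and $\epsilon(\Gamma) = \sum_i \epsilon(\Gamma_i, q_i)$; additivity of $\varphi$ and $\lambda$ then follows from (4.1.5) and (4.1.6), completing Theorem 4.3.2.
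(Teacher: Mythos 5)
Your proposal is correct and follows essentially the same route as the paper: verify that each $(\Gamma_i, q_i)$ is polarized of genus $g$, establish the resistance decomposition $r_\Gamma(x,y) = \sum_i r_{\Gamma_i}(\pi_i(x), \pi_i(y))$ via the cut-vertex property, identify the push-forward measures $(\pi_i)_* d\mu_\Gamma$ and $(\pi_i)_* \delta_{K_\Gamma}$ with the admissible measure and canonical divisor of $(\Gamma_i, q_i)$, then derive additivity of $\tau$ and $\epsilon$ from the integral formulas (4.1.2), (4.1.4) and of $\varphi$, $\lambda$ from (4.1.5), (4.1.6). The paper likewise asserts the resistance decomposition without a detailed proof, so your explicit flagging of it as the key ingredient matches the paper's level of rigor.
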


\begin{proof} By definition, we need
to show that $q_i$ is non-negative and $K_{\Gamma _i}$ is effective.
By definition, the genus of $\Gamma _A$ with restriction genus
function $q(x)$ is given by $$q(\Gamma _A)=\sum _{x\in \Gamma
_A}q(x)+b(\Gamma _A)$$ where $b(\Gamma _A)$ is the first Betti
number of the topological space $\Gamma _A$. It is clear that
$q(\Gamma _A)\ge 0$.  We need to compute the degree of the canonical
divisor $K_i$ of $(\Gamma _i, q_i)$. Notice that the canonical
divisor $K_A$ of $(\Gamma _A, q|_{\Gamma _A})$ and $K$ on a point
$x\in \Gamma _A$ have the same multiplicities respectively:
$$2q(x)-2+v_{\Gamma _A}(x), \qquad 2q(x)-2+v(x).$$
These two numbers are equal except at $x=A$ where the difference is
$v_{\Gamma _i}(A)$. It follows that
$$\ord _A K_i=2q(A)-2+v_{\Gamma _i}(A)=\sum _{x\in \Gamma _A}\ord _x
K.$$ This implies that $K_i$ is effective and thus $(\Gamma _i,
q_i)$ is polarized. Take sum over $A$ to obtain that
$$2g(\Gamma _i, q_i)-2=2g(\Gamma )-2.$$
It follows that $g(\Gamma _i)=g(\Gamma )$.

For four identities, by (4.1.5) and (4.1.6), it suffices to prove
the first two. For any two points $x, y\in \Gamma$, the resistance
$r(x, y)$ can be computed using $\Gamma _i$:
$$r(x, y)=\sum _{i}r(\pi _i (x), \pi _i(y))$$
where right side is the resistance on $\Gamma _i$ which is the same
as the resistance on $\Gamma$. We have a decomposition
\begin{equation}\tau (\Gamma )=\sum_i \tau _i(\Gamma ),\qquad \epsilon
(\Gamma )=\sum \epsilon _i(\Gamma ),\end{equation} where
$$\tau _i(\Gamma )=\frac 12 \int r(\pi _i(x), \pi _i(y))
d\mu (x)d\mu (y), $$ $$\epsilon _i (\Gamma )=\int  r(\pi _i(x), \pi
_i(y))\delta _{K}(x)d\mu (y).$$ We may compute these last two
integrations over fibers of $\pi_i:\Gamma \lra \Gamma _i$:
$$\tau _i(\Gamma )=\frac 12 \int_{\Gamma _i^2} r(x, y)
d\mu _i(x)d\mu _i(y), $$ $$\epsilon _i (\Gamma )=\int  r(x,y)\delta
_{K, i}(x)d\mu _i(y),$$ where $d\mu _i(x)$ is the sum of smooth part
of $d\mu (x)$ supported on $\Gamma _i$ plus the Dirac measure $A$ in
$\Gamma _i$ with mass
$$\int _{\Gamma _A}d\mu (x)=\frac {q_i(A)}g.$$
Similarly, $\delta _{K, i}(x)$ is the Dirac measure with mass
$$\int _{\Gamma _A}\delta _K=2q_i(A)-2+v_{\Gamma _i}(x)=\deg_A K_i.$$
It follows that
$$\tau _i(\Gamma )=\tau (\Gamma _i, q_i), \qquad
\epsilon _i(\Gamma )=\epsilon (\Gamma _i, q_i).$$ The formulae
(4.3.1) thus finishes the proof.
\end{proof}

\subsection{Reduction and  elementary graphs}
In this section, we want to reduce Conjecture 4.1.1 to the case
where $G$ is $2$-edge connected. Then we prove the conjecture for
elementary graphs.

\begin{proposition} Let $D_1, \cdots D_m$ be the set of maximal
$2$-edge connected subgraphs of $\Gamma _i$. Then
$$\varphi(\Gamma)=\sum _{i\in (0, g/2]}\frac {2i(g-i)}g\ell_i(\Gamma)
+\sum _D\varphi (D, q_D).$$
$$\lambda (\Gamma)=\sum _{i\in (0, g/2]}\frac
{i(g-i)}{8g+4}\ell_i (\Gamma) +\sum _D\lambda (D, q_D).
$$
\end{proposition}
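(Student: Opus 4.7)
The plan is to combine Lemma~4.3.1 with Theorem~4.3.1 to reduce the claimed identities to an explicit one-dimensional calculation on a single segment. First I would decompose $\Gamma$ via Lemma~4.3.1 as a successive pointed sum of its maximal $2$-edge connected subgraphs $D_1,\dots,D_m$ together with finitely many edges $e_1,\dots,e_n$ each of whose interior consists only of smooth points of $\Gamma$. The key geometric observation is that an interior point $p$ of a bridge $e_j$ is smooth and has $\Gamma_p$ disconnected, hence is of some type $i\in(0,g/2]$; conversely, any smooth point lying inside some $D_k$ has $\Gamma_p$ still connected and is therefore of type~$0$. Consequently, grouping the bridges by the common type $i_j$ of their interior,
$$\sum_{j\colon i_j=i} \ell(e_j) = \ell_i(\Gamma), \qquad i\in(0,g/2].$$

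Next I would invoke Theorem~4.3.1, iterated over the pieces of this decomposition, to obtain the additivity
$$\varphi(\Gamma) = \sum_k \varphi(D_k,q_{D_k}) + \sum_j \varphi(e_j,q_{e_j}), \qquad \lambda(\Gamma) = \sum_k \lambda(D_k,q_{D_k}) + \sum_j \lambda(e_j,q_{e_j}).$$
From the definition of the induced polarization in Theorem~4.3.1, if the bridge $e_j$ has interior of type $i_j$ then its two endpoints carry $q_{e_j}$-values $i_j$ and $g-i_j$, so that $(e_j,q_{e_j})$ is again a polarized metrized graph of genus~$g$.

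The problem thus reduces to evaluating $\varphi(e,q_e)$ and $\lambda(e,q_e)$ on a single polarized segment $e=[0,\ell]$ with $q_e(0)=i$ and $q_e(\ell)=g-i$. The admissible measure formula (4.1.7) has no continuous part, since the effective resistance through the complement of a single edge is infinite; thus $d\mu=\frac{1}{g}\bigl(i\,\delta_0+(g-i)\,\delta_\ell\bigr)$. The resistance on a segment is simply $r(x,y)=|x-y|$, and the canonical divisor is $K=(2i-1)\cdot 0+(2(g-i)-1)\cdot\ell$. Plugging these into (4.1.2) and (4.1.4) gives closed forms for $\tau(e,q_e)$ and $\epsilon(e,q_e)$, and substituting into (4.1.5) and (4.1.6) then yields explicit multiples of $i(g-i)\ell/g$ which, after summation over $j$ and regrouping by type, reproduce the two coefficient formulas in the proposition.

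The main conceptual step is the identification of type-$i$ smooth points (for $i>0$) with the interiors of bridge components in the canonical decomposition of Lemma~4.3.1; once that is in place, everything that remains is a routine one-dimensional computation on a segment with two mass-bearing endpoints, and I do not anticipate any serious obstacle.
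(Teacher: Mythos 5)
Your approach is correct and essentially identical to the paper's: invoke Lemma~4.3.1 and Theorem~4.3.2 (you wrote ``Theorem~4.3.1'', a small misnumbering) to reduce by additivity to a single polarized segment $[0,\ell]$ with masses $i$ and $g-i$ at the endpoints, compute $\tau$ and $\epsilon$ there, and substitute into (4.1.5)--(4.1.6). Your extra observations — that type-$i$ smooth points with $i>0$ are exactly the interiors of the bridge pieces, and that $d\mu$ has no continuous part on a single segment since $r_e=\infty$ — are just the unstated steps the paper compresses, so nothing genuinely new.

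One small caution if you do carry out the final substitution: a single segment gives $\lambda = \dfrac{i(g-i)}{2g+1}\,\ell$, not $\dfrac{i(g-i)}{8g+4}\,\ell$; the coefficient printed in the Proposition appears to be a misprint (it is off by a factor of $4$), and the value $\dfrac{i(g-i)}{2g+1}$ is the one consistent with Conjecture~4.1.1, Conjecture~1.4.5, and Moriwaki's inequality quoted earlier in the paper.
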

\begin{proof} By Lemma 4.3.1 and Theorem 4.3.2,  we need only prove the Proposition
when $\Gamma $ is an edge with smooth inner points. Let $i$ and
$g-i$ be values of genus function at two ends $a$ and $b$.  Then
$$K=(2i-1)a+(2g-2i-1)b, \qquad d\mu =\frac 1g(i\delta _a+(g-i)\delta
_b).$$ As $r(x, y)$ is the distance between $x$ and $y$, it follows
that
$$\tau (\Gamma )=\frac 12\int r(x, y)d\mu (x)d\mu (y)
=\frac {i(g-i)}{g^2}\ell (\Gamma ).$$
$$\epsilon (\Gamma )=\int r(x, y)\delta _K (x)d\mu (y)=
\left(4\frac{i(g-i)}g-1\right)\ell(\Gamma ).$$
The formulae in the Proposition follows from (4.1.5) and (4.1.6).
\end{proof}

\begin{corollary}
Conjecture 4.1.1 in general case follows from the case where $\Gamma
$ is $2$-edge connected.
\end{corollary}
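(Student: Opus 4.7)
The plan is to combine Proposition~4.4.1 with the observation that on a $2$-edge connected graph every smooth point is automatically of type~$0$, so the $2$-edge connected case of Conjecture~4.1.1 reduces to a bound involving only $\ell_0$, which is exactly the piece of the bound for $\Gamma$ not already accounted for by the edge summands in Proposition~4.4.1.

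First I would set up the decomposition. By Lemma~4.3.1, write $\Gamma$ as a pointed sum of subgraphs $\Gamma_j$, each either $2$-edge connected or a single edge with smooth interior, and let $D_1,\dots,D_m$ be the $2$-edge connected summands. By Theorem~4.3.2 each $(D,q_D)$ is polarized of genus $g$. Applying the assumed conjecture to each $(D,q_D)$, and using the fact that for a $2$-edge connected polarized graph $D$ the removal of any smooth point $p$ leaves $D$ connected (so $p$ is of type $0$, giving $\ell_0(D,q_D)=\ell(D)$ and $\ell_i(D,q_D)=0$ for $i>0$ modulo a finite set of measure zero), one obtains
\begin{equation*}
\varphi(D,q_D)\ \ge\ c(g)\,\ell(D),\qquad
\lambda(D,q_D)\ \ge\ \frac{g}{8g+4}\,\ell(D).
\end{equation*}

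Next I would identify $\sum_D \ell(D)$ with $\ell_0(\Gamma)$. For an interior point $p$ of some $D$, connectedness of $D\setminus\{p\}$ together with the pointed-sum structure (the other $\Gamma_k$'s are attached at vertices of $D$) shows that $\Gamma\setminus\{p\}$ is connected, so $p$ is of type~$0$ in $\Gamma$. Conversely, for an interior point $p$ of a bridge-edge $\Gamma_j$ of $\Gamma_+$, removal disconnects $\Gamma$ into two pieces of genera $q_j(a)$ and $g-q_j(a)$, both at least~$1$ since $(\Gamma_j,q_j)$ is polarized; hence $p$ is of positive type. Up to the finite (hence measure-zero) set of non-smooth points, this gives $\ell_0(\Gamma)=\sum_D \ell(D)$ and $\sum_{i>0}\ell_i(\Gamma)=\sum_{\Gamma_j\text{ edge}}\ell(\Gamma_j)$, with the type of interior points of $\Gamma_j$ matching $\min(q_j(a),q_j(b))$.

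Finally I would substitute into Proposition~4.4.1. For $\varphi$ this gives
\begin{equation*}
\varphi(\Gamma)=\sum_{i\in(0,g/2]}\tfrac{2i(g-i)}{g}\ell_i(\Gamma)+\sum_D \varphi(D,q_D)\ \ge\ \sum_{i\in(0,g/2]}\tfrac{2i(g-i)}{g}\ell_i(\Gamma)+c(g)\,\ell_0(\Gamma),
\end{equation*}
and the analogous computation yields the $\lambda$-bound. Thus Conjecture~4.1.1 for $\Gamma$ follows from Conjecture~4.1.1 for the $2$-edge connected $D_j$. There is essentially no obstacle here beyond keeping the bookkeeping of types straight across the decomposition; the only point requiring care is verifying that the endpoint genera $q_j(a)$ used in Proposition~4.4.1 to label an edge-summand $\Gamma_j$ agree with the types in $\Gamma$ of interior points of $\Gamma_j$, which follows directly from the definition of $q_j$ via the fibers of $\pi_j\colon\Gamma\to\Gamma_j$.
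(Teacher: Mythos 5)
Your proposal is correct and takes the same route the paper intends: the Corollary is meant to follow immediately from Proposition~4.4.1, whose additive decomposition already separates out the $\ell_i$ contributions for $i>0$, so the only thing left to check is $\sum_D\varphi(D,q_D)\ge c(g)\,\ell_0(\Gamma)$ (and likewise for $\lambda$), which is exactly what you obtain after observing that every smooth point of a $2$-edge connected $D$ is of type~$0$ and that $\sum_D\ell(D)=\ell_0(\Gamma)$. Your type-matching argument (interior points of a $2$-edge connected $D$ are type~$0$ in $\Gamma$, interior points of a bridge-edge $\Gamma_j$ have type $\min(q_j(a),q_j(b))\ge 1$) is the right bookkeeping, though the converse direction is already built into Proposition~4.4.1. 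One small caveat: the $\lambda$-formula in Proposition~4.4.1 as printed has coefficient $\tfrac{i(g-i)}{8g+4}$, but plugging the edge computation $\tau=\tfrac{i(g-i)}{g^2}\ell$, $\epsilon=\bigl(4\tfrac{i(g-i)}{g}-1\bigr)\ell$ into (4.1.6) actually gives $\tfrac{i(g-i)}{2g+1}$, which is the coefficient appearing in Conjecture~4.1.1; with that typographical correction your ``analogous computation'' for $\lambda$ closes exactly as the $\varphi$ case does.
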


 A graph is called {\em elementary} if every edge is included in at most
one circle. In the following, we give some explicit formulae for
$\varphi (\Gamma)$ and $\lambda(\Gamma)$ for elementary graphs and
then deduce Conjecture 4.1.1. For each circle $C$ in $\Gamma$, let
$V_C$ be the set of points on $C$ such that $q(x)>0$, and write
$C^0=C\setminus V_C$.
  Then $\Gamma\setminus C^0$ is a union of subgraphs
$\Gamma_A$ for $A\in V_C$. Let $g_A$ denote the genus of $\Gamma _A$
for the restriction of genus function $g_A$, and let $r_C(A, B)$
denote the resistance between two points $A$ and $B$ on the circle
$C$. We want to prove Conjecture 4.2.1 for elementary graph:

\begin{proposition}
$$\varphi(\Gamma)=\frac {g-1}{6g}\ell _0(\Gamma)
+\sum _{i\in (0, g/2]}\frac {2i(g-i)}g\ell_i(\Gamma) +\sum _{c\in
C}\sum_{A, B\in V_C} \frac {g_Ag_B}gr_C(A, B).$$
$$\lambda (\Gamma)=\frac g{8g+4}\ell _0(\Gamma)+\sum _{i\in (0, g/2]}\frac
{i(g-i)}{8g+4}\ell_i (\Gamma) +\sum _{c\in C}\sum_{A, B\in V_C}
\frac {g_Ag_B}{4g+2}r_C(A, B).
$$
\end{proposition}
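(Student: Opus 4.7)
The plan is to combine Proposition 4.4.1 with a direct computation on a single polarized circle. First I would apply Proposition 4.4.1 to extract the $\ell_i$ contributions for $i > 0$, reducing the problem to showing the analogous formula for each maximal $2$-edge connected subgraph $D$ of $\Gamma$ with its induced polarization $q_D$. Since $\Gamma$ is elementary, any two circles of $D$ share at most a single vertex (else they would share an edge), so $D$ is a cactus, a tree of circles glued at cut vertices. The next step is to extend the additivity of Theorem 4.3.2 to splittings at cut vertices of a cactus: resistance is additive across a cut vertex $p$ (any path between distinct pieces is forced through $p$), the admissible measure restricts piecewise, and the polarization apportions so that each piece remains polarized of genus $g$. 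Iterating reduces the computation of $\varphi(D, q_D)$ and $\lambda(D, q_D)$ to that of $\varphi(C, q_C)$ and $\lambda(C, q_C)$ for each constituent circle $C$ with the natural weights $g_A$ at $A \in V_C$, where $g_A$ is the genus of $\Gamma_A$.

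On a single polarized circle $C$ of length $\ell$ and genus $g$ the ingredients become explicit: $r_C(x, y) = d(x, y)(\ell - d(x, y))/\ell$ for shorter arc-distance $d$, and since every arc $e \subset C$ satisfies $\ell_e + r_e = \ell$, formula (4.1.7) simplifies to $d\mu = \frac{1}{g}\bigl(\sum_A g_A \delta_A + \frac{dx}{\ell}\bigr)$. All points of $C$ have valence $2$, so $\delta_K = \sum_A 2 g_A \delta_A$. Using the elementary identities $\int_C r_C(A, y)\,\frac{dy}{\ell} = \frac{\ell}{6}$ and $\int_{C \times C} r_C(x, y)\,\frac{dx\,dy}{\ell^2} = \frac{\ell}{6}$, expanding the measures in the definitions of $\tau$ and $\epsilon$ gives
\begin{equation*}
\tau(C) = \frac{1}{2g^2}\Bigl(\sum_{A,B} g_A g_B\, r_C(A, B) + \tfrac{(2g-1)\ell}{6}\Bigr), \quad \epsilon(C) = \frac{2}{g}\Bigl(\sum_{A,B} g_A g_B\, r_C(A, B) + \tfrac{(g-1)\ell}{6}\Bigr).
\end{equation*}

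Substituting into (4.1.5) and (4.1.6), the coefficients collapse nicely: the algebraic identity $\frac{2g-1}{4g} - \frac{g-1}{12g} - \frac{1}{4} = \frac{g-1}{6g}$ (and its analogue for $\lambda$) yields the single-circle formulae $\varphi(C) = \frac{g-1}{6g}\ell + \frac{1}{g}\sum g_A g_B r_C(A, B)$ and $\lambda(C) = \frac{g}{8g+4}\ell + \frac{1}{4g+2}\sum g_A g_B r_C(A, B)$. Summing over circles and reassembling with Proposition 4.4.1 then yields the desired identities, noting that $\sum_C \ell(C^0) = \ell_0(\Gamma)$ in an elementary graph. The main obstacle is the extension of Theorem 4.3.2 to cut-vertex splittings, which is not explicitly covered there: one must verify that the resistance-weighted integrals defining $\tau$ and $\epsilon$ decompose additively at a non-smooth cut vertex of a cactus, and that the genus function can be consistently redistributed so that each piece remains a polarized graph of genus $g$.
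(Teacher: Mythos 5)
Your approach is essentially the paper's: reduce via Proposition 4.4.1 to the $2$-edge-connected pieces, compute $\tau$ and $\epsilon$ explicitly on a single polarized circle, and substitute into (4.1.5)--(4.1.6). Your formulas for $\tau(C)$ and $\epsilon(C)$ agree with the paper's, and the collapse to coefficients $\frac{g-1}{6g}$ and $\frac{g}{8g+4}$ checks out. The one thing you flag as an obstacle is in fact a genuine gap in the paper's own terse proof: the line ``By Proposition 4.4.1, it suffices to prove Proposition for case where $\Gamma$ is a circle'' is not literally licensed, because for an elementary graph the maximal $2$-edge-connected subgraphs are cacti (circles glued at valence-$\ge 4$ vertices), not single circles, and those attaching vertices are not smooth, so Lemma 4.3.1 and Theorem 4.3.2 as stated do not permit cutting there. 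What is needed is exactly the cut-vertex extension of Theorem 4.3.2 you describe, and it goes through by the same argument: resistance, $d\mu$, and $\delta_K$ all split at a cut vertex of a cactus as in the pointed-sum case, with the branch genus $g_A$ becoming the induced weight at the attaching vertex. Making this explicit also clarifies that $V_C$ in the statement must be read as $C\cap\Supp(K)$ (points with $q>0$ or valence $\ge 3$) rather than merely $\{x\in C: q(x)>0\}$: a cut vertex of the cactus can have $q=0$ yet carry $g_A>0$ and a nonzero resistance term $g_Ag_B\,r_C(A,B)$ which the formula must include, as one sees by directly computing $\varphi$ for two circles attached at distinct points of a third with $q\equiv 0$.
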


\begin{proof} By Proposition 4.4.1, it suffices to prove Proposition for
case where $\Gamma $ is a circle. Let us compute the integrals
$\epsilon (\Gamma )$ and $\tau (\Gamma )$:
$$\tau(\Gamma )=\frac 12\int r(x, y)d\mu (x)d\mu (y), \qquad \epsilon (\Gamma )=\int
r(x, y)\delta _K(x)d\mu (y).$$ For $A, B\in \Gamma $ the resistance
$r(A, B)$ is given by $\ell (A, B)\ell ' (A, B)/\ell $ where
$\ell(A, B)$ and $\ell '(A, B)$ are the lengths of two segments of
in the complement of $A, B$ in $\Gamma $. The measures in the
integrals are given by
$$d\mu =\frac 1g(\sum _Aq(A)\delta _A+\ell ^{-1}dx),
\qquad \delta _K=\sum _A2q(A)\delta _A.$$

Let $\Gamma ^0$ be the complement of the support of $q$. Then we
have discrete contribution  when both $x$ and $y$ are not in $C^0$.
The  contributions in this case are given by
$$\tau _{A, B}(\Gamma )=\frac {q(A)q(B)}{2g^2}r(A, B),
\qquad \epsilon _{A, B}(\Gamma )=2\frac {q(A)q(B)}gr(A, B).$$

Next we consider the case where $x\notin C^0$, $y\in C^0$. We assume
that $x=A$. Let us choose coordinate $t$ on $C$ such that $t(A)=0$.
Then we have contributions:
$$ \tau_A^1(\Gamma):=\frac {q(A)}{2g} \int_{\Gamma ^0}
\frac {t(\ell _c-t)}{\ell _c}d\mu\\
=\frac {q(A)}{2g^2}\int _0^{\ell}\frac {t(\ell -t)}{\ell }\frac
{dt}{\ell } =\frac {q(A)}{6g^2}\ell ,$$
$$\epsilon_A^1(\Gamma):=2q(A) \int_{\Gamma ^0} \frac {t(\ell -t)}{\ell }d\mu =\frac
{q(A)}{3g}{\ell }.
$$

Now let us consider the case $x\in \Gamma ^0$, $y=A$. Then we have
contribution: \begin{align*} \tau_A^2(\Gamma):=&\frac 12\int
_{\Gamma ^0}\frac {t(x)(\ell -t(x))}{\ell }d\mu (x)\cdot \frac
{q(A)}g =\frac {g_A}{12g^2}\ell ,
\end{align*}
 $$\epsilon_A^2(\Gamma):=\int _{\Gamma ^0}\frac {t(\ell
-t)}{\ell }\delta _K\cdot \frac {q(A)}g=0.
$$

Finally, lets us consider the case where both $x$ and $y$ are in
$\Gamma ^0$. Then we have contribution: \begin{align*}
\tau^0(\Gamma):=&\frac 12\int _{\Gamma ^0}\int _{\Gamma ^0}\frac
{|t(x)-t(y)|(\ell -|t(x)-t(y)|)}{\ell
}d\mu (x)d\mu (y)\\
=&\frac 1{2g^2} \int _0^{\ell }\int _0^{\ell }\frac
{|t(x)-t(y)|(\ell _e-|t(x)-t(y)|)}{\ell }\frac {dxdy}{\ell ^2}
=\frac {\ell }{12g^2}.
\end{align*}

Thus a total contribution from a circle is
\begin{align*}
\tau(\Gamma)=&\sum _{A, B}\tau_{A,B}(\Gamma)
+\sum _A(\tau_A^1(\Gamma)+\tau_A^2(\Gamma))+\tau^0(\Gamma)\\
=&\sum _{A, B} \frac {q(A)q(B)}{2g^2}\cdot r(A, B)+\frac
{1}{6g^2}(\sum _A q(A))\ell +\frac {\ell }{12g^2},
\end{align*}
\begin{align*}
\epsilon(\Gamma)=&\sum _{A, B}\epsilon_{A,B}(\Gamma)+\sum
_A(\epsilon_A^1(\Gamma)
+\epsilon_A^2(\Gamma))+\epsilon^0(\Gamma)\\
=&\sum _{A, B} 2\frac {q(A)q(B)}g\cdot r(A, B)+\frac {1}{3g}(\sum _A
q(A))\ell.
\end{align*}
It is easy to verify that $\sum q(A)+1=g$. Thus we have formulae
$$
\tau_C(\Gamma) =\sum _{A, B} \frac {q(A)q(B)}{g^2}\cdot r(A,
B)+\frac {2g-1}{6g^2}\ell,
$$
$$
\epsilon_C(\Gamma) =\sum _{A, B} 2\frac {q(A)q(B)}g\cdot r(A,
B)+\frac {g-1}{3g}\ell.
$$

By formulae (4.1.5) and (4.1.6), we obtain the formulae in
proposition.
$$\varphi(\Gamma)=\left(\frac {g-1}{6g} +\sum_{A, B\in \Gamma } \frac
{q(A)q(B)}gr(A, B)\right)\ell _c,$$
$$ \lambda (\Gamma)
=\left(\frac g{4(2g+1)} +\frac 12\sum_{A, B\in V_C} \frac
{q(A)q(B)}{2g+1}r(A, B)\right)\ell _c.
$$
\end{proof}

\section{Triple product L-series and Tautological cycles}
In this section, we define a subgroup containing the Gross--Schoen
cycle  of homologous to zero cycles of codimension $2$ on  the
triple product $X^3$ of a curve $X$. The Beilinson--Bloch conjecture
relates the rank of this group and the order of vanishing of
$L$-series at $s=0$ associated to the motive $M$ defined as the
kernel
$$\wedge ^3H^2(X)\,(2)\lra H^1(X)(1).$$
We will list some formulae for  $L$-series and root numbers  in the
semistable case. At the end of this section, we want to rewrite
heights of $\Delta _\xi$ in terms of K\"unnemann's height pairing of
tautological cycles $X_1$ and $\CF (X_1)$ in the
Beauville--Fourier--Mukai theory. In particular, we can show that
the non-vanishing of height of $\Delta _\xi$ will implies the
non-vanishing of the Ceresa cycle $X-[-1]^*X$ in the Jacobian.

\subsection{Beilinson--Bloch's conjectures}
In this subsection, we define some groups of cycles homologous to
$0$ of codimension $2$ on a product of three curves and state the
Beilinson--Bloch's conjectures for corresponding motives.

Let $X_i$ ($i=1,2,3$) be three curves over a number field with three
fixed points $e_i$. We consider the triple product $Y=X_1\times
X_2\times X_3 $, the group $\Ch^2(Y)$ of cycles of dimension $1$ on
the $Y$, and the class map $$\Ch ^2(Y)\lra H^4(Y).$$ The kernel of
this map is called the group of cycles homologous to $0$ and is
denoted by $\Ch ^2(Y)^0$. We have the following Beilinson--Bloch's
conjecture \cite{beilinson1, beilinson2, bloch1}:

\begin{conjecture}[Beilinson--Bloch]
The rank of $\Ch^2(Y)^0$  is finite and is equal to the order of
vanishing of $L(H^3(Y), 2)$.
\end{conjecture}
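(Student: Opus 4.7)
The plan is to first decompose both sides of the conjectured identity under the natural idempotents, reducing to the refined Conjecture 1.5.1 for the motive $M$. On the cycle side, the action of the symmetric group $\CS_3$ on $X^3$, together with the correspondences built from the projections $X^3\to X^i$ and the embeddings induced by base points, splits $\Ch^2(Y)^0$ into isotypic pieces. The same idempotents act on $H^3(Y)$, giving a corresponding decomposition $L(H^3(Y),s)=\prod L(M_i,s)$ whose factors, apart from $L(M,s)$, come from the Jacobians of the $X_i$ and are governed by classical BSD. Thus one is reduced to proving $\rank \Ch(M)=\ord_{s=0}L(M,s)$.

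For the $M$-piece, the strategy I would pursue is an arithmetic Gan--Gross--Prasad / Gross--Kudla type formula relating the height of the canonical Gross--Schoen cycle to the central derivative of $L(M,s)$, of the shape
$$\pair{\Delta_\xi,\Delta_\xi}_{X^3}\;=\;c\cdot L'(M,0),$$
with an explicit constant $c$ depending on local data. Combining this with the Main Theorem~1.3.1 expresses $L'(M,0)$ in terms of $\omega_a^2$, the Néron--Tate height of $\xi-K_X/(2g-2)$, and the local contributions $\varphi(X_v)$. This would give the lower bound direction ``$\rank\ge\ord$'': whenever Theorem~1.3.1 produces $\pair{\Delta_\xi,\Delta_\xi}\ne 0$ (as in Theorem~1.3.4 for non-isotrivial families), the formula forces $L'(M,0)\ne 0$, and more generally $\dim$ of the subspace of $\Ch(M)$ spanned by translates of $\Delta_\xi$ under correspondences matches $\ord_{s=0}L(M,s)$.

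The main obstacles are threefold. First, establishing analytic continuation and functional equation for $L(M,s)$, which amounts to some form of automorphy/modularity for $M$ and is only known in very special situations (e.g.\ when $X$ is uniformized by a Shimura curve, via the work of Harris--Kudla and Yuan--Zhang--Zhang). Second, proving the Gross--Kudla identity for a curve of arbitrary genus over an arbitrary global field: the archimedean local factors alone, matching Proposition~2.5.3 with special values of archimedean orbital integrals, are already subtle. Third, and by far the hardest, is the upper bound $\rank\Ch(M)\le\ord_{s=0}L(M,s)$; this is the arithmetic analogue of Bloch--Kato and requires controlling Selmer groups attached to the Galois representation $H^3_{\mathrm{\acute et}}(Y)(2)$-piece cut out by $M$, which is currently out of reach even in the BSD setting.

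Accordingly, the realistic scope of the proposal is the lower-bound direction in the function-field case of positive characteristic, where $L(M,s)$ is a polynomial in $q^{-s}$ and Tate's conjecture is available: there the combination of Theorem~1.3.4, the sign computation of Theorem~1.5.1, and a Gross--Kudla formula already yields Theorem~1.5.2, and one expects that pushing this strategy through gives Conjecture~1.5.3 unconditionally in that setting. The number-field case, and the upper bound in general, I would leave as the genuinely hard open problem that this paper is designed to provide new tools for rather than to resolve.
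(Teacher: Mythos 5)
The statement you were asked about is Conjecture 5.1.1, which the paper states but does \emph{not} prove --- it is the Beilinson--Bloch conjecture for $Y=X_1\times X_2\times X_3$, and no proof is available. You correctly recognize this and are honest that what you offer is a roadmap, not a proof; that assessment is accurate.

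Your first step --- decomposing both $\Ch^2(Y)^0$ and $H^3(Y)$ under the correspondences generated by projections, base-point embeddings, and (when $X_1=X_2=X_3$) the action of $\CS_3$, thereby splitting off the $\Pic^0(X_i)$ pieces governed by BSD and isolating the motive $M=\ker(\wedge^3 H^1(X)(2)\to H^1(X)(1))$ --- is exactly the paper's own reduction in \S5.1 (Lemma~5.1.2 through Lemma~5.1.5). The remainder of your proposal, positing an arithmetic Gross--Kudla / GGP-type formula $\pair{\Delta_\xi,\Delta_\xi}=c\cdot L'(M,0)$ and then plugging in Theorem~1.3.1, goes beyond what the paper attempts: the paper's actual contribution toward Conjecture~5.1.1 consists of the unconditional height formula (Theorem~1.3.1), the root-number computation (Theorem~1.5.1), the Northcott statement (Theorem~1.3.5), and, in the function-field case, the lower bound $\ord_{s=0}L(M,s)\ge 2$ for non-isotrivial good-reduction families (Theorem~1.5.2), obtained by combining the non-vanishing of $\pair{\Delta_\xi,\Delta_\xi}$ from Theorem~1.3.4 with the inequality $\rank\Ch(M)\le\ord_{s=0}L(M,s)$ coming from Tate's conjecture and the sign of the functional equation. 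Your identification of the function-field lower bound as the only currently realistic piece matches the paper's scope precisely. The one caution I would add is that your proposed Gross--Kudla identity, while plausible in the Shimura-curve setting you cite, has no counterpart here for an arbitrary curve $X$; the paper deliberately stops at the purely geometric/arithmetic-intersection identity of Theorem~1.3.1 and does not assert any link to $L'(M,0)$, which would require automorphy of $M$. So your roadmap is consistent with the paper but strictly more speculative than it.
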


By K\"unneth formula, we have a decomposition:
\begin{equation}
H^3(Y)=H^1(X_1)\otimes H^1(X_2)\otimes H^1(X_3)\oplus
\oplus_iH^1(X_i)(-1)^{\oplus 2}.\end{equation} Thus, the right hand
side is the product of $L$-series corresponding to the
decomposition. We would like to decompose the group $\Ch ^2(Y)^0$
into a sum of subgroups and formulate a conjecture for these
subgroups. For this, we need only find correspondence decomposition
of the identity correspondence which gives decomposition. In the
following we want to describe the group $\Ch ^2(Y)^0$ in terms of
projections and embeddings.

\begin{lemma} Let $\Ch^2(Y)_0$ be the subgroup of elements with trivial
projection onto $X_i\times X_j$ and $\Ch^1(X_i)^0$  be the group of
zero cycles on $X_i$ of degree $0$. Then
$$\Ch^2(Y)^0=\Ch^2(Y)_0\oplus \oplus _{i}(\Ch^1(X_i)^0)^{\oplus 2}.$$
Moreover, this decomposition is compatible with K\"unneth
decomposition in the sense that they are given by same
correspondences on $Y$.
\end{lemma}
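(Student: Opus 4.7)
The plan is to realise both decompositions via idempotent correspondences on $Y \times Y$, which will make the K\"unneth compatibility transparent. For each curve $X_i$ with base point $e_i$, I would introduce the standard Chow--K\"unneth projectors
\[
\pi_0^i = \{e_i\} \times X_i, \qquad \pi_2^i = X_i \times \{e_i\}, \qquad \pi_1^i = \Delta_{X_i} - \pi_0^i - \pi_2^i
\]
in $\Ch^1(X_i \times X_i)$; a direct verification shows these are mutually orthogonal idempotents summing to $\Delta_{X_i}$, and that they project $H^*(X_i)$ onto $H^0, H^2, H^1$ respectively. Their tensor products $\pi_{abc} := \pi_a^1 \otimes \pi_b^2 \otimes \pi_c^3$ on $Y \times Y$ then furnish $27$ orthogonal idempotents summing to $\Delta_Y$ and realising the full K\"unneth decomposition of $H^*(Y)$.

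In parallel, I would build the decomposition of $\Ch^2(Y)^0$ explicitly. For each ordered pair $(i,j)$ with $i \neq j$ and third index $k$, define the embedding
\[
\iota_{ij} : \Ch^1(X_i)^0 \hookrightarrow \Ch^2(Y)^0, \qquad \alpha \mapsto \alpha \times X_j \times \{e_k\},
\]
where the three factors are placed in positions $i,j,k$; the image lies in $\Ch^2(Y)^0$ since $\deg\alpha = 0$. The central computation is the pushforward table
\[
(p_{mn})_*\,\iota_{ij}(\alpha) = \begin{cases} p_i^*\alpha, & \{m,n\} = \{i,j\}, \\ 0, & \text{otherwise,}\end{cases}
\]
derived by noting that $\iota_{ij}(\alpha)$ maps birationally onto $\alpha \times X_j$ under $p_{ij}$, while the remaining pushforwards vanish either by a drop of dimension or because $\deg\alpha = 0$. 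Combined with the Albanese decomposition $\Pic^0(X_m \times X_n) = p_m^*\Pic^0(X_m) \oplus p_n^*\Pic^0(X_n)$ for a product of two curves (already used in Lemma~2.2.2), this identifies the kernel of $\oplus_{m<n}(p_{mn})_*$ with $\Ch^2(Y)_0$ and exhibits $\bigoplus_{i \neq j}\iota_{ij}(\Ch^1(X_i)^0)$ as a direct complement. Surjectivity onto $\Ch^2(Y)^0$ is automatic: for any $z$, subtracting the $\iota_{ij}(\alpha_{ij})$ read off from the Albanese splittings of $(p_{mn})_*z$ lands in $\Ch^2(Y)_0$.

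Finally, for the K\"unneth compatibility I would match each summand with the image of a $\pi_{abc}$ acting on $\Ch^2(Y)^0$. The piece $\Ch^2(Y)_0$ coincides with $(\pi_{111})_*\Ch^2(Y)^0$, while the image of $\iota_{ij}$ coincides with $(\pi_{abc})_*\Ch^2(Y)^0$ for the unique permutation of $(0,1,2)$ placing $1$ at position $i$, $0$ at position $j$, and $2$ at position $k$. Since the very same correspondences $\pi_{abc}$ produce the K\"unneth decomposition of $H^3(Y)$, both decompositions are induced by the same correspondences on $Y$, as asserted. The main technical obstacle will be the dimension-chasing in the pushforward table---verifying cleanly the two types of vanishing and checking that the non-vanishing case lines up with the correct Albanese component---together with the matching between the geometric $\iota_{ij}$'s and the correspondence-theoretic images of the $\pi_{abc}$'s.
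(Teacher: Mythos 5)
Your proof is correct and follows essentially the same route as the paper's, which likewise builds the decomposition from pushforwards and pullbacks along the projections $p_{ij}$, $p_k$ and the base-point embeddings $\iota_{ij}$, $\iota_k$, and then invokes the Picard/Albanese splitting of $\Pic^0(X_i\times X_j)$. The one organizational difference is that you lead with the Chow--K\"unneth idempotents $\pi_a^i$ and their tensor products $\pi_{abc}$ and then read off the direct-sum pieces as images of these projectors, whereas the paper writes down the inclusion--exclusion expressions $Z^0 = Z-\sum Z_{i,j}+\sum Z_k$ and $Z_{i,j}^0 = Z_{i,j}-Z_i-Z_j$ directly and only afterward observes that these are realized by correspondences; your version makes the ``same correspondences as the K\"unneth decomposition'' claim more transparent, at the cost of slightly more set-up. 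The pushforward table you highlight as the ``main technical obstacle'' is indeed where the content lives, but it is exactly the same computation the paper relies on (in particular, $p_{jk*}\iota_{ij}(\alpha)=0$ because $\deg\alpha=0$, and $p_{ik*}\iota_{ij}(\alpha)=0$ by the dimension drop), and your matching of $\iota_{ij}$ with the relevant permutation of $\pi_{0},\pi_{1},\pi_{2}$ is the correct way to make the compatibility assertion precise. So this is not a different argument, just a cleaner bookkeeping of the one the paper sketches.
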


\begin{proof}
Let $i, j, k$ be a reordering of $1,2,3$. For any factor $X_k$,  we
have an injection $\iota _k: X_k\lra Y$ by putting $e_i, e_j$ for
other factors; similarly we have an embedding $\iota _{i, j}:
X_i\times X_j\lra Y$ be the inclusion by putting component $e_k$ on
$X_k$. Then we have an inclusion $\Ch^0(X_k)\lra \Ch^2(Y)$ and $\Ch
^1(X_i\times X_j)\lra \Ch ^2(Y)$ by push-push forward. Let $\pi _k$
and $\pi _{i, j}$ denote the projections to $X_k$ and $X_{i, j}$.

For any cycle $Z$ on $Y$, let $Z_{i, j}$ and $Z_k$ denote push
forwards of $Z$ onto $X_i\times X_j$ and $X_k$ by $\pi _{i, j}$ and
$\pi_k$ respectively.  We define the following combinations:
$$Z^0=Z-\sum _{i,j} Z_{i, j}+\sum _kZ_k,$$
$$Z_{i, j}^0=Z_{i, j}-Z_i-Z_j.$$
Then we have a decomposition \begin{equation} Z=Z^0+\sum_{i, j}Z_{i,
j}^0+\sum _kZ_k.\end{equation}
 It can be proved
  that
 $Z^0$ has the trivial projection to $X_i\times
X_j$, and $Z_{i, j}$ has trivial projection on $X_i$ and $X_j$.
These imply that  $Z^0$ is cohomologically trivial, and $Z_{i, j}^0$
and $Z_k$ have cohomological classes in the following groups
respectively:
$$H^1(X_i)\otimes H^1(X_j)\otimes H^2(X_k),\qquad
H^2(X_i)\otimes H^2(X_j)\otimes H^0(X_k).$$

Assume now that $Z$ is homologically trivial. Then $Z_k=0$ and the
class $Z_{i, j}$ are cohomologically trivial with decomposition
$$Z_{i, j}=A\times X_j\times \{e_k\}+X_i\times B\times \{e_k\}$$
where $A$ and $B$ are divisors on $X_i$ and $X_j$ with degree $0$
respectively.
\end{proof}

The group $\Ch ^1(X_i)^0$ is nothing other than the Mordell--Weil
group of $\Jac (X_i)$. The Birch and Swinnerton--Dyer conjecture
gives
$$\ord _{s=1}L(H^1(X_i), s)=\rank \Ch ^1(X_i)^0.$$
Thus conjecture 5.1.1 is equivalent to the following:

\begin{conjecture} The rank of  $\Ch ^2(Y)_0$ is finite and is equal
to
$$\ord _{s=2}L(H^1(X_1)\otimes H^1(X_2)\otimes H^1(X_3), s).$$
\end{conjecture}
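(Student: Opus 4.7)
The plan is to derive Conjecture 5.1.3 from Conjecture 5.1.1 by combining three ingredients: the decomposition of $\Ch^2(Y)^0$ in Lemma 5.1.2, the K\"unneth decomposition of $H^3(Y)$ in (5.1.1), and the Birch--Swinnerton-Dyer conjecture for each Jacobian $\Jac(X_i)$. The key compatibility I would exploit is the last clause of Lemma 5.1.2: the decomposition on the cycle side is induced by the \emph{same} projector correspondences as the K\"unneth decomposition on the cohomology side. Consequently the L-function factorizes along the same direct sum into which $\Ch^2(Y)^0$ splits, which is what makes an equivalence of conjectures possible at all.

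First I would factor the L-series along K\"unneth. Writing $T = H^1(X_1)\otimes H^1(X_2)\otimes H^1(X_3)$, (5.1.1) gives
$$L(H^3(Y), s) = L(T, s)\cdot \prod_{i=1}^{3} L(H^1(X_i)(-1), s)^2.$$
Using the Tate-twist identity $L(M(-1), s) = L(M, s-1)$, evaluating at $s=2$ yields
$$\ord_{s=2}L(H^3(Y), s) = \ord_{s=2}L(T, s) + 2\sum_{i=1}^{3}\ord_{s=1}L(H^1(X_i), s).$$
By Lemma 5.1.2 the ranks split in parallel:
$$\rank\Ch^2(Y)^0 = \rank\Ch^2(Y)_0 + 2\sum_{i=1}^{3}\rank\Ch^1(X_i)^0.$$
The BSD conjecture identifies each Jacobian summand on the two sides: $\rank\Ch^1(X_i)^0 = \ord_{s=1}L(H^1(X_i), s)$. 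Subtracting $2\sum_i\ord_{s=1}L(H^1(X_i), s)$ from the corresponding identity predicted by Conjecture 5.1.1, one recovers exactly Conjecture 5.1.3, and conversely Conjecture 5.1.3 together with BSD for the $\Jac(X_i)$ recovers Conjecture 5.1.1. This establishes the equivalence asserted in the text preceding Conjecture 5.1.3.

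The main obstacle is that this is only an \emph{equivalence}, not an unconditional proof: both Conjecture 5.1.1 and BSD are deeply open. A direct attack on Conjecture 5.1.3 would face two essentially orthogonal difficulties. On the arithmetic side, finiteness of $\rank\Ch^2(Y)_0$ is unknown for any non-trivial $Y$; there is no Mordell--Weil-type theorem for codimension-two cycles, and even the conjectural height pairing $\pair{\cdot,\cdot}$ constructed in \S2 is not known to take values in a discrete subgroup. On the analytic side, $L(T, s)$ is the triple product L-function, whose holomorphic continuation and functional equation are known only in restricted ranges (via Garrett and Piatetski-Shapiro--Rallis integral representations), and relating its order of vanishing to cycle classes is the whole content of Beilinson--Bloch. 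In the function field case, however, the analytic side is a theorem, and the inequality $\rank\Ch^2(Y)_0 \le \ord_{s=2}L(T,s)$ becomes available from Tate's conjecture, as used in Theorem 1.5.3.
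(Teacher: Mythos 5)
Your derivation is exactly the route the paper takes: it uses Lemma 5.1.2 to split $\Ch^2(Y)^0$, the K\"unneth factorization (5.1.1) to split $L(H^3(Y),s)$, and BSD for the three Jacobians to cancel the matching factors, yielding the stated equivalence with Conjecture 5.1.1. Your extra remarks on why a direct proof is out of reach are accurate but go beyond what the paper records; the core argument matches.
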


In the following we try to discuss the conjecture in the  spacial
case  $X_1=X_2=X_k=X$, where $X$ is a general curve of genus $g\ge
2$. In this case, we have more correspondences to decompose the
motive $H^1(X)^{\otimes 3}$. We will decide a submotive whose Chow
group containing the modified diagonal.

First of all, we notice that the modified diagonal is invariant
under the symmetric group $S_3$. Thus it corresponds to the
component of $H^1(X)^{\otimes 3}$ under the action of $S_3$. Notice
that the action of $S_3$ on this group is given by the following:
for $\alpha, \beta, \gamma \in H^1(X)$ then it defines an element
$\alpha (x)\wedge \beta (y)\wedge (z)$ in $H^1(X)^{\otimes 3}$. The
group $S_3$ acts by the permutations of $x, y, z$. Thus the
invariant under $S_3$ is exactly the subspace $\wedge ^3H^1(X)$ of
$H^1(X)^{\otimes 3}$. Thus the Beilinson-Bloch conjecture gives
$$\ord _{s=0}L(s, \wedge ^3H^1(X)(2))= \dim \Ch ^2(Y)_0^{S_3}.$$
Here $\Ch ^2(Y)_0^{S_3}$ is the group of cohomologically trivial
cycles with trivial projection under $\pi _{i, j}$ and invariant
under permutation. Both sides are nontrivial only if $g\ge 2$.

Using the alternative paring on $H^1(X)$, we can define a surjective
morphism
$$\wedge ^3H^1(X)(2)\lra H^1(X)(1),
\qquad a\wedge b\wedge c\mapsto a(b\cup c)+b(c\cup a)+c(a\cup b).$$
This morphism is defined by a correspondence on $X^3$ as follows:
$$X^2\lra (X^3)\times (X):\qquad (x, y)\mapsto (x, x, y)\times
(y).$$ Thus the kernel  $M$ is a motive fitted in a  splitting:
$$\wedge ^3H^1(X)(2)=M\oplus H^1(X)(1).$$

The corresponding decomposition is given by
$$\Ch^2(Y)_0^{S_3}=\Ch(M)\oplus \Pic ^0(X)(K)$$
where   $\Ch (M)$ is a subgroup of $\Ch ^2(Y^3)^0$ consists of
elements $z$ satisfying the following conditions:
\begin{enumerate}
\item $z$ is symmetric with respect to permutations on $X^3$;
\item the pushforward $p_{12*}z=0$ with respect to the projection
$$p_{12}:\quad X^3\lra X^2, \qquad (x, y, z)\mapsto (x, y).$$
\item let $i: X^2\lra X^3$ be the embedding defined by $(x, y)\lra
(x, x, y)$ and $p_2: X^2\lra X$ be the second projection. Then
$$p_{2*}i^*z=0.$$
\end{enumerate}

For any $\eta \in \Jac (X)(K)$, the corresponding element in
$\Ch^2(Y)_0^{S_3}$ is given  by
$$\alpha (\eta )=\sum _{i, j, k}\Delta _{i, j}^0\times \eta _k$$
where $\eta _k\in X_k$ is corresponding to $\eta$.

The Beilinson-Bloch conjecture gives the following
\begin{conjecture} The group $\Ch(M)$ has finite rank and
$$\ord _{s=0}L(s, M)=\dim \Ch (M).$$
\end{conjecture}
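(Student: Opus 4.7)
The plan is to reduce Conjecture 5.1.3 to the full Beilinson--Bloch Conjecture 5.1.1 applied to $Y=X^3$, using the motivic decompositions already set up in \S5.1. First I would verify that the splitting
$$\Ch^2(Y)_0^{S_3}=\Ch(M)\oplus \Pic^0(X)(k)$$
is cut out by an idempotent correspondence $\pi_M\in \Ch^3(X^3\times X^3)_\BQ$, namely the $S_3$-symmetrizer composed with the complementary projector to the correspondence $(x,y)\mapsto (x,x,y)\times(y)$ used to define the map $\wedge^3 H^1(X)(2)\to H^1(X)(1)$. The same correspondence acts on $\ell$-adic cohomology and on $L$-functions, so I would then check that it realizes precisely the motivic splitting $\wedge^3 H^1(X)(2)=M\oplus H^1(X)(1)$. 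Because the Gross--Schoen cycle $\Delta_\xi$ was shown in \S5.1 to satisfy the three defining conditions of $\Ch(M)$, one has $\pi_M\Delta_\xi=\Delta_\xi$, confirming that the refinement is the correct piece.

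Given this, the factorization
$$L(s,\wedge^3 H^1(X)(2))=L(s,M)\cdot L(s,H^1(X)(1))$$
is automatic (up to the standard conjectures on holomorphic continuation of $L(s,M)$), and Lemma~5.1.2 combined with Conjecture~5.1.2 gives
$$\rank \Ch^2(Y)_0^{S_3}=\ord_{s=0}L(s,\wedge^3 H^1(X)(2)).$$
The Mordell--Weil summand matches the factor $L(s,H^1(X)(1))$ by BSD, so subtracting this contribution from both sides yields the desired equality for $\Ch(M)$ and $L(s,M)$. Finiteness of $\rank\Ch(M)$ follows from finiteness of $\rank\Ch^2(Y)^0$ via the decomposition of Lemma~5.1.2.

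The hard part, of course, is that Conjecture 5.1.1 itself is open. In the number field case there is essentially no unconditional leverage; one cannot even prove that $L(s,M)$ has a holomorphic continuation past the region of absolute convergence, let alone compute its order of vanishing at $s=0$. In the function field case over a finite field, however, the $L$-function is a polynomial in $q^{-s}$ (by Grothendieck's trace formula and Deligne's work), so both sides of the conjectured equality are well defined, and by the work of Tate one has
$$\rank \Ch(M)\le \ord_{s=0}L(s,M),$$
which is invoked as equation (1.5.2). The remaining task is the reverse inequality, equivalent in this setting to Tate's conjecture for the self-product $Y=X^3$ on the $\pi_M$-eigencomponent. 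This is the essential obstruction: the best that can be said unconditionally is Theorem 1.5.3, whose proof produces $\Delta_\xi$ as a single nontrivial element in $\Ch(M)$ when $X$ has good reduction everywhere and is non-isotrivial, thereby forcing $\ord_{s=0}L(s,M)\ge 2$ once combined with the computation of the sign of the functional equation from Theorem 1.5.2. A full proof of Conjecture 5.1.3 would therefore have to await, at minimum, the Tate conjecture for codimension-$2$ cycles on threefolds of the form $X^3$ over global function fields.
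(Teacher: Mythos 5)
The statement you are addressing is a \emph{conjecture}, not a theorem; the paper offers no proof and does not claim one. What the paper does provide, in \S5.1, is exactly the motivic bookkeeping you reproduce: the K\"unneth decomposition of $H^3(Y)$ together with Lemma 5.1.2 splits $\Ch^2(Y)^0$ into $\Ch^2(Y)_0$ plus Mordell--Weil pieces; passing to $S_3$-invariants and then splitting off $H^1(X)(1)$ via the correspondence $(x,y)\mapsto(x,x,y)\times(y)$ produces $M$ and the decomposition $\Ch^2(Y)_0^{S_3}=\Ch(M)\oplus\Pic^0(X)(k)$; the conjecture is then the $M$-isotypic refinement of Beilinson--Bloch. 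Your proposal traces this same chain and, correctly, stops where the paper stops — at the open Conjecture 5.1.1 (and BSD for the complementary factor), with the function-field inequality $\rank\Ch(M)\le\ord_{s=0}L(s,M)$ as the only unconditional piece. This is the right reading; the only quibble is that the conjecture in question is numbered 5.1.4 in the paper's scheme (with the tensor-cube L-series statement being 5.1.3), so your internal cross-references are off by one, but the mathematics is sound.
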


Let us check if the modified diagonal is in the above group:

\begin{lemma}
$$\Delta _\xi\in \Ch (M).$$
\end{lemma}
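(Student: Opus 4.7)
The plan is to check directly that $\Delta_\xi$ satisfies each of the three conditions defining $\Ch(M)$. Conditions (1) and (2)—namely $S_3$-symmetry and $p_{12*}\Delta_\xi = 0$—are essentially bookkeeping. Symmetry is clear since the seven cycles entering the Gross--Schoen sum fall into three $S_3$-orbits, each carrying a uniform coefficient. For condition (2), a term-by-term computation using the projection formula yields $p_{12*}\Delta_{123} = p_{12*}\Delta_{12} = \Delta_{X^2}$, $p_{12*}\Delta_{23} = p_{12*}\Delta_2 = e \times X$, $p_{12*}\Delta_{31} = p_{12*}\Delta_1 = X \times e$, and $p_{12*}\Delta_3 = 0$ (since the image of $\Delta_3$ under $p_{12}$ is zero-dimensional). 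The signed sum vanishes, using $\sum a_i = 1$.

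Condition (3), $p_{2*}i^*\Delta_\xi = 0$, is the substantive part. Set $D = i(X^2) = \{(a,b,c) : a = b\} \subset X^3$, identified with $X^2$ via $(a,a,c) \mapsto (a,c)$. Since $D = p_{12}^{-1}(\Delta)$ with $\Delta \subset X^2$ the diagonal, the normal bundle is $N_{D/X^3} \cong p_1^* T_X$, and $c_1(N_{D/X^3}) = -p_1^* K_X$ on $D \cong X^2$. I would compute $p_{2*}i^*\Delta_T$ for each $T$. The four cycles $\Delta_{23}, \Delta_{31}, \Delta_1, \Delta_2$ meet $D$ transversely in isolated points of the form $(p_i,p_i,\cdot)$, and each contributes $+e$ to $p_{2*}i^*$. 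The remaining three cycles $\Delta_{123}, \Delta_{12}, \Delta_3$ lie inside $D$, so the excess intersection formula $i^*[V] = c_1(N_{D/X^3}|_V) \cap [V]$ applies: for $\Delta_{123}$, restricting to the diagonal of $X^2$ gives $p_{2*}i^*\Delta_{123} = -K_X$; for $\Delta_{12}$, restricting to $X \times \{p_i\}$ gives $p_{2*}i^*\Delta_{12} = -(2g-2)e$; and for $\Delta_3$, the normal bundle restricts to a trivial line bundle on each $\{p_i\} \times X$, so $p_{2*}i^*\Delta_3 = 0$. Summing with the signs in the Gross--Schoen decomposition gives
$$p_{2*}i^*\Delta_e = -K_X + (2g-2)e,$$
which vanishes in $\Pic(X)\otimes \BQ$ exactly when $e = \xi$.

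The principal obstacle is the excess intersection computation for the three cycles contained in $D$, and in particular the identification $N_{D/X^3} \cong p_1^*T_X$ together with the evaluation of its first Chern class on the three distinct loci (diagonal, $X \times \{p_i\}$, and $\{p_i\} \times X$), noting that the last of these is trivial while the first two contribute $-K_X$. The final expression $-K_X + (2g-2)e$ also explains the distinguished role of $\xi$ in Gross--Schoen's construction: condition (3) singles out precisely the class whose $(2g-2)$-multiple is canonical, which is the defining property of $\xi$.
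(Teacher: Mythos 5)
Your argument is correct and takes the same route as the paper: compute $i^*\Delta_e$ (by excess intersection for the pieces inside $D$, transversal intersection for the rest) and push forward, arriving at $p_{2*}i^*\Delta_e = -K_X + (2g-2)e$; the paper simply states the resulting formula $i^*\Delta_\xi = (2-2g)\xi_\Delta - (2-2g)\xi\times\xi - 2\xi_\Delta + 2\xi\times\xi$ without derivation, and your term-by-term computation reproduces it. One small imprecision: $\Delta_3$ is not actually contained in $D$ when $e=\sum a_ip_i$ has several components, since the curves $\{(p_i,p_j,x)\}$ with $p_i\ne p_j$ are disjoint from $D$; but as those pieces contribute zero to $i^*$ (empty intersection) and the pieces $\{(p_i,p_i,x)\}\subset D$ also contribute zero (trivial normal bundle, as you note), the conclusion is unaffected.
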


\begin{proof} Indeed, it is easy to show that
$$i^*\Delta _\xi =(2-2g(X))\xi _\Delta
-(2-2g)\xi\times \xi-2\xi _\Delta+2\xi \times \xi .$$ It is clear
that
$$p_{2*}i^*\Delta _\xi =(2-2g)\xi-(2-2g)\xi-2\xi+2\xi=0.$$
\end{proof}

 \subsection{L-series and  root numbers}
 In this section we want to compute $L$-series and the epsilon factor
 for $L(s, M)$ when the curve has semi-stable reduction. Our
 reference for definitions is Deligne \cite{deligne2}.
 For convenience, we will work on homology motive
 $H_1(X)=H^1(X)(1)$.
 Recall that  $M$ is the kernel of a canonical surjective morphism
 motives:
 $$\wedge ^3H_1(X)(-1)\lra H_1(X),$$
 It follows that the motive $M$ is of weight
 $-1$ with a non-degenerate alternative pairing
 $$M\otimes M\lra \BQ(1).$$
 It is conjectured that the $L$-series $L(s, M)$ should be entire
 and satisfies a functional equation
 $$L(s, M)=\pm f(M)^{-s}L(s, M)$$
 where $f(M)\ge 1$ is the conductor of $M$ (an integer divisible
 only by finite places ramify in $M$).

\subsubsection*{Local L-functors}
 By definition, the L-series is defined by an Euler product:
 $$L(s, M)=\prod _v L_v(s, M)$$
 where $v$ runs through the set of places of $K$, and $L_v(s, M)$
 is a  local $L$-factor of $M$ at $v$.  For $v$ an archimedean place,
 the local $L$-factor is determined by the Hodge weights. Notice
 that we have a decomposition
 $$H_1(X, \BC)=H^{-1, 0}(X, \BC)\oplus H^{0,-1}(X, \BC)$$
 of Hodge structure into two spaces of dimension $g$, and that $\BC(-1)$ has Hodge
 weight $(1, 1)$. As $M$ is the kernel of a surjective morphism of Hodge structure
 $$\wedge ^3H_1(X, \BC)(-1)\lra H_1(X, \BC),$$
 it follows
 that $M$ has Hodge numbers given by
 $$h^{1,-2}=h^{-2, 1}=
 \frac {g(g-1)(g-2)}6, \quad h^{0,-1}=h^{-1,0}
 =\frac {g(g-2)(g+1)}2.$$
 The $L$-factor then is given by
 \begin{equation}
 L_v(s, M)=\Gamma _\BC(s+2)^{h^{-2, 1}}\Gamma _\BC(s+1)^{h^{-1,
 0}}, \qquad \Gamma _\BC=2\cdot (2\pi )^{-s}\Gamma (s).
 \end{equation}

 For $v$ a finite place with
 inertia group $I_v$, residue field $\BF _{q_v}$,
 and geometric Frobenius $F_v$, the L-series is
 given by
 \begin{equation}
 L_v(s, M)=\det (1-q_v^{-s}F_v; M_\ell^{I_v})^{-1}
 \end{equation}
 where $M_\ell$ is the $\ell$-adic realization of $M$ at a prime
 $\ell\nmid q_v$.
 For $v$ unramified, the L-series can be computed simply by Weil
 numbers. For $v$ a ramified place, then $\Jac (X)$ has a
 semi-abelian reduction: the connected component $J$ of the
  Neron model of $\Jac (X)$ is an extension of an abelian variety
  $A$ by a torus
  $$0\lra T\lra J\lra A\lra 0.$$
  Here $A$ is the product of Jacobians of the irreducible components
  in the semistable reduction of $X$ and $T$ is a torus determined
  by homology group in the reduction graph of $X$. We have
  a filtration of $V:=H_1(X_{\bar K}, \BQ_\ell)$:
  $$H_1(\bar T, \BQ_\ell)\subset H_1(\bar J, \BQ_\ell)\subset
  H_1(\bar X, \BQ_\ell).$$
  This filtration is compatible with action of the decomposition
  group $D_v$. By Serre--Tate, we have an identity:
  $$H_1(\bar J, \BQ_\ell)=H_1(\bar X, \BQ_\ell)^{I_v},$$
  and $H_1(\bar T, \BQ_\ell)$ is the orthogonal complement of
  $H_1(\bar J, \BQ_\ell)$ with respect to the Weil pairing on
  $H_1(\bar X, \BQ_\ell)$. In particular, the action of $F_v$ on
  these space  are
  semiample with eigenvalues of absolute value $q^{-1}$, $q^{1/2}$,
  and $1$. Thus $F_v$ on $H_1(\bar X, \BQ_\ell)$ is semi-simple.

  By Grothendieck, the action of $I_v$ on $H_1(\bar X, \BQ_\ell)$ is
  given by
  $$\sigma x=x+t_\ell (\sigma) Nx, \qquad x\in H_1(\bar X, \BQ_\ell),
  \quad N\in \End
  (H_1(\bar X, \BQ_\ell)),$$
  where $t_\ell: I_v\lra \BQ_\ell$ is a nonzero homomorphism.
  We may decompose $V:=H^1(\bar X, \BQ_\ell)$ into an orthogonal sum
  of two dimensional spaces $V_i$ ($i=1, \cdots g$)
  invariant under $D_v$.
  The $\wedge^3 H_1(\bar X, \BQ_\ell)$ is then a direct sum of tensors
$$\wedge ^3H_1(\bar X, \BQ_\ell)
=\oplus_{n_1+n_2\cdots =3} \wedge ^{n_1}V_1\otimes \wedge
^{n_2}V_2\otimes \cdots.$$ The invariants of $I_v$ must have
decomposition:
$$\wedge^3 H_1(\bar X, \BQ_\ell)^{I_v}
=\oplus_{n_0+n_1+n_2\cdots =3} (\wedge ^{n_0}V_0)^{I_v}\otimes
(\wedge ^{n_1}V_1)^{I_v}\otimes \cdots.$$ Thus $M(1)$ has the
following orthogonal decomposition of $D_v$-modules:
  \begin{equation}
  M(1)=\sum _{i<j<k}V_i\otimes V_j\otimes V_k+\sum _i
V_i\otimes (\sum _{j\ne i}(\wedge ^2V_i))^0\end{equation}
 where superscript $0$
means kernel in the Weil pairing. The space $M^{I_v}$ has a
decomposition \begin{equation} M^{I_v}(1)=\sum
_{i<j<k}V_i^{I_v}\otimes V_j^{I_v}\otimes V_k^{I_v}+\sum _i
V_i^{I_v}\otimes (\sum _{j\ne i}\wedge ^2V_i)^0 .\end{equation}  In
this way, we have a precise description of the Galois action on $M$
and therefore a formula for $L$-factor.

\subsubsection*{Local root numbers}

  In the following we want to compute the root numbers of the
  functional equation. Recall that the root number $\epsilon$ is the
product of local root numbers $\epsilon_v$.

\begin{lemma}
For $v$ complex we have
$$\epsilon _v =i^{6h^{-2,1}+2h^{-1, 0}}=\begin{cases}
1,& g\equiv 0, 1 \pmod 4\\
-1,& g\equiv 2,3 \pmod 4\end{cases} $$
 For $v$ a real place
$$\epsilon _v=i^{4h^{-2,1}+2h^{-1, 0}}=\begin{cases}
1, & g\not\equiv 1 \pmod 4\\
-1, &g\equiv 1\pmod 4.\end{cases}
$$
\end{lemma}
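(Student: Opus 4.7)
The plan is to apply Deligne's explicit formulas for the archimedean local $\epsilon$-factors of a Hodge structure to the motive $M$, using the Hodge numbers recorded just above the lemma. First I would set up notation: $M$ is pure of weight $-1$ with Hodge numbers concentrated in the off-diagonal types $(-2,1),(-1,0),(0,-1),(1,-2)$, satisfying $h^{-2,1}=h^{1,-2}$ and $h^{-1,0}=h^{0,-1}$. The absence of any $(p,p)$-component is what makes the archimedean computation purely combinatorial: one never has to unpack the action of complex conjugation on a real $(p,p)$-piece, which is the only place additional arithmetic data would enter.

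For $v$ complex, the Langlands parameter of $M_v$ is a direct sum of characters of $W_v=\BC^*$ indexed by Hodge components; the $(p,q)$-piece gives the character $z\mapsto z^{-p}\bar z^{-q}$. By Deligne's local constants formula, such a character contributes a factor of $i^{|q-p|}$ per basis vector to $\epsilon_v$. Summing over all Hodge types of $M$ and grouping conjugate pairs gives the exponent
$$\sum_{p<q}2(q-p)h^{p,q}=6h^{-2,1}+2h^{-1,0},$$
proving the first displayed equality. For $v$ real, the Langlands parameter is a representation of $W_\BR=\BC^*\sqcup j\BC^*$; since $M$ has no $(p,p)$-component, each conjugate pair $\{V^{p,q},V^{q,p}\}$ with $p<q$ assembles into a two-dimensional representation of $W_\BR$ induced from the character of $\BC^*$ above, whose $\epsilon$-factor is $i^{q-p+1}$. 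Summing gives the exponent $\sum_{p<q}(q-p+1)h^{p,q}=4h^{-2,1}+2h^{-1,0}$.

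To finish, one substitutes the explicit Hodge numbers $h^{-2,1}=g(g-1)(g-2)/6$ and $h^{-1,0}=g(g-2)(g+1)/2$ into each exponent and reduces modulo $4$ to extract the sign, which is an elementary case check on $g\bmod 4$. The main technical care is to verify that Deligne's $\epsilon$-factor convention (additive character, filtration, and $\gamma$-factor normalization) matches the one implicit in the $L_v$-formula $(5.2.1)$, so that the conjectural functional equation actually produces $\epsilon_v$ in the claimed form; this is bookkeeping rather than a conceptual obstacle, and no further geometric input from $X$ is needed beyond the Hodge decomposition of $H_1(X)$.
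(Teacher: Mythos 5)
The paper states this lemma without any proof, so there is no paper argument to compare against; your derivation---reading the archimedean $\epsilon$-factors off the Hodge decomposition via Deligne's formulas---is surely the intended one, and the exponents you obtain, $\sum_{p,q}|q-p|\,h^{p,q}=6h^{-2,1}+2h^{-1,0}$ for $v$ complex and $\sum_{p<q}(q-p+1)\,h^{p,q}=4h^{-2,1}+2h^{-1,0}$ for $v$ real, match the displayed powers of $i$ exactly.

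However, you dismiss the final step as ``an elementary case check on $g\bmod 4$,'' and if you actually carry it out for the complex place it does not produce the stated answer. Substituting $h^{-2,1}=\tfrac{g(g-1)(g-2)}{6}$ and $h^{-1,0}=\tfrac{g(g-2)(g+1)}{2}$ gives
$$6h^{-2,1}+2h^{-1,0}=g(g-1)(g-2)+g(g-2)(g+1)=2g^2(g-2),$$
so $i^{6h^{-2,1}+2h^{-1,0}}=(-1)^{g^2(g-2)}=(-1)^g$, which is $+1$ for $g$ even and $-1$ for $g$ odd. This disagrees with the paper's displayed case formula ($+1$ for $g\equiv 0,1\pmod 4$, $-1$ for $g\equiv 2,3\pmod 4$, i.e.\ $(-1)^{g(g-1)/2}$) whenever $g\equiv 1$ or $2\pmod 4$; e.g.\ for $g=5$ the exponent is $150\equiv 2\pmod 4$, giving $-1$, not $+1$. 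The real-place case is internally consistent: $i^{4h^{-2,1}+2h^{-1,0}}=(-1)^{h^{-1,0}}$, and $h^{-1,0}=g(g-2)(g+1)/2$ is odd precisely when $g\equiv 1\pmod 4$, matching the lemma. So the gap in your write-up is precisely the step you call bookkeeping: the mod-$4$ evaluation is where the argument must be confronted, and doing so reveals that the complex-place display in the lemma (and, note, the inconsistent real/complex labelling between this lemma and Theorem 1.5.3) cannot all be simultaneously correct. You should carry out the arithmetic explicitly and flag the discrepancy rather than assert the stated signs follow.
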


\begin{lemma}
Let $v$ be a non-archimedean place. Let $\tau=\pm 1$ be the product
of $\alpha _i$. Then the root number is given by
\begin{align*}
\epsilon _v:=&(-1)^{e(e-1)(e-2)/6}\tau
^{(e-1)(e-2)/2}(-1)^{e(g-2)}\tau ^{(g-2)}\\
=&(-1)^{e(e-1)(e-2)/6+ge}\tau ^{(e-1)(e-2)/2+g}
\end{align*}
Here $e$ is the rank of the first homology group of the reduction
graph of $X$ at $v$, and $\tau$ is the determinant of $F_v$ acts on
the character group of ($e$-dimensional ) toric part of the
reduction of $\Jac (X)$.
\end{lemma}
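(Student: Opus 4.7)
The plan is to compute $\epsilon_v(M)$ by using the orthogonal decomposition (5.2.3) of $M(1)$ into Weil--Deligne summands and multiplicativity of local root numbers on direct sums. After reordering, let $V_1, \ldots, V_e$ be the two-dimensional toric summands of $V = H_1(\bar X, \BQ_\ell)$ (non-trivial monodromy $N$; Frobenius eigenvalues $\alpha_i$ and $q_v^{-1}\alpha_i$ with $\alpha_i \in \{\pm 1\}$) and $V_{e+1}, \ldots, V_g$ be the unramified good summands, so that $\tau = \prod_{i=1}^e \alpha_i$ is the determinant of Frobenius on the character lattice of the toric part.

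The key inputs are Rohrlich's formula $\epsilon_v(V_i) = -\alpha_i$ for a toric 2-dimensional semistable piece (and $\epsilon_v = 1$ for good pieces), the identity $\epsilon_v(V \otimes \chi) = \chi(\pi_v)^{a(V)} \epsilon_v(V)$ for an unramified twist $\chi$, and multiplicativity over direct sums. Applied to (5.2.3) piece by piece: a triple $V_i \otimes V_j \otimes V_k$ with no toric factor is unramified and contributes $+1$; with one or two toric factors the unramified-twist contributions cancel against the conductor-weight factors (using $\det V_l = \BQ_\ell(1)$) and the piece still contributes $+1$; with all three factors toric it contributes $-\alpha_i \alpha_j \alpha_k$. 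A Type B piece $V_i \otimes (\sum_{j \ne i} \wedge^2 V_j)^0$ decomposes, since each $\wedge^2 V_j \cong \BQ_\ell(1)$ is unramified, as a sum of $g-2$ unramified twists of $V_i$; it contributes $+1$ when $V_i$ is good and $(-\alpha_i)^{g-2} = (-\alpha_i)^g$ when $V_i$ is toric (using $\alpha_i^2 = 1$). Multiplying, the Type A contribution is $\prod_{i<j<k \le e}(-\alpha_i \alpha_j \alpha_k) = (-1)^{\binom{e}{3}} \tau^{\binom{e-1}{2}}$ (each $\alpha_i$ appearing in exactly $\binom{e-1}{2}$ triples), and the Type B contribution is $\prod_{i=1}^e (-\alpha_i)^g = (-1)^{eg} \tau^g$; their product is the claimed formula.

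The main obstacle is the cancellation of unramified-twist contributions in the mixed-profile Type A pieces: the factors $\chi(\pi_v)^{a(V)}$ arising from each unramified twist could in principle produce extra signs, and verifying that they combine trivially (via $\det V_l = \BQ_\ell(1)$ and the self-dual normalization of $\epsilon_v$ at the center of the functional equation) is the essential technical point. It requires no new ideas beyond standard facts from Deligne's theory of local constants, but demands careful bookkeeping.
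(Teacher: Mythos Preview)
Your approach is correct and reaches the same answer, but it is packaged differently from the paper's argument, and the difference is worth noting.

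The paper does not invoke Rohrlich's formula or the unramified-twist identity $\epsilon_v(V\otimes\chi)=\chi(\pi_v)^{a(V)}\epsilon_v(V)$. Instead it uses the explicit semistable formula
\[
\epsilon_v \;=\; \frac{\det\bigl(-F_v\,\big|\,M_\ell\bigr)}{\det\bigl(-F_v\,\big|\,M_\ell^{I_v}\bigr)}
\]
and computes numerator and denominator separately via the decompositions (5.2.3) and (5.2.4). On each summand this is a bare eigenvalue calculation: for the all-toric triple one reads off $-\alpha_i\alpha_j\alpha_k$; for two toric factors one gets $\alpha_i^2\alpha_j^2=1$; for one toric factor $\alpha_i^4=1$; and for the Type~B piece with $i\le e$ one gets $(-\alpha_i)^{g-2}$. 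The $q$-powers are absorbed into the overall positivity of the conductor term and play no role in the sign.

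The practical difference is exactly the point you flag as the ``main obstacle.'' In the paper's determinant formulation the mixed-profile Type~A pieces are trivial on sight, because $\alpha_i^2=1$; there is no cancellation to arrange and no bookkeeping with $\chi(\pi_v)^{a(V)}$ factors or with the Tate twist by $\BQ_\ell(-1)$. Your route through Rohrlich plus twist formulas is perfectly valid, but it creates work that the determinant formula avoids: you must track the conductor exponents, the additive-character normalization, and the global twist by $(-1)$, and then observe that the resulting $q$-powers are positive and hence irrelevant to the sign. That is doable (and you correctly identify it as routine), but the paper's approach sidesteps it entirely by working with the determinant ratio from the start.

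Both arguments use (5.2.3); only the paper also uses (5.2.4) explicitly, since it needs $M^{I_v}$ for the denominator. Your final multiplication over triples and over Type~B pieces is identical to the paper's.
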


\begin{proof}
If $v$ is finite unramified place, then $\epsilon _v=1$. It remains
to compute the root number at a ramified finite place. It is given
by
$$\epsilon _v=\frac {\det (-F_v|_{M_\ell })}{\det
(-F_v|_{M_\ell ^{I_v}})}.$$

Now we want to compute $\epsilon_v$ using  decompositions (5.2.3)
and (5.2.4). Notice that on each $V_i$, $-F_v$ has determinant
$-q^{-1}$, and on $V_i^{I_v}$, it has eigenvalues $q^{-1}$ for $i\le
e$. We assume that $V_i\ne V_i^{I_v}$ exactly for the first $e$
$V_i$'s. Let $F_v$ have eigenvalues $\alpha _i$ on $V_i/V_i^{I_v}$
which has absolute value $1$. The contribution to root number from
each term is given as follows: \begin{align*} V_i\otimes V_j\otimes
V_k: &\qquad 1&\\
V_i\otimes (\sum _{j\ne i}\wedge ^2V_i): &\qquad 1&,\\
V_i^{I_v}\otimes V_j^{I_v}\otimes V_k^{I_v}: &\qquad -\alpha
_i\alpha _j\alpha _k, &\qquad i<j<k\le e,\\
V_i^{I_v}\otimes V_j^{I_v}\otimes V_k^{I_v}: &\qquad \alpha
_i^2\alpha _j^2, &\qquad i<j\le e<k,\\
V_i^{I_v}\otimes V_j^{I_v}\otimes V_k^{I_v}: &\qquad \alpha _i^4,
&\qquad i\le e<j<k,\\
V_i^{I_v}\otimes (\sum _{j\ne i}\wedge ^2V_i): &\qquad (-\alpha
_i)^{g-2},&\qquad i\le e.
\end{align*}
\end{proof}

\subsection{Tautological classes in Jacobians}
In this subsection, we would like to study tautological algebraic
cycles in the Jacobian defined by Ceresa \cite {ceresa} and
Beauville \cite{beauville3}. We will use Fourier--Mukai transform of
Beauville (\cite{beauville1, beauville2}) and height pairing of
K\"unnemann (\cite{kunnemann2}).

Let $A$ be an abelian variety of dimension $g\ge 3$  over a global
field $k$ with a fixed symmetric and ample line bundle $\CL$. Let
$L$ be the operator on motive $h(A)$ induced by intersecting with
$c_1(\CL)$ which thus induces operator on Chow group and cohomology
group. For each integer $p$ in the interval $[0, (g+1)/2]$, it is
conjectured that the map
$$L^{g+1-2p}: \quad \Ch ^p(A)_\BQ^0\lra \Ch ^{g+1-p}(A)_\BQ^0$$
is an isomorphism of two vector spaces of finite dimensional. Let
$\Ch ^p(A)^{00}_\BQ$ denote the kernel of $L^{g+2-2p}$ which is
called {\em the group of primitive class of degree $p$}. By the same
way, we can define the primitive cohomology classes
$H^{2p-1}(A)^{00}$. Then the Beilinson--Bloch conjecture says that
$$\rank \Ch ^p(A)^{00}_\BQ=\ord _{s=0}L(H^{2p-1}(A)^{00}(p), s).$$
Moreover, K\"unnemann has constructed a height pairing on
$\Ch^*(X)_\BQ^0$:
$$\pair{\cdot, \cdot}:\qquad \Ch ^p(X)_\BQ^0\otimes  \Ch
^{g-p+1}(X)_\BQ^0\lra \BR.$$ The index conjecture of Gillet--Soul\'e
says
$$(-1)^p\pair{x, L^{g+1-2p}x}>0, \qquad 0\ne x\in \Ch
^p(X)^{00}_\BQ.$$

Using Mukai--Fourier transform, we may decompose the group $\Ch
^p(A)$ into a direct sum of eigen spaces under multiplications:
$$\Ch^p(A)_\BQ=\sum_s \Ch ^p_s(A)$$
where $s$ are integers and $\Ch ^p_s(A)$ is the subgroup of cycles
$x\in\Ch ^p(A)$ with the property
$$[k]^*x=k^{2p-s}x, \qquad \forall k\in \BZ,$$
where $[k]$ is the multiplication on $A$ by $k$. It has been
conjectured that $\Ch ^p_s(A)=0$ if $s\ne 0, 1$.  By the projection
formula
$$\pair{k^*x, y}=\pair{x, k_*y}$$
we see that $\Ch _s^p(A)^0$ are perpendicular to $\Ch _t^q(A)^0$
unless
$$p+q=g+1, \qquad s+t=2.$$

Let $X$ be a curve over a global field $k$ with Jacobian  $J$. For
an integer $n\in [0, g]$,  we can define morphism $f_n: \quad
X^n\lra J$ by sending $(x_1,\cdots, x_n)$ to the class of $\sum
(x_i-\xi)$. Notice that the image does not depend on the choice of
$\xi$. We view $X$ as a subvariety of $J$ via embedding $f_1$ and
define the theta divisor $\theta$  as the image of  $f_{g-1}$. We
use $\theta$ for the primitive decomposition and
 Fourier--Mukai transform:
$$\CF: \Ch ^*(J)_\BQ\lra \Ch ^*(J)_\BQ$$
$$x\mapsto \CF (x):=p_{2*}(p_1^*x\cdot e^\lambda)$$
where $\lambda$ is the Poincar\'e class:
$$\lambda =p_1^*\theta+p_2^*\theta -m^*\theta.$$
The decomposition into $s$-space can be made explicit as follows:
Define a decomposition $\CF=\sum \CF _s$  by
$$\CF _s(x)=p_{2*}\left(p_1^*x\cdot \frac {\lambda ^{2g-2p+s}}{(2g-2p+s)!}\right).$$
Then we have decomposition $x=\sum x_s$ with $$x_s=\CF^{-1}(\CF
_s(x))\in \Ch ^p_s(J)_\BQ,$$ where $\CF^{-1}$ is the inverse of
$\CF$ which has an expression:
$$\CF ^{-1}=(-1)^g[-1]^*\circ \CF.$$

 Following Beauville, we define the ring $\CR$ of tautological cycles of $\Ch ^*(J)$
 as the smallest $\BQ$-vector generated by  $X$ under the following operations:
 the intersection, the star operator, and the Fourier--Mukai
 transform. By Beauville, in the decomposition $\CR=\oplus _s\CR_s$,
 $\CR_s=0$ if $s<0$ and $\CR_0$ is generated by $\theta$. Thus $\CR_0$
 maps injectively into cohomology group. Thus cohomological trivial
 cycles have components $s>0$. The height intersection on these
 cycles factors through the first component:
 $$\pair {x, y}=\pair{x_1, y_1}.$$

The key to prove Theorem 1.5.5 is the following pull-back formula:
\begin{theorem} Consider the morphism $f_3: X^3\lra
J$. Then
$$f_3^*\CF (X)=-g\sum _ip_i^*\xi-\sum _{ij}p_{ij*}\delta _\xi+\Delta _\xi,$$
where $\delta _\xi$ is the class
$$\delta _\xi=p_1^*\xi+p_2^*\xi-\Delta\in \Ch ^1(X^2).$$
\end{theorem}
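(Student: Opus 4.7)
The plan is to compute $f_3^*\CF(X)$ directly from the definition of the Fourier--Mukai transform, by combining flat base change with the projection formula. Applying flat base change to the Cartesian square formed by $\pi_2: J\times J\to J$ and $f_3: X^3\to J$ gives
\[
f_3^*\CF(X) = \mathrm{pr}_{2*}\bigl((\mathrm{id}\times f_3)^*(\pi_1^*X\cdot e^\lambda)\bigr)
\]
on $X^3$. Since $\pi_1^*X\subset J\times J$ pulls back under $\mathrm{id}\times f_3$ to the image of the closed immersion $j = f_1\times\mathrm{id}: X^4 = X\times X^3\hookrightarrow J\times X^3$, the projection formula further reduces this to $f_3^*\CF(X) = q_*(e^{\Lambda'})$, where $q: X^4\to X^3$ is the projection forgetting the first factor and $\Lambda' = j^*(\mathrm{id}\times f_3)^*\lambda\in\Ch^1(X^4)$.

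Next, $\Lambda'$ must be rewritten explicitly. Expanding $\lambda = \pi_1^*\theta + \pi_2^*\theta - m^*\theta$ and pulling back each term, together with the observation that the relevant composition $X^4\to J$ is the Abel--Jacobi map $f_4$, gives $\Lambda' = q_1^*(f_1^*\theta) + q^*f_3^*\theta - f_4^*\theta$. The key input here is the identity $(f_1\times f_1)^*\lambda = -\delta_\xi$ recorded in the example at the end of \S2.2, which identifies the Poincaré bundle pulled back to $X^2$ with $s(\Delta)$. Combined with the biadditivity of $\lambda$, this propagates to the Abel--Jacobi formula
\[
f_n^*\theta = \sum_i p_i^*(f_1^*\theta) + \sum_{i<j} p_{ij}^*\delta_\xi
\]
on $X^n$, and once this is plugged into the expression for $\Lambda'$ the $f_1^*\theta$ contributions cancel completely, leaving
\[
\Lambda' = -\bigl(q_{12}^*\delta_\xi + q_{13}^*\delta_\xi + q_{14}^*\delta_\xi\bigr),
\]
with $q_{1j}:X^4\to X^2$ the projections to the first and $j$-th factors.

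With this formula in hand, expand $e^{\Lambda'} = \sum_{k\ge 0}\Lambda'^k/k!$ and compute $q_*(\Lambda'^k/k!)$ degree by degree. The terms $k = 0$ and $k = 1$ vanish: $q_*[X^4] = 0$ for dimension reasons, and $\pi_{2*}\delta_\xi = 0$ is a short direct calculation on $X^2$. The term $k = 2$ uses the identity $\delta_\xi^2 = -2g\, p_1^*\xi\cdot p_2^*\xi$ on $X^2$ (which follows from $\Delta^2 = 2 - 2g$) together with short intersection computations against the partial diagonals $\Delta_{1j}\subset X^4$, and produces exactly $-g\sum_i p_i^*\xi - \sum_{i<j} p_{ij}^*\delta_\xi$. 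The crucial term $k = 3$ splits into (i) repeated-index contributions, which vanish because the four surviving monomials reduce to the same zero-cycle with cancelling signs, and (ii) the distinct-index triple product $q_{12}^*\delta_\xi\cdot q_{13}^*\delta_\xi\cdot q_{14}^*\delta_\xi$, whose pushforward turns out to be exactly $-\Delta_\xi$. The term $k = 4$ vanishes by the same cancellation mechanism, completing the identification $q_*(\Lambda'^3/6) = \Delta_\xi$.

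The main obstacle is the combinatorial matching in the case $k=3$: a full expansion of $q_{12}^*\delta_\xi\cdot q_{13}^*\delta_\xi\cdot q_{14}^*\delta_\xi$ in the classes $q_i^*\xi$ and the partial diagonals $\Delta_{1j}$ produces eight codimension-three monomials on $X^4$, each of whose pushforward along $q$ must be evaluated via an appropriate Cartesian square and the projection formula. The resulting combination of products $p_i^*\xi\cdot p_j^*\xi$ and partial diagonals on $X^3$ must then be matched term by term with the seven-summand definition of $\Delta_\xi$; several of the eight monomials push forward to the same underlying cycle with differing signs, so careful bookkeeping is essential to see the cancellations and produce the correct combination.
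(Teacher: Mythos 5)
Your proposal is correct and follows essentially the same route as the paper: both reduce $f_3^*\CF(X)$ to the pushforward $q_*\!\left(e^{\Lambda'}\right)$ along $q\colon X^4\to X^3$, identify $\Lambda'$ as a signed sum of pullbacks of $\delta_\xi$ from the three faces containing the first factor, and then expand the exponential degree by degree. Your Abel--Jacobi pullback formula $f_n^*\theta=\sum_i p_i^*f_1^*\theta+\sum_{i<j}p_{ij}^*\delta_\xi$ is exactly the content of the paper's theorem-of-the-cube computation on $J^4$ restricted to the curve, so the two decompositions of $\Lambda'$ are the same identity in different clothing; you are merely more explicit than the paper's ``a direct computation'' about the combinatorics of the $k=2,3,4$ terms.

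One small caveat on a statement you make in passing: $\delta_\xi^2=-2g\,p_1^*\xi\cdot p_2^*\xi$ is not a correct rational equivalence on $X^2$. Expanding, $\delta_\xi^2=-(2g+2)\,\Delta\cdot p_1^*\xi+2\,p_1^*\xi\cdot p_2^*\xi$, and the zero-cycles $\Delta\cdot p_1^*\xi$ (the divisor $\xi$ embedded in the diagonal) and $p_1^*\xi\cdot p_2^*\xi$ are in general distinct classes in $\Ch_0(X^2)_\BQ$ when $\xi$ is not a single rational point. The slip is harmless here because both zero-cycles have the same pushforward $p_1^*\xi$ under $q$ (via the projection formula), so your $k=2$ contribution comes out right; but you should only assert the identity after applying $q_*$, not on $X^2$ itself.
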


\begin{proof} By discussion above,
$$\CF (X)=p_{2*}\left(p_1^*X\cdot e^{\lambda}\right).$$ Consider the morphism
$$g: X^4\lra J\times J, \qquad (x_i)\mapsto (x_0-\xi, x_1+x_2+x_3-3\xi)$$
Then it is easy to see that
$$f_3^*\CF (X)=p_{123*}g^*e^\lambda=p_{123*}\exp g^*\lambda.$$
Let us compute the class $g^*\lambda$:
$$g^*\lambda =p_0^*\theta +p_{123}^*f_3^*\theta -f_4^*\theta.$$
We want to use the theorem of cube to decompose this bundle into a
sum of pull-backs of bundles of a face $X^2$ of $X^4$. More
conveniently, we may consider this bundle as pull-back of bundle on
$A^4$ of the following bundle: $$m_0^*\theta +m_{123}^*\theta
-m_{0123}^*\theta$$ where for a subset $I$ of $\{0,1,2,3,4\}$, $m_I$
is the sum of elements in $I$. By the theorem of cube, this bundle
has an expression
$$\sum _{ij}\CL_{ij}+\sum _i\CM_i$$
where $\CL_{ij}$ are line bundles on $J^2$ with trivial restriction
on $\{0\}\times J$ and $J\times \{0\}$ and $\CM_i$ are line bundles
on $J$. Now lets us restrict the bundle on $ij$-factors with $0$ on
other factors to obtain:
$$\CL_{0i}=\lambda, \quad \CL _{ij}=0, \quad \forall i, j>0.$$
Similarly, restrict on a single factor to get $\CM_i=0$. In summary,
we have shown that
$$g^*\lambda =\sum _{i=0}^3 f_{0i}^*\lambda$$
where $f_{0i}$ is the projection $X^3\lra A^2$. To compute the
bundle $f_{0i}\lambda$ we consider the embedding $X^2\lra A^2$. It
is easy to see that the restriction of $\lambda$ is given by
$\delta_\xi$. It follows that
$$g^*\lambda=\sum _i p_{0i}^*\delta _\xi.$$

Thus we have
$$f_3^*\CF (X)=p_{123*}\exp g^*\lambda=\sum _{ijk}\frac 1{i!j!k!}
p_{123*}(p_{01}^*\delta _\xi^i\cdot p_{02}^*\delta _\xi^j\cdot
p_{03}^*\delta _\xi^k).$$ The identity in Theorem follows from a
direct computation. \end{proof}

\subsubsection*{Proof of Theorem   1.5.5}
The first formula follows from Theorem 5.3.1. The second follows
form the identity
$$f_{3*}\Delta _\xi=[3]_*X-3[2]_*X+3X, \qquad X=\sum X_s.$$
For the third formula, we notice the star operator and intersection
operator respect to the $s$-graduation. Push the first formula in
the Theorem   to $J$ to obtain:
$$X^{*3}\cdot \CF (X_1)=[3]_*X-3[2]_*X+3X.$$
Decompose this into $s$-components to obtain:
$$\CF (X_1)\cdot \sum _{i+j+k=s-1}X_i*X_j*X_k =\left(3^{2+s}-3\cdot
2^{2+s}+3\right) X_s.$$ This proves the identity in the third
formula. The list of equivalence is clear by three identities and
the following expression for Ceresa cycle:
$$X-[-1]_*X=2\sum _{s \,\textrm{odd}}X_s.$$

\subsubsection*{Proof of Theorem 1.5.6}
By Theorem 1.5.5, $f_3^*\CF (X_1)=\Delta _\xi$. The first inequality
follows from the projection formula:
$$\pair{\Delta _\xi, \Delta _\xi}_{X^3}=
\pair{\CF (X_1), (f_{3*}\Delta _\xi)_1}_{X^3}.$$ Now we use the
identity
$$f_{3*}\Delta _\xi=[3]_*X-3[2]_*X+3X=6X_1+\cdots.$$
For the second inequality, we use
 another projection formula \begin{align*} \pair{\Delta _\xi, \Delta
_\xi}_{X^3} =&\pair {f_3^*\CF (X_1), f_3^*\CF (X_1)}_{X^3}
=\pair{\CF (X_1), f_{3*}f_3^*\CF (X_1)}_J\\
=&\pair{\CF (X_1), X^{*3}\cdot \CF (X_1)}.\end{align*} As the
intersection pairing depends only on the $s=1$ component, we may
replace $X^{*3}$ by
$$X^{*3}_0=\frac 6{(g-3)!}\theta ^{g-3}.$$
Here for a subvariety $Y$ of $X$, $Y^{*d}$ denote $d$-th star
product power of $Y$. This proves the identity in the Theorem. To
show that $\CF (X_1)$ is primitive, we use the following identity:
$$L\cdot L^{g-3}\CF (X_1)=\frac {(g-3)!}6\left(\theta \cdot
X^{*3}\CF (X_1)\right)_1 =\frac
{(g-3)!}6f_{3*}\left(f_3^*\theta\cdot \Delta _\xi\right).$$ Thus it
suffices to prove
$$f_3^*\theta\cdot \Delta _\xi=0.$$
By Theorem 5.3.1, $$f_3^*\theta =-f_3^*\CF (X_0) =g\sum i
p_i^*\xi+\sum _{ij}p_{ij}^*\delta_\xi.$$ It is easy to show all of
these terms have zero intersection with $\Delta _\xi$.

\end{document}